\numberwithin{equation}{section}
\newtheorem{theorem}{Theorem}[section]
\newtheorem{proposition}[theorem]{Proposition}
\newtheorem{corollary}[theorem]{Corollary}
\newtheorem{lemma}[theorem]{Lemma}
\newtheorem{problem}[theorem]{Problem}
\theoremstyle{definition}
\newtheorem{definition}[theorem]{Definition}
\newtheorem{remark}[theorem]{Remark}
\DeclareMathOperator{\cs}{cs}
\DeclareMathOperator{\ncs}{ncs}
\DeclareMathOperator{\skel}{Skel}
\DeclareMathOperator\lk{\mathrm{lk}}
\DeclareMathOperator\st{\mathrm{st}}
\DeclareMathOperator{\conv}{\mathrm{conv}}
\DeclareMathOperator{\dist}{\mathrm{dist}}
\newcommand{\B}{\mathcal{B}}
\newcommand{\R}{{\mathbb R}}
\newcommand{\I}{{\mathcal I}}
\newcommand{\C}{{\mathcal C}}
\title{New families of highly neighborly centrally symmetric spheres}
\author{
	Isabella Novik\thanks{Research of IN is partially\textsl{} supported by NSF grants DMS-1664865 and DMS-1953815, and by Robert R.~\&  Elaine F.~Phelps Professorship in Mathematics. }\\
	\small Department of Mathematics\\[-0.8ex]
	\small University of Washington\\[-0.8ex]
	\small Seattle, WA 98195-4350, USA\\[-0.8ex]
	\small \texttt{novik@uw.edu}
	\and 
	Hailun Zheng\thanks{Research of HZ is partially supported by a postdoctoral fellowship from ERC grant 716424 - CASe.}\\
	\small Department of Mathematical Sciences\\[-0.8ex]
	\small University of Copenhagen\\[-0.8ex]
	\small Universitesparken 5, 2100 Copenhagen, Denmark \\[-0.8ex]
	\small \texttt{hz@math.ku.dk}
}
\begin{document}
\maketitle
\begin{abstract} 
	 In 1995, Josckusch constructed an infinite family of centrally symmetric (cs, for short) triangulations of $3$-spheres that are cs-$2$-neighborly. Recently, Novik and Zheng extended Jockusch's construction: for all $d$ and $n>d$, they constructed a cs triangulation of a $d$-sphere with $2n$ vertices, $\Delta^d_n$, that is cs-$\lceil d/2\rceil$-neighborly. Here, several new cs constructions, related to $\Delta^d_n$, are provided. It is shown that for all $k>2$ and a sufficiently large $n$, there is another cs triangulation of a $(2k-1)$-sphere with $2n$ vertices that is cs-$k$-neighborly, while for $k=2$ there are $\Omega(2^n)$ such pairwise non-isomorphic triangulations. It is also shown that for all $k>2$ and a sufficiently large $n$, there are $\Omega(2^n)$ pairwise non-isomorphic cs triangulations of a  $(2k-1)$-sphere with $2n$ vertices that are cs-$(k-1)$-neighborly. The constructions are based on studying facets of $\Delta^d_n$, and, in particular, on some necessary and some sufficient conditions similar in spirit to Gale's evenness condition. Along the way, it is proved that Jockusch's spheres $\Delta^3_n$ are shellable and an affirmative answer to Murai--Nevo's question about $2$-stacked shellable balls is given.
\end{abstract}
	
\section{Introduction} \label{sec:Intro}
In this paper, we construct new families of centrally symmetric (cs, for short) triangulations of spheres that are highly neighborly. Our constructions are based on studying the edge links and facets of the complex $\Delta^d_n$. Here, for odd $d\geq 3$, $\{\Delta^d_n : n>d\}$ is the only currently known infinite family of cs $d$-spheres that are cs-$\lceil d/2\rceil$-neighborly. In the process, we establish several properties of $\Delta^d_n$ that are natural cs analogs of the properties that the cyclic polytopes have.

A simplicial complex $\Delta$ is called $\ell$-neighborly if every $\ell$ of its vertices form a face. One famous example is $C(d+1,n)$ --- (the boundary complex of) the cyclic $(d+1)$-polytope with $n$ vertices. This object was  discovered and rediscovered by Carath\'eodory, Gale, and Motzkin \cite{Carath1911,Gale63,Motz57} among others, and it is $\lceil d/2\rceil$-neighborly. One reason triangulations of $d$-spheres that are $\lceil d/2\rceil$-neighborly are so sought after is the celebrated Upper Bound Theorem of McMullen \cite{McMullen70} (for polytopes) and Stanley \cite{Stanley75} (for spheres) asserting that among all triangulated $d$-spheres with $n$ vertices, any $\lceil d/2\rceil$-neighborly sphere simultaneously maximizes all the face numbers. These extremal properties may seem to suggest that  $\lceil d/2\rceil$-neighborly triangulations of $d$-spheres are extremely rare. Surprisingly, this intuition is quite wrong: such objects abound.
 Indeed, for $k\geq 2$, Shemer \cite{Shemer} constructed superexponentially many (in the number of vertices) combinatorial types of $k$-neighborly $2k$-polytopes; the current record lower bound on the number of combinatorial types of $k$-neighborly $2k$-polytopes is due to Padrol \cite{Padrol-13}. In fact, Kalai conjectured that for odd  $d\geq 3$ and a sufficiently large $n$, most of triangulated $d$-spheres with $n$ vertices are $\lceil d/2\rceil$-neighborly, \cite[Section 6.3]{Kal}.

For cs polytopes and spheres the situation with neighborliness is much more subtle. We say that a cs simplicial complex $\Delta$ is cs-$\ell$-neighborly if every set of $\ell$ of its vertices, no two of which are antipodal, forms a face of $\Delta$. While there do exist cs $(d+1)$-polytopes with $2(d+1)$ and $2(d+2)$ vertices that are cs-$\lceil d/2\rceil$-neighborly \cite{McMShep}, contrary to the non-cs situation, a cs $(d+1)$-polytope with more than $2(d+2)$ vertices cannot be cs-$\lceil d/2\rceil$-neighborly \cite{McMShep}, and a cs $(d+1)$-polytope with more than $2^{d+1}$ vertices cannot be even cs-$2$-neighborly \cite{LinNov}. On the other hand, an infinite family of cs triangulations of $3$-spheres that are cs-$2$-neighborly was constructed by Jockusch \cite{Jockusch95}. Furthermore, very recently the authors \cite{N-Z} extended Jockusch's result:  for all $d$ and $n>d$, they constructed a cs combinatorial $d$-sphere with $2n$ vertices, $\Delta^d_n$, that is cs-$\lceil d/2\rceil$-neighborly. (The family $\{\Delta^3_n : n\geq 4\}$ coincides with Jockusch's series of complexes.) It is worth pointing out that Lutz, see \cite[Chapter 4]{Lutz} and \cite{Manifold-page}, using a computer search, found several examples of highly neighborly cs spheres of dimensions $3$, $5$, and $7$ with few vertices. However, at present, the complexes  $\Delta^{2k-1}_n$ (for $k\geq2$) provide the {\em only} known up-to-date construction of a cs $(2k-1)$-sphere with an arbitrary even number of vertices that is cs-$k$-neighborly.\footnote{All Lutz's spheres possess vertex-transitive cyclic symmetry. As such, all of his $(2k-1)$-spheres with $2n>4k$ vertices are non-isomorphic to $\Delta^{2k-1}_n$.} The complex $\Delta^{2k}_{n}$ and the suspension of $\Delta^{2k-1}_{n-1}$ are the only two known constructions of cs $2k$-spheres that are cs-$k$-neighborly.

In this paper, we produce several new constructions. Our main results can be summarized as follows:
\begin{itemize}
\item For all $k\geq 2$ and a sufficiently large $n$, there exists a cs combinatorial $(2k-1)$-sphere with $2n$ vertices, $\Lambda^{2k-1}_n$, that is cs-$k$-neighborly and not isomorphic to $\Delta^{2k-1}_n$, see Theorems \ref{lm: edge link main result} and \ref{thm: distinct edge links}. Of course, the suspension of $\Lambda^{2k-1}_{n-1}$ provides us with an analogous result in even dimensions. 
\item For $k=2$, there exist $\Omega(2^n)$ pairwise non-isomorphic cs combinatorial $3$-spheres with $2n$ vertices that are cs-$2$-neighborly, see Theorem \ref{thm: 2^n combinatorial types}.
\item For all $k\geq 3$ and a sufficiently large $n$, there exist  $\Omega(2^n)$ pairwise non-isomorphic cs combinatorial $(2k-1)$-spheres with $2n$ vertices that are cs-$(k-1)$-neighborly, see Theorem \ref{thm:cs-(k-1)-neighb-constr}.
\end{itemize}

Many constructions in the non-cs world start with the cyclic polytope \cite{Kal, Shemer}. In the same spirit, all of our constructions start with  $\Delta^d_n$. For instance, the complex $\Lambda^{2k-1}_n$ is obtained as an edge link of $\Delta^{2k+1}_n$. This construction is inspired by the known fact that while an edge link of a $(k+1)$-neighborly $(2k+1)$-sphere is, in general, only $(k-1)$-neighborly, there exist edge links of $C(2k+2, n+2)$ that are isomorphic to $C(2k, n)$, and hence are $k$-neighborly. Also the complexes in the third construction are obtained by bistellar flips performed on $\Delta^{2k-1}_n$.  

The proofs of promised results require thorough understanding of complexes $\Delta^d_n$. Consequently, a big portion of the paper is devoted to establishing new properties of $\Delta^d_n$. For instance, we explicitly describe all facets of $\Delta^3_n$ as well as provide some sufficient and some necessary conditions on facets of $\Delta^{2k-1}_n$ for all $k>2$ (see Section \ref{sec:Facets}) that are similar in spirit to Gale's evenness condition on facets of cyclic polytopes \cite{Gale63}. One consequence of these results is that {\em all} Kalai's squeezed $(2k-1)$-balls with $n$ vertices \cite{Kal} are embeddable in $\Delta^{2k+1}_{2n+1}$ and also in $\Lambda^{2k-1}_{2n-1}$ as subcomplexes, see Proposition \ref{prop:squeezed-balls-as-subcomplexes}. We also prove that for $k\geq 2$ and a sufficiently large $n$, the complex $\Delta^{2k-1}_n$ admits only two automorphisms (i.e., the identity, and the involution that takes each vertex to its antipode), see Theorem \ref{thm: automorphism}. Along the way, we show that the spheres $\Delta^3_n$ are shellable, see Theorem \ref{thm:shellable}, and answer in the affirmative Murai--Nevo's question from \cite{MuraiNevo2013} about the existence of a $2$-stacked shellable ball whose boundary complex is not polytopal. It is our hope that the families of cs spheres $\Delta^{2k-1}_n$ and $\Lambda^{2k-1}_n$ will be a fruitful source for finding many {\em non-polytopal} constructions of spheres with additional interesting properties. We refer to \cite{GouveiaMacchiaWiebe,Pfeifle-20} for the first results in this direction.

The structure of the paper is as follows. In Section \ref{sec:Prelim}, after reviewing basics of simplicial complexes along with basics of combinatorial balls and spheres (see Section \ref{subsec:simplicial-compelxes}), we summarize the main definitions and results of \cite{N-Z} (see Section \ref{sec:Delta-and-B}). These include the definition of cs combinatorial spheres $\Delta^d_n$ and certain combinatorial balls $B^{d,i}_n$ as well as some of their properties. In Section \ref{sec:Facets} we study the facets of $\Delta^{2k-1}_n$ and more generally of $\partial B^{d,i}_n$. Section \ref{sec:two-constructions} is an intermission section: there we outline all of our high-dimensional constructions. In Section \ref{sec:k-neighb}, we discuss the edge links of $\Delta^{2k+1}_n$, and in particular, verify the promissed properties of $\Lambda^{2k-1}_n$. In Section \ref{sec:(k-1)-neighb} we construct many cs combinatorial $(2k-1)$-spheres that are cs-$(k-1)$-neighborly. Sections \ref{sec:many-3-spheres} and \ref{sec:shellable} are devoted to $3$-dimensional complexes: in Section \ref{sec:many-3-spheres}, we construct many cs combinatorial $3$-spheres that are cs-$2$-neighborly, while in Section \ref{sec:shellable} we prove shellability of $\Delta^3_n$ and answer Murai--Nevo's question. We close in Section \ref{sec:open problems} with a few open problems.

\section{Preliminaries} \label{sec:Prelim}
\subsection{Basics on simplicial complexes}  \label{subsec:simplicial-compelxes}
In this section we review several notions and results pertaining to simplicial complexes.
A \emph{simplicial complex} $\Delta$ on vertex set $V=V(\Delta)$ is a collection of subsets of $V$ that is closed under inclusion and contains all singletons: $\{v\}\in\Delta$ for all $v\in V$. The elements of $\Delta$ are called {\em faces}. The \emph{dimension of a face} $\tau\in\Delta$ is $\dim\tau:=|\tau|-1$. The \emph{dimension of $\Delta$}, $\dim\Delta$, is the maximum dimension of its faces. We refer to faces of dimension $0$ and $1$ as \emph{vertices} and \emph{edges}, respectively. To simplify the notation, for a face that is a vertex, we write $v$ instead of $\{v\}$.

A face of a simplicial complex $\Delta$ is a \textit{facet} if it is maximal w.r.t.~inclusion. We say that $\Delta$ is \emph{pure} if all facets of $\Delta$ have the same dimension; in this case, faces of codimension $1$ are called {\em ridges}. If $\Delta$ is pure, the \emph{facet-ridge graph} of $\Delta$ is the graph whose vertices are the facets of $\Delta$ and whose edges are pairs of facets that contain a common ridge.

Let $V$ be a finite set. Denote by $\overline{V}:=\{\tau \ : \ \tau\subseteq V\}$ the $(|V|-1)$-dimensional simplex (or $(|V|-1)$-simplex, for short) on vertex set $V$ and by $\partial \overline{V}:=\{\tau \ : \ \tau\subsetneq V\}$ the boundary complex of this simplex. For $v_1,\dots, v_n\in V$, we let $(v_1,v_2,\dots, v_n)$ be a path with edges $\{v_1,v_2\}, \{v_2,v_3\},\dots, \{v_{n-1},v_n\}$ if $v_n\neq v_1$, or a cycle if $v_n=v_1$. In particular, a path $(v_1, v_2)$ of length one is a $1$-simplex, so it can also be written as $\overline{\{v_1,v_2\}}$. 

Let $\Delta$ be a simplicial complex. If $W\subseteq V(\Delta)$ is any subset of vertices, we define the {\em restriction} of $\Delta$ to $W$ to be the subcomplex $\Delta[W]=\{F\in \Delta: F\subseteq W\}$. The \emph{$k$-skeleton} of $\Delta$, $\skel_k(\Delta)$, is the subcomplex of $\Delta$ consisting of all faces of dimension $\leq k$. If $\tau$ is a face of $\Delta$, then the {\em star} and the {\em link of $\tau$ in $\Delta$} are the following subcomplexes of $\Delta$: 
\[\st(\tau, \Delta)=\{\sigma\in \Delta \ : \ \sigma\cup\tau \in \Delta\} \mbox{ and }
\lk(\tau,\Delta):= \{\sigma\in \st(\tau, \Delta) \ : \ \sigma\cap\tau=\emptyset\}.
\]
When the context is clear, we sometimes write $\st(\tau)$ and $\lk(\tau)$ in place of $\st(\tau, \Delta)$ and $\lk(\tau, \Delta)$.

If $\Delta$ is pure and $\Gamma$ is a full-dimensional pure subcomplex of $\Delta$, then $\Delta\backslash \Gamma$ is the subcomplex of $\Delta$ generated by those facets of $\Delta$ that are not in $\Gamma$. If $\Delta$ and $\Gamma$ are simplicial complexes on disjoint vertex sets, then the \textit{join} of $\Delta$ and $\Gamma$ is the simplicial complex $\Delta*\Gamma = \{\sigma \cup \tau \ : \ \sigma \in \Delta \text{ and } \tau \in \Gamma\}$. Two important special cases are the \emph{cone} over $\Delta$ with apex $v$ defined as  the join $\Delta\ast\overline{v}$ and the \emph{suspension} of~$\Delta$, $\Sigma\Delta$, defined as the join of $\Delta$ with a $0$-dimensional sphere. In the rest of the paper, we write $\Delta \ast v$ in place of  $\Delta\ast\overline{v}$. 

We will be mainly focusing on the following two classes of simplicial complexes. A \emph{combinatorial $d$-ball} is a simplicial complex PL homeomorphic to a $d$-simplex. Similarly, a \emph{combinatorial $(d-1)$-sphere} is a simplicial complex PL homeomorphic to the boundary complex of a $d$-simplex. The link of any face in a combinatorial sphere is a combinatorial sphere. On the other hand, the link of a face $\tau$ in a combinatorial $d$-ball $B$ is either a combinatorial ball or a combinatorial sphere; in the former case we say that $\tau$ is  a \emph{boundary face} of $B$, and in the latter case that $\tau$ is an \emph{interior face} of $B$. The \emph{boundary complex} of $B$, $\partial B$, is the subcomplex of $B$ that consists of all boundary faces of $B$; in particular, $\partial B$ is a combinatorial $(d-1)$-sphere. (See \cite{Hudson} for additional background on PL topology.)

Let $\Delta$ be a pure simplicial complex, and assume that
\[A\in \Delta, \, B\notin \Delta, \, \lk(A, \Delta)=\partial \overline{B}.\]
The process of replacing the subcomplex $\st(A, \Delta)=A*\partial\overline{B}$ with $\partial \overline{A}* B$ is called a {\em bistellar flip}.  Two complexes are called {\em bistellar equivalent} if one can be obtained from the other through a sequence of bistellar flips. It is clear from this definition that bistellar equivalent complexes are PL homeomorphic. A much more surprising result that emphasizes the significance of bistellar flips is the following theorem of Pachner:

\begin{theorem}[\cite{Pachner}] A simplicial complex $\Delta$ is a combinatorial $(d-1)$-sphere if and only if $\Delta$ is bistellar equivalent to the boundary complex of a $d$-simplex.
\end{theorem}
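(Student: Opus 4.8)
The plan is to treat the two directions separately. The ``if'' direction is essentially contained in the discussion preceding the statement: a bistellar flip replaces the $(d-1)$-ball $A*\partial\overline{B}$ (the closed star of $A$) by the $(d-1)$-ball $\partial\overline{A}*B$, which has the same boundary $\partial\overline{A}*\partial\overline{B}$, so it is a local modification preserving both purity/dimension and the PL homeomorphism type. Hence if $\Delta$ is bistellar equivalent to the boundary $\partial\overline{V}$ of a $d$-simplex, then $\Delta$ is PL homeomorphic to $\partial\overline{V}$, i.e., a combinatorial $(d-1)$-sphere. So the real content is the ``only if'' direction: every combinatorial $(d-1)$-sphere can be brought to $\partial\overline{V}$ by finitely many bistellar flips.

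For this I would invoke the Alexander--Newman stellar theory: any two PL homeomorphic simplicial complexes are connected by a finite sequence of stellar subdivisions and inverse stellar subdivisions (welds). Applying this to $\Delta$ and $\partial\overline{V}$ gives a chain $\Delta=\Delta_0,\Delta_1,\dots,\Delta_m=\partial\overline{V}$ in which consecutive terms differ by a stellar subdivision or a weld; since a stellar subdivision of a combinatorial $(d-1)$-sphere is again a combinatorial $(d-1)$-sphere, every $\Delta_j$ is a combinatorial $(d-1)$-sphere. Thus it suffices to prove the following local statement: \emph{if $\Sigma'$ is obtained from a combinatorial $(d-1)$-sphere $\Sigma$ by a single stellar subdivision, then $\Sigma$ and $\Sigma'$ are bistellar equivalent} (the weld case is then automatic, since bistellar flips are reversible: the inverse of the flip replacing $A*\partial\overline{B}$ by $\partial\overline{A}*B$ is again a bistellar flip, with the roles of $A$ and $B$ exchanged).

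The proof of this local statement is where all the difficulty resides, and I would carry it out by downward induction on the dimension $i$ of the starred face $\sigma$, with the ambient dimension $d-1$ fixed, exploiting that $\lk(\sigma,\Sigma)$ is a combinatorial sphere of smaller dimension so that the induction closes. The base case $i=d-1$ is immediate: starring a facet $\sigma$ is exactly the bistellar flip at $\sigma$ whose link is the boundary of a single new vertex (a ``$0$-move''). For the inductive step, with $\st(\sigma,\Sigma)=\overline{\sigma}*L$ where $L=\lk(\sigma,\Sigma)$, I would first perform a $0$-move on a facet $\sigma\cup\rho$ (with $\rho$ a facet of $L$), introducing a new vertex, and then rebuild the star of $\sigma$ by stellar subdivisions of higher-dimensional faces in the modified star — each bistellar-realizable by the inductive hypothesis — arranged so that the net effect is precisely the starring of $\sigma$. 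This bookkeeping, carried out by tracking how $\overline{\sigma}*L$ is reassembled, is Pachner's key lemma relating starring to bistellar moves, and it is the main obstacle: it is a delicate purely local combinatorial-topological argument whose subtlety is that it must be organized so that only links — which are genuine combinatorial spheres — get manipulated, making the dimension induction go through. Everything else (the easy direction and the reduction via the Alexander--Newman theorem) is comparatively formal. An alternative route phrases the hard direction through combinatorial balls (a combinatorial $(d-1)$-sphere bounds the cone over itself; any two combinatorial $d$-balls with the same boundary are connected by interior bistellar flips, and any combinatorial $d$-ball is bistellar equivalent to $\overline{V}$, from which the sphere statement follows by passing to boundaries), but this variant rests on the same stellar-theoretic core, so I would present the direct version above.
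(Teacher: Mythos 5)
The paper does not prove Pachner's theorem --- it is stated as a black-box citation to \cite{Pachner} --- so there is no proof of the paper's own against which to compare your attempt. Your outline is the standard architecture for the known proof: the ``if'' direction is immediate because a bistellar flip is a local PL-preserving replacement of one ball by another with the same boundary $\partial\overline{A}*\partial\overline{B}$; for ``only if'' one invokes the Alexander--Newman stellar theory to reduce to the key lemma that a single stellar subdivision of a combinatorial sphere can be realized by a finite sequence of bistellar flips, and then observes that welds come for free from the reversibility of flips. You correctly identify this key lemma as the crux and are honest that the real work lives there.

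However, the inductive scheme you sketch for the crux lemma is not tight enough to count as a proof. You announce a downward induction on $\dim\sigma$ with the ambient dimension fixed, yet invoke the smaller dimension of $\lk(\sigma,\Sigma)$ as what makes ``the induction close''; these are two different parameters, and as stated it is not clear which one the induction is really on, nor how the two interact (the actual argument in Pachner and in Lickorish's exposition is a rather delicate double induction). More importantly, the decisive step --- introducing a new vertex by a $0$-move and then ``rebuilding the star of $\sigma$ by stellar subdivisions of higher-dimensional faces \ldots\ arranged so that the net effect is precisely the starring of $\sigma$'' --- is where Pachner's theorem genuinely lives, and it is left entirely implicit. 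Without a precise description of that sequence of flips, or a reduction to a correctly stated auxiliary lemma, this is a restatement of the difficulty rather than a resolution of it. You flag this yourself, which is the right posture, but as written the argument does not close. Since the paper treats the statement as imported from \cite{Pachner}, none of this affects the paper; it does mean your sketch should not be mistaken for a self-contained proof.
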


A simplicial complex $\Delta$ is \textit{centrally symmetric} or {\em cs} if its vertex set is endowed with a {\em free involution} $\alpha: V(\Delta) \rightarrow V(\Delta)$ that induces a free involution on the set of all nonempty faces of $\Delta$. In more detail,  for all nonempty faces $\tau \in \Delta$, the following holds: $\alpha(\tau)\in\Delta$, $\alpha(\tau)\neq \tau$, and $\alpha(\alpha(\tau))=\tau$. To simplify notation, we write $\alpha(\tau)=-\tau$ and refer to $\tau$ and $-\tau$ as {\em antipodal faces} of $\Delta$. Similarly, if $\Gamma$ is a subcomplex of $\Delta$ we write $-\Gamma$ in place of $\alpha(\Gamma)$.

One example of a cs combinatorial $d$-sphere is the boundary complex of the $(d+1)$-dimensional \emph{cross-polytope}, $\partial\C^*_{d+1}$. The polytope $\C_{d+1}^*$ is the convex hull of $\{\pm e_1,\pm e_2,\ldots, \pm e_{d+1}\}$, where $e_1,e_2,\ldots, e_{d+1}$ are the endpoints of the standard basis in $\R^{d+1}$. As an abstract simplicial complex, $\partial\C^*_{d+1}$ is the $(d+1)$-fold suspension of $\{\emptyset\}$. Equivalently, it is the collection of all subsets of $V_{d+1}:=\{\pm 1,\ldots,\pm(d+1)\}$ that contain at most one vertex from each pair $\{\pm i\}$. In particular, every cs simplicial complex on vertex set $V_n$ is a subcomplex of $\partial\C^*_n$.  

Let $\Delta\subseteq \partial\C^*_{n}$ be a simplicial complex, possibly non-cs, and let $1\leq i\leq n$. We say that $\Delta$ is \emph{cs-$i$-neighborly} (w.r.t.~$V_n$), if $\skel_{i-1}(\Delta)=\skel_{i-1}(\partial \C_n^*)$. For $i=1$, this simply means that $V(\Delta)=V_n$. Furthermore, we say that $\Delta$ is {\em exactly cs-$i$-neighborly (w.r.t.~$V_n$) if $\Delta$ is cs-$i$-neighborly but not cs-$(i+1)$-neighborly (w.r.t.~$V_n$).} For convenience, we also refer to  simplices (i.e., faces of $\partial\C^*_{n}$) as cs-$0$-neighborly complexes. 

For a $d$-dimensional simplicial complex $\Delta$, we let $f_i = f_i(\Delta)$ be the number of $i$-dimensional faces of $\Delta$ for $-1\leq i\leq d$. The vector $(f_{-1}=1, f_0, \ldots, f_{d})$ is called the $f$\emph{-vector} of $\Delta$. The \emph{$h$-vector} of $\Delta$, $(h_0,h_1, \dots, h_{d+1})$, is defined by the relation $$\sum_{i=0}^{d+1} h_i t^{d+1-i}=\sum_{i=0}^{d+1} f_{i-1}(t-1)^{d+1-i}.$$

If $\Delta$ is a combinatorial $d$-sphere, then the Dehn-Sommerville relations \cite{klee64} assert that $h_i=h_{d+1-i}$ for all $0\leq i\leq d+1$. This implies the following useful approximation:
\begin{lemma}\label{lm: facet count}
Let $k\geq 1$ be a fixed integer and let $\Delta\subseteq \partial \C^*_n$ be a combinatorial $(2k-1)$-sphere on $V_n$. If $\Delta$ is cs-$k$-neighborly, then $\Delta$ has $2^k\binom{n}{k}+O(n^{k-1})$ facets.
\end{lemma}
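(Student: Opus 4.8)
The idea is to count facets of a cs-$k$-neighborly combinatorial $(2k-1)$-sphere $\Delta$ by combining the Dehn--Sommerville relations with the neighborliness hypothesis, which pins down the low-dimensional face numbers exactly. First I would record that $\Delta$ has dimension $d = 2k-1$, so its $h$-vector has $d+2 = 2k+1$ entries $h_0, h_1, \dots, h_{2k}$, and the Dehn--Sommerville relations give $h_i = h_{2k-i}$ for all $i$. The number of facets is $f_{2k-1} = h_0 + h_1 + \cdots + h_{2k} = 2(h_0 + h_1 + \cdots + h_{k-1}) + h_k$, so it suffices to control $h_0, \dots, h_k$.

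**Key steps.** Next I would use cs-$k$-neighborliness: $\skel_{k-1}(\Delta) = \skel_{k-1}(\partial\C^*_n)$, so for $0 \le j \le k-1$ we have $f_{j-1}(\Delta) = f_{j-1}(\partial\C^*_n) = 2^j \binom{n}{j}$ (the number of $j$-subsets of $V_n$ using at most one vertex from each antipodal pair). Since the $h$-numbers $h_0, \dots, h_k$ are determined by $f_{-1}, \dots, f_{k-1}$ via the defining relation $\sum h_i t^{2k-i} = \sum f_{i-1}(t-1)^{2k-i}$ — indeed the standard inversion gives $h_m = \sum_{i=0}^{m} (-1)^{m-i}\binom{2k-i}{m-i} f_{i-1}$ — each of $h_0, \dots, h_k$ is a fixed polynomial in $n$ of degree exactly $k$ in the top term contributed by $f_{k-1} = 2^k\binom{n}{k}$, with leading coefficient $2^k/k! \cdot [\text{binomial factor}]$. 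Then I would sum: $f_{2k-1} = 2\sum_{i=0}^{k-1} h_i + h_k$. The dominant contribution is the $n^k$ term; collecting it, the coefficient of $\binom{n}{k}$ works out (after a short binomial identity) to $2^k$, so $f_{2k-1} = 2^k\binom{n}{k} + O(n^{k-1})$. Concretely, $h_k$ already contributes $2^k\binom{n}{k}/\text{(something)}$ and the telescoping sum $2\sum_{i<k} h_i$ supplies the rest; the cleanest route is to observe that each $h_i$ with $i \le k$ equals $2^i\binom{n}{i} + (\text{lower degree})$ is false in general, so I would instead just track leading terms: $h_i = \frac{2^i}{i!} n^i + O(n^{i-1})$ would need checking, and more robustly $h_i$ is a polynomial in $n$ of degree $\le k$ whose degree-$k$ part comes solely from the $f_{k-1}$ term, namely $(-1)^{k-i}\binom{2k-i}{k-i} \cdot 2^k \binom{n}{k}$; summing $2\sum_{i=0}^{k-1}(-1)^{k-i}\binom{2k-i}{k-i} + \binom{k}{0} = 1$ via the alternating Vandermonde-type identity $\sum_{i=0}^{k}(-1)^{k-i}\binom{2k-i}{k-i} = \tfrac12$ (equivalently $\sum_{j=0}^{k}(-1)^j\binom{k+j}{j} = (-1)^k\binom{k-1}{k}$-type cancellation) yields the factor $2^k$ on $\binom{n}{k}$.

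**Main obstacle.** The only delicate point is the combinatorial identity that makes the $n^k$ coefficients from $2\sum_{i<k} h_i + h_k$ collapse to exactly $2^k\binom{n}{k}$ rather than some other multiple. I expect this to fall out of a known generating-function manipulation (e.g. evaluating $\sum_i h_i$ as $f_{d}$ and matching the top term), so rather than grinding through the alternating sum directly I would phrase the argument as: the leading term of $f_{2k-1}$ as a polynomial in $n$ is determined by the leading term of $f_{k-1}(\Delta)$ together with the linear Dehn--Sommerville constraints, all of which are the same for $\Delta$ as for any cs-$k$-neighborly $(2k-1)$-sphere; hence it equals the leading term of the facet count of the cross-polytopal-type bound, which is $2^k\binom{n}{k}$. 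Everything below degree $k$ is absorbed into the $O(n^{k-1})$ error, so no finer estimate is needed.
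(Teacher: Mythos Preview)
Your overall strategy --- Dehn--Sommerville plus the face counts forced by cs-$k$-neighborliness --- is exactly the paper's approach, and your reduction $f_{2k-1}=h_k+2\sum_{i<k}h_i$ is correct. The confusion enters at the next step. You write down the correct inversion $h_m=\sum_{i=0}^{m}(-1)^{m-i}\binom{2k-i}{m-i}f_{i-1}$ and then dismiss the claim ``$h_i=2^i\binom{n}{i}+O(n^{i-1})$ for $i\le k$'' as ``false in general.'' Under the hypothesis at hand it is \emph{true}, and it is precisely what the paper uses: in that sum the top term (at $i=m$) is $\binom{2k-m}{0}f_{m-1}=f_{m-1}=2^m\binom{n}{m}$, while every other summand $f_{i-1}$ with $i<m$ is a polynomial in $n$ of degree $i<m$. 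Hence $h_m$ has degree exactly $m$ in $n$, not degree $k$. (Incidentally, your index range should read $0\le j\le k$, not $k-1$: cs-$k$-neighborliness pins down $f_{j-1}$ up through $j=k$, and you do use $f_{k-1}=2^k\binom{n}{k}$ later.)

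Consequently, for $i<k$ the number $h_i$ contributes nothing in degree $k$: the entire tail $2\sum_{i<k}h_i$ is already $O(n^{k-1})$. Your ``Main obstacle'' --- the alternating identity meant to collapse the degree-$k$ pieces coming from all the $h_i$'s --- is a phantom, because only $h_k$ carries an $n^k$ term, and its leading part is exactly $f_{k-1}=2^k\binom{n}{k}$. That is the paper's one-line finish: $f_{2k-1}=h_k+2\sum_{i<k}h_i=2^k\binom{n}{k}+O(n^{k-1})$. No binomial identity is needed.
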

\begin{proof}
	Since $\Delta$ and $\partial C^*_n$ have the same $(k-1)$-skeleton, it follows that for all $i\leq k$, $f_{i-1}(\Delta)=f_{i-1}(\partial C^*_n)=2^i\binom{n}{i}$. Consequently, $h_i(\Delta)=2^i\binom{n}{i} + O(n^{i-1})$ for all $i\leq k$, and we infer from the Dehn-Sommerville relations that
	$f_{2k-1}(\Delta)=\sum_{i=0}^{2k} h_i(\Delta)=h_k(\Delta)+2\big(h_0(\Delta)+h_1(\Delta)+\dots h_{k-1}(\Delta)\big)=2^k\binom{n}{k}+O(n^{k-1}). $
\end{proof}

The following notion takes its origins in the Generalized Lower Bound Theorem \cite{McMullenWalkup71,MuraiNevo2013,Stanley80}. 
A combinatorial $d$-ball $B$ is called \emph{$i$-stacked} (for some $0\leq i\leq d$), if all interior faces of $B$ are of dimension $\geq d-i$, that is, $\skel_{d-i-1}(B)=\skel_{d-i-1}(\partial B)$, and it is called {\em exactly $i$-stacked} if, in addition, $B$ has an interior $(d-i)$-face. For instance, a ball is $0$-stacked or exactly $0$-stacked if and only if it is a simplex. A ball is $1$-stacked if its facet-ridge graph is a tree; furthermore, it is exactly $1$-stacked if it is not a simplex. ($1$-stacked balls are also known in the literature as stacked balls.) A combinatorial $(d-1)$-sphere is called \emph{$i$-stacked} if it is the boundary complex of some $i$-stacked combinatorial $d$-ball.

We close this subsection with three lemmas that will be used in our main constructions. For the first and third, see \cite[Lem.~2.2 and 2.3]{N-Z}.
\begin{lemma}\label{lm: stackedness}
	Let $B_1$ and $B_2$ be combinatorial balls of dimension $d_1$ and $d_2$, respectively.
	If $B_1$ is $i_1$-stacked and $B_2$ is $i_2$-stacked, then
	\begin{enumerate}
		\item The complex $B_1*B_2$ is an $(i_1+i_2)$-stacked combinatorial $(d_1+d_2+1)$-ball. 
		\item If $d_1=d_2=d$, $i_1\leq i_2$, and  $B_1\cap B_2\subseteq \partial B_1\cap \partial B_2$ is a combinatorial $(d-1)$-ball that is $i_3$-stacked for some $i_3< i_2$, then $B_1\cup B_2$ is an $i_2$-stacked combinatorial $d$-ball.
	\end{enumerate}
\end{lemma}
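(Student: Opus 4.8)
\medskip\noindent\textbf{Proof idea.} I would reduce both parts to two standard PL facts. The first is that joins of combinatorial balls and spheres behave as expected: $B^p * B^q$ and $B^p * S^q$ are combinatorial $(p{+}q{+}1)$-balls, while $S^p * S^q$ is a combinatorial $(p{+}q{+}1)$-sphere. The second is the \emph{gluing lemma}: if two combinatorial $m$-balls meet in a combinatorial $(m-1)$-ball contained in both of their boundaries, their union is a combinatorial $m$-ball, whereas if they meet in a common boundary $(m-1)$-sphere, their union is a combinatorial $m$-sphere. I would also use throughout that, for a face $\sigma$, links commute with $*$, $\cup$ and $\cap$ in the evident way, and that $\partial\lk(\sigma,B)=\lk(\sigma,\partial B)$ whenever $\sigma$ is a boundary face of a combinatorial ball $B$.

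For part (1): since $V(B_1)$ and $V(B_2)$ are disjoint, every face of $B_1*B_2$ is uniquely of the form $\sigma\cup\tau$ with $\sigma\in B_1$, $\tau\in B_2$, and $\lk(\sigma\cup\tau,B_1*B_2)=\lk(\sigma,B_1)*\lk(\tau,B_2)$. The join fact shows $B_1*B_2$ is a combinatorial $(d_1+d_2+1)$-ball and, by reading off when this link is a sphere, that $\sigma\cup\tau$ is an interior face of $B_1*B_2$ exactly when $\sigma$ is interior in $B_1$ and $\tau$ is interior in $B_2$. For such a face the stackedness hypotheses give $\dim(\sigma\cup\tau)=\dim\sigma+\dim\tau+1\ge(d_1-i_1)+(d_2-i_2)+1=(d_1+d_2+1)-(i_1+i_2)$, which is precisely the assertion that $B_1*B_2$ is $(i_1+i_2)$-stacked.

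For part (2): the gluing lemma gives that $B:=B_1\cup B_2$ is a combinatorial $d$-ball, and I would then bound the dimension of an arbitrary interior face $\sigma$ of $B$ by cases. If $\sigma\in B_1\setminus B_2$, every facet of $B$ containing $\sigma$ lies in $B_1$, so $\lk(\sigma,B)=\lk(\sigma,B_1)$; being a sphere, $\sigma$ is interior in $B_1$, hence $\dim\sigma\ge d-i_1\ge d-i_2$. The case $\sigma\in B_2\setminus B_1$ is symmetric and uses $i_2$-stackedness of $B_2$. If $\sigma\in B_1\cap B_2$, then $\sigma$ is a boundary face of both balls (as $B_1\cap B_2\subseteq\partial B_1\cap\partial B_2$), so $\lk(\sigma,B_1)$ and $\lk(\sigma,B_2)$ are $(d-\dim\sigma-1)$-balls meeting along $\lk(\sigma,B_1\cap B_2)$, a subcomplex of both of their boundaries. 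Applying the gluing lemma to $\lk(\sigma,B)=\lk(\sigma,B_1)\cup\lk(\sigma,B_2)$, this union is a sphere exactly when $\lk(\sigma,B_1\cap B_2)$ is a full $(d-\dim\sigma-2)$-sphere, i.e.\ exactly when $\sigma$ is an interior face of the $(d-1)$-ball $B_1\cap B_2$. Since $\sigma$ is interior in $B$, it is interior in $B_1\cap B_2$, and the $i_3$-stackedness of $B_1\cap B_2$ together with $i_3<i_2$ gives $\dim\sigma\ge(d-1)-i_3\ge(d-1)-(i_2-1)=d-i_2$. All cases yield $\dim\sigma\ge d-i_2$, so $B_1\cup B_2$ is $i_2$-stacked.

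The bookkeeping of which faces are interior is routine; the step I would expect to require the most care is the case $\sigma\in B_1\cap B_2$ in part (2), where one must correctly identify $\lk(\sigma,B)$ as two balls glued along a subcomplex of their common boundary and deduce sphericity of the union from whether that subcomplex is an entire boundary sphere. It is exactly here that the hypothesis that $B_1\cap B_2$ is a $(d-1)$-ball (rather than a $(d-1)$-sphere) and the strict inequality $i_3<i_2$ are both used.
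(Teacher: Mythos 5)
The paper does not give a proof of this lemma; it is quoted from the authors' earlier paper \cite{N-Z} (see Lemma~2.2 there), so there is no in-paper argument to compare against. Your proof is, however, correct and self-contained. Both parts hinge on the right two facts: the behaviour of joins of combinatorial balls and spheres, and the PL gluing lemma, together with the link characterization of interior faces. In part~(1) the identity $\lk(\sigma\cup\tau, B_1*B_2)=\lk(\sigma,B_1)*\lk(\tau,B_2)$, combined with ``ball $*$ ball = ball'', ``ball $*$ sphere = ball'', ``sphere $*$ sphere = sphere'', correctly identifies the interior faces and yields the dimension bound $(d_1+d_2+1)-(i_1+i_2)$. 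In part~(2), the three-way case split on $\sigma\in B_1\setminus B_2$, $\sigma\in B_2\setminus B_1$, $\sigma\in B_1\cap B_2$ is the right decomposition, and in the last case you correctly recognize $\lk(\sigma,B)$ as two balls glued along $\lk(\sigma,B_1\cap B_2)\subseteq\partial\lk(\sigma,B_1)\cap\partial\lk(\sigma,B_2)$, so that sphericity of $\lk(\sigma,B)$ forces $\sigma$ to be interior in $B_1\cap B_2$; the strict inequality $i_3<i_2$ then closes the estimate. One tiny point worth spelling out, though it does not affect correctness: the ``exactly when'' step uses that the $(m{-}1)$-dimensional link $\lk(\sigma,B_1\cap B_2)$ is either a ball or a sphere (since $\sigma$ is a boundary or interior face of the $(d{-}1)$-ball $B_1\cap B_2$), and that an $(m{-}1)$-sphere contained as a subcomplex of an $(m{-}1)$-sphere must be the whole thing -- both of which hold but merit a sentence. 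Overall this is precisely the standard argument one would expect behind the cited lemma.
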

\begin{lemma}\label{lm: exact stackedness}
	Assume that $B_1, B_2$ and $B_1\cup B_2$ are combinatorial $d$-balls and that $B_1\cap B_2\subseteq \partial B_1\cap \partial B_2$ is a combinatorial $(d-1)$-ball. If $B_1\cup B_2$ is $i$-stacked, then both $B_1$ and $B_2$ are $i$-stacked while $B_1\cap B_2$ is $(i-1)$-stacked. Furthermore, if $B_1\cup B_2$ is exactly $i$-stacked, then either one of $B_1$ and $B_2$ is exactly $i$-stacked, or $B_1\cap B_2$ is exactly $(i-1)$-stacked.
\end{lemma}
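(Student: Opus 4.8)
The plan is to reduce both assertions to a single structural fact. Write $B:=B_1\cup B_2$ and $D:=B_1\cap B_2$, and denote by $\intr\Gamma$ the set of faces of a combinatorial ball $\Gamma$ that do not lie on $\partial\Gamma$. I claim that
\[
\intr B=\intr B_1\ \sqcup\ \intr B_2\ \sqcup\ \intr D .
\]
The three sets are pairwise disjoint because $D\subseteq\partial B_1\cap\partial B_2$ forces $\intr B_1\cap D=\intr B_2\cap D=\emptyset$, while every face of $B$ lies in $B_1$ only, in $B_2$ only, or in $D=B_1\cap B_2$. To establish the equality I would test a face $\sigma\in B$ by its link, using that $\lk(\sigma,B)$ is a combinatorial ball or sphere (as $B$ is a combinatorial $d$-ball) and that $\sigma$ is interior in $B$ precisely when $\lk(\sigma,B)$ is a sphere. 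If $\sigma$ lies in $B_1$ but not $B_2$, then $\st(\sigma,B)=\st(\sigma,B_1)$, hence $\lk(\sigma,B)=\lk(\sigma,B_1)$ and $\sigma\in\intr B\iff\sigma\in\intr B_1$; the case of $B_2$ is symmetric. If $\sigma\in D$, then, since $D\subseteq\partial B_1\cap\partial B_2$ and $\dim D=d-1$, both $\lk(\sigma,B_1)$ and $\lk(\sigma,B_2)$ are combinatorial $m$-balls with $m:=d-|\sigma|$, their intersection equals $\lk(\sigma,D)$, and $\lk(\sigma,D)$ is a subcomplex of the $(m-1)$-sphere $\partial\lk(\sigma,B_j)=\lk(\sigma,\partial B_j)$ for $j=1,2$. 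If $\sigma\in\partial D$, then $\lk(\sigma,D)$ is an $(m-1)$-ball, hence a proper subcomplex of $\partial\lk(\sigma,B_1)$, so $\lk(\sigma,B)$ has a ridge lying in a single facet and is therefore a ball, i.e.\ $\sigma\notin\intr B$. If $\sigma\in\intr D$, then $\lk(\sigma,D)$ is an $(m-1)$-sphere, and so it must coincide with $\partial\lk(\sigma,B_j)$ by the elementary fact that a combinatorial sphere contains no proper full-dimensional subcomplex that is again a combinatorial sphere; consequently every ridge of $\lk(\sigma,B)$ lies in exactly two facets, so $\lk(\sigma,B)$ is not a ball, hence a sphere, and $\sigma\in\intr B$. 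This proves the structural fact.

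Granting it, the first assertion is bookkeeping with dimensions: $B$ is $i$-stacked iff every face in $\intr B$ has dimension $\ge d-i$, which by the structural fact is equivalent to every face in $\intr B_1$ and in $\intr B_2$ having dimension $\ge d-i$ (that is, $B_1$ and $B_2$ are $i$-stacked) together with every face in $\intr D$ having dimension $\ge d-i=(d-1)-(i-1)$ (that is, $D$ is $(i-1)$-stacked). (Here $i\ge 1$ automatically: if $i=0$ then $B$ is a simplex, whose unique facet forces $B_1=B_2=B$, hence $D=B$, contradicting $\dim D=d-1$.) For the ``exactly'' statement, rephrase the hypothesis as: $B$ is $i$-stacked and has a face $\tau\in\intr B$ with $\dim\tau=d-i$. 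By the structural fact, $\tau$ lies in $\intr B_1$, in $\intr B_2$, or in $\intr D$. In the first case $B_1$ is $i$-stacked (as shown above) and has an interior face of dimension $d-i$, so $B_1$ is exactly $i$-stacked; the second case is symmetric; in the third case $D$ is $(i-1)$-stacked and has an interior face of dimension $d-i=(d-1)-(i-1)$, so $D$ is exactly $(i-1)$-stacked. This is the stated conclusion.

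I expect the only genuinely nontrivial step to be the structural fact, and within it the subcase $\sigma\in\intr D$: the point requiring care is that $\lk(\sigma,B)=\lk(\sigma,B_1)\cup\lk(\sigma,B_2)$ is glued along the \emph{whole} common boundary sphere, which is exactly where the ``no proper full-dimensional sphere subcomplex'' fact enters (this itself follows, e.g., from strong connectedness of combinatorial spheres, or from a computation of their top $\Z/2$-homology). The two ball-versus-sphere recognitions used above---that a union of two combinatorial $m$-balls along an $(m-1)$-ball contained in each of their boundaries is a ball, and along the entire boundary $(m-1)$-sphere is a sphere---are standard PL facts, cf.\ \cite{Hudson}; here they can be reduced to the observations that a combinatorial ball of positive dimension has a ridge in a single facet while in a combinatorial sphere every ridge lies in two facets, together with the a priori knowledge (from $B$ being a combinatorial $d$-ball) that $\lk(\sigma,B)$ is a ball or a sphere. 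Everything after the structural fact is routine.
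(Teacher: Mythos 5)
Your proof is correct and follows essentially the same approach as the paper: everything reduces to the decomposition $\intr(B_1\cup B_2)=\intr B_1\cup\intr B_2\cup\intr(B_1\cap B_2)$ (which you sharpen to a disjoint union), followed by dimension bookkeeping. The only difference is that the paper asserts this decomposition without justification, whereas you supply a careful link-by-link PL argument for both inclusions; your argument is sound, including the use of top $\Z/2$-homology to rule out a proper full-dimensional sphere inside a sphere.
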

\begin{proof}
	If $B_1\cup B_2$ is $i$-stacked, then all interior faces of $B_1\cup B_2$ are of dimension $\geq d-i$. Since every interior face of $B_1$, $B_2$ or $B_1\cap B_2$ is an interior face of $B_1\cup B_2$, it follows from the definition of stackedness that both $B_1$ and $B_2$ are $i$-stacked while $B_1\cap B_2$ is $(i-1)$-stacked. If $B_1\cup B_2$ is exactly $i$-stacked, then $B_1\cup B_2$ has an interior $(d-i)$-face $F$. This face $F$ must be an interior face of one of the complexes $B_1$, $B_2$ and $B_1\cap B_2$. Hence the second statement also holds.
\end{proof}

\begin{lemma}\label{lm: complement}
	Let $k\geq 1$ be an integer. Let $\Delta\subseteq \partial \C^*_n$ be a combinatorial $(2k-1)$-sphere that is cs-$k$-neighborly w.r.t.~$V_n$, and let $B\subseteq \Delta $ be a combinatorial $(2k-1)$-ball that is both cs-$(k-1)$-neighborly w.r.t.~$V_n$ and $(k-1)$-stacked. Then $\Delta \backslash B$ is a combinatorial $(2k-1)$-ball that is cs-$k$-neighborly and $k$-stacked.
\end{lemma}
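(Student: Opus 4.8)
The plan is to put $C:=\Delta\backslash B$ and verify the three assertions in turn: that $C$ is a combinatorial $(2k-1)$-ball, that $C$ is cs-$k$-neighborly, and that $C$ is $k$-stacked. Note first that $B$, being a combinatorial $(2k-1)$-ball inside the $(2k-1)$-sphere $\Delta$, is a full-dimensional pure subcomplex, so $C$ is well defined as the subcomplex of $\Delta$ generated by the facets of $\Delta$ not lying in $B$.

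I would begin by recording the single piece of PL topology that the argument needs. By Newman's theorem (see \cite{Hudson}), the closure of the complement of a PL $(2k-1)$-ball in a PL $(2k-1)$-sphere is again a PL $(2k-1)$-ball; since this closure is exactly $C$, we get that $C$ is a combinatorial $(2k-1)$-ball. Moreover a standard pseudomanifold argument identifies the boundary: every ridge of the sphere $\Delta$ lies in exactly two facets, so an interior face of $B$ lies only in facets of $B$ (hence not in $C$), giving $B\cap C\subseteq\partial B$; and a boundary ridge of $B$ lies in exactly one facet of $B$ and therefore in exactly one facet outside $B$, giving $\partial B\subseteq C$ and, symmetrically, $\partial C\subseteq B$. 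Putting these together yields $B\cap C=\partial B=\partial C$, which is all the information about $\partial C$ the remaining two steps require.

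Next I would prove cs-$k$-neighborliness of $C$ by showing every face $\tau\in\partial\C^*_n$ with $\dim\tau\le k-1$ lies in $C$. Since $\Delta$ is cs-$k$-neighborly, $\tau\in\Delta$. If $\tau$ is contained in some facet of $\Delta$ not in $B$, then $\tau\in C$ by definition of $C$. Otherwise every facet of $\Delta$ through $\tau$ lies in $B$, so $\tau\in B$; but $B$ is $(k-1)$-stacked, i.e.\ $\skel_{k-1}(B)=\skel_{k-1}(\partial B)$, so $\tau\in\partial B=B\cap C\subseteq C$. Either way $\tau\in C$, hence $\skel_{k-1}(C)=\skel_{k-1}(\partial\C^*_n)$.

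Finally, for $k$-stackedness of $C$ I must check $\skel_{k-2}(C)=\skel_{k-2}(\partial C)$, i.e.\ that every face of $C$ of dimension at most $k-2$ lies in $\partial C$. Let $\tau\in C$ with $\dim\tau\le k-2$. By the previous step $C$ is cs-$k$-neighborly, so $\tau\in\skel_{k-2}(C)\subseteq\skel_{k-2}(\partial\C^*_n)$; and since $B$ is cs-$(k-1)$-neighborly, $\skel_{k-2}(\partial\C^*_n)=\skel_{k-2}(B)$, so $\tau\in B$. Hence $\tau\in B\cap C=\partial C$, as desired. I do not expect any step to be a real obstacle: the only non-formal ingredient is Newman's theorem together with the identification $B\cap C=\partial B=\partial C$, and everything else is a direct unwinding of the definitions of cs-neighborliness and stackedness. (If one wished to avoid citing PL topology one could instead produce $C$ as an explicit combinatorial ball, but the route above is the most economical.)
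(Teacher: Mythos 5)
Your argument is correct. The paper does not actually reprove this lemma --- it cites it as Lemma~2.3 of the earlier paper \cite{N-Z} --- so there is no in-text proof to compare against, but your route is the natural one: Newman's theorem gives that $C=\Delta\backslash B$ is a combinatorial $(2k-1)$-ball with $B\cap C=\partial B=\partial C$, and the cs-neighborliness and stackedness claims then follow by directly unwinding the definitions $\skel_{k-1}(B)=\skel_{k-1}(\partial B)$ and $\skel_{k-2}(B)=\skel_{k-2}(\partial\C^*_n)$, exactly as you do.
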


\subsection{The complexes $\Delta^{d}_n$ and $B^{d, i}_n$} \label{sec:Delta-and-B}
In \cite{N-Z} (building on Jockusch's construction from \cite{Jockusch95}), for each $d\geq 1$ and $n\geq d+1$, we constructed a cs combinatorial $d$-sphere $\Delta^d_n$ on $V_n$ that is cs-$\lceil \frac{d}{2}\rceil$-neighborly. Below we briefly review this construction and discuss some of the properties that the complexes $\Delta^d_n$ possess. An essential part of the construction is a family of cs-$i$-neighborly and $i$-stacked balls $B^{d,i}_n$.
\begin{definition}\label{def}
	Let $d\geq 1$, $i\leq \lceil d/2\rceil$, and $n\geq d+1$ be integers. Define $\Delta^{d}_n$ and $B^{d,i}_n$ inductively as follows: 
	\begin{itemize}
		\item For the initial cases, define
		$\Delta^{1}_{n}:=(1, 2,\dots, n,-1,-2,\dots, -n,1)$, $\Delta^{d}_{d+1}:=\partial \C^*_{d+1}$, 
		$B^{d,j}_n:=\emptyset$   if $j<0$, and $B^{1, 0}_n:=(-1, n)$. (In particular, $B^{1,j}_n\subseteq \Delta^{1,1}_n$ for all $j\leq 0$.)
		
		\item
		If $\Delta^{d-1}_{m}$ and $B^{d-1,i}_m \subseteq \Delta^{d-1}_{m}$ are already defined for all $i\leq \lfloor (d-1)/2\rfloor$ and $m\geq d$, then for $d=2k$ and $n\geq 2k$, define $B^{2k-1,k}_{n}:=\Delta^{2k-1}_{n}\backslash B^{2k-1,k-1}_{n}$; furthermore, for all $n\geq d+1$ and $i\leq \lfloor d/2\rfloor$, define $$B^{d, i}_n:=\left(B^{d-1, i}_{n-1}*n\right) \cup\left((-B^{d-1,i-1}_{n-1})*(-n)\right).$$ 
		
		\item	 
		If $\Delta^{d}_n$ is already defined, then define $\Delta^{d}_{n+1}$ as the complex obtained from $\Delta^{d}_n$ by replacing the subcomplex $B^{d,\lceil d/2\rceil -1}_n$ with $\partial B^{d,\lceil d/2\rceil -1}_n * (n+1)$ and $-B^{d,\lceil d/2\rceil -1}_n$ with $\partial (-B^{d,\lceil d/2\rceil -1}_n) * (-n-1)$.
	\end{itemize}
\end{definition}

\noindent Note that in \cite{N-Z} the vertex set of $\Delta^{d}_n$ and $B^{d, i}_n$ is $\{\pm v_1, \pm v_2, \dots, \pm v_n\}$ while in Definition \ref{def} the vertex set is $\{\pm 1, \pm 2, \dots, \pm n\}$. 

Putting for the moment the question of whether the objects $\Delta^d_n$ and $B^{d,i}_n$ are well-defined aside, observe that Definition \ref{def} and induction on $n$ imply that for all $d\geq 1$,  $B^{d, 0}_n$ is the simplex on the vertex set $\{-1, n-d+1, n-d+2, \dots, n\}$. Another consequence of  Definition \ref{def} is that
for $d\geq 2$ and $i\leq \lceil d/2\rceil -1$,
\begin{equation} \label{B^d-in-terms of-B^{d-2}}
B^{d, i}_n=\left( B^{d-2, i}_{n-2}*(n-1, n) \right)  \cup  \left( (-B^{d-2, i-1}_{n-2})* (n, -n+1, -n)\right)\cup\left( B^{d-2, i-2}_{n-2}*(n-1, -n)\right).
\end{equation}
In particular, letting $d=3$ and $i=1$ and using definitions of $B^{1,1}_{n-2}$ and $B^{1,0}_{n-2}$, we obtain that
\begin{equation} \label{eq:facets-of-B^{3,1}}
B^{3,1}_n=\Big( (n-2, n-3, \dots, 1, -n+2, -n+3, \dots, -1)* (n-1, n) \Big) \cup \Big( (1, -n+2)*(n, -n+1, -n)\Big) .
\end{equation}
In Section \ref{sec:Facets}, we will use this description of $B^{3,1}_n$ to characterize all facets of $\Delta^3_n$.

A big portion of \cite[Section 3]{N-Z} is devoted to showing that the objects $\Delta^d_n$ and $B^{d,i}_n$ are well-defined (including the fact that $\Delta^{d}_n\supseteq B^{d,\lceil d/2\rceil -1}_n$). The proof relies on a few crucial properties of $B^{d,i}_n$, see \cite[Lem.~3.3, 3.4, 3.6 and Cor.~3.7]{N-Z}, summarized in the following lemma. 
\begin{lemma}\label{lm: prop of B^{d, i}_n}
    Let $d\geq 2$ and $n\geq d+1$. Then for all $0\leq i\leq j\leq \lfloor d/2 \rfloor$ and $k\leq \lceil d/2 \rceil$, 
    \begin{enumerate}
		\item $B^{d,i}_n$ is a combinatorial $d$-ball that is cs-$i$-neighborly (w.r.t.~$V_n$) and $i$-stacked; furthermore, $B^{d,i}_n$ shares no common facets with $-B^{d,i}_n$;
			\item $B^{d, k-1}_n\subseteq -B^{d, k}_n$;
    	\item $\partial B^{d, i}_n=\left( \partial B^{d-1, i}_{n-1}*n\right)   \cup \left( \partial (-B^{d-1, i-1}_{n-1})*(-n)\right) \cup \left( B^{d-1, i}_{n-1}\backslash -B^{d-1, i-1}_{n-1}\right)$;
    	\item $B^{d-1, i}_n\subseteq \partial B^{d, j}_n$;
			\item $B^{d, \lceil d/2\rceil-1}_{n+1} \cup -B^{d, \lceil d/2\rceil-1}_{n+1}\subseteq\left( \partial B^{d, \lceil d/2 \rceil -1}_{n} *(n+1) \right) \cup \left( \partial \big(-B^{d, \lceil d/2 \rceil -1}_{n}\big) *(-n-1) \right)$.
    \end{enumerate} 
\end{lemma}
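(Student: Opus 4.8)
The plan is to prove the five assertions by a single outer induction on the dimension $d$, carrying along in the same induction the two auxiliary facts that (i) $\Delta^d_n$ is a cs-$\lceil d/2\rceil$-neighborly combinatorial $d$-sphere and (ii) $B^{d,\lceil d/2\rceil-1}_n\subseteq\Delta^d_n$; these are exactly what legitimizes the bistellar move defining $\Delta^d_{n+1}$ and the complement description $B^{2k-1,k}_n=\Delta^{2k-1}_n\backslash B^{2k-1,k-1}_n$. Inside the layer of a fixed $d\ge 2$ I would proceed in the order: first (1) and (3) for the cone-defined balls $B^{d,i}_n$ (all $i\le\lfloor d/2\rfloor$, all $n$), using only the whole package at dimension $d-1$; then (i) and (ii) for all $n$ by an inner induction on $n$, pushing from $\Delta^d_n$ to $\Delta^d_{n+1}$ across the flip (via Lemma~\ref{lm: complement} in odd dimensions and a parallel argument in even ones); then (1) for the complement-defined ball $B^{2k-1,k}_n$ when $d=2k-1$; then (2), (4), and (5). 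The base cases are: $d=1$, where $B^{1,0}_n=\overline{\{-1,n\}}$ and $B^{1,1}_n=\Delta^1_n\backslash B^{1,0}_n$ are handled directly and supply the input for $d=2$; $d=2$, where statement (4) must be checked by hand from the explicit descriptions of $B^{2,j}_n$ and $\partial B^{2,j}_n$ since its general reduction to lower dimension would involve the undefined $B^{0,\cdot}$; the starting vertex count $n=d+1$, where $\Delta^d_{d+1}=\partial\C^*_{d+1}$ and the $B^{d,i}_{d+1}$ unwind to explicit subcomplexes of the cross-polytope boundary (a trivial induction also shows every $B^{d,i}_n$ is a subcomplex of $\partial\C^*_n$); and the degenerate index $i=0$, where $B^{d,0}_n$ is the simplex on $\{-1,n-d+1,\dots,n\}$.

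The heart of the argument is the cone case: for $d\ge 2$ and $0\le i\le\lfloor d/2\rfloor$, write $B^{d,i}_n=B_1\cup B_2$ with $B_1:=B^{d-1,i}_{n-1}\ast n$ and $B_2:=(-B^{d-1,i-1}_{n-1})\ast(-n)$. Since $n$ occurs only in $B_1$ and $-n$ only in $B_2$, one has $B_1\cap B_2=B^{d-1,i}_{n-1}\cap(-B^{d-1,i-1}_{n-1})$, and statement (2) one dimension down (giving $-B^{d-1,i-1}_{n-1}\subseteq B^{d-1,i}_{n-1}$) identifies this with the combinatorial $(d-1)$-ball $-B^{d-1,i-1}_{n-1}$; by the identity $\partial(C\ast v)=C\cup(\partial C\ast v)$ for a cone over a ball this ball lies in $\partial B_1$ and in $\partial B_2$, so the standard PL gluing lemma makes $B^{d,i}_n$ a combinatorial $d$-ball and, writing out $\partial B_1$ and $\partial B_2$ and discarding the faces interior to the gluing region, yields the boundary formula (3). cs-$i$-neighborliness follows by splitting a non-antipodal $i$-subset of $V_n$ according to whether it contains $n$, contains $-n$, or neither, each case reducing to cs-$i$- or cs-$(i-1)$-neighborliness of the appropriate $B^{d-1,\cdot}_{n-1}$. $i$-stackedness follows from Lemma~\ref{lm: stackedness}: $B_1$ is $i$-stacked and $B_2$ is $(i-1)$-stacked (cones over the $0$-stacked $0$-ball $\pm n$), their intersection is $(i-1)$-stacked, and part~(2) of that lemma applies. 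And $B^{d,i}_n$ shares no facet with $-B^{d,i}_n$ because every $d$-face of $B^{d,i}_n$ contains exactly one of $\pm n$: a common facet $H$ with $n\in H$ would make $H\setminus n$ a common facet of $B^{d-1,i}_{n-1}$ and $B^{d-1,i-1}_{n-1}$, hence --- using $B^{d-1,i-1}_{n-1}\subseteq-B^{d-1,i}_{n-1}$ --- a common facet of $B^{d-1,i}_{n-1}$ and $-B^{d-1,i}_{n-1}$, contradicting (1) in dimension $d-1$; the case $-n\in H$ is the antipodal mirror. The complement ball $B^{2k-1,k}_n=\Delta^{2k-1}_n\backslash B^{2k-1,k-1}_n$ (outside the range of (1), but entering (2) and reappearing as a cone factor of $B^{2k,k}_n$) is handled via Lemma~\ref{lm: complement} together with (i),(ii): one gets that it is a combinatorial ball, cs-$k$-neighborly and $k$-stacked, and that $\partial B^{2k-1,k}_n=\partial B^{2k-1,k-1}_n$ (two balls tiling the sphere $\Delta^{2k-1}_n$ along a common boundary), which is what feeds the even-dimensional cone case and validates the passage from $\Delta^{2k-1}_n$ to $\Delta^{2k-1}_{n+1}$.

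For the inclusion statements: (2), $B^{d,k-1}_n\subseteq -B^{d,k}_n$, splits into the case $k\le\lfloor d/2\rfloor$, where unfolding both sides by the cone recursion reduces it to (2) at dimension $d-1$ applied to the indices $k$ and $k-1$ (the only slightly subtle matching being $B^{d-1,k-2}_{n-1}\subseteq B^{d-1,k}_{n-1}$, which is (2) applied twice together with the involution), and the case $k=\lceil d/2\rceil$ with $d$ odd, where $-B^{d,k}_n=\Delta^d_n\backslash(-B^{d,k-1}_n)$ and the inclusion holds because every facet of $B^{d,k-1}_n$ is a facet of $\Delta^d_n$ (by (ii) and purity) not lying in $-B^{d,k-1}_n$ (by the no-shared-facet clause of (1)). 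Statement (4), $B^{d-1,i}_n\subseteq\partial B^{d,j}_n$ for $i\le j\le\lfloor d/2\rfloor$, is obtained by peeling vertex $n$ off $B^{d-1,i}_n$ via the cone recursion and off $\partial B^{d,j}_n$ via (3): the faces containing $n$ reduce to $B^{d-2,i}_{n-1}\subseteq\partial B^{d-1,j}_{n-1}$, the faces containing $-n$ to $B^{d-2,i-1}_{n-1}\subseteq\partial B^{d-1,j-1}_{n-1}$, and the remaining faces use $\partial B^{d-1,j}_{n-1}\subseteq\partial B^{d,j}_n$ (read off (3)) --- all instances of (4) at dimension $d-1$, with the single bookkeeping point that for $d$ even and $j=d/2$ one first rewrites $\partial B^{2k-1,k}_{n-1}$ as the equal complex $\partial B^{2k-1,k-1}_{n-1}$ to bring the index into range. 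Finally (5) follows from (4) at level $(d,n)$: applying the cone recursion to $B^{d,\lceil d/2\rceil-1}_{n+1}$ and describing $\partial B^{d,\lceil d/2\rceil-1}_n\ast(n+1)$ and $\partial(-B^{d,\lceil d/2\rceil-1}_n)\ast(-n-1)$ through their links, every face of $B^{d,\lceil d/2\rceil-1}_{n+1}$ is shown to lie on the right-hand side once one knows $B^{d-1,\lceil d/2\rceil-1}_n\subseteq\partial B^{d,\lceil d/2\rceil-1}_n$ and $B^{d-1,\lceil d/2\rceil-2}_n\subseteq\partial B^{d,\lceil d/2\rceil-1}_n$; the antipodal half is symmetric.

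The main obstacle I expect is not any single one of the five items but the scheduling of this interlocking induction: (3) must be available before one can even assert that the move producing $\Delta^d_{n+1}$ replaces a ball by a cone over its boundary, that move's validity is needed (through Lemma~\ref{lm: complement}) for (i),(ii) and hence for the complement-defined balls, and those feed the even-dimensional cone case of (1) as well as statement (2); so the order of sub-claims within each layer $(d,n)$ must be fixed so as never to create a loop, and one must keep careful track of the difference between the "nice range" $i\le\lfloor d/2\rfloor$, where (1) holds, and the top index $\lceil d/2\rceil$, where it fails (for instance $B^{3,2}_4$ does share facets with $-B^{3,2}_4$). The PL-topological ingredients --- cones over balls, gluing two balls along a common boundary ball, and invariance of the PL sphere type under bistellar moves (Pachner's theorem) --- are routine, so the real work is the combinatorial accounting of which faces survive in each boundary and which cone each face of a prescribed vertex subset lands in.
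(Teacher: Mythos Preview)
The paper does not prove this lemma; it is quoted from the authors' earlier paper \cite{N-Z} (Lemmas~3.3, 3.4, 3.6 and Corollary~3.7 there), so there is no proof here to compare against directly. Your outline is correct and is essentially the argument of \cite[Section 3]{N-Z}: the same interlocking induction on $d$, carrying the well-definedness and cs-neighborliness of $\Delta^d_n$ along with the five statements; the same gluing argument for the cone-defined balls using statement~(2) one dimension down to identify $B_1\cap B_2$; the same use of Lemma~\ref{lm: complement} for the complement-defined ball $B^{2k-1,k}_n$; and the same reductions of (2), (4), (5) by peeling off $\pm n$ via the recursion and (3). Your scheduling of the sub-claims within a layer (first (1),(3) for cone balls, then the sphere facts (i),(ii), then the complement ball, then (2),(4),(5)) matches how the original paper organizes it, and your flagged bookkeeping points (the base case $d=2$ for (4), the rewrite $\partial B^{2k-1,k}_{n-1}=\partial B^{2k-1,k-1}_{n-1}$ when $d$ is even and $j=d/2$, and the caution that the no-shared-facet clause of (1) fails at the top index) are exactly the places where care is needed.
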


The main result of \cite{N-Z}, see \cite[Theorem 3.8]{N-Z}, is 
\begin{theorem}\label{thm: Delta^{d}_n}
	For all $d\geq 2$ and $n\geq d+1$, the complex $\Delta^d_n$ is well-defined. It is a cs combinatorial $d$-sphere with vertex set $V_n$ that is cs-$\lceil d/2\rceil$-neighborly. 
\end{theorem}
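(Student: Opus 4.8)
The plan is to prove the theorem by induction on $n$, with the explicit cases $d=1$ and $n=d+1$ serving as base cases. In the interest of a clean write-up we treat Lemma \ref{lm: prop of B^{d,i}_n} as a black box; strictly speaking it and the present theorem must be established by one simultaneous induction on the pair $(d,n)$, since the definition $B^{2k-1,k}_n:=\Delta^{2k-1}_n\backslash B^{2k-1,k-1}_n$ already presupposes that $\Delta^{2k-1}_n$ is well-defined, i.e.\ that $B^{2k-1,k-1}_n\subseteq\Delta^{2k-1}_n$. For the base cases: $\Delta^1_n$ is by definition the $2n$-cycle $(1,2,\dots,n,-1,\dots,-n,1)$, which is a combinatorial $1$-sphere on $V_n$ whose edge set is carried to itself without fixed faces by $i\mapsto-i$, hence cs and cs-$1$-neighborly; and $\Delta^d_{d+1}=\partial\C^*_{d+1}$ is a cs combinatorial $d$-sphere on $V_{d+1}$ with full $(\lceil d/2\rceil-1)$-skeleton, so cs-$\lceil d/2\rceil$-neighborly, while $B^{d,\lceil d/2\rceil-1}_{d+1}\subseteq\partial\C^*_{d+1}=\Delta^d_{d+1}$ because every cs complex on $V_{d+1}$ embeds in $\partial\C^*_{d+1}$.

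For the inductive step, fix $d\geq1$, set $i:=\lceil d/2\rceil-1$, and assume $\Delta^d_n$ is a well-defined cs combinatorial $d$-sphere on $V_n$ that is cs-$\lceil d/2\rceil$-neighborly and contains $B:=B^{d,i}_n$. By Lemma \ref{lm: prop of B^{d,i}_n}(1), $B$ and $-B$ are combinatorial $d$-balls that are $i$-stacked, cs-$i$-neighborly w.r.t.\ $V_n$, and share no facet. We first check that replacing $B$ by the cone $\partial B*(n+1)$ turns $\Delta^d_n$ into a combinatorial $d$-sphere: $\partial B$ is a combinatorial $(d-1)$-sphere embedded in the combinatorial $d$-sphere $\Delta^d_n$, so by PL Schoenflies (see \cite{Hudson}; this is Lemma \ref{lm: complement} in the odd-dimensional case) the complement $\Delta^d_n\backslash B$ is a combinatorial $d$-ball with boundary $\partial B$, and gluing it to the combinatorial $d$-ball $\partial B*(n+1)$ along their entire common boundary yields a combinatorial $d$-sphere $\Delta'$. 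Since $B$ and $-B$ share no facet, $-B\subseteq\Delta^d_n\backslash B\subseteq\Delta'$ is again a combinatorial $d$-ball inside $\Delta'$, and repeating the argument shows that $\Delta^d_{n+1}:=\bigl(\Delta'\backslash(-B)\bigr)\cup\bigl(\partial(-B)*(-n-1)\bigr)$ — which is the complex prescribed by Definition \ref{def} — is a combinatorial $d$-sphere. It is centrally symmetric: extend the involution by $n+1\mapsto-(n+1)$; the facets of $\Delta^d_{n+1}$ are the facets of $\Delta^d_n$ lying in neither $B$ nor $-B$, together with the sets $F\cup\{n+1\}$ for $F$ a facet of $\partial B$ and their antipodes, a collection manifestly invariant under the extended involution, which is fixed-point-free on nonempty faces because faces of $\partial\C^*_{n+1}$ meet each antipodal pair of vertices at most once. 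Finally, to propagate the containment, note that by Lemma \ref{lm: prop of B^{d,i}_n}(5), $B^{d,i}_{n+1}\cup(-B^{d,i}_{n+1})\subseteq\bigl(\partial B*(n+1)\bigr)\cup\bigl(\partial(-B)*(-n-1)\bigr)$, and the right-hand side is a subcomplex of $\Delta^d_{n+1}$ by construction, so $B^{d,i}_{n+1}\subseteq\Delta^d_{n+1}$.

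It remains to see that $\Delta^d_{n+1}$ is cs-$\lceil d/2\rceil$-neighborly w.r.t.\ $V_{n+1}$. Because $B$ and $-B$ are $i$-stacked, each of their interior faces has dimension $\geq d-i=\lfloor d/2\rfloor+1>\lceil d/2\rceil-1$, so the two replacements delete no face of dimension $\leq\lceil d/2\rceil-1$; hence every such face of $\partial\C^*_{n+1}$ avoiding $\pm(n+1)$ already lies in $\Delta^d_n$ by the inductive hypothesis and survives. A face of dimension $\leq\lceil d/2\rceil-1$ containing $n+1$ (the case $-(n+1)$ being symmetric) is $\{n+1\}\cup\tau$ with $\dim\tau\leq\lceil d/2\rceil-2$, and it lies in $\partial B*(n+1)$ exactly when $\tau\in\partial B$; but $\skel_{\lceil d/2\rceil-2}(\partial B)=\skel_{\lceil d/2\rceil-2}(\partial\C^*_n)$, since cs-$i$-neighborliness of $B$ gives $\skel_{\lceil d/2\rceil-2}(B)=\skel_{\lceil d/2\rceil-2}(\partial\C^*_n)$ while $i$-stackedness of $B$ gives $\skel_{\lfloor d/2\rfloor}(B)=\skel_{\lfloor d/2\rfloor}(\partial B)$, and $\lceil d/2\rceil-2\leq\lfloor d/2\rfloor$. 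This closes the induction.

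Granting Lemma \ref{lm: prop of B^{d,i}_n}, the only genuinely delicate point is the bookkeeping around the two replacements: one must check, using Lemma \ref{lm: prop of B^{d,i}_n}(1) together with the stacked/neighborly bounds, that removing the interior faces of $-B$ from $\Delta'$ deletes no face of the freshly attached cone $\partial B*(n+1)$, so that the move on $B$ and the move on $-B$ commute and Definition \ref{def} is well-posed. The real work of the development, deliberately packaged into Lemma \ref{lm: prop of B^{d,i}_n}, lies elsewhere: showing that the join-and-union recipe for $B^{d,i}_n$ and the complementation recipe for $B^{2k-1,k}_n$ simultaneously preserve the properties ``combinatorial $d$-ball'', ``$i$-stacked'' and ``cs-$i$-neighborly'' — which is where Lemmas \ref{lm: stackedness}, \ref{lm: exact stackedness} and \ref{lm: complement} do the heavy lifting — and verifying the nested containments (2)--(5) that make the whole simultaneous induction consistent.
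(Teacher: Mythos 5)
Your proof follows essentially the same route as the paper: the paper deduces Theorem~\ref{thm: Delta^{d}_n} by combining Lemma~\ref{lm: prop of B^{d, i}_n} with the ``surgery'' step packaged in Lemma~\ref{lm: induction method}, and you simply re-derive that surgery step inline (Newman's theorem --- what you call PL Schoenflies --- for the ball complement, the degree-count for the extended involution, and the skeleton bookkeeping coming from $i$-stackedness and cs-$i$-neighborliness of $B^{d,\lceil d/2\rceil-1}_n$) while taking Lemma~\ref{lm: prop of B^{d, i}_n} as given, exactly as the paper does by citing \cite{N-Z}. You also correctly flag that the theorem and Lemma~\ref{lm: prop of B^{d, i}_n} must really be proved by one simultaneous induction on $(d,n)$, which is the same caveat the paper handles by pushing the details into \cite[Section 3]{N-Z}.

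The one place you explicitly leave a hook unresolved --- that deleting the interior faces of $-B$ removes no face of the freshly attached cone $\partial B*(n+1)$ --- is quick to close and worth closing, since it is what makes the sequential and simultaneous replacements agree. A face of $\partial B*(n+1)$ containing $n+1$ cannot lie in $-B$ because $n+1\notin V_n$. And a face $F\in\partial B$ cannot be interior to $-B$: if it were, $\lk(F,-B)$ would be a $(d-1-\dim F)$-sphere, forcing $\lk(F,-B)=\lk(F,\Delta^d_n)$ and hence every facet of $\Delta^d_n$ through $F$ to lie in $-B$; but $F\in B$ lies in some facet of $B$, which would then be a common facet of $B$ and $-B$, contradicting Lemma~\ref{lm: prop of B^{d, i}_n}(1). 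With that filled in, the argument is complete.
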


The proof of Theorem \ref{thm: Delta^{d}_n} utilizes Lemma \ref{lm: prop of B^{d, i}_n} along with the following inductive method of constructing cs combinatorial $d$-spheres that are cs-$i$-neighborly, see \cite[Lemma 3.1]{N-Z}. We will use this method, which can be considered a cs analog of Shemer's sewing technique, in Section \ref{sec:many-3-spheres} to construct many cs combinatorial $3$-spheres that are cs-$2$-neighborly.
\begin{lemma}\label{lm: induction method}
	Let $d\geq 1$ and $1\leq i\leq \lceil d/2\rceil$ be integers. Assume that $\Gamma$ is a cs combinatorial $d$-sphere with $V(\Gamma)=V_n$ that is cs-$i$-neighborly. Assume further that $B\subseteq\Gamma$ is a combinatorial $d$-ball that satisfies the following properties: 
	\begin{itemize}
		\item the ball $B$ is both cs-$(i-1)$-neighborly w.r.t.~$V_n$ and $(i-1)$-stacked, and
		\item the balls $B$ and $-B$ share no common facets.
	\end{itemize} 
	Then the complex $\Gamma'$ obtained from $\Gamma$ by replacing $B$ with $\partial B*(n+1)$ and $-B$ with $\partial(-B)*(-n-1)$ is a cs combinatorial $d$-sphere with $V(\Gamma')=V_{n+1}$ that is cs-$i$-neighborly.
\end{lemma}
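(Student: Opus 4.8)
\medskip\noindent\emph{Proof sketch.}
The plan is to verify three things in turn: that $\Gamma'$ is a well-defined centrally symmetric simplicial complex on $V_{n+1}$, that it is a combinatorial $d$-sphere, and that it is cs-$i$-neighborly; the first two points are essentially topological, while the third is the combinatorial heart of the statement and is where the two hypotheses on $B$ enter. For the first point, I would observe that, because $B$ and $-B$ share no common facets, $-B$ is contained in the complementary ball $C := \Gamma\backslash B$, so the replacement of $B$ by $\partial B*(n+1)$ and of $-B$ by $\partial(-B)*(-n-1)$ affect disjoint sets of facets of $\Gamma$ and may be performed independently; explicitly, $\Gamma' = \big(\partial B*(n+1)\big)\cup\big(\partial(-B)*(-n-1)\big)\cup C'$, where $C'$ is the subcomplex generated by the facets of $\Gamma$ lying in neither $B$ nor $-B$. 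Extending $\alpha$ to $V_{n+1}$ by interchanging $n+1$ and $-n-1$, I would check that this extended $\alpha$ carries $\Gamma'$ to itself --- since $\alpha$ is a simplicial automorphism of $\Gamma$ that sends the ball $B$ to $-B$, hence $\partial B$ to $\partial(-B)$ and $C'$ to $C'$ --- and that it acts freely on the nonempty faces of $\Gamma'$: no face of $\Gamma'$ contains both $n+1$ and $-n-1$, and any face avoiding both lies in $\Gamma$, where $\alpha$ already acts freely. The equality $V(\Gamma') = V_{n+1}$ will follow from the cs-$1$-neighborliness established in the last step.

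For the sphere property I would invoke the standard facts of PL topology that (i) the subcomplex $\Gamma\backslash B$ of the combinatorial $d$-sphere $\Gamma$ generated by those facets not belonging to the full-dimensional combinatorial $d$-ball $B$ is again a combinatorial $d$-ball, with the same boundary sphere as $B$; (ii) the cone $S*v$ over a combinatorial $(d-1)$-sphere $S$, with $v$ a new apex, is a combinatorial $d$-ball with $\partial(S*v)=S$; and (iii) the union of two combinatorial $d$-balls meeting exactly along their common boundary $(d-1)$-sphere is a combinatorial $d$-sphere. By (i), $C = \Gamma\backslash B$ is a combinatorial $d$-ball with $\partial C = \partial B$; by (ii), $\partial B*(n+1)$ is a combinatorial $d$-ball with the same boundary; by (iii), $\Gamma_1 := \big(\partial B*(n+1)\big)\cup C$ is a combinatorial $d$-sphere. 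Since the facets of $-B$ were untouched, $-B$ remains a full-dimensional combinatorial $d$-ball inside $\Gamma_1$, and applying (i)--(iii) to $-B$ inside $\Gamma_1$ shows that $\Gamma' = \big(\partial(-B)*(-n-1)\big)\cup\big(\Gamma_1\backslash(-B)\big)$ is a combinatorial $d$-sphere. Note that this step uses neither the stackedness nor the neighborliness of $B$.

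For cs-$i$-neighborliness it suffices to show that every $W\subseteq V_{n+1}$ with $|W| = i$ and no two antipodal vertices is a face of $\Gamma'$, since admissible sets of smaller size then follow by enlarging them to such a $W$ (possible because $n+1 > d \ge i$). If $W\subseteq V_n$, then $W\in\Gamma$ as $\Gamma$ is cs-$i$-neighborly, so $W$ is contained in a facet of $B$, of $-B$, or of $C'$. In the $C'$ case, $W\in C'\subseteq\Gamma'$. In the $B$ case, $W$ cannot be an interior face of the $(i-1)$-stacked ball $B$ --- interior faces of $B$ have cardinality at least $d-(i-1)+1 = d-i+2$, whereas $|W| = i\le d-i+1$ because $i\le\lceil d/2\rceil$ --- hence $W\in\partial B\subseteq\partial B*(n+1)\subseteq\Gamma'$; the $-B$ case is symmetric. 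If instead $W$ meets $\{n+1,-n-1\}$, say $W = W'\cup\{n+1\}$ (the subcase $W = W'\cup\{-n-1\}$ being symmetric), then $W'\subseteq V_n$ with $|W'| = i-1$ and no two antipodal, so $W'\in B$ because $B$ is cs-$(i-1)$-neighborly; and since $|W'| = i-1 \le d-i+1 < d-i+2$, the face $W'$ is again not interior in $B$, so $W'\in\partial B$ and $W = W'\cup\{n+1\}\in\partial B*(n+1)\subseteq\Gamma'$.

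The step I expect to be the main obstacle is this last one, specifically the bookkeeping ensuring that none of the faces deleted in passing from $\Gamma$ to $\Gamma'$ is a small face needed for cs-$i$-neighborliness. This is exactly what the two hypotheses on $B$ buy us: $(i-1)$-stackedness forces every deleted (interior) face of $B$ to have dimension at least $d-i+1$, while cs-$(i-1)$-neighborliness of $B$ guarantees that the base $\partial B$ of the cone attached in place of $B$ already carries every admissible $(i-1)$-set; and the numerical hypothesis $i\le\lceil d/2\rceil$ is precisely what makes $d-i+1$ large enough for both comparisons above to go through.
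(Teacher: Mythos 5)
Your argument is correct, and it is the natural way to prove this lemma: replace $B$ (resp.~$-B$) by the cone over its boundary, verify the cs structure by extending the involution with $n+1\leftrightarrow -(n+1)$, verify the sphere property via Newman's theorem (complement of a full-dimensional ball in a combinatorial sphere is a ball) together with the standard cone and gluing facts, and verify cs-$i$-neighborliness by checking that the $(i-1)$-stackedness of $B$ keeps all $\le (i-1)$-dimensional faces of $B$ on $\partial B$ while the cs-$(i-1)$-neighborliness of $B$ supplies the needed admissible $(i-1)$-subsets in $\partial B$ to join with $\pm(n+1)$. Note that the paper does not reprove this lemma but cites it directly as Lemma~3.1 of Novik--Zheng \cite{N-Z}, so there is no in-paper argument to compare line by line; nevertheless your decomposition $\Gamma' = (\partial B*(n+1))\cup(\partial(-B)*(-n-1))\cup C'$, the use of facet-disjointness to carry out the two replacements independently, and the dimension count $|W|\le i\le d-i+1 < d-i+2$ bounding interior faces are exactly the ingredients one expects that proof to use.
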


We will also need the following strengthening of Lemma \ref{lm: prop of B^{d, i}_n}(1). It follows easily from the definition of $B^{d, i}_n$, Lemmas \ref{lm: exact stackedness} and \ref{lm: complement}, and Theorem \ref{thm: Delta^{d}_n}. We leave it as an exercise to the reader. 
\begin{lemma}\label{remark on B^{d, i}_n}
	For $d\geq 2$ and $i\leq \lceil \frac{d}{2}\rceil$, $B^{d, i}_n$ is exactly cs-$i$-neighborly (w.r.t.~$V_n$) and exactly $i$-stacked. 
\end{lemma}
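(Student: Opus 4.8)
The claim is that for $d\geq 2$ and $i\leq \lceil d/2\rceil$, the ball $B^{d,i}_n$ is \emph{exactly} cs-$i$-neighborly and \emph{exactly} $i$-stacked. Since Lemma~\ref{lm: prop of B^{d, i}_n}(1) already gives that $B^{d,i}_n$ is cs-$i$-neighborly and $i$-stacked, what remains is to rule out the next level in each case: we must show that $B^{d,i}_n$ is \emph{not} cs-$(i+1)$-neighborly and \emph{not} $(i-1)$-stacked. The plan is to induct on $d$, using the recursive decomposition $B^{d,i}_n=(B^{d-1,i}_{n-1}*n)\cup\bigl((-B^{d-1,i-1}_{n-1})*(-n)\bigr)$ together with Lemmas~\ref{lm: stackedness}, \ref{lm: exact stackedness} and \ref{lm: complement}.

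\textbf{Exact stackedness.} For the lower bound on stackedness one wants to exhibit an interior face of dimension exactly $d-i$, i.e.\ show $\skel_{d-i-1}(B^{d,i}_n)\neq\skel_{d-i-1}(\partial B^{d,i}_n)$. I would split into two regimes. When $i\leq \lfloor d/2\rfloor$, write $B^{d,i}_n$ as the union of the two $d$-balls $B_1=B^{d-1,i}_{n-1}*n$ and $B_2=(-B^{d-1,i-1}_{n-1})*(-n)$, whose intersection $B_1\cap B_2$ sits inside $\partial B_1\cap\partial B_2$ (this is part of the well-definedness package from \cite{N-Z}, already invoked to get $i$-stackedness in Lemma~\ref{lm: prop of B^{d, i}_n}(1)). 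By the inductive hypothesis, $B^{d-1,i}_{n-1}$ is exactly $i$-stacked, hence $B_1=B^{d-1,i}_{n-1}*n$ is exactly $i$-stacked by Lemma~\ref{lm: stackedness}(1) (coning with a point is a $0$-stacked ball and preserves exactness: an interior $(d-1-i)$-face of $B^{d-1,i}_{n-1}$ is still interior in the cone). An interior face of $B_1$ of dimension $d-i$ not meeting $B_1\cap B_2$ remains interior in $B_1\cup B_2=B^{d,i}_n$; one needs to check such a face exists, which follows because $B_1\cap B_2$ is a proper subcomplex of $\partial B_1$ and interior faces of $B_1$ are dense enough (alternatively, use the "furthermore" clause of Lemma~\ref{lm: exact stackedness} contrapositively once we know $B^{d,i}_n$ is not $(i-1)$-stacked, see below). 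For the remaining case $i=\lceil d/2\rceil$, which only occurs for $d=2k$ with $i=k$, we have $B^{2k-1,k}_n=\Delta^{2k-1}_n\backslash B^{2k-1,k-1}_n$; but $\Delta^{2k-1}_n$ is cs-$k$-neighborly (Theorem~\ref{thm: Delta^{d}_n}) and $B^{2k-1,k-1}_n$ is cs-$(k-1)$-neighborly and $(k-1)$-stacked (Lemma~\ref{lm: prop of B^{d, i}_n}(1)), so Lemma~\ref{lm: complement} directly yields that $B^{2k-1,k}_n$ is $k$-stacked; exactness (i.e.\ it is not $(k-1)$-stacked) needs the complementary fact that $B^{2k-1,k-1}_n$ is \emph{not} cs-$k$-neighborly, which is the exact-neighborliness statement in dimension $2k-1$ — hence the two halves of the lemma genuinely feed each other across dimensions, and the induction must be organized to prove both simultaneously.

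\textbf{Exact neighborliness.} Here we must produce, for each relevant $d$ and $i$, a set of $i+1$ pairwise non-antipodal vertices of $V_n$ that is \emph{not} a face of $B^{d,i}_n$. The cleanest route is again the decomposition: a face of $B^{d,i}_n$ either lies in $B^{d-1,i}_{n-1}*n$ or in $(-B^{d-1,i-1}_{n-1})*(-n)$. By induction $B^{d-1,i}_{n-1}$ is not cs-$(i+1)$-neighborly, so pick $i+1$ non-antipodal vertices $S\subseteq V_{n-1}$ with $S\notin B^{d-1,i}_{n-1}$; then $S$ is not a face of $B^{d-1,i}_{n-1}*n$, and one checks $S$ is not a face of $(-B^{d-1,i-1}_{n-1})*(-n)$ either — indeed any face of the latter containing an $(i+1)$-set of $V_{n-1}$-vertices would force an $i$-set or $(i+1)$-set inside $-B^{d-1,i-1}_{n-1}$, contradicting that $B^{d-1,i-1}_{n-1}$ is only $(i-1)$-neighborly (this is the place to be careful about whether $-n\in S$ is allowed — it is not, since $n,-n\notin V_{n-1}$ and $S\subseteq V_{n-1}$). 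The base cases $d=2$ and the initial balls $B^{1,0}_n$ are immediate from the explicit description ($B^{1,0}_n=(-1,n)$ is a single edge, exactly $0$-stacked and exactly cs-$0$-neighborly; $B^{2,1}_n$ can be read off Definition~\ref{def}).

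\textbf{Main obstacle.} The delicate point is the intertwining of the two induction directions in the boundary case $i=\lceil d/2\rceil$: exact $k$-stackedness of $B^{2k-1,k}_n$ relies on $\Delta^{2k-1}_n$ being \emph{exactly} cs-$k$-neighborly (equivalently $B^{2k-1,k-1}_n$ not being cs-$k$-neighborly), so the statement must be proved by a single induction on $d$ that carries the "exactly cs-$i$-neighborly'' conclusion for \emph{all} $i\leq\lceil d/2\rceil$ (including the top one) as a hypothesis when passing from $d-1$ to $d$. A secondary nuisance is the bookkeeping needed to confirm that a chosen interior face of $B_1$ (resp.\ a chosen non-face $S$) survives into the union $B^{d,i}_n=B_1\cup B_2$ rather than getting absorbed into $B_1\cap B_2\subseteq\partial B^{d,i}_n$; this is handled by noting that $B_1\cap B_2\subseteq \partial B_1$ is a \emph{proper} subcomplex and choosing the witness face to avoid it, or by invoking the contrapositive "furthermore'' clause of Lemma~\ref{lm: exact stackedness} once both $B_1$ and $B_2$ are known to be exactly $i$- and exactly $(i-1)$-stacked respectively — which is exactly what the induction provides. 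None of these steps requires a long computation, so the whole argument is the routine exercise the paper claims it to be, provided the joint induction is set up correctly.
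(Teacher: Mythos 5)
The paper leaves this lemma as an exercise and merely points at the ingredients (the definition of $B^{d,i}_n$, Lemmas~\ref{lm: exact stackedness} and~\ref{lm: complement}, and Theorem~\ref{thm: Delta^{d}_n}), so there is no written proof in the paper to compare against. Your overall strategy — induct on $d$, use the recursive decomposition $B^{d,i}_n=B_1\cup B_2$ with $B_1=B^{d-1,i}_{n-1}*n$ and $B_2=(-B^{d-1,i-1}_{n-1})*(-n)$, and handle the boundary case $i=\lceil d/2\rceil$ via Lemma~\ref{lm: complement} — is exactly the route the paper indicates, and your identification of the intertwining between exact neighborliness and exact stackedness at the top level is the right thing to flag. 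That said, two steps as written do not hold up.

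First, in the exact-neighborliness step your justification that $S\notin -B^{d-1,i-1}_{n-1}$ is wrong: you argue this ``contradicts that $B^{d-1,i-1}_{n-1}$ is only $(i-1)$-neighborly,'' but low cs-neighborliness of a ball does not forbid it from having $(i+1)$-element faces — it only says not every $(i-1)$-set is a face. The inference you actually want is Lemma~\ref{lm: prop of B^{d, i}_n}(2): $B^{d-1,i-1}_{n-1}\subseteq -B^{d-1,i}_{n-1}$, equivalently $-B^{d-1,i-1}_{n-1}\subseteq B^{d-1,i}_{n-1}$, so $S\in -B^{d-1,i-1}_{n-1}$ would put $S\in B^{d-1,i}_{n-1}$, contradicting the choice of $S$. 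This is the correct (and very short) fix.

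Second, in the exact-stackedness step you worry about choosing an interior $(d-i)$-face of $B_1$ that avoids $B_1\cap B_2$ and then ``checking such a face exists.'' This is a non-issue: since $B_1\cap B_2\subseteq\partial B_1$, \emph{every} interior face of $B_1$ is automatically an interior face of $B_1\cup B_2$ (this is the observation used explicitly in the paper's proof of Lemma~\ref{lm: exact stackedness}). The cleanest phrasing is the contrapositive of the \emph{first} part of Lemma~\ref{lm: exact stackedness}, not the ``furthermore'' clause as you suggest: if $B^{d,i}_n$ were $(i-1)$-stacked then $B_1$ would be $(i-1)$-stacked, contradicting the inductive hypothesis that $B^{d-1,i}_{n-1}$ (hence $B_1$, by Lemma~\ref{lm: stackedness}(1)) is exactly $i$-stacked. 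With these two local repairs your argument goes through as the paper intends.
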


To close this section we mention two additional properties of $\Delta^d_n$ that will be handy. The first one is \cite[Cor.~3.5 and Prop.~4.1]{N-Z}. The second one was proved in \cite[Prop.~4.4]{N-Z} (by a different method) for the case of an odd $d$, but it is new for even values of $d$. 

\begin{lemma}\label{lm: prop of Delta^{2k-1}_n}
	Let $k\geq 2$ and $n\geq 2k$. Then $\Delta^{2k-1}_n=\partial B^{2k,k}_n$; in particular, the sphere $\Delta^{2k-1}_n$ is $k$-stacked. Moreover, $$\lk(\{n-1, n\}, \Delta^{2k-1}_n)=\Delta^{2k-3}_{n-2}.$$
\end{lemma}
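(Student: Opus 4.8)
\textbf{Proof plan for Lemma \ref{lm: prop of Delta^{2k-1}_n}.}

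The two assertions are of a different flavor, so I would treat them separately. For the identity $\Delta^{2k-1}_n=\partial B^{2k,k}_n$, I would argue by induction on $n$, with the base case $n=2k$ handled directly: by Definition \ref{def}, $\Delta^{2k-1}_{2k}=\partial\C^*_{2k}$ and $B^{2k,k}_{2k}=\Delta^{2k-1}_{2k}\backslash B^{2k,k-1}_{2k}$, and one checks (using that $B^{2k,k-1}_{2k}$ is a facet, namely the simplex on $\{-1,1,\dots,k\}$ by the remark after Definition \ref{def}) that $\partial(\partial\C^*_{2k}\backslash F)=\partial\C^*_{2k}$ for a single facet $F$, since removing one facet from a sphere and taking the boundary recovers the sphere. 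For the inductive step, I would use Lemma \ref{lm: prop of B^{d, i}_n}(3) with $d=2k$ and $i=k$, which expresses $\partial B^{2k,k}_n$ as $(\partial B^{2k-1,k}_{n-1}*n)\cup(\partial(-B^{2k-1,k-1}_{n-1})*(-n))\cup(B^{2k-1,k}_{n-1}\backslash -B^{2k-1,k-1}_{n-1})$. On the other hand, $\Delta^{2k-1}_n$ is obtained from $\Delta^{2k-1}_{n-1}$ by replacing $B^{2k-1,k-1}_{n-1}$ with $\partial B^{2k-1,k-1}_{n-1}*n$ and $-B^{2k-1,k-1}_{n-1}$ with $\partial(-B^{2k-1,k-1}_{n-1})*(-n)$, i.e. $\Delta^{2k-1}_n=(\Delta^{2k-1}_{n-1}\backslash(B^{2k-1,k-1}_{n-1}\cup -B^{2k-1,k-1}_{n-1}))\cup(\partial B^{2k-1,k-1}_{n-1}*n)\cup(\partial(-B^{2k-1,k-1}_{n-1})*(-n))$. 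Using the inductive hypothesis $\Delta^{2k-1}_{n-1}=\partial B^{2k,k}_{n-1}$ is not what I want here; instead I want to match the two decompositions directly. Note $B^{2k-1,k}_{n-1}=\Delta^{2k-1}_{n-1}\backslash B^{2k-1,k-1}_{n-1}$ by definition, and $-B^{2k-1,k-1}_{n-1}\subseteq B^{2k-1,k}_{n-1}$ by Lemma \ref{lm: prop of B^{d, i}_n}(2); hence $B^{2k-1,k}_{n-1}\backslash -B^{2k-1,k-1}_{n-1}=\Delta^{2k-1}_{n-1}\backslash(B^{2k-1,k-1}_{n-1}\cup -B^{2k-1,k-1}_{n-1})$, which is exactly the ``leftover'' piece of $\Delta^{2k-1}_n$. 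Finally $\partial B^{2k-1,k}_{n-1}=\partial(\Delta^{2k-1}_{n-1}\backslash B^{2k-1,k-1}_{n-1})$; since $B^{2k-1,k-1}_{n-1}$ is a $(k-1)$-stacked and cs-$(k-1)$-neighborly subball of the cs-$k$-neighborly sphere $\Delta^{2k-1}_{n-1}$, Lemma \ref{lm: complement} applies, and the boundary of the complementary ball $B^{2k-1,k}_{n-1}$ shares its boundary sphere with $\partial B^{2k-1,k-1}_{n-1}$, so $\partial B^{2k-1,k}_{n-1}=\partial B^{2k-1,k-1}_{n-1}$. Substituting this into the Lemma \ref{lm: prop of B^{d, i}_n}(3) decomposition matches it term-by-term with the decomposition of $\Delta^{2k-1}_n$ above. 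The $k$-stackedness of $\Delta^{2k-1}_n$ is then immediate from Lemma \ref{remark on B^{d, i}_n}, which gives that $B^{2k,k}_n$ is (exactly) $k$-stacked, so its boundary sphere is $k$-stacked by definition.

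For the link statement $\lk(\{n-1,n\},\Delta^{2k-1}_n)=\Delta^{2k-3}_{n-2}$, I would proceed by induction on $n$ as well, using the recursive description $\Delta^{2k-1}_n=(\Delta^{2k-1}_{n-1}\backslash(B^{2k-1,k-1}_{n-1}\cup -B^{2k-1,k-1}_{n-1}))\cup(\partial B^{2k-1,k-1}_{n-1}*n)\cup(\partial(-B^{2k-1,k-1}_{n-1})*(-n))$. The vertex $n$ appears only in the cone $\partial B^{2k-1,k-1}_{n-1}*n$, so $\st(n,\Delta^{2k-1}_n)=\partial B^{2k-1,k-1}_{n-1}*n$ and $\lk(n,\Delta^{2k-1}_n)=\partial B^{2k-1,k-1}_{n-1}$. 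Therefore $\lk(\{n-1,n\},\Delta^{2k-1}_n)=\lk(n-1,\partial B^{2k-1,k-1}_{n-1})$. Now I use formula \eqref{B^d-in-terms of-B^{d-2}} (or directly Lemma \ref{lm: prop of B^{d, i}_n}(3) applied to $B^{2k-1,k-1}_{n-1}$, peeling off the vertex $n-1$): the vertex $n-1$ occurs in $B^{2k-1,k-1}_{n-1}$ only in the cone $B^{2k-2,k-1}_{n-2}*(n-1)$, hence its link inside $B^{2k-1,k-1}_{n-1}$ is $B^{2k-2,k-1}_{n-2}$, and passing to the boundary, $\lk(n-1,\partial B^{2k-1,k-1}_{n-1})=\partial B^{2k-2,k-1}_{n-2}$ provided $n-1$ is a boundary vertex of $B^{2k-1,k-1}_{n-1}$, which holds since the ball is cs-$(k-1)$-neighborly hence not a simplex and $n-1$ is not in every facet. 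By the first part of this very lemma (applied one dimension down, which is legitimate because the first assertion is now proved for all $n$), $\partial B^{2k-2,k-1}_{n-2}=\Delta^{2k-3}_{n-2}$. That completes the chain of equalities. One should double-check the base case $n=2k$: then $\{n-1,n\}=\{2k-1,2k\}$ is an edge of $\partial\C^*_{2k}$ and its link is $\partial\C^*_{2k-2}=\Delta^{2k-3}_{2k-2}$, as required.

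The main obstacle, as usual with these recursively defined complexes, is the careful bookkeeping of which faces survive the ``replace $B$ by $\partial B*(\text{new vertex})$'' operations and the verification that the set-theoretic unions really do match term-by-term — in particular that the three pieces in Lemma \ref{lm: prop of B^{d, i}_n}(3) are glued along the correct subcomplexes and that no face is double-counted or lost. The key nonobvious identification is $\partial B^{2k-1,k}_{n-1}=\partial B^{2k-1,k-1}_{n-1}$, i.e. that a $(k-1)$-stacked cs-$(k-1)$-neighborly subball and its complement inside a cs-$k$-neighborly sphere share the same boundary; this is exactly the content of Lemma \ref{lm: complement} (the complement is a ball, so its boundary is the common separating sphere), so once that lemma is invoked the rest is formal. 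A secondary subtlety is making sure the induction on $n$ for the two assertions is interleaved correctly — the link statement in dimension $2k-1$ uses the identity statement in dimension $2k-2$, so I would phrase the induction as a simultaneous downward-in-dimension, upward-in-$n$ argument, or simply prove the identity $\partial B^{2k,k}_n=\Delta^{2k-1}_n$ for all $k$ and $n$ first (its proof only refers to lower $n$, never to the link statement), and then prove the link statement afterwards.
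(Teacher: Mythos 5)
The paper itself does not reprove this lemma: it is cited from the prior paper [N-Z] (Cor.~3.5 and Prop.~4.1 there), so there is no in-paper proof to line up against yours. Your plan nonetheless captures the right ideas: match the Lemma~\ref{lm: prop of B^{d, i}_n}(3) expansion of $\partial B^{2k,k}_n$ against the recursive step that builds $\Delta^{2k-1}_n$ from $\Delta^{2k-1}_{n-1}$, and the pivotal observation is indeed $\partial B^{2k-1,k}_{n-1}=\partial B^{2k-1,k-1}_{n-1}$, which follows because $B^{2k-1,k}_{n-1}$ and $B^{2k-1,k-1}_{n-1}$ are complementary balls in the sphere $\Delta^{2k-1}_{n-1}$ (Lemma~\ref{lm: complement} guarantees the complement is a ball). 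Once that is in place the rest does line up term by term, and since you never actually invoke the inductive hypothesis, the ``induction'' reduces to a direct computation valid for every $n$ for which all the pieces are defined.

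There are, however, three genuine bugs you should repair. First, your base case is garbled: $B^{2k,k}_{2k}=\Delta^{2k-1}_{2k}\backslash B^{2k,k-1}_{2k}$ cannot be right, since $B^{2k,\,\ast}_{2k}$ is $2k$-dimensional and $\Delta^{2k-1}_{2k}$ is $(2k-1)$-dimensional; the definition you are misremembering is $B^{2k-1,k}_n=\Delta^{2k-1}_n\backslash B^{2k-1,k-1}_n$. Also $B^{2k,k-1}_{2k}$ is not a single facet for $k\geq 2$ (only $B^{d,0}_n$ is a simplex, and its vertex set is $\{-1,n-d+1,\dots,n\}$, not $\{-1,1,\dots,k\}$). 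Second, your justification that $n-1$ is a boundary vertex of $B^{2k-1,k-1}_{n-1}$ --- that the ball is not a simplex and $n-1$ is not in every facet --- is a non-sequitur: an interior vertex of a ball need not belong to every facet. The cleanest fix is to bypass the issue entirely by applying Lemma~\ref{lm: prop of B^{d, i}_n}(3) directly to $\partial B^{2k-1,k-1}_{n-1}$: the vertex $n-1$ appears only in the summand $\partial B^{2k-2,k-1}_{n-2}*(n-1)$, so $\lk(n-1,\partial B^{2k-1,k-1}_{n-1})=\partial B^{2k-2,k-1}_{n-2}$ with no appeal to boundary-vertex status. (Alternatively, note $\lk(n-1,B^{2k-1,k-1}_{n-1})=B^{2k-2,k-1}_{n-2}$ is a ball by Lemma~\ref{lm: prop of B^{d, i}_n}(1), which \emph{is} a correct reason for $n-1$ to be a boundary vertex.) Third, at the final step you invoke ``the first part one dimension down,'' i.e.\ $\partial B^{2k-2,k-1}_{n-2}=\Delta^{2k-3}_{n-2}$; for $k=2$ this needs the $k=1$ instance $\partial B^{2,1}_{m}=\Delta^1_m$, which falls outside the stated range $k\geq 2$. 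It does hold (a short check from the definitions of $B^{1,0}_m,B^{1,1}_m$), but you should say so rather than leaning on a lemma stated only for $k\geq 2$.
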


\begin{proposition}\label{prop: odd even sphere relation}
	For any $d\geq 1$ and $n\geq d+2$, the complex $\Delta^d_n$ is a subcomplex of $\Delta^{d+1}_n$.
\end{proposition}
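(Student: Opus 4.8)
\section*{Proof proposal}

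The plan is to argue by induction on $n$, proving at stage $n$ that $\Delta^{e}_n\subseteq\Delta^{e+1}_n$ for every $e\le n-2$. The base case $n=d+2$ is immediate: by the first clause of Definition~\ref{def} one has $\Delta^{d+1}_{d+2}=\partial\C^*_{d+2}$, while $\Delta^{d}_{d+2}$ is, by Theorem~\ref{thm: Delta^{d}_n}, a cs simplicial complex with vertex set $V_{d+2}$, hence a subcomplex of $\partial\C^*_{d+2}$.

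For the inductive step, set $B:=B^{d,\lceil d/2\rceil-1}_n$ and $B':=B^{d+1,\lceil(d+1)/2\rceil-1}_n$ (so $B'=B^{d+1,\lfloor d/2\rfloor}_n$), and recall from Definition~\ref{def} that $\Delta^{d}_{n+1}$ is obtained from $\Delta^d_n$ by removing the interior faces of $B$ and of $-B$ and adjoining the cones $\partial B*(n+1)$ and $\partial(-B)*(-n-1)$; likewise $\Delta^{d+1}_{n+1}$ arises from $\Delta^{d+1}_n$ with $B'$ in place of $B$. Split the faces of $\Delta^d_{n+1}$ into those meeting $\{\pm(n+1)\}$ and those avoiding it. A face of the first kind lies in $(n+1)*\partial B$ or in $(-n-1)*\partial(-B)$; by Lemma~\ref{lm: prop of B^{d,i}_n}(4) we have $\partial B\subseteq B\subseteq\partial B'$, hence $\partial(-B)\subseteq\partial(-B')$ as well, so such a face lies in $\st(n+1,\Delta^{d+1}_{n+1})\cup\st(-n-1,\Delta^{d+1}_{n+1})\subseteq\Delta^{d+1}_{n+1}$. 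A face $\rho$ of the second kind is a face of $\Delta^d_n$, and so of $\Delta^{d+1}_n$ by the inductive hypothesis; since $\rho$ avoids $\pm(n+1)$, it remains a face of $\Delta^{d+1}_{n+1}$ as soon as it is an interior face of neither $B'$ nor $-B'$. Thus the step reduces to the claim that \emph{no interior face of $B^{d+1,\lfloor d/2\rfloor}_n$ lies in $\Delta^d_n$} (and then, by central symmetry, none of $-B^{d+1,\lfloor d/2\rfloor}_n$ does either).

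To prove this I would induct once more on $n$. By the recursion $B^{d+1,\lfloor d/2\rfloor}_n=\bigl(B^{d,\lfloor d/2\rfloor}_{n-1}*n\bigr)\cup\bigl((-B^{d,\lfloor d/2\rfloor-1}_{n-1})*(-n)\bigr)$ and the boundary formula of Lemma~\ref{lm: prop of B^{d,i}_n}(3), an interior face of $B^{d+1,\lfloor d/2\rfloor}_n$ is: (a) $\{n\}\cup\sigma'$ with $\sigma'$ interior in $B^{d,\lfloor d/2\rfloor}_{n-1}$; or (b) $\{-n\}\cup\sigma''$ with $\sigma''$ interior in $-B^{d,\lfloor d/2\rfloor-1}_{n-1}$; or (c) a face avoiding $\pm n$ and interior in both $B^{d,\lfloor d/2\rfloor}_{n-1}$ and $-B^{d,\lfloor d/2\rfloor-1}_{n-1}$. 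Meanwhile, the move building $\Delta^d_n$ from $\Delta^d_{n-1}$ gives $\lk(n,\Delta^d_n)=\partial B^{d,\lceil d/2\rceil-1}_{n-1}$, so a face of $\Delta^d_n$ through $n$ (resp.\ through $-n$) has the form $\{n\}\cup\tau$ with $\tau\in\partial B^{d,\lceil d/2\rceil-1}_{n-1}$ (resp.\ $\{-n\}\cup\tau$ with $\tau\in\partial(-B^{d,\lceil d/2\rceil-1}_{n-1})$), while a face of $\Delta^d_n$ avoiding $\pm n$ is a face of $\Delta^d_{n-1}$ that survived that move, i.e.\ is interior in neither $B^{d,\lceil d/2\rceil-1}_{n-1}$ nor $-B^{d,\lceil d/2\rceil-1}_{n-1}$. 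Since $\lceil d/2\rceil-1$ equals $\lfloor d/2\rfloor$ for $d$ odd and $\lfloor d/2\rfloor-1$ for $d$ even, comparing the two lists shows that case (c) is impossible, and that one of (a),(b) is impossible by parity (a face is never interior and on the boundary of a single ball).

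What is left are case (a) for $d$ even and case (b) for $d$ odd, and this is exactly the part with no odd-dimensional precedent in \cite{N-Z}. Using Lemma~\ref{lm: prop of B^{d,i}_n}(2) in the form $B^{d,i-1}_m\subseteq -B^{d,i}_m$, both cases reduce to disjointness assertions about the balls $B^{d,i}_m$: that $B^{2k,k}_m\cap(-B^{2k,k}_m)\subseteq\partial B^{2k,k}_m$, and that the interior faces of $B^{2k-1,k-2}_m$ (which all have dimension $\ge k+1$) avoid $\partial B^{2k-1,k-1}_m$. I would prove these by a joint induction, again expanding the $B^{d,i}_m$ via their recursion, using Lemma~\ref{lm: prop of B^{d,i}_n}(1)--(3) and the complementarity $B^{2k-1,k}_m=\Delta^{2k-1}_m\setminus B^{2k-1,k-1}_m$, which yields $B^{2k-1,k}_m\cap B^{2k-1,k-1}_m=\partial B^{2k-1,k-1}_m$. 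All base cases reduce to direct computations with $\partial\C^*$ of small rank. The main difficulty I anticipate is organizational: keeping the three nested inductions compatible and tracking, case by case and parity by parity, which face is an interior face of which ball.
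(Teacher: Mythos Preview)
Your approach is the same as the paper's: induction on $n$, the identical base case via $\Delta^{d+1}_{d+2}=\partial\C^*_{d+2}$, and the same key inclusion $B^{d,\lfloor(d-1)/2\rfloor}_n\subseteq\partial B^{d+1,\lfloor d/2\rfloor}_n$ from Lemma~\ref{lm: prop of B^{d, i}_n}(4) to handle faces through $\pm(n+1)$.

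The only difference is scope. The paper's five-line proof stops after observing that all \emph{new} faces of $\Delta^d_{n+1}$ (those not already in $\Delta^d_n$) lie in $\pm\bigl(\partial B^{d,\lfloor(d-1)/2\rfloor}_n*(n+1)\bigr)\subseteq\pm\bigl(\partial B^{d+1,\lfloor d/2\rfloor}_n*(n+1)\bigr)\subseteq\Delta^{d+1}_{n+1}$, and then simply writes ``the claim follows from the inductive hypothesis on $n$.'' It does not explicitly address the point you isolate --- that an \emph{old} face $\rho\in\Delta^d_n\cap\Delta^d_{n+1}$, while in $\Delta^{d+1}_n$ by induction, might a priori be interior to $\pm B'$ and hence absent from $\Delta^{d+1}_{n+1}$. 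Your reduction of this to the two disjointness statements (that $B^{2k,k}_m\cap(-B^{2k,k}_m)\subseteq\partial B^{2k,k}_m$, and that interior faces of $B^{2k-1,k-2}_m$ avoid $\partial B^{2k-1,k-1}_m$) is correct, and your proposed inductive attack on them is reasonable, but all of this goes well beyond what the paper writes down. In short: same skeleton, but you are substantially more careful on a step the paper treats as immediate.
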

 
\begin{proof}
	The proof is by induction on $n$. The claim holds for $n=d+2$. Indeed, $\Delta^{d+1}_{d+2}=\partial \C^*_{d+2}$, and so it contains as subcomplexes \emph{all} cs complexes on $V_{d+2}$. For $n>d+2$, notice that $$\partial B^{d, \lfloor \frac{d-1}{2}\rfloor}_n \subseteq B^{d, \lfloor \frac{d-1}{2}\rfloor}_n \stackrel{(\diamond)}{\subseteq} \partial B^{d+1, \lfloor \frac{d}{2}\rfloor}_n,\mbox{ and hence } \pm \big(\partial B^{d, \lfloor \frac{d-1}{2}\rfloor}_n*(n+1)\big)\subseteq \pm \big(\partial B^{d+1, \lfloor \frac{d}{2}\rfloor}_n*(n+1)\big).$$ Here the inclusion $(\diamond)$ follows from Lemma \ref{lm: prop of B^{d, i}_n}(4). 
	Since $\Delta^i_{n+1}$ (for $i=d,d+1$) is obtained from $\Delta^{i}_n$ by replacing  $\pm B^{i, \lfloor \frac{i-1}{2}\rfloor}_n$ with $\pm \big(\partial B^{i, \lfloor \frac{i-1}{2}\rfloor}_n*(n+1)\big)$, that is, since all new faces belong to $\pm \big(\partial B^{i, \lfloor \frac{i-1}{2}\rfloor}_n*(n+1)\big)$, the claim follows from the inductive hypothesis on $n$.
\end{proof}

\section{The facets of $\Delta^{2k-1}_n$} \label{sec:Facets}
All new constructions in this paper require good understanding of facets of $\Delta^{2k-1}_n$. With this in mind, we start this section with a complete characterization of facets of $\Delta^{3}_n$. We then discuss the question of which subsets of $V_n$ can be facets of $\Delta^{2k-1}_n$ for $k\ge 3$, and provide certain sufficient and certain necessary conditions.

Our main tool is the following decomposition of $\Delta^{2k-1}_n$ into pure subcomplexes that are pairwise facet-disjoint: for all $n\geq 2k$,
\begin{equation}  \label{eq:decomposition}
\Delta^{2k-1}_n=\left( \Delta^{2k-1}_{2k}\backslash \pm B^{2k-1, k-1}_{2k}\right) \cup \left( \cup_{s=2k+1}^{n} \pm (\partial B^{2k-1, k-1}_{s-1} * s) \backslash \pm B^{2k-1, k-1}_s\right)  \cup \left( \pm B^{2k-1, k-1}_n\right).
\end{equation}
This decomposition is an immediate consequence of Definition \ref{def} and Lemma \ref{lm: prop of B^{d, i}_n}(5).

\subsection{Dimension three}
Combining equations \eqref{eq:facets-of-B^{3,1}} and \eqref{eq:decomposition} and recalling that $\Delta^3_4=\partial\C^*_4$ provides the following explicit description of facets of $\Delta^3_n$:
\begin{lemma}\label{lm: facets}
Let $n\geq 4$. The collection of facets of $\Delta^{3}_n$ consists of
\begin{enumerate}
	\item The facets of $\pm B^{3,1}_n$. They are the following sets along with their antipodes:
\begin{eqnarray*}	\{i,i+1, n-1,n\},\;\;\{-i,-i-1,n-1,n\} \quad \mathrm{for}\;\;1\leq i \leq n-3, \\
	 \{1,-n+2, n-1, n\},\;\; \{1,-n+2,-n+1, n\},\;\; \{1,-n+2,-n+1,-n\}. 
	\end{eqnarray*}
	\item The facets of $\bigcup_{s=5}^n \pm (\partial B^{3,1}_{s-1}*s)\backslash \pm B^{3,1}_s$. They are the following sets along with their antipodes:
	\begin{eqnarray*}\{i,i+1, \ell, \ell+2\}, \;\;\{-i, -i-1,\ell, \ell+2\},\;\; \{1, -\ell+1, \ell, \ell+2\}  &\mathrm{ for }& 1\leq i, i+1 <\ell\leq n-2; \\
	\{\ell,\ell+1,\ell+2,-\ell-3\},\;\;\{-1,\ell,\ell+2,-\ell -3\} &\mathrm{ for }& 2\leq \ell \leq n-3.
	\end{eqnarray*}
	\item The facets of $\Delta^{3}_4\backslash \pm B^{3,1}_4$. They are $\{1,2,-3,4\}, \{1,2,3,-4\}, \{1,-2,3,-4\}$ and their antipodes.
\end{enumerate}
\end{lemma}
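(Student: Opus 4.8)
## Proof Proposal for Lemma \ref{lm: facets}

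The plan is to unwind the decomposition \eqref{eq:decomposition} in the special case $k=2$, $d=3$, and to identify the facets contributed by each of its three pieces using the explicit description \eqref{eq:facets-of-B^{3,1}} of $B^{3,1}_n$ together with the known facets of $\partial \C^*_4 = \Delta^3_4$. Since \eqref{eq:decomposition} writes $\Delta^3_n$ as a union of pairwise facet-disjoint pure subcomplexes, it suffices to list the facets of $\pm B^{3,1}_n$ (part (1)), the facets of $\pm(\partial B^{3,1}_{s-1}*s)\setminus \pm B^{3,1}_s$ for $5\le s\le n$ (part (2)), and the facets of $\Delta^3_4\setminus \pm B^{3,1}_4$ (part (3)); no facet is counted twice, and antipodal symmetry lets us restrict attention to the "positive" pieces and append antipodes at the end.

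For part (1), I would read the facets of $B^{3,1}_n$ directly off \eqref{eq:facets-of-B^{3,1}}: the join of the path $(n-2,n-3,\dots,1,-n+2,\dots,-1)$ with the edge $(n-1,n)$ contributes, for each edge $\{a,b\}$ of that path, the $3$-simplex $\{a,b,n-1,n\}$; these are exactly the sets $\{i,i+1,n-1,n\}$ and $\{-i,-i-1,n-1,n\}$ for $1\le i\le n-3$ together with the "junction" facet $\{1,-n+2,n-1,n\}$. The join of the edge $(1,-n+2)$ with the path $(n,-n+1,-n)$ contributes $\{1,-n+2,n,-n+1\}$ and $\{1,-n+2,-n+1,-n\}$. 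Adding antipodes gives precisely the list in (1). For part (3), one simply checks which facets of $\partial\C^*_4$ do not lie in $\pm B^{3,1}_4$: from \eqref{eq:facets-of-B^{3,1}} with $n=4$, the facets of $B^{3,1}_4$ are $\{1,2,3,4\},\{1,-2,3,4\},\{1,-2,-1,4\},\{1,-2,-1,-4\}$ — wait, this needs care since for small $n$ the path degenerates — so I would verify the $n=4$ base case by hand, listing all $16$ facets of $\partial\C^*_4$, subtracting the (at most) four facets of $B^{3,1}_4$ and their four antipodes, and confirming the remainder is $\{1,2,-3,4\},\{1,2,3,-4\},\{1,-2,3,-4\}$ and antipodes.

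The main work, and the step I expect to be the real obstacle, is part (2): computing the facets of $\partial B^{3,1}_{s-1}*s$ that survive the removal of $\pm B^{3,1}_s$. This requires an explicit description of $\partial B^{3,1}_{s-1}$ — i.e. the $2$-faces of the boundary $2$-sphere of the $3$-ball $B^{3,1}_{s-1}$ — which I would obtain either from Lemma \ref{lm: prop of B^{d, i}_n}(3) applied with $d=3$, $i=1$, or directly by identifying which triangles of the facets listed in part (1) (for index $s-1$) are boundary triangles. Coning each such boundary triangle $T$ over the new vertex $s$ yields a candidate facet $T\cup\{s\}$; I then discard those lying in $B^{3,1}_s$ (these are exactly the facets of $B^{3,1}_s$ from part (1) that contain $s$, namely $\{i,i+1,s-1,s\}$, $\{-i,-i-1,s-1,s\}$, $\{1,-s+2,s-1,s\}$, $\{1,-s+2,-s+1,s\}$ and, for the antipodal side, their antipodes containing $s$). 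Bookkeeping the boundary triangles of $B^{3,1}_{s-1}$: the lateral triangles of the join $(\text{path})*(s-2,s-1)$ containing exactly one of $s-2,s-1$, together with the two "end caps" and the triangles coming from the second piece $(1,-s+2)*(s-1,-s+2,-s+1)$ — wait, reindexing \eqref{eq:facets-of-B^{3,1}} at $n=s-1$ — are what remain. Setting $\ell=s-2$ throughout and carefully tracking which triangles are shared with $-B^{3,1}_{s-1}$ (hence excluded from $\partial B^{3,1}_{s-1}$ by Lemma \ref{lm: prop of B^{d, i}_n}(3)), the surviving cones over $s$ should reduce exactly to $\{i,i+1,\ell,\ell+2\}$, $\{-i,-i-1,\ell,\ell+2\}$, $\{1,-\ell+1,\ell,\ell+2\}$ for $i+1<\ell\le n-2$, and $\{\ell,\ell+1,\ell+2,-\ell-3\}$, $\{-1,\ell,\ell+2,-\ell-3\}$ for $2\le\ell\le n-3$, plus antipodes. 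The delicate points are (a) correctly enumerating the boundary triangles of the two-piece ball $B^{3,1}_{s-1}$, especially near its "junction" vertex $1$ and the degenerate behavior for small $s$, and (b) confirming that the triangles lying in $-B^{3,1}_{s-1}$ are precisely the ones that must be thrown away, so that the boundary-sphere count matches. I would handle this by a careful case analysis on the position of the triangle within the join structure, using that a triangle of a join $P*Q$ with $P$ a path is a boundary face of the ball iff it projects to a boundary (endpoint) configuration, and cross-checking the final count against the estimate $f_3(\Delta^3_n)=8\binom{n}{2}+O(n)=4n^2+O(n)$ from Lemma \ref{lm: facet count}.
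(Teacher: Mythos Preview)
Your approach is essentially the same as the paper's: the paper states the lemma as an immediate consequence of combining equation \eqref{eq:facets-of-B^{3,1}} with the decomposition \eqref{eq:decomposition} and the base case $\Delta^3_4=\partial\C^*_4$, and you are proposing to carry out exactly that computation piece by piece.

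One small correction to your bookkeeping in part (2): the substitution $\ell=s-2$ does not account for all the listed facets. The first line of facets in (2), namely $\{i,i+1,\ell,\ell+2\}$, $\{-i,-i-1,\ell,\ell+2\}$, $\{1,-\ell+1,\ell,\ell+2\}$, do come from the block $\partial B^{3,1}_{s-1}*s$ with $s=\ell+2$, since they contain $s$ as their largest-in-absolute-value element. But the second line, $\{\ell,\ell+1,\ell+2,-\ell-3\}$ and $\{-1,\ell,\ell+2,-\ell-3\}$, contain the vertex $-(\ell+3)$, which lies outside $V_{\ell+2}$; these are \emph{antipodes} of facets coming from the block with $s=\ell+3$ (i.e., from $-(\partial B^{3,1}_{\ell+2}*(\ell+3))$). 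So when you enumerate the boundary triangles of $B^{3,1}_{s-1}$ and cone over $s$, you will only directly recover the first line; the second line appears once you take antipodes and reindex. This is consistent with the statement's phrasing ``along with their antipodes,'' but it means your single substitution $\ell=s-2$ ``throughout'' is not quite right --- you need to track two index shifts. With that adjustment, the computation goes through as you outline, and Lemma~\ref{lm: prop of B^{d, i}_n}(3) is indeed a clean way to get the boundary triangles if you prefer it to direct inspection of the join.
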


Lemma \ref{lm: facets} allows us to compute links of edges of $\Delta^3_n$ and their sizes. This (rather technical) information will be used in Sections \ref{sec:k-neighb} and \ref{sec:many-3-spheres} to provide constructions of cs spheres that are highly cs-neighborly as well as to show that many of them are non-isomorphic; it will also be used in Section \ref{sec:(k-1)-neighb} to prove that for a sufficiently large $n$, the sphere $\Delta^3_n$ admits only two automorphisms. The links of edges in a $3$-sphere are (graph-theoretic) cycles, hence we can talk about their length.

\begin{corollary}\label{cor: edge links}
 Let $n\geq 6$ and let $e$ be an edge of $\Delta^{3}_n$.
Then
	$$f_0\big(\lk(e, \Delta^{3}_n)\big) \mbox{ is } \begin{cases}
	2n-4 & e=\pm \{1,2\},\; \pm \{n-1,n\}\\
	2(n-i)-1 & e=\pm \{i,i+1\},\; 2\leq i\leq n-3\\
	2n-5 & e=\pm \{n-2,n\}\\
	2\ell+1 & e=\pm\{\ell, \ell+2\},\; 3\leq \ell\leq n-3\\
	\leq 6 & \mathrm{otherwise}.
	\end{cases}$$
Furthermore, the link $\lk(\{1,2\},\Delta^3_n)$ is a cs cycle of length $2n-4$ that contains all pairs of the form $\{i, i+2\}$, for $3\leq i\leq n-2$, as edges. Similarly, for $3\leq \ell \leq n-2$, the link $\lk(\{\ell, \ell+2\}, \Delta^{3}_n)$ contains the path $(2, 1, -\ell+1, -\ell+2, \dots, -2, -1)$ as a subcomplex.
\end{corollary}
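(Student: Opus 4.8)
\textbf{Proof plan for Corollary \ref{cor: edge links}.}

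The plan is to read off everything from the explicit list of facets in Lemma \ref{lm: facets} together with the facet-disjoint decomposition \eqref{eq:decomposition}. For the vertex-count formula, I would fix an edge $e$ and count the facets of $\Delta^3_n$ that contain $e$; since $\Delta^3_n$ is a $3$-sphere, $\lk(e,\Delta^3_n)$ is a cycle, so $f_0(\lk(e))$ equals the number of facets through $e$. By the central symmetry it suffices to treat $e$ up to the antipodal map, and by the decomposition I can organize the count by which of the three pieces in Lemma \ref{lm: facets} a facet comes from. Concretely, for $e=\{1,2\}$: the type-(1) facets $\{i,i+1,n-1,n\}$ contribute only $i=1$, i.e.\ $\{1,2,n-1,n\}$; the type-(2) facets $\{i,i+1,\ell,\ell+2\}$ with $1\le i<\ell\le n-2$ contribute $i=1$ and $3\le\ell\le n-2$, which is $n-4$ facets; the facet $\{1,-\ell+1,\ell,\ell+2\}$ never contains $2$; and from $\Delta^3_4$ one picks up $\{1,2,-3,4\},\{1,2,3,-4\}$ — but one must be careful that these last two only count when they survive in $\Delta^3_n$, i.e.\ when $n=4$ they are facets and for larger $n$ one has to track through the decomposition which $\Delta^3_4$-facets remain facets of $\Delta^3_n$. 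In any case the arithmetic should collapse to $2n-4$. The remaining cases $e=\{i,i+1\}$ for $2\le i\le n-3$, $e=\{n-2,n\}$, and $e=\{\ell,\ell+2\}$ for $3\le\ell\le n-3$ are handled by exactly the same bookkeeping against the list in Lemma \ref{lm: facets}; for every other edge $e$ one checks from the list that at most six facets can contain it (the edges not appearing as a ``distinguished'' pair sit in only $O(1)$ facets, all coming from the low-index tail $\Delta^3_4\backslash\pm B^{3,1}_4$ or from the $\{\ell,\ell+1,\ell+2,-\ell-3\}$-type facets), giving the bound $\le 6$.

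For the structural claims about $\lk(\{1,2\},\Delta^3_n)$ and $\lk(\{\ell,\ell+2\},\Delta^3_n)$, I would list the facets through the given edge and then read off the edges of the link. Through $\{1,2\}$ the facets are $\{1,2,n-1,n\}$ and $\{1,2,\ell,\ell+2\}$ for $3\le\ell\le n-2$ (plus possibly a couple from the tail), so $\lk(\{1,2\})$ contains the edges $\{n-1,n\}$ and $\{\ell,\ell+2\}$ for $3\le\ell\le n-2$; these together with the edges $\{\ell,\ell+1\}$ that appear in consecutive facets $\{1,2,\ell-1,\ell+1\}$, $\{1,2,\ell,\ell+2\}$ (sharing the triangle $\{1,2,\ell\}$ or $\{1,2,\ell+1\}$) chain up into a single cycle, which by the count above has length $2n-4$ and is cs because $-\{1,2\}=\{-1,-2\}$ is an edge and the antipodal map carries $\lk(\{1,2\})$ to $\lk(\{-1,-2\})$, forcing the two halves $\{3,\dots,n\}$ and $\{-3,\dots,-n\}$ of the vertex set to be interchanged along the cycle. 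Similarly, through $\{\ell,\ell+2\}$ (for $3\le\ell\le n-2$) the relevant type-(2) facets are $\{i,i+1,\ell,\ell+2\}$ with $i+1<\ell$, $\{1,-\ell+1,\ell,\ell+2\}$, and $\{-i,-i-1,\ell,\ell+2\}$ with $i+1<\ell$, plus $\{-1,\ell,\ell+2,-\ell-3\}$ of type-(2). Reading off the link edges from these facets: the facets $\{1,2,\ell,\ell+2\},\{2,3,\ell,\ell+2\},\dots$ give the sub-path $(2,1)$, $(1,\cdot)$ on the positive side; the facet $\{1,-\ell+1,\ell,\ell+2\}$ contributes the edge $\{1,-\ell+1\}$; and the facets $\{-1,-2,\ell,\ell+2\},\dots,\{-\ell+2,-\ell+1,\ell,\ell+2\}$ contribute the path through $-\ell+1,-\ell+2,\dots,-2,-1$. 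Concatenating, $\lk(\{\ell,\ell+2\})$ contains the path $(2,1,-\ell+1,-\ell+2,\dots,-2,-1)$ as claimed.

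The main obstacle I expect is not any one of these counts individually but the bookkeeping around which facets of $\Delta^3_4$ (and more generally which ``boundary-of-earlier-ball'' contributions) actually persist as facets of $\Delta^3_n$ after all the bistellar replacements encoded in Definition \ref{def}; the decomposition \eqref{eq:decomposition} is exactly the tool that resolves this, since it presents $\Delta^3_n$ as a \emph{facet-disjoint} union, so a subset of $V_n$ is a facet of $\Delta^3_n$ iff it is a facet of precisely one of the listed pieces, and no double-counting or cancellation can occur. A secondary subtlety is handling small indices (e.g.\ $\ell=3$, where $-\ell+1=-2$ and the path $(2,1,-2,-1)$ degenerates to length three) and confirming the edge cases against the requirement $n\ge 6$; these I would simply check directly from Lemma \ref{lm: facets}. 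Once the persistence issue is dispatched via \eqref{eq:decomposition}, the rest is a finite, if tedious, inspection of the facet list, and the cs-cycle and path statements follow by tracing adjacencies of facets sharing a common triangle containing $e$.
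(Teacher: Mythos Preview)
Your approach---enumerate the facets through each edge using Lemma~\ref{lm: facets} and read off the link---is exactly what the paper does; the paper simply lists the edges of each link directly rather than phrasing it as a facet count, but this is the same computation.

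Two corrections to your plan. First, your ``persistence'' worry is misplaced: Lemma~\ref{lm: facets} is already the complete facet list of $\Delta^3_n$ for every $n\ge 4$ (this is precisely what the facet-disjoint decomposition \eqref{eq:decomposition} guarantees), so the three facets in part~(3) \emph{are} facets of $\Delta^3_n$ and no survival-tracking through bistellar moves is needed. Second, your argument that $\lk(\{1,2\})$ is cs is wrong: the fact that the involution $\alpha$ carries $\lk(\{1,2\})$ to $\lk(\{-1,-2\})$ is automatic in any cs complex and says nothing about $\lk(\{1,2\})$ itself being cs. What you need is that the set of edges of $\lk(\{1,2\})$ is closed under $\alpha$, and this you get for free once you write the edges down: they are $\pm\{\ell,\ell+2\}$ for $3\le\ell\le n-2$, $\pm\{n-1,n\}$, and $\{3,-4\}$, $\{-3,4\}$, visibly an antipodally closed list. (Relatedly, your description of the cycle structure is garbled: there are no edges $\{\ell,\ell+1\}$ in $\lk(\{1,2\})$, and the facets $\{1,2,\ell-1,\ell+1\}$ and $\{1,2,\ell,\ell+2\}$ share only the edge $\{1,2\}$, not a triangle. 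The edges of the link are \emph{exactly} the pairs $\{a,b\}$ with $\{1,2,a,b\}$ a facet.)
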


\begin{proof}
We use Lemma \ref{lm: facets}. For any $1\leq i\leq n-3$, the edges in the link of $\{i, i+1\}$ are those of the form $\pm \{\ell, \ell+2\}$ for $i+2\leq \ell\leq n-2$, $\pm \{n-1,n\}$, $\{i+2, -i-3\}$, along with $\{i-1, -i-2\}, \{i-1, i+3\}$ if $i\geq 2$ and with $\{-3,4\}$ if $i=1$.  
Together with the fact that $\lk(\{n-1, n\},\Delta^{3}_n)=\Delta^{1}_{n-2}$ this completes the proof of the first two cases and verifies the statement about the link of $\{1,2\}$ in the ``furthermore'' part.

Similarly, for $3\leq \ell \leq n-3$, the edges in the link of $\{\ell, \ell+2\}$ are those of the form  $\pm \{i, i+1\}$ for $1\leq i\leq \ell-2$,  $\{1, -\ell+1\}$, $\{-1, -\ell-3\}$, $\{\ell+1, -\ell-3\}$, together with the path $(\ell-1, \ell+4, \ell+1)$ if $\ell\leq n-4$ and with the path $(n-4,n, n-2)$ if $\ell=n-3$. In the same vein, the link of $\{n-2, n\}$ consists of the edges $\pm \{i, i+1\}$ for $1\leq i\leq n-4$,  $\{1, -n+3\}$, and the path $(-1, n-1, n-3)$.  This completes the proof of the third and fourth cases and also of the ``furthermore'' part.

Finally, observe that for $n\geq 6$, $\lk(\{n-2,n-1\})=(-1, -n, n-3, n, -1)$, $\lk(\{1,3\})=(-2,-4,2,5,-2)$, and $\lk(\{2,4\})=(-1, -3, 1, 6, 3, -5,-1)$, and that it follows from Lemma \ref{lm: facets} that the links of all other edges have at most six vertices. 
\end{proof}

\subsection{Higher dimensions}
While at present we do not have a complete description of facets of $\Delta^{2k-1}_n$ for $k>2$, we devote this section to establishing certain necessary as well as certain sufficient conditions for a subset of $V_n$ to be a facet. In particular, we identify a big chunk of facets of $\Delta^{2k-1}_n$. 

Let $M: \R\to \R^d$, $t \mapsto (t, t^2, \dots, t^d)$, be the {\em moment curve} in $\R^d$. The {\em cyclic polytope} $C(d, n)$ is the convex hull of $n$ distinct points on this curve, that is,
\[C(d, n):=\conv\big(M(t_1), M(t_2), \dots, M(t_n)\big) \mbox{ for some $t_1<t_2<\dots <t_n$}.\]
In \cite{Gale63} Gale proposed a criterion that characterizes (the vertex sets of the) facets of $C(d, n)$. Denote the vertex set of $C(d,n)$ by $[n]:=\{1,2,\dots,n\}$  where we identify $M(t_i)$ with $i$. A $d$-subset $F$ forms a facet of $C(d,n)$ if and only if the following ``evenness" condition is satisfied: if $i <j$ are not in $F$, then the number of elements in $F$ between $i$ and $j$ is even. In particular, Gale's evenness condition implies that the combinatorial type of $C(d,n)$ does not depend on the specific choice of points on the moment curve. It also implies that a ``typical" facet of $C(2k, n)$, written in the increasing order of elements, is of the form $\{i_1,i_1+1, i_2,i_2+1,\dots, i_k,i_{k}+1\}$. (We refer the reader to books \cite{Gru-book,Ziegler} for more background on cyclic polytopes and on polytopes in general.)

The following two lemmas provide necessary conditions on facets of $\partial B^{d, i}_n$, and in particular on facets of $\Delta^{2k-1}_n$ (see Lemma \ref{lm: facet}); the latter result is similar in spirit to Gale's evenness condition.

\begin{lemma}\label{lm: facet inclusion}
	Let $d\geq 2$, $n\geq d$, $j\leq d/2$ be integers, and let $F=\{p_1, \ldots, p_d\}$ be a facet of $\partial B^{d, j}_n$, where $|p_1|<|p_2|<\cdots < |p_d|$. Then for any $1\leq i\leq d$, $\{p_1, \ldots, p_i\}$ is a facet of $\pm \partial B^{i, j'}_{n'}$ for some $j'\leq \min\{i/2, j\}$ and $n'\leq n$. In particular, $\{p_1,p_2\}$ is a facet of $\Delta^1_{n''}$ for some $n''\leq n$.
\end{lemma}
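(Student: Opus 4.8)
The plan is to prove the statement by induction on $d$, peeling off the largest-index vertex $p_d$ of the facet $F$ and using the recursive description of $\partial B^{d,j}_n$ from Lemma \ref{lm: prop of B^{d, i}_n}(3). The base cases $d=1$ and $d=2$ are essentially tautological: a facet of $\partial B^{2,j}_n$ is an edge, and the claim for $i=1$ merely says that any single vertex $p_1$ is a facet of $\pm\partial B^{1,j'}_{n'}$ for a suitable $j'\le 0$ (a vertex of the $0$-ball $\pm B^{1,0}_{n'}$), while the claim for $i=2$ reduces to $F$ itself being a facet of $\pm\partial B^{2,j}_n$.

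For the inductive step, let $F=\{p_1,\dots,p_d\}$ be a facet of $\partial B^{d,j}_n$ with $|p_1|<\cdots<|p_d|$. By Lemma \ref{lm: prop of B^{d, i}_n}(3),
\[\partial B^{d,j}_n=\big(\partial B^{d-1,j}_{n-1}*n\big)\cup\big(\partial(-B^{d-1,j-1}_{n-1})*(-n)\big)\cup\big(B^{d-1,j}_{n-1}\setminus -B^{d-1,j-1}_{n-1}\big),\]
so $F$ lies in one of these three pieces. In the first two pieces, $p_d=\pm n$ is the apex of a cone; removing it leaves $\{p_1,\dots,p_{d-1}\}$ as a facet of $\pm\partial B^{d-1,j''}_{n-1}$ with $j''\in\{j,j-1\}\le j$ and $|p_{d-1}|\le n-1$. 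In the third piece, $F$ is a facet of the pure subcomplex $B^{d-1,j}_{n-1}\setminus -B^{d-1,j-1}_{n-1}$, hence a facet of $\pm B^{d-1,j}_{n-1}$, which is a $(d-1)$-ball; here $p_d$ need not be extremal in index within the cone structure, but $F$ is still a facet of a complex of the form $\pm B^{d-1,j}_{m}$ for some $m\le n-1$. Thus in every case, after possibly applying the central involution and relabelling, we have reduced to: $\{p_1,\dots,p_{d-1}\}$ (in increasing order of absolute value) is a facet of some $\pm B^{d-1,\bar j}_{\bar n}$ or $\pm\partial B^{d-1,\bar j}_{\bar n}$ with $\bar j\le j$, $\bar n\le n$. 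Since a facet of $B^{d-1,\bar j}_{\bar n}$ lies in $\partial B^{d,\bar j'}_{\bar n}$ for suitable $\bar j'$ by Lemma \ref{lm: prop of B^{d, i}_n}(4) (or one works directly with the ball), I can invoke the inductive hypothesis applied to $\{p_1,\dots,p_{d-1}\}$ to conclude that for every $1\le i\le d-1$, $\{p_1,\dots,p_i\}$ is a facet of $\pm\partial B^{i,j'}_{n'}$ with $j'\le\min\{i/2,\bar j\}\le\min\{i/2,j\}$ and $n'\le\bar n\le n$. Adding back the case $i=d$, which is $F$ itself being a facet of $\partial B^{d,j}_n$, completes the induction. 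The final sentence about $\{p_1,p_2\}$ being a facet of $\Delta^1_{n''}$ follows by taking $i=2$ and noting $\Delta^1_{n''}$ is exactly $\pm\partial B^{2,j'}_{n''}$ when $j'$ can only be $\le 1$ — more precisely, one uses the $d=2$ identification of $\partial B^{2,1}_{n''}$ with (part of) $\Delta^1_{n''}$ together with the observation that a $1$-dimensional ball's boundary sphere sits inside $\Delta^1$.

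The main obstacle I anticipate is bookkeeping the indices $j'$ and $n'$ cleanly through the three-way decomposition — in particular, making sure that in the third piece $B^{d-1,j}_{n-1}\setminus -B^{d-1,j-1}_{n-1}$ the vertex $p_d$ really can be absorbed without disturbing the increasing-absolute-value order of the remaining $p_i$'s, since there $p_d$ is not a cone apex and its index is not obviously larger than those of the lower vertices. Resolving this likely requires re-examining the explicit cone structure of $B^{d-1,j}_{n-1}$ itself via the recursion $B^{d-1,j}_{n-1}=(B^{d-2,j}_{n-2}*(n-1))\cup((-B^{d-2,j-1}_{n-2})*(-n+1))$, so that one again peels off a genuinely extremal vertex; this may force the induction to be stated for balls and boundary spheres simultaneously rather than for boundary spheres alone. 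Everything else is a routine unwinding of Lemma \ref{lm: prop of B^{d, i}_n} together with the monotonicity $\min\{i/2,j\}$ bound, which is preserved automatically since $\bar j\le j$ at each step.
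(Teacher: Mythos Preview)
Your overall strategy---induct on $d$ and peel off $p_d$ via the three-term decomposition of $\partial B^{d,j}_n$ from Lemma~\ref{lm: prop of B^{d, i}_n}(3)---matches the paper's, and your treatment of the two cone pieces is fine. In the third piece, though, there is a dimension slip: $F$ itself (a $d$-set) is the facet of the $(d-1)$-ball $B^{d-1,j}_{n-1}$, so $G=\{p_1,\dots,p_{d-1}\}$ is only a ridge there; your claim that $G$ is a facet of some $\pm B^{d-1,\bar j}_{\bar n}$ is off by one, and the appeal to Lemma~\ref{lm: prop of B^{d, i}_n}(4) then lands back in $\partial B^{d,\cdot}$ rather than in $\partial B^{d-1,\cdot}$ where the inductive hypothesis lives. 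The fix you anticipate is the right one: unfold $B^{d-1,j}_{n-1}=(B^{d-2,j}_{n-2}*(n{-}1))\cup((-B^{d-2,j-1}_{n-2})*(-n{+}1))$, observe that $p_d=\pm(n-1)$ is in fact forced (so it \emph{is} extremal, contrary to what you write), get $G\in B^{d-2,j}_{n-2}\cup(-B^{d-2,j-1}_{n-2})$, and only then use Lemma~\ref{lm: prop of B^{d, i}_n}(4) to place $G$ in $\pm\partial B^{d-1,\cdot}_{n-2}$.

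The genuine gap is that this unfolding exists only when $j\le\lfloor(d-1)/2\rfloor$, i.e., $j<d/2$. When $d=2j$ the ball $B^{d-1,j}_{n-1}=B^{2j-1,j}_{n-1}$ is defined as $\Delta^{2j-1}_{n-1}\backslash B^{2j-1,j-1}_{n-1}$ and has \emph{no} such cone description (indeed $B^{2j-2,j}_{n-2}$ is not even defined), so the recursion you write down in the obstacle paragraph is unavailable precisely at the top value of $j$. The paper handles $d=2j$ by a separate argument: it uses $\partial B^{2j,j}_n=\Delta^{2j-1}_n$ together with the decomposition~\eqref{eq:decomposition}, and then reasons case by case over the pieces $\Delta^{2j-1}_{2j}$, $\pm(\partial B^{2j-1,j-1}_{i-1}*i)$, and $\pm B^{2j-1,j-1}_n$. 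Without this extra branch the induction does not close.
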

\begin{proof}
	It suffices to treat the case of  $i=d-1$. Let $G=F\backslash\{p_d\}$. If $j<d/2$, then by Lemma~\ref{lm: prop of B^{d, i}_n}(3),

	\[\partial B^{d, j}_n=\left( \partial B^{d-1, j}_{n-1}*n\right) \cup \left( \partial (-B^{d-1, j-1}_{n-1})*(-n)\right) \cup  \left( B^{d-1, j}_{n-1}\backslash -B^{d-1, j-1}_{n-1}\right).\]
	Hence either $G\in \partial B^{d-1, j}_{n-1}\cup \partial (-B^{d-1, j-1}_{n-1})$, in which case we are done, or $F\in B^{d-1, j}_{n-1}$, and so $G\in B^{d-2, j}_{n-2}\cup (-B^{d-2,j-1}_{n-2})$. In the latter case, by applying Lemma \ref{lm: prop of B^{d, i}_n}(4) if $d\geq 2j+2$ or by using that $B^{2j-1, j}_{n-2}\subseteq \Delta^{2j-1}_{n-2}=\partial B^{2j, j}_{n-2}$ if $d=2j+1$, we infer that $G\in \partial B^{d-1, j}_{n-2}\cup \partial (-B^{d-1,j-1}_{n-2})$.
	
	Otherwise if $d=2j$, then $\partial B^{2j, j}_n=\Delta^{2j-1}_n$, and so by equation \eqref{eq:decomposition},
	$$\partial B^{2j, j}_n=\left( \Delta^{2j-1}_{2j}\backslash \pm B^{2j-1, j-1}_{2j}\right) \cup \left( \cup_{i=2j+1}^{n} \pm (\partial B^{2j-1, j-1}_{i-1} * i) \backslash \pm B^{2j-1, j-1}_i\right)  \cup \left( \pm B^{2j-1, j-1}_n\right).$$
    If $F\in \Delta^{2j-1}_{2j}=\partial\C^*_{2j}$, then $G\in \partial \C^*_{2j-1}=\big(\Delta^{2j-3}_{2j-2}*(2j-1)\big) \cup \big(\Delta^{2j-3}_{2j-2}*(-2j+1)\big)$, and so
	$G\in \pm\left( \partial B^{2j-2, j-1}_{2j-2}* (2j-1)\right) \subseteq \pm \partial B^{2j-1, j-1}_{2j-1}$, where the last step is by Lemma \ref{lm: prop of B^{d, i}_n}(3). The case of $F\in \pm B^{2j-1, j-1}_n$ was already treated in the previous paragraph.
	Finally, in the case when $F$ belongs to the middle term, the result obviously holds.
\end{proof}

\begin{lemma}\label{lm: facet}
Let $d\geq 2$, $0\leq i\leq d/2$, and $n\geq d$ be integers. Let $F=\{p_1, p_2, \dots, p_{d}\}$ be a facet of $\partial B^{d, i}_n$, where $|p_1|<|p_2|< \dots < |p_{d}|$. Then 
	\begin{enumerate}
		\item $|p_{2s}|-|p_{2s-1}|\leq 2$ for all $2\leq s\leq d/2$, and 
		\item $|p_2|-|p_1|=1$ unless $|p_1|=1$.
	\end{enumerate}
	In particular, since $\Delta^{2k-1}_n=\partial B^{2k,k}_n$, the facets of $\Delta^{2k-1}_n$ satisfy these conditions.
\end{lemma}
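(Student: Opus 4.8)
The plan is to prove Lemma~\ref{lm: facet} by induction on $d$, using the recursive description of $\partial B^{d,i}_n$ from Lemma~\ref{lm: prop of B^{d, i}_n}(3) together with the decomposition \eqref{eq:decomposition} for the case $d=2i$. The base cases are small: when $d=2$ there is nothing to prove for part~(1), and part~(2) follows from inspecting $\partial B^{2,0}_n$ and $\partial B^{2,1}_n=\Delta^1_n$ directly (the facets of $\Delta^1_n$ are consecutive pairs $\{j,j+1\}$ and the ``wrap-around'' pair $\{1,\dots\}$-type edges $\{n,-1\}$, $\{1,-n\}$, etc., which satisfy $|p_2|-|p_1|=1$ unless $|p_1|=1$). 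The reduction engine is Lemma~\ref{lm: facet inclusion}: given a facet $F=\{p_1,\dots,p_d\}$ of $\partial B^{d,i}_n$ with $|p_1|<\cdots<|p_d|$, every initial segment $\{p_1,\dots,p_\ell\}$ is a facet of $\pm\partial B^{\ell,j'}_{n'}$ for suitable $j'\le\min\{\ell/2,i\}$ and $n'\le n$.

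First I would prove part~(2), which is the cleanest: $\{p_1,p_2\}$ is a facet of $\Delta^1_{n''}=(1,2,\dots,n'',-1,\dots,-n'',1)$ for some $n''\le n$ by the last sentence of Lemma~\ref{lm: facet inclusion}. The edges of this cycle, written with $|p_1|<|p_2|$, are exactly $\{j,j+1\}$ (consecutive), $\{-j,-j-1\}$ (consecutive in absolute value), and the two ``extreme'' edges $\{1,-n''\}$ and $\{n'',-1\}$... wait, more precisely the cycle edges incident to the ``seam'' are $\{n'',-1\}$ and $\{1,-n''\}$ (the latter coming from closing up $-n''$ back to $1$), both of which have smaller element $1$ in absolute value. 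Every other edge has the form $\{\pm j,\pm(j+1)\}$ with $|p_2|-|p_1|=1$. So $|p_2|-|p_1|=1$ unless $|p_1|=1$, giving part~(2).

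For part~(1), fix $s$ with $2\le s\le d/2$ and consider the pair $\{|p_{2s-1}|,|p_{2s}|\}$. By Lemma~\ref{lm: facet inclusion}, $G:=\{p_1,\dots,p_{2s}\}$ is a facet of $\pm\partial B^{2s,j'}_{n'}$ for some $j'\le\min\{s,i\}$ and $n'\le n$; replacing $B$ by $-B$ and using that $-$ commutes with everything, we may assume $G$ is a facet of $\partial B^{2s,j'}_{n'}$. Now I apply the recursive structure at dimension $2s$: either $2s=2j'$, in which case $\partial B^{2s,j'}_{n'}=\Delta^{2s-1}_{n'}$ and \eqref{eq:decomposition} expresses it as a union of pieces of the form $\Delta^{2s-1}_{2s}$ (which is $\partial\C^*_{2s}$, a cross-polytope, so its facets have all $|p_{2t}|-|p_{2t-1}|$ equal to... well, a facet of $\partial\C^*_{2s}$ on $V_{n'}$ restricted — here $n'=2s$ — so any $2s$-subset with one from each antipodal pair; in particular the top two absolute values are $2s-1$ and $2s$, difference $1$), pieces $\pm(\partial B^{2s-1,j'-1}_{m-1}*m)\setminus\pm B^{2s-1,j'-1}_m$, and the piece $\pm B^{2s-1,j'-1}_{n'}$; or $2s>2j'$, in which case Lemma~\ref{lm: prop of B^{d, i}_n}(3) writes $\partial B^{2s,j'}_{n'}$ as $(\partial B^{2s-1,j'}_{n'-1}*n')\cup(\partial(-B^{2s-1,j'-1}_{n'-1})*(-n'))\cup(B^{2s-1,j'}_{n'-1}\setminus -B^{2s-1,j'-1}_{n'-1})$. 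In every case, $G$ lies in some piece of the form $(\pm\partial B^{2s-1,j''}_{m-1})*(\pm m)$ or $\pm B^{2s-1,j''}_{m}$ or $\pm(\partial B^{2s-1,j''}_{m-1}*m)\setminus(\dots)$ with $m\le n'$. If $m$ equals the vertex of largest absolute value in $G$, namely $|p_{2s}|=m$, then $p_{2s}=\pm m$ is the cone apex and $\{p_1,\dots,p_{2s-1}\}$ is a facet of $\pm\partial B^{2s-1,j''}_{m-1}$; I then recurse down to dimension $2s-1$ (odd!) and need a version of part~(1) for $\partial B^{2s-1,\cdot}_{\cdot}$ — which is covered by the same inductive statement since the lemma is stated for all $d\ge 2$, and for odd $d=2s-1$ the last pair controlled by part~(1) is $\{|p_{2s-2}|,|p_{2s-3}|\}$, so I need a separate small argument that $|p_{2s}|-|p_{2s-1}|\le 2$ directly. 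The honest way: when $p_{2s}=\pm m$ is a cone apex over $\pm\partial B^{2s-1,j''}_{m-1}$, apply Lemma~\ref{lm: facet inclusion} again to peel one more vertex, so that $\{p_1,\dots,p_{2s-2},p_{2s-1}\}$ is a facet of $\pm\partial B^{2s-1,j''}_{m-1}$, hence $\{p_1,\dots,p_{2s-1}\}\setminus\{p_{2s-2}\}$... this is getting delicate; the cleanest route is to observe that $\{p_{2s-1},p_{2s}\}$ with $|p_{2s}|$ largest is, by Lemma~\ref{lm: facet inclusion} applied to the $2s$-element initial segment inside a $B^{2s-1,\cdot}_{\cdot}$-type complex and then tracking how the top two coordinates enter in \eqref{B^d-in-terms of-B^{d-2}}, forced into a pattern $\{m-2,m-1\},\{m-1,m\}$ or $\{m-2,m\}$ together with extreme pairs involving the smallest index — exactly as in the explicit dimension-three formula \eqref{eq:facets-of-B^{3,1}}.

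\textbf{The main obstacle.} The crux is the bookkeeping in the inductive step: I must show that the two largest-in-absolute-value coordinates $p_{2s-1},p_{2s}$ of a facet of $\partial B^{2s,j'}_{n'}$ always enter ``together'' — i.e. that they are never split across different joins/pieces of the decomposition in a way that lets their absolute values differ by $3$ or more. The danger is a facet that uses, say, $p_{2s}=m$ as an apex and then has $p_{2s-1}$ living in $B^{2s-1,j''}_{m-1}$ with $|p_{2s-1}|$ much smaller than $m-2$; one must rule this out by descending into $B^{2s-1,j''}_{m-1}$ and noting that its own top two coordinates (by equation~\eqref{B^d-in-terms of-B^{d-2}}, which peels off vertices $m-1$ and $m-2$ or $-(m-1)$ and $-(m-2)$) are constrained, then iterating. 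Making this descent clean — and handling the genuinely different ``extreme'' facets (the ones with $|p_1|=1$, analogous to $\{1,-n+2,n-1,n\}$ in Lemma~\ref{lm: facets}) without letting the exception in part~(2) leak into part~(1) — is where the real work lies; everything else is routine unwinding of Lemmas~\ref{lm: facet inclusion} and \ref{lm: prop of B^{d, i}_n}(3) and equation~\eqref{B^d-in-terms of-B^{d-2}}.
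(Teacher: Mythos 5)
Your overall plan — induction via the recursive description of $\partial B^{d,i}_n$ and the ``peel off the top vertex'' reduction of Lemma~\ref{lm: facet inclusion} — is the same skeleton the paper uses, and your treatment of part~(2) and of the base cases is essentially correct. But you have correctly diagnosed where the real difficulty lies and then stopped short of resolving it, so the proposal has a genuine gap exactly where you flag ``this is getting delicate.''

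The missing ingredient is quantitative, not structural, and it comes for free from the proof of Lemma~\ref{lm: facet inclusion} itself. When $j<d/2$ and $F$ is a facet of $\partial B^{d,j}_n$, the decomposition in Lemma~\ref{lm: prop of B^{d, i}_n}(3) forces $F$ to contain one of $\pm n$ (first two pieces) or, failing that, to be a facet of the ball $B^{d-1,j}_{n-1}$, every facet of which contains one of $\pm(n-1)$. So not only does $F\setminus\{p_d\}$ land in $\pm\partial B^{d-1,j}_{n'}\cup\pm\partial B^{d-1,j-1}_{n'}$, but one additionally gets $n'\in\{n-2,n-1\}$ \emph{and} $|p_d|=n'+1$. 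That is: the parameter $n$ drops by at most~$2$ per peel, and the deleted vertex always has absolute value equal to the old parameter minus at most~$1$. Applying this twice immediately yields $|p_{d-1}|\in\{n'-1,n'\}=\{|p_d|-2,\,|p_d|-1\}$ while dropping to $\pm\partial B^{d-2,\cdot}_{n''}$, and the rest is a clean induction on $k$ (with a separate, easy branch for $i=k$ using $\partial B^{2k,k}_n=\Delta^{2k-1}_n\subseteq\pm(\partial B^{2k-1,k-1}_{n-1}*n)\cup\Delta^{2k-1}_{n-1}$). Your proposal, by contrast, tries to argue by tracking which join-piece $G=\{p_1,\dots,p_{2s}\}$ lands in and worrying that $p_{2s-1}$ could ``live deep inside'' $B^{2s-1,j''}_{m-1}$ — but that scenario is ruled out at the source, because every facet of $B^{2s-1,j''}_{m-1}$ must contain $\pm(m-1)$. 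Without this observation you cannot close the induction; with it, the bookkeeping you were dreading collapses to two applications of the same fact.

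One smaller point: you do not need equation~\eqref{eq:decomposition} anywhere in this proof (the paper reserves it for Lemma~\ref{lm: positive facets}); the case $d=2i$ is handled more directly via the definition of $\Delta^{2k-1}_n$ as built from $\Delta^{2k-1}_{n-1}$, which lets you induct on $n$ rather than descend through~\eqref{eq:decomposition}.
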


	\begin{proof}
	If $d=2k+1$, then the statement places no restrictions on $p_d$. Furthermore, by Lemma~\ref{lm: facet inclusion}, $F\backslash p_d$ is a facet of $\pm\partial B^{2k,i'}_{n'}$ for some $i'\leq k$ and $n'\leq n$. Thus it is enough to prove the statement for $d=2k$. We do this by induction.
	First we deal with the base cases: 
	\begin{itemize}
		\item If $k=i=1$ and $n\geq 2$, then $\partial B^{2,1}_n=\Delta^1_n$, and so both conditions hold by definition of $\Delta^1_n$.
		\item If $i=0$ and $k, n$ are arbitrary, then $\partial B^{2k, 0}_n=\partial \overline{\{-1, n-2k+1, \dots, n\}}$, so the statement holds.
		\item Finally, if $k$ is any number, $0\leq i\leq k$ and $n=2k$, then $|p_1|=1, |p_2|=2, \dots, |p_{2k}|=2k$; hence, the statement holds in this case as well.
	\end{itemize} 

	Now, for the inductive step, we assume that the statement holds for $k'=k-1$, all $i'\leq k'$ and $n'\geq 2k'$, and that it also holds for  $k'=k$, $i'=k$ and all $n'< n$. We will prove that then it holds for $k$, $n$, and all $i\leq k$. Note that the proof of Lemma \ref{lm: facet inclusion} implies the following: 
	\begin{enumerate}
	\item[($\star$)] if $j<d/2$ and $F\in \partial B^{d, j}_n$, then $F\backslash \{p_d\}\in \pm \partial B^{d-1, j}_{n'} \cup \pm \partial B^{d-1, j-1}_{n'}$, where $n'\in \{n-2, n-1\}$, and $|p_d|=n'+1\in\{n-1,n\}$. 
	\end{enumerate}
	Hence in the case of $d=2k$ and $i\leq k-1$, by applying the above statement to $F\in \partial B^{2k, i}_n$ twice, we obtain that $$F\backslash \{p_{2k-1}, p_{2k}\}\in \pm \Big(\partial B^{2k-2,i}_{n''} \cup \partial B^{2k-2, i-1}_{n''}\cup \partial B^{2k-2, i-2}_{n''}\Big), $$
where $|p_{2k-1}|=n''+1 \in \{n'-1,n'\}=\{|p_{2k}|-2, |p_{2k}|-1\}$. 
Thus the statement follows by the inductive hypothesis on $k$.

Finally we consider the case of $i=k$. 
Since $n>2k$, by definition of $\Delta^{2k-1}_n$,
\[\partial B^{2k, k}_n=\Delta^{2k-1}_n\subseteq \pm \Big(\partial B^{2k-1, k-1}_{n-1}*n \Big) \cup \Delta^{2k-1}_{n-1}.\]
Hence a facet $F\in \partial B^{2k,k}_n$ is either a facet of $\Delta^{2k-1}_{n-1}$ or it is a facet of $\pm(\partial B^{2k-1, k-1}_{n-1}*n)$. In the former case, the statement follows by the inductive hypothesis on $n$. In the latter case it follows from ($\star$) applied to $\partial B^{2k-1,k-1}_{n-1}$ and the inductive hypothesis on $k$. 
\end{proof}

We now turn to discussing sufficient conditions: the following lemma describes a large chunk of facets of $\Delta^{2k-1}_n$  and in fact characterizes all positive facets of $\Delta^{2k-1}_n$. For a simplicial complex $\Gamma$ on $V_n$, we denote by $\Gamma_+$ the restriction of $\Gamma$ to the positive vertices, i.e., to $[n]=\{1,2,\ldots,n\}$. A facet of $\Gamma$ is called \emph{positive} if $F\in \Gamma_+$.

\begin{lemma}\label{lm: positive facets}
	Let $k\geq 1$, $n\geq 2k$, and $1\leq m\leq k$ be integers. Let $S(2k,n)_m$ be the collection of subsets of $V_n$ of the form $\{p_1,p_2, p_3, \ldots, p_{2k}\}$ that satisfy the following conditions:
	\begin{enumerate}
		\item $|p_1|<|p_2| <\cdots <|p_{2k}|$;
		\item  $p_{2i-1}$ and $p_{2i}$ have the same sign for all $i=1,2,\ldots, k$;
		\item $|p_2|-|p_1|=1$,  
		$|p_{2i}|-|p_{2i-1}|=2$ for all $2\leq i \leq m$, and  $\{|p_{2i}|, |p_{2i-1}|\}=\{n-2(k-i)-1, n-2(k-i)\}$ for all $m< i \leq k$.
	\end{enumerate}
Let $S(2k,n)=\cup_{m=1}^k S(2k,n)_m$.
	Then any positive facet of $\Delta^{2k-1}_n$ is in $S(2k,n)$. Furthermore, any set in $S(2k,n)$ is a facet of $\Delta^{2k-1}_n$.
\end{lemma}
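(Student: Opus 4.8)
The statement has two halves: every positive facet of $\Delta^{2k-1}_n$ lies in $S(2k,n)$, and conversely every member of $S(2k,n)$ is a facet. I would prove both by induction on $n$ (with $k$ fixed, or a joint induction on $k$ and $n$ as in the proof of Lemma~\ref{lm: facet}), using the decomposition \eqref{eq:decomposition} of $\Delta^{2k-1}_n$ into facet-disjoint pure subcomplexes together with the recursive description \eqref{B^d-in-terms of-B^{d-2}} of $B^{d,i}_n$ and the structural facts in Lemma~\ref{lm: prop of B^{d, i}_n}. The base case $n=2k$ is immediate: $\Delta^{2k-1}_{2k}=\partial\C^*_{2k}$, its unique positive facet is $\{1,2,\dots,2k\}$, and this set lies in $S(2k,2k)_1$ (all the gap conditions are forced, and the "$m<i\le k$" clause with $m=1$ reads $\{|p_{2i}|,|p_{2i-1}|\}=\{2i-1,2i\}$, which holds).

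\textbf{The necessity direction ("any positive facet is in $S(2k,n)$").} Conditions (1) and (3) of $S(2k,n)$ are essentially already guaranteed by Lemma~\ref{lm: facet}: a facet $F$ of $\Delta^{2k-1}_n=\partial B^{2k,k}_n$ satisfies $|p_{2s}|-|p_{2s-1}|\le 2$ for $s\ge 2$ and $|p_2|-|p_1|=1$ unless $|p_1|=1$; for a \emph{positive} facet, $|p_1|=p_1\ge 1$, so I need to argue the "$|p_1|=1$" escape clause does not actually occur for a positive facet — or, more cleanly, just carry the whole description through the induction and let it come out. The substantive content is condition (2) (consecutive pairs share a sign — automatic for positive facets) and the precise statement in (3) that after some index $m$ the pairs are forced to be the \emph{last} $2(k-m)$ coordinates $\{n-2(k-i)-1,n-2(k-i)\}$, and that for $i\le m$ the gap is exactly $2$ (not $\le 2$). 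Here I would use \eqref{eq:decomposition}: a positive facet $F$ lies in exactly one of the pieces $\Delta^{2k-1}_{2k}\backslash\pm B^{2k-1,k-1}_{2k}$, $\pm(\partial B^{2k-1,k-1}_{s-1}*s)\backslash\pm B^{2k-1,k-1}_s$, or $\pm B^{2k-1,k-1}_n$; being positive it must lie in one of the "$+$" pieces. If $F\in \partial B^{2k-1,k-1}_{s-1}*s$ then $p_{2k}=s$ and $F\backslash\{s\}$ is a positive facet of $\partial B^{2k-1,k-1}_{s-1}$, so Lemma~\ref{lm: facet inclusion} plus peeling off $p_{2k-1}$ via ($\star$) from the proof of Lemma~\ref{lm: facet} reduces it to a positive facet of $\partial B^{2k-2,k-1}_{s-2}=\Delta^{2k-3}_{s-2}$, i.e., to the case of $(k-1,s-2)$; the index shift $n\mapsto s$ together with the fact that $p_{2k-1}\in\{s-2,s-1\}$ is exactly what produces the "$\{n-2(k-i)-1,n-2(k-i)\}$" tail pattern once the induction is unwound. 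The $F\in B^{2k-1,k-1}_n$ case is handled by \eqref{B^d-in-terms of-B^{d-2}} pushing down to $B^{2k-3,\bullet}_{n-2}$, again a $(k-1,n-2)$ instance. Bookkeeping the value of $m$ through these reductions is the fiddly part.

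\textbf{The sufficiency direction ("any set in $S(2k,n)$ is a facet").} Here I would show each $F\in S(2k,n)_m$ actually appears as a facet in the appropriate piece of \eqref{eq:decomposition}. Write $F=\{p_1,p_2\}\cup\dots$; since $F$ is positive, $p_{2k}\le n$, and the recursion $B^{d,i}_n=(B^{d-1,i}_{n-1}*n)\cup\cdots$ together with $\Delta^{2k-1}_s$ being built from $\Delta^{2k-1}_{s-1}$ by replacing $B^{2k-1,k-1}_{s-1}$ by $\partial B^{2k-1,k-1}_{s-1}*s$ suggests: if $p_{2k}=n$, remove $\{n\}$ (and if $m=k$ also $p_{2k-1}=n-1$, using that the last pair must be $\{n-1,n\}$ when $i=k$) and check $F\setminus\{p_{2k}\}\in S(2k-1,n-1)$-type set that is a facet of $\partial B^{2k-1,k-1}_{n-1}$, then cone; if $p_{2k}<n$ iterate down to $\Delta^{2k-1}_{p_{2k}}$. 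Concretely it is cleanest to induct the statement for $\partial B^{2k,i}_n$ for \emph{all} $i\le k$ simultaneously, mirroring Lemma~\ref{lm: facet}'s induction, since the cone/union structure of $B^{d,i}_n$ only closes up across all $i$. The key checks are: (a) the dimension count — $|F|=2k$, so $F$ being a face of the $(2k-1)$-sphere $\Delta^{2k-1}_n$ makes it automatically a facet once we know $F\in\Delta^{2k-1}_n$; thus the entire sufficiency reduces to \emph{membership}, $F\in\Delta^{2k-1}_n$, which is much easier than facet-hood; and (b) that the gap/sign/tail conditions defining $S(2k,n)_m$ are exactly preserved under the reduction maps $F\mapsto F\setminus\{p_{2k}\}$ or $F\mapsto F\setminus\{p_{2k-1},p_{2k}\}$, with $m$ decreasing by $0$ or $1$ accordingly.

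\textbf{Main obstacle.} The genuine difficulty is the necessity direction's tail-pattern claim in condition (3): showing that once the gaps stop being "exactly $2$" they are forced into the specific windows $\{n-2(k-i)-1,n-2(k-i)\}$ at the top. This is where the asymmetry of the construction (the replaced ball is always $B^{d,\lceil d/2\rceil-1}_n$, whose facets involve the largest-indexed vertices) has to be tracked carefully through \eqref{eq:decomposition} and the nested applications of ($\star$); a sloppy induction will lose control of which index $m$ a given facet has. I would set up the induction to prove, for each piece of the decomposition separately, a refined statement recording both membership in $S(2k,n)$ \emph{and} which sub-range of $m$ occurs (facets of $\pm B^{2k-1,k-1}_n$ forcing large $m$ down to the terminal $B^{2k,0}_n=\overline{\{-1,n-2k+1,\dots,n\}}$ giving $m=1$), so that the tail pattern is an automatic consequence rather than something to be re-derived.
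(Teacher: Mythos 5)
Your high-level plan---use the decomposition \eqref{eq:decomposition}, peel off the top coordinates to reduce to a $(k-1)$-instance, and for sufficiency reduce facethood to membership via the dimension count---does match the paper's strategy, and the membership-implies-facet reduction is a tidy observation the paper leaves implicit. Where the paper is markedly cleaner, and where your sketch stays vague, is precisely the part you call ``the fiddly part.'' The paper inducts on $k$ (not $n$) and works throughout with the restriction operator $\Gamma\mapsto\Gamma_+$ to positive vertices. Because $\pm n$ enters the recursion for $B^{d,i}_n$ asymmetrically, applying $(\cdot)_+$ commutes with the relevant joins and kills one of the two unioned summands at each step, so a positive facet lying in $\pm(\partial B^{2k-1,k-1}_{j-1}*j)\backslash\pm B^{2k-1,k-1}_j$ is shown by a short chain of subcomplex identities to lie in $\big(\Delta^{2k-3}_{j-3}\backslash\pm B^{2k-3,k-2}_{j-3}\big)_+*(j-2,j)$, and a positive facet of $\pm B^{2k-1,k-1}_n$ to lie in $\big(\Delta^{2k-3}_{n-2}\big)_+*(n-1,n)$. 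That is exactly the $(k-1,j-3)$ and $(k-1,n-2)$ reduction you anticipate, but carried out at the level of whole subcomplexes rather than by iterating the facet-peeling statement ($\star$), which only tells you $F\setminus\{p_d\}$ lies in \emph{some} $\pm\partial B^{d-1,\bullet}_{n'}$ without controlling which summand or which $n'$---so the $m$-bookkeeping you worry about would indeed be painful to run your way. This also explains why the paper strips off \emph{pairs}: doing so keeps everything in terms of $S(2k-2,\cdot)$ and sidesteps the $S(2k-1,n-1)$-type sets your sketch invokes, which are never defined. For sufficiency the paper runs the reduction backwards, using the identity $\bigcup_{m<k}S(2k,n)_m=\{F\cup\{n-1,n\},\,F\cup\{-n+1,-n\}:F\in S(2k-2,n-2)\}$ together with Lemma~\ref{lm: prop of Delta^{2k-1}_n} for the $m<k$ range, and a separate inductive argument for $S(2k,n)_k$. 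Finally, one small slip in your sufficiency paragraph: when $m=k$ the positive facet's last pair satisfies $p_{2k}-p_{2k-1}=2$ and need not be $\{n-1,n\}$; it is the $m<k$ case in which the tail is pinned at $\{n-1,n\}$.
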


\noindent For instance, if $n\geq 9$, then $F=\{-3,-4, 5,7, n-1,n\}$ and $G=\{1,2,3,5,7,9\}$  are facets
of $\Delta^{5}_n$  since $F\in S(6,n)_2$ and $G\in S(6,n)_3$. Similarly, if $n\geq 6$, then $H=\{1,2, -(n-3), -(n-2), n-1, n\}$ is a facet of $\Delta^{5}_n$ since $H\in S(6,n)_1$.

\begin{proof} 
We start with the first statement. We use induction on $k$. In the base case of $k=1$, the statement follows from the definition of $\Delta^1_n$. In fact, the elements of $S(2,n)_1$ are precisely the facets of $\Delta^1_n\backslash \pm B^{1,0}_n$. For $k\geq 2$, a facet $F\in(\Delta^{2k-1}_n)_+$ can only be of the following two types:

\smallskip\noindent{\bf Case 1:}
	$F\in \left(\Delta^{2k-1}_n\backslash\pm B^{2k-1, k-1}_n\right)_+$. We will prove by induction on $k$ that in this case, $F$ is in $S(2k,n)_k$, and, in particular, that $\{p_{2k-1}, p_{2k}\}=\{j-2, j\}$  for some $j\leq n$. Since the only positive facet in $\Delta^{2k-1}_{2k}$ is $[2k]$ and $[2k]\in B^{2k-1, k-1}_{2k}$, it follows from equation \eqref{eq:decomposition} that $F\in \left(\pm(\partial B^{2k-1, k-1}_{j-1} * j) \backslash \pm B^{2k-1, k-1}_j\right)_+$ for some $2k<j\leq n$. 
		
Note that 
\begin{eqnarray*}
&&\left(\pm \big(\partial B^{2k-1, k-1}_{j-1} * j\big)\right)_+ 
=\left(\partial B^{2k-1, k-1}_{j-1}\right)_+ * j \\
	&\qquad& =\left(\Big(\partial B^{2k-2, k-1}_{j-2}\Big)_+ *(j-1,j)\right) \cup 
	\left(\Big(B^{2k-2, k-1}_{j-2}\backslash -B^{2k-2, k-2}_{j-2}\Big)_+*j \right),
\end{eqnarray*}		
where the last step is by Lemma~\ref{lm: prop of B^{d, i}_n}(3). 
On the other hand, by equation \eqref{B^d-in-terms of-B^{d-2}} and by Definition~\ref{def},
\begin{equation*}
\left(\pm B^{2k-1, k-1}_j\right)_+= \Big(B^{2k-3,k-1}_{j-2}\cup B^{2k-3,k-2}_{j-2}\Big)_+ * (j-1,j)=
\Big(\Delta^{2k-3}_{j-2}\Big)_+ * (j-1,j).
\end{equation*}
Comparing the last two equations and using the fact that $\partial B^{2k-2, k-1}_{j-2}=\Delta^{2k-3}_{j-2}$, we conclude that 
\begin{eqnarray*}
F &\in & \Big(B^{2k-2, k-1}_{j-2}\backslash -B^{2k-2, k-2}_{j-2}\Big)_+*j \\
&=&\Big(B^{2k-3,k-1}_{j-3}\backslash B^{2k-3,k-3}_{j-3}\Big)_+ * (j-2,j)\\
&=&\Big(\big(\Delta^{2k-3}_{j-3}\backslash \pm B^{2k-3,k-2}_{j-3}\big)_+ \cup 
\big(-B^{2k-3,k-2}_{j-3}\backslash B^{2k-3,k-3}_{j-3}\big)_+\Big)  * (j-2,j)\\
&=&\Big(\Delta^{2k-3}_{j-3}\backslash \pm B^{2k-3,k-2}_{j-3}\Big)_+ * (j-2,j).
\end{eqnarray*}
The last step uses that $\big(-B^{2k-3,k-2}_{j-3}\big)_+=\big(B^{2k-4,k-3}_{j-4}\big)_+*(j-3)=\big(B^{2k-3,k-3}_{j-3}\big)_+$.
This computation along with the inductive assumption shows that $F\backslash\{j-2,j\}\in S(2k-2,j-3)_{k-1}$, and hence that $F\in S(2k,j)_k\subseteq S(2k,n)$.

\smallskip\noindent{\bf Case 2:} $F\in \left(\pm B^{2k-1, k-1}_n\right)_+$. Then 
		\begin{eqnarray*}
		F &\in& \left(B^{2k-2, k-1}_{n-1} \cup B^{2k-2, k-2}_{n-1}\right)_+*n \\
		&=& \left(B^{2k-3, k-1}_{n-2}\cup B^{2k-3, k-2}_{n-2}\right)_+*(n-1, n)= \left(\Delta^{2k-3}_{n-2}\right)_+*(n-1, n),
		\end{eqnarray*}
		and the assertion again follows by the inductive hypothesis. This concludes the proof of the first statement.

	For the second statement, we also use induction on $k$. We start by showing that all elements of $S(2k,n)_k$ are facets of $\Delta^{2k-1}_n \backslash \pm B^{2k-1, k-1}_n$. This claim does hold for $k=1$. For $k\geq 2$, note that by Lemma \ref{lm: prop of B^{d, i}_n}(2) and Definition \ref{def},
	\[ \Delta^{2k-3}_{j-3}\backslash \pm B^{2k-3, k-2}_{j-3} \subseteq \Delta^{2k-3}_{j-3}\backslash 
	\big(B^{2k-3, k-2}_{j-3} \cup B^{2k-3, k-3}_{j-3}\big)=B^{2k-3, k-1}_{j-3}\backslash B^{2k-3, k-3}_{j-3} \mbox{ for all } 2k<j\leq n,\]
	and by symmetry, $\Delta^{2k-3}_{j-3}\backslash \pm B^{2k-3,k-2}_{j-3}$ is also contained in $-\big(B^{2k-3,k-1}_{j-3}\backslash B^{2k-3,k-3}_{j-3}\big)$.
	This together with Definition \ref{def}, Lemma \ref{lm: prop of B^{d, i}_n}(3), and equation \eqref{eq:decomposition} implies that for $2k<j\leq n$,
	\begin{equation*}
	\begin{split}
	\left(\Delta^{2k-3}_{j-3}\backslash \pm B^{2k-3, k-2}_{j-3}\right)*\Big((j-2,j)\cup (-j+2, -j)\Big)\subseteq \pm  \left( \big(B^{2k-2, k-1}_{j-2}\backslash -B^{2k-2, k-2}_{j-2}\big)*j \right) \\
	\subseteq  \pm \left(\partial B^{2k-1, k-1}_{j-1} * j\right) \backslash \pm B^{2k-1, k-1}_j \subseteq \Delta^{2k-1}_n\backslash \pm B^{2k-1, k-1}_n.
	\end{split}
	\end{equation*}
	Since $S(2k,n)_k=\bigcup_{j=2k+1}^n \left\{F\cup\{j-2,j\}, \,  F\cup\{-j+2,-j\} \, : \, F\in S(2k-2,j-3)_{k-1}\right\}$, the claim follows by the inductive hypothesis. 
	
	Finally, by Lemma \ref{lm: prop of Delta^{2k-1}_n},
	$$\Delta^{2k-3}_{n-2}*\Big((n-1,n)\cup (-n+1, -n)\Big)\subseteq \Delta^{2k-1}_n.$$
Since $\bigcup_{m=1}^{k-1} S(2k,n)_m = \{F\cup\{n-1,n\}, \  F\cup\{-n+1,-n\} \ : \ F\in S(2k-2,n-2)\}$, the above equation together with the inductive hypothesis shows that all elements of $\bigcup_{m=1}^{k-1} S(2k,n)_m$ are also facets of $\Delta^{2k-1}_n$. The result follows.
\end{proof}

We close this section with a couple of remarks related to Lemma \ref{lm: positive facets}.

\begin{remark}  \label{rem:typical}
	For a sufficiently large $n$, the set $S(2k,n)_k$ of Lemma \ref{lm: positive facets} describes a majority of facets of $\Delta^{2k-1}_n$. Indeed, by Lemma \ref{lm: facet count}, there are $2^k\binom{n}{k}+O(n^{k-1})$ facets in any combinatorial $(2k-1)$-sphere on $V_n$ that is cs-$k$-neighborly. On the other hand, the cardinality of the set $S(2k,n)_k$ is $2^k\binom{n-1-2(k-1)}{k}=2^k\binom{n}{k}+O(n^{k-1})$; here $2^k$ is the number of ways to attach signs to the $k$ pairs $(|p_{2j-1}|, |p_{2j}|)$, and $\binom{n-1-2(k-1)}{k}$ is the number of ways to choose $|p_1|, |p_3|,\dots, |p_{2k-1}|$ from $[n]$.
\end{remark}

\begin{remark}
	For any $k\geq 4$ and $n>2k$, the complex $(\Delta^{2k-1}_n)_+$ is not pure. Indeed, since $\Delta^{2k-1}_n$ is cs-$k$-neighborly, the set $\tau=\{1,2,\dots,k\}$ is a face of $(\Delta^{2k-1}_n)_+$. However, according to Lemma~\ref{lm: positive facets}, no $(2k-1)$-dimensional face of $(\Delta^{2k-1}_n)_+$ contains $\tau$. Furthermore, for $k= 3$ and $n\geq 9$, $(\Delta^{5}_n)_+$
	is not a (combinatorial) ball. To see this, note that the intersection of the facet $\{1,2,3,5,6,8\}$ with the complex generated by other positive facets of $\Delta^{5}_n$ is not a pure $4$-dimensional complex because, as follows from Lemma \ref{lm: positive facets}, the $3$-face $\{1,2,6,8\}$ is a maximal face of this intersection.	
\end{remark}

\section{An overview of two high-dimensional constructions} \label{sec:two-constructions}
This is an intermission section: with Lemmas \ref{lm: facet inclusion}--\ref{lm: positive facets} at our disposal, we are ready to outline both of our high-dimensional constructions while deferring most of the proofs to the following two sections. The first construction is very simple:

\begin{definition} \label{def:Lambda}
For $d\geq 1$ and $n\geq d+1$, define $\Lambda^{d}_n:=\lk\big(\{1,2\}, \Delta^{d+2}_{n+2}\big)$.
\end{definition}

It follows from the definition that the complex $\Lambda^{d}_n$ is a combinatorial $d$-sphere whose vertex set is contained in $W_n:=\{\pm 3,\pm 4,\ldots,\pm (n+2)\}$. We will mostly concentrate on the $d=2k-1$ case. The significance of $\Lambda^{2k-1}_n$ is that it gives us a new construction of a cs combinatorial $(2k-1)$-sphere with $2n$ vertices that is cs-$k$-neighborly:

\begin{theorem} \label{thm:Lambda}
Let $k\geq 1$ and $n\geq 2k$ be integers. Then $\Lambda^{2k-1}_n$ is a combinatorial $(2k-1)$-sphere whose vertex set is $W_n$. This sphere is both cs and cs-$k$-neighborly (w.r.t.~$W_n$). Furthermore, if $k\geq 2$ and $n$ is sufficiently large, then $\Lambda^{2k-1}_n$ is not isomorphic to $\Delta^{2k-1}_n$.
\end{theorem}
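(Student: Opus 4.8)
The plan is to establish the three assertions of Theorem~\ref{thm:Lambda} in turn, with the bulk of the work going into cs-$k$-neighborliness and, separately, the non-isomorphism claim. Since the first two assertions are needed for context but the question asks only about the final statement, I will focus the proposal on \emph{``$\Lambda^{2k-1}_n$ is not isomorphic to $\Delta^{2k-1}_n$ for $k\geq 2$ and $n$ sufficiently large.''} The natural strategy is to exhibit a combinatorial invariant on which the two spheres disagree, and the most accessible invariant here is the multiset of edge-link sizes (equivalently, the face numbers of vertex links or edge links). Recall $\Lambda^{2k-1}_n=\lk(\{1,2\},\Delta^{2k+1}_{n+2})$, so an edge $e$ of $\Lambda^{2k-1}_n$ corresponds to the face $e\cup\{1,2\}$ of $\Delta^{2k+1}_{n+2}$, and $\lk(e,\Lambda^{2k-1}_n)=\lk(e\cup\{1,2\},\Delta^{2k+1}_{n+2})$. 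Thus edge links in $\Lambda^{2k-1}_n$ are exactly the links of those $4$-faces of $\Delta^{2k+1}_{n+2}$ that contain $\{1,2\}$.

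First I would handle the base case $k=2$ directly, since there the needed data is already in hand: Corollary~\ref{cor: edge links} lists all edge-link sizes of $\Delta^3_m$, and from it one reads off the multiset $\{f_0(\lk(e,\Delta^3_n)) : e \in \Delta^3_n\}$. For $\Lambda^3_n=\lk(\{1,2\},\Delta^5_{n+2})$ one must compute the analogous multiset; here I would use Lemma~\ref{lm: positive facets} (the description of facets of $\Delta^5_{n+2}$ via $S(6,n+2)$) together with Lemma~\ref{lm: prop of Delta^{2k-1}_n} to enumerate the $4$-faces containing $\{1,2\}$ and compute their link sizes, or — more cleanly — use the fact that $\lk(\{n+1,n+2\},\Delta^5_{n+2})=\Delta^3_n$ together with the $\Z/2$ symmetry to locate a few atypical edges. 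The point is that $\Delta^3_n$ has edge links whose sizes grow \emph{linearly} in $n$ (sizes $2n-4$, $2(n-i)-1$, $2\ell+1$, etc.), whereas I expect the ``large'' edge links of $\Lambda^3_n$ to have a different growth profile or a different count of extremal edges; the cleanest contradiction to chase is: \emph{the number of edges with link of a given large size differs}, or \emph{the largest edge-link size differs}. Concretely, $\Delta^3_n$ has exactly four edges ($\pm\{1,2\}$, $\pm\{n-1,n\}$) with the maximum link size $2n-4$; I would compute the corresponding count and maximum for $\Lambda^3_n$ and show they disagree for large $n$.

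For general $k\geq 2$ the same philosophy applies but the invariant must be chosen to be computable from the facet descriptions we have. Here is the approach I would take: iterate the edge-link-of-an-edge reduction. Lemma~\ref{lm: prop of Delta^{2k-1}_n} gives $\lk(\{m-1,m\},\Delta^{2k-1}_m)=\Delta^{2k-3}_{m-2}$, so by repeatedly taking links of the ``last'' antipodal pairs one descends from $\Delta^{2k-1}_n$ to a $3$-dimensional complex of the form $\Delta^3_\ast$ (up to relabeling); similarly, Definition~\ref{def:Lambda} and Lemma~\ref{lm: prop of Delta^{2k-1}_n} let one descend from $\Lambda^{2k-1}_n=\lk(\{1,2\},\Delta^{2k+1}_{n+2})$ to a $3$-dimensional link. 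If one can show that along \emph{every} descending chain of this type in $\Lambda^{2k-1}_n$ one lands in a complex that is \emph{never} isomorphic to $\Delta^3_\ast$ (because its edge-link multiset, computed via Corollary~\ref{cor: edge links} applied to the relevant $\Delta^3$ and the restriction imposed by fixing $\{1,2\}$, is wrong), then $\Lambda^{2k-1}_n\not\cong\Delta^{2k-1}_n$, since an isomorphism would carry some descending chain to a descending chain. A cleaner alternative, which I would try first, is purely numerical: compute $f_3(\lk(\sigma,\cdot))$ for a distinguished vertex or edge, or better, count the number of vertices $v$ of each complex for which $\lk(v)$ has a prescribed number of facets, using Lemma~\ref{lm: facet count}-style estimates together with Lemma~\ref{lm: positive facets} to get the leading terms. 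If the leading-term coefficients already match, one has to go to the lower-order terms, and that is where the argument gets delicate.

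The main obstacle I anticipate is precisely this: $\Delta^{2k-1}_n$ and $\Lambda^{2k-1}_n$ are both cs-$k$-neighborly $(2k-1)$-spheres on $2n$ vertices, so by Lemma~\ref{lm: facet count} they have the \emph{same} number of facets up to $O(n^{k-1})$, and more generally their $f$-vectors agree in top order; any distinguishing invariant must therefore be sensitive to finer structure — the distribution of link sizes, the automorphism group, or a local combinatorial feature. The automorphism-group route is attractive because Theorem~\ref{thm: automorphism} (cited in the introduction) says $\Delta^{2k-1}_n$ has only the trivial automorphism group $\{\mathrm{id},-\mathrm{id}\}$ for large $n$; if I can show $\Lambda^{2k-1}_n$ has a \emph{larger} automorphism group for infinitely many $n$ — for instance an extra symmetry coming from an automorphism of $\Delta^{2k+1}_{n+2}$ that fixes $\{1,2\}$ setwise, or from the special combinatorics of an edge link — then non-isomorphism is immediate. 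So my concrete plan is: (i) dispose of $k=2$ by explicit edge-link counting, contrasting Corollary~\ref{cor: edge links} for $\Delta^3_n$ with a direct computation for $\Lambda^3_n$; (ii) for $k\geq 3$, reduce via iterated links to the $k=2$ situation, showing an isomorphism $\Lambda^{2k-1}_n\cong\Delta^{2k-1}_n$ would induce an isomorphism between the corresponding $3$-dimensional links, contradicting (i); and (iii) keep the automorphism-group argument in reserve in case the edge-link invariant degenerates. I expect step (i), the careful enumeration of atypical edge links of $\Lambda^3_n=\lk(\{1,2\},\Delta^5_{n+2})$ from Lemma~\ref{lm: positive facets}, to be the technical heart of the proof.
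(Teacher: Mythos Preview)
Your instinct to use link sizes as the distinguishing invariant is correct, and for $k=2$ your plan essentially coincides with the paper's: in a $3$-sphere, edges are precisely the $(2k-3)$-faces, and their links are cycles whose lengths can be counted. The paper shows (Lemmas~\ref{lemma:Omega(n^{k-1})} and~\ref{lemma:O(n^{k-2})}) that $\Lambda^{3}_{n}$ has $\Omega(n)$ edges with ``long'' links (length at least roughly $2n$), whereas $\Delta^3_n$ has only $O(1)$ such edges; this is exactly the kind of discrepancy you are aiming for.

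For $k\geq 3$, however, your two proposed routes both have real problems. The iterated-link reduction does not give a clean contradiction: an isomorphism $\phi:\Lambda^{2k-1}_n\to\Delta^{2k-1}_n$ would send the edge $\{n-1,n\}$ of $\Delta^{2k-1}_n$ to some \emph{unknown} edge $e$ of $\Lambda^{2k-1}_n$, and all you would learn is that $\lk(e,\Lambda^{2k-1}_n)\cong\Delta^{2k-3}_{n-2}$. To derive a contradiction you would have to prove that \emph{no} edge of $\Lambda^{2k-1}_n$ has link isomorphic to $\Delta^{2k-3}_{n-2}$, which requires controlling all edge links of $\Lambda^{2k-1}_n$ --- a problem at least as hard as the original. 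Your ``every descending chain'' formulation does not avoid this, since the chain on the $\Lambda$ side is dictated by $\phi$, not by you. The automorphism-group backup is also unlikely to succeed: there is no reason to expect $\Lambda^{2k-1}_n$ to have extra symmetry, and in fact the same kind of analysis that yields Theorem~\ref{thm: automorphism} would presumably show its automorphism group is also $\{\mathrm{id},\alpha\}$.

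The idea you are missing is simple but decisive: instead of looking at \emph{edge} links (which are $(2k-3)$-spheres and hence complicated for $k\geq 3$), look directly at links of $(2k-3)$-\emph{faces}. These are always $1$-spheres, i.e., cycles, so their size is a single integer, and the invariant ``number of $(2k-3)$-faces whose link has at least $2(n-3k+3)$ vertices'' is computable from the facet descriptions you already cite. The paper shows this count is $\Omega(n^{k-1})$ for $\Lambda^{2k-1}_n$ (by exhibiting many such faces via Lemma~\ref{lm: positive facets}) but only $O(n^{k-2})$ for $\Delta^{2k-1}_n$ (via a case analysis based on Lemmas~\ref{lm: facet inclusion} and~\ref{lm: facet}). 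This handles all $k\geq 2$ uniformly and avoids any induction on $k$.
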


\noindent The fact that there exists an edge of $\Delta^{2k+1}_{n+2}$ whose link is {\em cs} is already rather surprising; even more surprising is the fact that this link is {\em cs-$k$-neighborly}.

The complete proof of Theorem \ref{thm:Lambda} will be given in the next section. Here we merely explain 
why $\Lambda^{2k-1}_n$ could be cs and cs-$k$-neighborly. According to Remark \ref{rem:typical}, we expect that most facets $G$ of $\Lambda^{2k-1}_n$ satisfy $\{1,2\}\cup G \in S(2k+2,n+2)$. In such a case, by definition of $S(2k+2,n+2)$, the set $\{1,2\}\cup (-G)$ is also in $S(2k+2,n+2)$, which by Lemma \ref{lm: positive facets} implies that $-G\in \Lambda^{2k-1}_n$. Similarly, to see that a ``generic" $k$-element subset of $W_n$ is a face of $\Lambda^{2k-1}_n$, let $\tau=\{i_1,i_2,\dots i_k\}\subseteq W_n$ be such that $|i_k|\leq n$ and $|i_{s+1}|-|i_s|\geq 3$ for all $1\leq s\leq k-1$. For $1\leq s\leq k$, define $j_s$ to be the integer that has the same sign as $i_s$ and satisfies $|j_s|-|i_s|=2$, and let $G:=\{i_1,j_1,\dots,i_k,j_k\}$. Then the set $\{1,2\}\cup G$ is in $S(2k+2,n+2)$,  hence $G$ is a face of $\Lambda^{2k-1}_n$, and hence so is $\tau\subseteq G$. 

The proof that $\Lambda^{2k-1}_n$ and $\Delta^{2k-1}_n$ (for $n\gg 0$) are non-isomorphic will also be based on results of Section 3.2: in the spirit of the previous paragraph, we will use Lemma \ref{lm: positive facets} to show that quite a few $(2k-3)$-faces of $\Lambda^{2k-1}_n$, namely $\Omega(n^{k-1})$ of them, have large links: such links are (graph-theoretic) cycles, and large here means that the link contains $2(n-3k+3)$ or more vertices. On the other hand, we will show using Lemmas \ref{lm: facet} and \ref{lm: facet inclusion} that only $O(n^{k-2})$ of $(2k-3)$-faces of $\Delta^{2k-1}_n$ have such large links.

We close our discussion of $\Lambda^{2k-1}_n$ with one additional property of $\Lambda^{2k-1}_n$. (In contrast, we suspect that for a sufficiently large $n$, $\Delta^{2k-1}_n$ doesn't have this property.) Let $F(2k,n)$ be the collection of facets of the cyclic polytope $C(2k,n)$ of the form $\{i_1, i_1+1\}\cup \{i_2, i_2+1\}\cup \dots \cup \{i_k, i_k+1\}$, where $i_1 \geq 1$, $i_k<n$ and $i_{m+1}\geq i_m+2$ for all relevant $m$. Following Kalai \cite{Kal}, we denote by $\B(F(2k,n))$ the simplicial complex generated by the facets in $F(2k,n)$. This complex is a combinatorial $(2k-1)$-ball: it belongs to the family of \emph{squeezed balls} defined in \cite{Kal}; in fact, it is the inclusion largest ball among all squeezed balls of dimension $2k-1$ with at most $n$ vertices.  The squeezed balls were instrumental in Kalai's proof that for a large $n$ and $d\geq 5$, most of combinatorial $(d-1)$-spheres on $n$ vertices are not polytopal, that is, they cannot be realized as the boundary complexes of polytopes.

\begin{proposition}  \label{prop:squeezed-balls-as-subcomplexes}
	Let $k\geq 1$ and $n\geq k+1$. 
	The sphere $\Lambda^{2k-1}_{2n-1}$ contains an isomorphic image of the squeezed ball $\B(F(2k,n))$ as a subcomplex. Consequently, all squeezed $(2k-1)$-balls with $\leq n$ vertices are embeddable in $\Lambda^{2k-1}_{2n-1}$, and hence also in $\Delta^{2k+1}_{2n+1}$, as subcomplexes.
\end{proposition}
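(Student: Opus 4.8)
The plan is to build the embedding $\B(F(2k,n))\hookrightarrow\Lambda^{2k-1}_{2n-1}$ completely explicitly and then verify it generator by generator using the ``sufficiency'' half of Lemma~\ref{lm: positive facets}. Recall from Definition~\ref{def:Lambda} that $\Lambda^{2k-1}_{2n-1}=\lk\big(\{1,2\},\Delta^{2k+1}_{2n+1}\big)$, so a subset $G$ of $W_{2n-1}$ disjoint from $\{1,2\}$ is a face of $\Lambda^{2k-1}_{2n-1}$ precisely when $\{1,2\}\cup G$ is a face of $\Delta^{2k+1}_{2n+1}$. I would take the vertex map $\phi\colon [n]\to W_{2n-1}$, $j\mapsto 2j+1$; it is injective, and its image $\{3,5,7,\dots,2n+1\}$ avoids $\{1,2\}$ and lies inside $V_{2n+1}$. (Note $n\ge k+1$ guarantees $\Delta^{2k+1}_{2n+1}$ and $\Lambda^{2k-1}_{2n-1}$ are defined.)

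The crucial step is to show that $\phi$ sends each generator $F=\{i_1,i_1+1\}\cup\dots\cup\{i_k,i_k+1\}$ of $\B(F(2k,n))$ (so $i_1\ge1$, $i_k\le n-1$, $i_{m+1}\ge i_m+2$) to a facet of $\Lambda^{2k-1}_{2n-1}$, equivalently that $\{1,2\}\cup\phi(F)$ is a facet of $\Delta^{2k+1}_{2n+1}$. For this I would check that $\{1,2\}\cup\phi(F)$ lies in $S(2k+2,2n+1)_{k+1}$, the collection from Lemma~\ref{lm: positive facets} with $k$ and $n$ there replaced by $k+1$ and $2n+1$: listing its elements in increasing absolute value as $p_1=1$, $p_2=2$, $p_3=2i_1+1$, $p_4=2i_1+3,\dots,p_{2k+1}=2i_k+1$, $p_{2k+2}=2i_k+3$, one sees that the first pair $\{1,2\}$ has gap $1$ while each of the remaining $k$ consecutive pairs has gap $2$ (this is exactly what the dilation by $2$ buys us), that the strict-increase condition $p_1<p_2<\dots<p_{2k+2}$ holds because $i_1\ge1$ and $i_{m+1}\ge i_m+2$, and that the top element $2i_k+3$ is at most $2n+1$ because $i_k\le n-1$. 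The ``sufficiency'' statement of Lemma~\ref{lm: positive facets} then guarantees that $\{1,2\}\cup\phi(F)$ is a facet of $\Delta^{2k+1}_{2n+1}$, hence $\phi(F)$ is a facet of $\Lambda^{2k-1}_{2n-1}$.

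Finally I would assemble the embedding. Since $\B(F(2k,n))$ is generated by the sets in $F(2k,n)$, every one of its faces is a subset of some such $F$; hence its $\phi$-image is a subset of the face $\phi(F)$ of $\Lambda^{2k-1}_{2n-1}$, and so is itself a face. Being injective on vertices, $\phi$ therefore identifies $\B(F(2k,n))$ with a subcomplex of $\Lambda^{2k-1}_{2n-1}$, which is the first assertion. For the ``consequently'' part, I would invoke the already-recorded fact that $\B(F(2k,n))$ is the inclusion-largest squeezed $(2k-1)$-ball with at most $n$ vertices: any squeezed $(2k-1)$-ball on $\le n$ vertices is isomorphic to a subcomplex of $\B(F(2k,n))$, hence embeds in $\Lambda^{2k-1}_{2n-1}$; and since $\Lambda^{2k-1}_{2n-1}=\lk(\{1,2\},\Delta^{2k+1}_{2n+1})$ is a subcomplex of $\Delta^{2k+1}_{2n+1}$, it also embeds there.

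I do not expect a genuine obstacle: the hard content is entirely inside Lemma~\ref{lm: positive facets}, which we may assume. The only points requiring care are the choice of the dilation factor $2$ in $\phi$ (so that the gap-$1$ pairs of the squeezed ball match the gap-$2$ pairs forced by the structure of $S(2k+2,2n+1)_{k+1}$, while the ``apex block'' $\{1,2\}$ stays free and the image stays inside $\{3,\dots,2n+1\}$), and a remark that for the small ranges of $n$ where $F(2k,n)$ is empty the statement is vacuous.
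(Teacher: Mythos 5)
Your proof is correct and follows essentially the same route as the paper's: the same dilation map $j\mapsto 2j+1$, the same verification via Lemma~\ref{lm: positive facets} that $\{1,2\}\cup\rho(F)$ lies in $S(2k+2,2n+1)_{k+1}$ and is therefore a facet of $\Delta^{2k+1}_{2n+1}$, and the same appeal to the inclusion-maximality of $\B(F(2k,n))$ for the ``consequently'' clause. The only difference is that you spell out the arithmetic that the paper compresses into ``it follows from the definitions of $F(2k,n)$ and $S(2k+2,2n+1)$''.
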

\noindent For a set $A$ and an integer $s$, we denote by $\binom{A}{s}$ the collection of all $s$-element subsets of $A$.

\begin{proof} 
Let $\rho: [n] \to W_{2n-1}=\{\pm 3, \ldots, \pm(2n+1)\}$ be defined by $i\mapsto 2i+1$. Then $\rho$ is an injection, and so is the induced map from $\binom{[n]}{2k}$ to $\binom{W_{2n-1}}{2k}$, which we also denote by $\rho$. Furthermore, it follows from the definitions of $F(2k, n)$ and $S(2k+2,2n+1)$ that for any $G\in F(2k,n)$, the set $\{1,2\}\cup \rho(G)$ is in $S(2k+2,2n+1)$, and hence   $\rho(G)\in \Lambda^{2k-1}_{2n-1}$. Thus $\rho$ embeds $\B(F(2k,n))$ into $\Lambda^{2k-1}_{2n-1}$.
\end{proof}

Next we outline how to construct many cs combinatorial $(2k-1)$-spheres that are cs-$(k-1)$-neighborly. This involves applying bistellar flips to $\Delta^{2k-1}_n$ and requires the following lemma.

\begin{lemma} \label{lem:F_i-G_i-flip}
	Let $k\geq 2$ and $3\leq i\leq n-4k+3$. Let $F_i=\{i, i+3, i+7, i+11, \dots, i+4k-5\}$ and $G_i=\{i-1, i+1, i+5, i+9, \dots, i+4k-3\}$. Then $F_i$ is a $(k-1)$-face of $\Delta^{2k-1}_n$, $G_i$ is not a face of $\Delta^{2k-1}_n$, and $\lk\big(F_i, \Delta^{2k-1}_n\big)=\partial {\overline{G_i}}$. 
\end{lemma}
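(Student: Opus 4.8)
The plan is to prove the three assertions by induction on $k$, peeling off the top pair of coordinates of $F_i$ and $G_i$ and descending into an edge link, so that the inductive step reduces to the statement one dimension (in $k$) lower. The key structural fact I would use is Lemma \ref{lm: prop of Delta^{2k-1}_n}, which gives $\lk(\{n-1,n\},\Delta^{2k-1}_n)=\Delta^{2k-3}_{n-2}$, together with the facet descriptions of Lemma \ref{lm: positive facets} (via $S(2k,n)$) and the necessary conditions of Lemma \ref{lm: facet}. First I would set up the base case $k=2$: here $F_i=\{i,i+3\}$, $G_i=\{i-1,i+1,i+5\}$, and $\Delta^{3}_n$ has the explicit facet list of Lemma \ref{lm: facets} and edge-link description of Corollary \ref{cor: edge links}. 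For $3\le\ell\le n-3$ the link of $\{\ell,\ell+2\}$ was computed there and contains the edge $\{\ell+1,-\ell-3\}$ and vertices $\ell-1,\ell+4$; translating to $F_i=\{i,i+3\}=\{\ell,\ell+2\}$ with $\ell=i$ shifted appropriately, I would read off that $\lk(F_i,\Delta^3_n)$ is the cycle containing $i-1,i+1,i+5$ consecutively (so $\partial\overline{G_i}$ sits inside it) — but for the bistellar-flip statement I actually need $\lk(F_i)=\partial\overline{G_i}$ exactly, i.e.\ $F_i$ must be an edge whose link is a triangle, which forces $f_0(\lk(F_i,\Delta^3_n))=3$. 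Checking Corollary \ref{cor: edge links}: an edge of $\Delta^3_n$ with link of size $3$ falls into the ``$\le 6$ otherwise'' bucket, and one must verify directly from Lemma \ref{lm: facets} that $\{i,i+3\}$ (for the relevant range of $i$) lies in exactly three facets, namely $\{i-1,i,i+1,i+3\}$, $\{i,i+1,i+3,i+5\}$, and $\{i,i+3,?,?\}$ completing the triangle $G_i\setminus F_i=\{i-1,i+1,i+5\}$; and that $G_i=\{i-1,i+1,i+5\}$ is not a face because its three edges don't all appear among the facets of Lemma \ref{lm: facets} (e.g.\ $\{i-1,i+5\}$ or $\{i+1,i+5\}$ violates the structure). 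This base-case verification is where the most careful bookkeeping lives.

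For the inductive step, assume the statement for $k-1$. Write $F_i = F_i' \cup \{i+4k-5\}$ where $F_i'=\{i,i+3,\dots,i+4k-9\}$ is the ``$(k-1)$-version'' pattern, and similarly $G_i = G_i' \cup \{i+4k-3\}$. The idea is to locate $F_i$ as a face of $\Delta^{2k-1}_n$ by exhibiting a facet of $S(2k,n)$ containing it (pair up $i$ with $i+1$? no — rather, observe $F_i$ uses only odd-gap-$3$ coordinates, and one can complete each $p_{2j-1}$ to a pair $p_{2j}=p_{2j-1}+2$, giving a set in $S(2k,n)_k$ containing $F_i$); this shows $F_i\in\Delta^{2k-1}_n$. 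Then, to compute $\lk(F_i,\Delta^{2k-1}_n)$, I would work inside the link of the single vertex $i+4k-5$ — or better, inside an iterated edge link of the top two coordinates if that is cleaner — reducing via Lemma \ref{lm: prop of Delta^{2k-1}_n} (after a relabeling that sends the two largest relevant vertices to $n-1,n$) to the statement $\lk(F_i',\Delta^{2k-3}_{n'})=\partial\overline{G_i'}$, which is the inductive hypothesis. The arithmetic of the gaps ($4$ between consecutive odd-indexed coordinates, $3$ between $i$ and $i+3$ at the bottom) is exactly engineered so that removing the top pair shifts the pattern down to the $(k-1)$-case. The non-face claim $G_i\notin\Delta^{2k-1}_n$ follows from Lemma \ref{lm: facet}: writing the elements of any hypothetical facet $F\supseteq G_i$ in increasing order of absolute value, the pair of consecutive elements $i-1,i+1$ has difference $2$ and $i+1,i+5$ has difference $4$, so $i-1,i+1,i+5$ cannot be distributed into the pattern $|p_{2s}|-|p_{2s-1}|\le 2$ forced by Lemma \ref{lm: facet}(1) together with condition (2) — more precisely, $i+1$ and $i+5$ would have to belong to different consecutive pairs, forcing a third coordinate strictly between them and hence a coordinate of $F$ equal to $i+2,i+3$ or $i+4$, none of which are in $G_i$, and one checks this cannot be completed; alternatively, since $G_i$ would lie in $\lk(F_i)=\partial\overline{G_i}$ only if $G_i$ itself is not the apex face, a direct contradiction.

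The main obstacle I anticipate is the exact computation $\lk(F_i,\Delta^{2k-1}_n)=\partial\overline{G_i}$ as opposed to merely ``$G_i$ bounds a subcomplex of the link'': one must show the link has \emph{exactly} $k+1$ vertices, i.e.\ that $F_i$ lies in exactly $k+1$ facets of $\Delta^{2k-1}_n$, and that these facets are precisely the $k+1$ sets $G_i\setminus\{g\}\cup F_i$ for $g\in G_i$. For this I would combine the facet characterization (Lemma \ref{lm: positive facets} for the positive part, and the mixed-sign facets obtained from it) with the decomposition \eqref{eq:decomposition}, arguing that any facet $F\supseteq F_i$ is forced by the gap structure of $F_i$ and conditions (1)–(3) of $S(2k,n)$ to have its coordinates interleaved with those of $F_i$ in exactly one of $k+1$ ways. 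A clean way to organize this is again the edge-link reduction: if the inductive hypothesis gives $\lk(F_i',\Delta^{2k-3}_{n-2})=\partial\overline{G_i'}$ (a sphere with $k$ vertices), then $\lk(F_i,\Delta^{2k-1}_n)$ — being the link of the vertex $i+4k-5$ inside, essentially, a suspension-type extension of that $(2k-3)$-sphere across the new top coordinates — acquires exactly one extra vertex, giving $k+1$ total, which is $\partial\overline{G_i}$. Making the relabeling and the ``extends across the top coordinates'' step precise, so that Lemma \ref{lm: prop of Delta^{2k-1}_n} applies verbatim, is the technical heart of the argument; once the vertex count is pinned down, the identification of the link with $\partial\overline{G_i}$ is forced since any $(2k-2)$-sphere on $k+1$ vertices is the boundary of a simplex.
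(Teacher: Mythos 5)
The paper's proof is short and direct, with no induction: $F_i$ is a face simply because it is a $k$-subset of $[n]$ and $\Delta^{2k-1}_n$ is cs-$k$-neighborly; then Lemma \ref{lm: positive facets} shows that each of the $k+1$ sets $F_i\cup(G_i\setminus\{g\})$, $g\in G_i$, lies in $S(2k,n)_k$ and so is a facet, giving $\partial\overline{G_i}\subseteq\lk(F_i,\Delta^{2k-1}_n)$; since both are $(k-1)$-spheres (not $(2k-2)$-spheres --- your dimension count here is off: a codimension-$k$ face of a $(2k-1)$-sphere has a $(k-1)$-dimensional link), the containment is equality with no vertex counting needed. Your proposal gets the right ingredient ($S(2k,n)$ shows containment) but then routes it through an induction on $k$ via iterated edge links, and that step is not viable: Lemma \ref{lm: prop of Delta^{2k-1}_n} only identifies $\lk(\{n-1,n\},\Delta^{2k-1}_n)$ with $\Delta^{2k-3}_{n-2}$, and the ``relabeling that sends the two largest relevant vertices to $n-1,n$'' you invoke does not exist. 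The complex $\Delta^{2k-1}_n$ has no vertex-transitive symmetry --- Theorem \ref{thm: automorphism} says its only automorphisms are the identity and the antipode --- and Theorem \ref{lm: edge link main result} shows that among all edges, only $\pm\{1,2\}$ and $\pm\{n-1,n\}$ even have cs-$(k-1)$-neighborly links. So the inductive reduction collapses, and you would have to fall back on the direct argument anyway.

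The non-face claim also has a gap. Your reasoning that $i+1$ and $i+5$ must fall into different consecutive pairs, forcing some intermediate coordinate, and that ``one checks this cannot be completed'' does not deliver a contradiction --- a facet containing $G_i$ has $k-1$ free coordinates, and an intermediate value like $i+2$ or $i+3$ is perfectly allowed to appear. Your ``alternative'' argument (that $G_i$ would have to lie in $\lk(F_i)$) is also incorrect: $F_i$ and $G_i$ are disjoint, but $G_i\in\Delta^{2k-1}_n$ does not imply $F_i\cup G_i\in\Delta^{2k-1}_n$, so membership of $G_i$ in $\lk(F_i)$ does not follow. What the paper actually uses is a pigeonhole count: since $i\ge 3$, Lemma \ref{lm: facet}(2) forces $i-1$ to be one of $|p_1|,|p_2|$ in any facet $G=\{p_1,\dots,p_{2k}\}\supseteq G_i$, so the $k$ elements of $G_i\setminus\{i-1\}$ must distribute among the $k-1$ pairs $\{p_{2j-1},p_{2j}\}$, $2\le j\le k$; but these $k$ elements are pairwise at distance at least $4$, so Lemma \ref{lm: facet}(1) allows at most one per pair --- contradiction. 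This counting step is the missing idea in your proposal.
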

\begin{proof}
	The set $F_i$ is a face of $\Delta^{2k-1}_n$ since $\Delta^{2k-1}_n$ is cs-$k$-neighborly. Lemma \ref{lm: positive facets} shows that the link of $F_i$ contains the $(k-1)$-sphere $\partial{\overline{G_i}}$, and hence the link is $\partial{\overline{G_i}}$. Assume $G_i$ is a face of $\Delta^{2k-1}_n$, and let $G=\{p_1,\ldots,p_{2k}\}$ be any facet of $\Delta^{2k-1}_n$ that contains $G_i$, where $|p_1|<|p_2|<\cdots<|p_{2k}|$. Since $i-1>1$ and $(i+1)-(i-1)> 1$, it follows from Lemma \ref{lm: facet} that either $\{|p_1|,|p_2|\}=\{i-1,i\}$ or $|p_2|\leq i-1$. In either case, $i+1\geq |p_3|$. Furthermore, since every two consecutive elements of $G_i\backslash \{i-1\}$ are apart from each other by four,  Lemma \ref{lm: facet} also implies that $|G_i \cap \{p_{2j-1}, p_{2j}\}| \leq 1$ for any $2\leq j\leq k$. But this is impossible since $|G_i \backslash \{i-1\}|=k$ and all of these $k$ elements must belong to the union $\cup_{j=2}^k \{p_{2j-1},p_{2j}\}$ of $k-1$ pairs. This proves that $G_i\notin \Delta^{2k-1}_n$.
\end{proof}

According to Lemma \ref{lem:F_i-G_i-flip}, replacing $\pm\st(F_i,\Delta^{2k-1}_n)= \pm\big(\overline{F_i} * \partial\overline{G_i}\big)$ with $\pm\big(\partial\overline{F_i}*\overline{G_i}\big)$ constitutes admissible bistellar flips, and, by symmetry, the resulting combinatorial sphere is cs. Furthermore, since the stars of distinct $F_i$'s share no common facets, we could simultaneously apply such pairs of symmetric flips for all $i$ in an arbitrary subset $J$ of $\{3,4,\ldots, n-4k+2\}$. We will see in Theorem~\ref{thm:cs-(k-1)-neighb-constr} that for $k\geq 3$, the spheres produced in this way are cs-$(k-1)$-neighborly and that they are pairwise non-isomorphic.

\section{The edge links of $\Delta^{2k+1}_n$} \label{sec:k-neighb}
The goal of this section is to prove Theorem \ref{thm:Lambda}. Along the way, we investigate a more general question of what edges of $\Delta^{2k+1}_n$ have highly cs-neighborly links. Recall that according to Lemma~\ref{lm: prop of Delta^{2k-1}_n}, the link of $\{n-1,n\}$ in $\Delta^{2k+1}_n$ is $\Delta^{2k-1}_{n-2}$, and hence it is both cs and cs-$k$-neighborly. Are there other edges of $\Delta^{2k+1}_n$ that possess the same properties?

In the case of $\Delta^3_n$, the answer is given by Corollary \ref{cor: edge links}. We will need the following extension.

\begin{lemma}\label{lm: cs 1 neighborly edge links}
	Let $n$ be sufficiently large and let $e$ be an edge of $\Delta^3_n$. The link $\lk(e, \Delta^{3}_n)$ has $2n-4$ vertices if and only if $e=\pm \{1,2\}$ or $\pm \{n-1,n\}$. Furthermore, both balls $\lk(e, B^{3,2}_n)$ and $\lk(e, -B^{3,2}_n)$ are cs-$1$-neighborly (w.r.t.~$V_n\backslash \pm e$) and $1$-stacked if and only if $e=\pm \{1,2\}$.
\end{lemma}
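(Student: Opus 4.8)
The plan is to analyze the decomposition $\Delta^3_n = B^{3,1}_n \cup (\Delta^3_n\backslash B^{3,1}_n) = B^{3,1}_n \cup B^{3,2}_n$ at the level of edge links, and to extract $\lk(e, B^{3,2}_n)$ and $\lk(e, -B^{3,2}_n)$ together with their boundaries. The first (vertex-count) sentence is an immediate consequence of Corollary~\ref{cor: edge links}: for $n$ large, among the listed cases the value $2n-4$ is attained only for $e=\pm\{1,2\}$ and $e=\pm\{n-1,n\}$, since $2(n-i)-1$, $2n-5$, $2\ell+1$ and $\le 6$ are all strictly less than $2n-4$ once $n$ is large and $i\ge 2$, $\ell\le n-3$. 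So the work is entirely in the ``furthermore'' part.

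First I would record the general fact that for any edge $e$ of $\Delta^3_n$, since $\Delta^3_n = B^{3,1}_n\cup B^{3,2}_n$ with $B^{3,2}_n = \Delta^3_n\backslash B^{3,1}_n$, the star $\st(e,\Delta^3_n)$ splits as $\st(e,B^{3,1}_n)\cup\st(e,B^{3,2}_n)$, these two $2$-balls (or pieces) meeting along $\st(e,\partial B^{3,1}_n)$ inside the cycle $\lk(e,\Delta^3_n)$. Thus $\lk(e,B^{3,2}_n)$ is either empty, a single vertex/edge, an arc, or the full cycle $\lk(e,\Delta^3_n)$, and it is a $1$-stacked $1$-ball exactly when it is an arc (a path), which is automatic for a $1$-ball anyway; the real content is cs-$1$-neighborliness, i.e.\ whether $\lk(e,B^{3,2}_n)$ contains \emph{every} vertex of $V_n\backslash\pm e$ — equivalently whether it is the whole cycle minus at most its two endpoints, and those endpoints must be the two antipodes among the $2n-4$ vertices of $\lk(e,\Delta^3_n)$. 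Combined with the vertex count, this forces $e\in\{\pm\{1,2\},\pm\{n-1,n\}\}$ as a necessary condition, and then I must decide which of these two actually works.

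The decisive step is the explicit computation of the three relevant links when $e=\{1,2\}$ and when $e=\{n-1,n\}$, using Lemma~\ref{lm: facets} (the facet list of $\Delta^3_n$) together with equation~\eqref{eq:facets-of-B^{3,1}} for the facets of $B^{3,1}_n$. For $e=\{n-1,n\}$: by Lemma~\ref{lm: prop of Delta^{2k-1}_n}, $\lk(\{n-1,n\},\Delta^3_n)=\Delta^1_{n-2}$, and from \eqref{eq:facets-of-B^{3,1}} the facets of $B^{3,1}_n$ containing $\{n-1,n\}$ are exactly $\{i,i+1,n-1,n\}$ and $\{-i,-i-1,n-1,n\}$ for $1\le i\le n-3$ plus $\{1,-n+2,n-1,n\}$; hence $\lk(\{n-1,n\},B^{3,1}_n)$ already \emph{is} essentially all of the cycle $\Delta^1_{n-2}$ (it contains the path $(1,2,\dots,n-2,-1,\dots)$ through all vertices), so the complementary ball $\lk(\{n-1,n\},B^{3,2}_n)$ is empty or a single edge, in particular \emph{not} cs-$1$-neighborly — this kills $e=\pm\{n-1,n\}$. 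For $e=\{1,2\}$: Corollary~\ref{cor: edge links} already tells us $\lk(\{1,2\},\Delta^3_n)$ is a cs cycle on $2n-4$ vertices containing all pairs $\{i,i+2\}$, $3\le i\le n-2$, as edges; I would then read off from \eqref{eq:facets-of-B^{3,1}} that the only facet of $B^{3,1}_n$ meeting $\{1,2\}$ is $\{1,2,n-1,n\}$ (since $i=1$ gives $\{1,2,n-1,n\}$, and no other listed facet of $B^{3,1}_n$ contains both $1$ and $2$), so $\lk(\{1,2\},B^{3,1}_n)$ is the single edge $\{n-1,n\}$; consequently $\lk(\{1,2\},B^{3,2}_n)$ is the complementary arc — the whole $(2n-4)$-cycle with the open edge $\{n-1,n\}$ removed — which is a path visiting all $2n-4$ vertices of $V_n\backslash\{\pm1,\pm2\}$, hence cs-$1$-neighborly and $1$-stacked. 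The identical computation for $-B^{3,2}_n = -(\Delta^3_n\backslash B^{3,1}_n)$ (equivalently, apply the antipodal map) gives $\lk(\{1,2\},-B^{3,1}_n)$ is the single edge $\{-n+1,-n\}$ and $\lk(\{1,2\},-B^{3,2}_n)$ is again a Hamiltonian path on $V_n\backslash\{\pm1,\pm2\}$.

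The main obstacle I anticipate is the bookkeeping in the ``only if'' direction for the second equivalence: having already pinned $e$ down to $\pm\{1,2\}$ or $\pm\{n-1,n\}$ via the vertex count, I must rule out $\pm\{n-1,n\}$, and the cleanest route is the observation above that at $\{n-1,n\}$ the ball $B^{3,1}_n$ — not $B^{3,2}_n$ — is the one carrying almost the entire edge link (indeed $\lk(\{n-1,n\},B^{3,1}_n)$ contains a Hamiltonian path of $\Delta^1_{n-2}$), so $\lk(\{n-1,n\},B^{3,2}_n)$ is too small. One subtlety to get right is that ``cs-$1$-neighborly w.r.t.\ $V_n\backslash\pm e$'' demands containing \emph{all} $2n-4$ remaining vertices, so a path missing even one vertex fails; and a $1$-ball is automatically $1$-stacked, so stackedness adds nothing beyond ``it is genuinely a $1$-ball and not the whole cycle.'' I would also double-check the boundary-vs-interior accounting to confirm that $\lk(\{1,2\},B^{3,2}_n)$ really is a ball (a path) and not the full cycle, which is exactly the statement that $\st(\{1,2\},B^{3,1}_n)$ is nonempty — true since $\{1,2,n-1,n\}\in B^{3,1}_n$.
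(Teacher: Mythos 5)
Your argument is correct and matches the paper's proof in substance: both reduce the first statement to Corollary~\ref{cor: edge links} and settle the ``furthermore'' part by reading off from equation~\eqref{eq:facets-of-B^{3,1}} that $\lk(\{1,2\},\pm B^{3,1}_n)=\pm(n-1,n)$ (so the complementary links in $\pm B^{3,2}_n$ are Hamiltonian paths) while $\lk(\{n-1,n\},B^{3,1}_n)=B^{1,1}_{n-2}$ is already a Hamiltonian path (so $\lk(\{n-1,n\},B^{3,2}_n)$ is only a single edge). The only imprecision is the aside that the two endpoints of a cs-$1$-neighborly arc ``must be the two antipodes'' — this is neither true (for $e=\{1,2\}$ the deleted edge is $\{n-1,n\}$, not an antipodal pair) nor used, so it does not affect the proof.
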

\begin{proof}
	The first statement follows from Corollary \ref{cor: edge links}. The second one follows from the fact that $\lk(\{1,2\}, \pm B^{3,1}_n)=\pm (n-1,n)$, while $\lk(\{n-1, n\}, B^{3,1}_n)= B^{1,1}_{n-2}$, see equation \eqref{eq:facets-of-B^{3,1}}; in particular, $\lk(\{n-1,n\}, B^{3,2}_n)$ is only cs-$0$-neighborly. 
\end{proof}

Our first task is to generalize this lemma and prove in Theorem \ref{lm: edge link main result} below that for all $k\geq 2$ and sufficiently large $n$, the only edges $e$ whose links in $ \Delta^{2k-1}_n$ are both cs and cs-$k$-neighborly are $e=\pm \{1,2\}$ and $\pm \{n-1,n\}$. 
 The proof relies on the following series of lemmas. We start by analyzing vertex links of $B^{d,i}_n$.

\begin{lemma}\label{lm: vertex link in B}
	Let $d\geq 2$, let $1\leq i\leq \lceil \frac{d}{2}\rceil$, and let $n$ be sufficiently large. Then for $\ell\in V_n$, the link $\lk(\ell, B^{d, i}_n)$ is cs-$i$-neighborly (w.r.t.~$V_n\backslash \{\pm \ell\}$) if and only if $\ell=n$ and $i\leq \lfloor d/2\rfloor$.
\end{lemma}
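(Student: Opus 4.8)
The plan is to proceed by induction on $d$, using the recursive description of $B^{d,i}_n$ from Definition \ref{def}, namely $B^{d,i}_n = (B^{d-1,i}_{n-1}*n) \cup ((-B^{d-1,i-1}_{n-1})*(-n))$, together with the structural facts collected in Lemma \ref{lm: prop of B^{d,i}_n}. First I would dispose of the ``if'' direction: when $\ell = n$, the link $\lk(n, B^{d,i}_n)$ is exactly $B^{d-1,i}_{n-1}$ (since the two pieces in the union overlap only along $\partial B^{d-1,i}_{n-1}\backslash -B^{d-1,i-1}_{n-1}$, which contributes nothing containing $n$), and by Lemma \ref{lm: prop of B^{d,i}_n}(1) this ball is cs-$i$-neighborly w.r.t.~$V_{n-1} = V_n\backslash\{\pm n\}$, provided $i \le \lfloor d/2\rfloor$ so that $B^{d-1,i}_{n-1}$ is defined (here is where the hypothesis $i \le \lfloor d/2\rfloor$ enters; when $d$ is even and $i = d/2$, one has $B^{d-1,d/2}_{n-1}$ undefined, and indeed $\lk(n,B^{d,d/2}_n)$ cannot be cs-$(d/2)$-neighborly — this edge case needs a separate short argument, presumably that $\lk(n, B^{d,d/2}_n)$ misses some $(d/2-1)$-face). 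For the ``only if'' direction, I would split into the cases $\ell = -n$ and $|\ell| < n$.

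For $\ell = -n$: the link $\lk(-n, B^{d,i}_n) = -B^{d-1,i-1}_{n-1}$, which is cs-$(i-1)$-neighborly but, by Lemma \ref{remark on B^{d,i}_n}, \emph{exactly} cs-$(i-1)$-neighborly, hence not cs-$i$-neighborly. For $|\ell| < n$, I would use the recursion: a face of $\lk(\ell, B^{d,i}_n)$ not containing $\pm n$ lives in $\lk(\ell, B^{d-1,i}_{n-1})$ or $\lk(\ell, -B^{d-1,i-1}_{n-1})$, and the cs-$i$-neighborliness of the whole link would force, for every $(i-1)$-subset $\tau$ of $V_n\backslash\{\pm \ell\}$ avoiding $\pm n$, that $\tau$ be a face. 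Choosing $\tau$ inside $V_{n-1}\backslash\{\pm\ell\}$ reduces matters to showing $\lk(\ell, B^{d-1,i}_{n-1})$ (or the corresponding piece) is already cs-$i$-neighborly w.r.t.~$V_{n-1}\backslash\{\pm\ell\}$; by the inductive hypothesis this fails unless $\ell = n-1$ — but then I must rule out $\ell = n-1$ separately, using that a $k$-neighborly link at $n-1$ would also have to contain faces $\tau\cup\{n\}$, and tracking through equation \eqref{eq:facets-of-B^{3,1}} / Lemma \ref{lm: prop of B^{d,i}_n}(3) shows the piece of the link near $n-1$ coming from $-B^{d-1,i-1}_{n-1}$ is only cs-$(i-1)$-neighborly. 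The base case $d = 2$ is handled directly from $\partial B^{2,1}_n = \Delta^1_n$ and $B^{2,0}_n$ being a simplex.

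The main obstacle, I expect, will be the bookkeeping in the ``only if'' direction for $|\ell| < n$: the recursion peels off one coordinate at a time, so $\ell$ may equal $n-1$, $n-2$, etc., and for each such value I need to exhibit an explicit $(i-1)$-subset of $V_n\backslash\{\pm\ell\}$ that is \emph{not} a face of $\lk(\ell, B^{d,i}_n)$. The cleanest route is probably to leverage Lemma \ref{lm: facet} (the Gale-type evenness conditions on facets of $\partial B^{d,i}_n$, applicable since facets of $B^{d,i}_n$ are related to those of $\Delta$'s and $\partial B$'s): if $\ell \notin \{\pm n\}$ and $n$ is large, then any facet containing $\ell$ must have its ``small'' coordinates paired up within distance $\le 2$ except for a possible singleton at $1$, so a spread-out $(i-1)$-set like $\{1, 4, 8, \dots\}\subseteq V_n\backslash\{\pm\ell\}$ (disjoint from $\pm\ell$, which we can arrange) cannot be completed through $\ell$ to a facet — giving the needed non-face. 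Making ``$n$ sufficiently large'' precise amounts to ensuring enough room to place such a spread-out set avoiding $\pm\ell$ and the few ``exceptional'' coordinates near the top, which is where the hypothesis on $n$ is genuinely used.
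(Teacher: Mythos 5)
Your overall strategy (induction on $d$ through the recursion $B^{d,i}_n = (B^{d-1,i}_{n-1}*n)\cup((-B^{d-1,i-1}_{n-1})*(-n))$, handling $\ell = n$, $\ell = -n$, and $|\ell|<n$ separately) is the right shape, and the treatment of $\ell = \pm n$ via Lemma~\ref{lm: prop of B^{d, i}_n}(1) and Lemma~\ref{remark on B^{d, i}_n} matches the paper. But there are a few genuine problems.

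First, a parity error. You write that ``when $d$ is even and $i = d/2$, one has $B^{d-1,d/2}_{n-1}$ undefined'' and that this is the case where $\lk(n,B^{d,i}_n)$ fails to be cs-$i$-neighborly. This is backwards: for $d = 2m$ even, $i = m = \lfloor d/2\rfloor$, the ball $B^{2m-1,m}_{n-1} = \Delta^{2m-1}_{n-1}\setminus B^{2m-1,m-1}_{n-1}$ \emph{is} defined and the lemma asserts the link \emph{is} cs-$m$-neighborly. The exceptional case is $d$ odd with $i = \lceil d/2\rceil = (d+1)/2 > \lfloor d/2\rfloor$, where the two-piece recursion for $B^{d,i}_n$ does not apply (one uses $B^{2k-1,k}_n = \Delta^{2k-1}_n\setminus B^{2k-1,k-1}_n$ instead). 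The paper deals with this base case $d=2k-1$, $i=k$ by a face-count argument: $\lk(\ell, B^{2k-1,k}_n)$ is a combinatorial $(2k-2)$-ball, so it has only $O(n^{k-1})$ faces of dimension $k-1$, while a cs-$k$-neighborly complex on $\sim 2n$ vertices needs $\Omega(n^k)$ of them. You have no argument for this case, only a vague hope that ``the link misses some face.''

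Second, the proposed argument to rule out $\ell = n-1$ doesn't close. You say that the piece $\lk(n-1, -B^{d-1,i-1}_{n-1})$ ``is only cs-$(i-1)$-neighborly,'' but being exactly cs-$(i-1)$-neighborly does not contradict cs-$i$-neighborliness of the full link $\lk(n-1, B^{d,i}_n)$: for the full link to be cs-$i$-neighborly one needs precisely that $\lk(n-1, -B^{d-1,i-1}_{n-1})$ be cs-$(i-1)$-neighborly (so that sets $\tau\cup\{-n\}$ are faces). What one needs to show is that it is \emph{not} cs-$(i-1)$-neighborly, and this is exactly what the inductive hypothesis delivers: $\lk(n-1, -B^{d-1,i-1}_{n-1}) = -\lk(-(n-1), B^{d-1,i-1}_{n-1})$, and since $-(n-1)\neq n-1$, the inductive hypothesis says this link is not cs-$(i-1)$-neighborly (and for $i=1$ it is empty). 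The paper's argument is a clean double application of the inductive hypothesis — once to force $\ell = n-1$, once to rule it out — with no appeal to Gale-type conditions at all.

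Third, the Gale-condition route is not a safe substitute. Lemma~\ref{lm: facet} constrains facets of $\partial B^{d,i}_n$, i.e.\ boundary faces. A set $\tau\cup\{\ell\}$ of size $i+1$ is forced to lie on the boundary (if it is a face at all) only when $i+1 \leq d-i$, i.e.\ $i < d/2$; for $i$ near $\lceil d/2\rceil$, such a set could a priori be an interior face of the $i$-stacked ball $B^{d,i}_n$, and Lemma~\ref{lm: facet} says nothing about it. So exhibiting a ``spread-out'' set that violates the evenness condition does not by itself show non-membership in $B^{d,i}_n$ for the range of $i$ closest to $d/2$, which is exactly where the lemma is hardest.
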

\begin{proof}
	The proof is by induction on $d$ and $i$. First we deal with the base case of $d=2k-1$ and $i=k$. The link $\lk(\ell, B^{2k-1,k}_n)$ is a combinatorial $(2k-2)$-ball and hence it has only $O(n^{k-1})$ faces of dimension $k-1$. In particular, for a sufficiently large $n$, it cannot be cs-$k$-neighborly. (To see that a $(2k-2)$-ball $\Gamma$ on $m$ vertices has only $O(m^{k-1})$ faces of dimension $k-1$, consider a new vertex $v$ and let $\hat{\Gamma}:=\Gamma \cup (v*\partial \Gamma)$. The resulting complex is a $(2k-2)$-sphere on $m+1$ vertices, and so by the Upper Bound Theorem for spheres, see \cite{Stanley75}, $f_{k-1}(\Gamma)\leq f_{k-1}(\hat{\Gamma})=O(m^{k-1})$.)
	
	For $i\leq d/2$, it follows from the definition of $B^{d, i}_n$ that
	\begin{equation}\label{eq: 5.1}
		\lk(\ell, B^{d, i}_n)=
		\begin{cases}
		B^{d-1, i}_{n-1} & \text{if $\ell=n$,}\\
		-B^{d-1, i-1}_{n-1} & \text{if $\ell=-n$,}\\
		\left(\lk(\ell, B^{d-1, i}_{n-1})*n\right)\cup \left(\lk(\ell, -B^{d-1, i-1}_{n-1})*(-n)\right) & \text{if $|\ell|<n$}.
		\end{cases}
	\end{equation}
    Hence in the second base case of $d=2$ and $i=1$, the link $\lk(n, B^{2,1}_n)$ is indeed cs-$1$-neighborly w.r.t.~$V_{n-1}$ while for $\ell \neq n$, the link $\lk(\ell, B^{2,1}_n)$ has at most five vertices and hence for a sufficiently large $n$, it is not cs-$1$-neighborly. 
    
    Now assume that the statement holds for $d'<d$ and $i'\leq \lceil \frac{d'}{2}\rceil$. 
We will prove that then it also holds for $d,i$. By Lemmas \ref{lm: prop of B^{d, i}_n}(1) and \ref{remark on B^{d, i}_n}, and by (\ref{eq: 5.1}), the link of $n$ is indeed cs-$i$-neighborly, while the link of $-n$ is cs-$(i-1)$-neighborly but not cs-$i$-neighborly. So assume that $|\ell|\leq n-1$. By Lemma \ref{lm: prop of B^{d, i}_n}(2), $\lk(\ell, -B^{d-1, i-1}_{n-1})$ is a subcomplex of $\lk(\ell, B^{d-1, i}_{n-1})$. Thus, for $\lk(\ell, B^{d, i}_n)$ to be cs-$i$-neighborly, $\lk(\ell, B^{d-1, i}_{n-1})$ must be cs-$i$-neighborly. Hence, by the inductive hypothesis, $\ell$ must be $n-1$. However, for $i=1$, the link $\lk(n-1, -B^{d-1, 0}_{n-1})=\emptyset$ while for $i>1$, it follows from the inductive hypothesis that $\lk(n-1, -B^{d-1, i-1}_{n-1})$ is not cs-$(i-1)$-neighborly. This implies that $\lk(\ell, B^{d, i}_{n})$ is not cs-$i$-neighborly if $|\ell|\leq n-1$.
\end{proof}

With Lemma \ref{lm: vertex link in B} in hand, we are ready to investigate the edge links of $B^{d,i}_n$. We start by proving the first part of Theorem \ref{thm:Lambda}.

\begin{lemma}\label{lm: link of 12}
	Let $d\geq 2$, $1\leq i\leq \lceil \frac{d}{2}\rceil$ and $n\geq d+1$.
	\begin{enumerate}
		\item The complexes $\lk(\{1,2\}, B^{d, i}_n)$ and $\lk(\{1,2\}, -B^{d,i}_n)$ are antipodal complexes that are cs-$(i-1)$-neighborly and $(i-1)$-stacked. Furthermore, if $i\leq  d/2 $, then they share no common facets.
		\item The complex $\Lambda^d_{n}$ (i.e., the link of $\{1,2\}$ in $\Delta^{d+2}_{n+2}$) is both cs and cs-$\lceil\frac{d}{2}\rceil$-neighborly.
	\end{enumerate}
\end{lemma}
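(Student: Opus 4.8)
The plan is to prove part (1) by induction on $d$ (with a nested induction on $i$ for the two exceptional indices), using the recursive structure of $B^{d,i}_n$ from Definition~\ref{def}, and then deduce part (2) from part (1) together with the recursive description of $\Delta^{d+2}_{n+2}$ in terms of $\pm B^{d+2,\lceil d/2\rceil}_{n+2}$. For part (1), the key observation is that $\{1,2\}$ never involves the "top" vertex $n$, so by the same reasoning as in equation~(\ref{eq: 5.1}), for $i\le d/2$ one has
\[
\lk(\{1,2\}, B^{d,i}_n)=\Big(\lk(\{1,2\}, B^{d-1,i}_{n-1})*n\Big)\cup\Big(\lk(\{1,2\}, -B^{d-1,i-1}_{n-1})*(-n)\Big),
\]
and an antipodal identity holds for $\lk(\{1,2\},-B^{d,i}_n)$. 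One then wants to show the right-hand side is cs-$(i-1)$-neighborly and $(i-1)$-stacked. For the neighborliness, I would combine the inductive hypothesis applied to $\lk(\{1,2\},B^{d-1,i}_{n-1})$ (which is cs-$(i-1)$-neighborly) with Lemma~\ref{lm: prop of B^{d, i}_n}(2), which gives $-B^{d-1,i-1}_{n-1}\subseteq B^{d-1,i}_{n-1}$ and hence $\lk(\{1,2\},-B^{d-1,i-1}_{n-1})\subseteq \lk(\{1,2\},B^{d-1,i}_{n-1})$; the union above is therefore a union of two $(i-1)$-neighborly pieces glued along a common $(i-2)$-neighborly (by induction) intersection, and its $(i-2)$-skeleton already agrees with that of the full cross-polytope. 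For the $(i-1)$-stackedness, I would use Lemma~\ref{lm: stackedness}(1) on each joined piece ($\lk(\{1,2\},B^{d-1,i}_{n-1})$ is $(i-1)$-stacked by induction, joined with the $0$-stacked vertex $n$), and Lemma~\ref{lm: stackedness}(2) to glue the two halves, checking that the intersection (the link of $\{1,2\}$ in $B^{d-1,i}_{n-1}\cap -B^{d-1,i-1}_{n-1}$, essentially a link inside $\partial B^{d-1,i}_{n-1}$ via Lemma~\ref{lm: prop of B^{d, i}_n}(3)) is appropriately stacked and strictly lower than $i-1$. The "share no common facets" claim for $i\le d/2$ should descend directly from Lemma~\ref{lm: prop of B^{d, i}_n}(1) (namely $B^{d,i}_n$ and $-B^{d,i}_n$ share no facets), since a common facet of the links would lift to a common facet of $B^{d,i}_n$ and $-B^{d,i}_n$ containing $\{1,2\}$.

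The base cases need care: $d=2,i=1$, where $\lk(\{1,2\}, B^{2,1}_n)=\lk(\{1,2\},\Delta^1_n)$ is a single vertex, hence cs-$0$-neighborly (a vertex is a cs-$0$-neighborly complex per the convention in Section~\ref{sec:Prelim}) and $0$-stacked (it is a simplex); and the case $d=2k-1$, $i=k$, where $B^{2k-1,k}_n=\Delta^{2k-1}_n\setminus \pm B^{2k-1,k-1}_n$ and one must argue directly using equation~(\ref{eq:facets-of-B^{3,1}})-style descriptions or the decomposition~(\ref{eq:decomposition}) that $\lk(\{1,2\},B^{2k-1,k}_n)$ is cs-$(k-1)$-neighborly and $(k-1)$-stacked. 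For $k=2$ this is exactly the content extracted in the proof of Lemma~\ref{lm: cs 1 neighborly edge links}, namely $\lk(\{1,2\},B^{3,1}_n)=\pm(n-1,n)$ on the positive side; for general $k$ I would feed $i=k-1\le (2k-1)/2$ into the already-established part of the induction applied to $\Delta^{2k-1}_n=\partial B^{2k,k}_n$ together with Lemma~\ref{lm: complement} (which says $\Delta\setminus B$ is $k$-stacked and cs-$k$-neighborly when $B$ is cs-$(k-1)$-neighborly and $(k-1)$-stacked), taking $\Delta=\Delta^{2k-1}_n$ and $B=\pm B^{2k-1,k-1}_n$ — though here one must be careful since $\pm B^{2k-1,k-1}_n$ is a disjoint union of two balls, so I would instead work with the explicit decomposition~(\ref{eq:decomposition}) and compute the link of $\{1,2\}$ termwise.

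For part (2): by Definition~\ref{def}, $\Delta^{d+2}_{n+2}$ is obtained from $\Delta^{d+2}_{n+1}$ by a $\pm$-symmetric pair of bistellar-type replacements, and iterating, $\{1,2\}$ lies in $\pm B^{d+2,\lceil d/2\rceil}_{m}$-related pieces; the cleanest route is to use Proposition~\ref{prop: odd even sphere relation} together with Lemma~\ref{lm: prop of Delta^{2k-1}_n} only as a sanity check, and instead directly invoke the decomposition~(\ref{eq:decomposition}) for $\Delta^{d+2}_{n+2}$ (when $d+2$ is even; for $d+2$ odd pass to $\Delta^{d+3}$ via Proposition~\ref{prop: odd even sphere relation} and take a further vertex link) to write $\Lambda^d_n=\lk(\{1,2\},\Delta^{d+2}_{n+2})$ as a union of links of the form $\lk(\{1,2\},\pm(\partial B^{\cdot}*s))$ and $\lk(\{1,2\},\pm B^{\cdot})$, each of which is controlled by part (1). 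The centrally symmetric property of $\Lambda^d_n$ follows because the whole decomposition is $\pm$-invariant and $\{1,2\}$ gets sent to $\{-1,-2\}$, whose link is antipodal; and cs-$\lceil d/2\rceil$-neighborliness follows because part (1) guarantees the relevant link-pieces are cs-$(\lceil d/2\rceil-1)$-neighborly while the ambient sphere $\Delta^{d+2}_{n+2}$ is cs-$\lceil (d+2)/2\rceil=(\lceil d/2\rceil+1)$-neighborly, so a vertex link — here the link of the $2$-face/edge $\{1,2\}$ — picks up exactly one fewer order of neighborliness on each joined half but the union recovers the full $\lceil d/2\rceil$-skeleton of the cross-polytope on $W_n$ (this last recovery step is where one uses that the two antipodal halves cover everything, analogous to the heuristic spelled out after Theorem~\ref{thm:Lambda}).

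I expect the main obstacle to be the $(i-1)$-stackedness gluing in part (1): verifying that $\lk(\{1,2\},B^{d-1,i}_{n-1})\cap \lk(\{1,2\},-B^{d-1,i-1}_{n-1})$ is a combinatorial ball of the right dimension contained in the boundaries of both halves and is $(i-3)$- or at most $(i-2)$-stacked so that Lemma~\ref{lm: stackedness}(2) applies. This requires unwinding Lemma~\ref{lm: prop of B^{d, i}_n}(3) to identify that intersection as a link inside $B^{d-1,i}_{n-1}\setminus -B^{d-1,i-1}_{n-1}$ or inside $\partial B^{d-1,i}_{n-1}$, and then invoking Lemma~\ref{lm: exact stackedness} to pin down its exact stackedness; keeping the parity/index bookkeeping straight across the two nested inductions is the delicate part. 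The neighborliness half, by contrast, should be fairly mechanical once the recursive link formula and Lemma~\ref{lm: prop of B^{d, i}_n}(2) are in hand.
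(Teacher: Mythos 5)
Your plan for part (1) is essentially the paper's proof: induction on $d$ and $i$, the recursive decomposition $\lk(\{1,2\},B^{d,i}_n)=D_1\cup D_2$ with $D_1=\lk(\{1,2\},B^{d-1,i}_{n-1})*n$ and $D_2=\lk(\{1,2\},-B^{d-1,i-1}_{n-1})*(-n)$, then Lemma~\ref{lm: stackedness} to glue. Two points where your sketch is more tangled than it needs to be. First, you worry about identifying $D_1\cap D_2$ and invoke Lemma~\ref{lm: prop of B^{d, i}_n}(3); this is unnecessary. Since $\pm n$ appear in only one of $D_1,D_2$ each, $D_1\cap D_2=\lk(\{1,2\},B^{d-1,i}_{n-1})\cap\lk(\{1,2\},-B^{d-1,i-1}_{n-1})$, and Lemma~\ref{lm: prop of B^{d, i}_n}(2) alone ($-B^{d-1,i-1}_{n-1}\subseteq B^{d-1,i}_{n-1}$) gives $D_1\cap D_2=\lk(\{1,2\},-B^{d-1,i-1}_{n-1})$, which is $(i-2)$-stacked by induction; Lemma~\ref{lm: stackedness}(2) then applies immediately with $i_3=i-2<i-1=i_2$. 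The obstacle you flag at the end of your proposal is not a real one. Second, for the boundary case $d=2k-1$, $i=k$: you propose applying Lemma~\ref{lm: complement} at the top level with $B=\pm B^{2k-1,k-1}_n$ and then retreat to a termwise computation because $\pm B$ is disconnected. This rests on a misreading of Definition~\ref{def}, which sets $B^{2k-1,k}_n=\Delta^{2k-1}_n\backslash B^{2k-1,k-1}_n$ (one sign). The correct move — and the paper's — is to take links first: $\lk(\{1,2\},B^{2k-1,k}_n)=\lk(\{1,2\},\Delta^{2k-1}_n)\backslash\lk(\{1,2\},B^{2k-1,k-1}_n)$, and then apply Lemma~\ref{lm: complement} with $\Delta=\lk(\{1,2\},\Delta^{2k-1}_n)=\Lambda^{2k-3}_{n-2}$ and $B=\lk(\{1,2\},B^{2k-1,k-1}_n)$. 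This requires knowing that $\Lambda^{2k-3}_{n-2}$ is a cs combinatorial $(2k-3)$-sphere that is cs-$(k-1)$-neighborly — i.e., part~(2) in lower dimension — so the two parts of the lemma must be proved by a genuinely interleaved induction, which your write-up does not make explicit.

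Your approach to part (2) departs from the paper's and, as sketched, does not go through. The paper proves part (2) for odd $d=2k-1$ by induction on $n$ via Lemma~\ref{lm: induction method}: passing from $\Delta^{2k+1}_{n+2}$ to $\Delta^{2k+1}_{n+3}$ deletes $\pm B^{2k+1,k}_{n+2}$ and inserts cones over their boundaries, and on the level of links of $\{1,2\}$ this is exactly the operation Lemma~\ref{lm: induction method} controls, with the hypotheses (cs-$(k-1)$-neighborliness, $(k-1)$-stackedness, antipodality, no shared facets) supplied by part~(1) applied to $B^{2k+1,k}_{n+2}$. The even case $d=2k$ then follows from $\Lambda^{2k}_n\supseteq\Lambda^{2k-1}_n$ via Proposition~\ref{prop: odd even sphere relation}. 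Your proposal instead tries to invoke the facet decomposition~\eqref{eq:decomposition} directly. Note that~\eqref{eq:decomposition} is stated for $\Delta^{2k-1}_n$, so it applies to $\Delta^{d+2}_{n+2}$ only when $d+2$ is odd — you have the parity reversed. More substantively, the termwise decomposition of the link does not by itself yield cs-$\lceil d/2\rceil$-neighborliness of the union; you would need to rebuild the argument of Lemma~\ref{lm: induction method} by hand, which is exactly what the paper's induction avoids. Your device of passing to $\Delta^{d+3}$ and ``taking a further vertex link'' is not developed and is not how the containment is used: Proposition~\ref{prop: odd even sphere relation} gives $\Lambda^{2k}_n\supseteq\Lambda^{2k-1}_n$, which is a containment in the direction that lets you transfer neighborliness up, not down. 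Finally, your argument for the cs property of $\Lambda^d_n$ is too vague: ``$\{1,2\}$ maps to $\{-1,-2\}$, whose link is antipodal'' only relates $\lk(\{1,2\})$ to $\lk(\{-1,-2\})$; what is actually needed — and what Lemma~\ref{lm: induction method} together with the antipodality statement in part~(1) delivers — is that the involution $\alpha$ restricts to a free involution of $\lk(\{1,2\},\Delta^{d+2}_{n+2})$ itself.
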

\begin{proof}
	The proof of the first statement is by induction on $d$ and $i$. We begin with the base case of $i=1$ and $d\geq 2$. It follows from the definition of $B^{d,1}_n$ and the fact that $\lk(\{1,2\}, B^{d, 0}_n)=\emptyset$ that 
	$$\lk(\{1,2\}, B^{d, 1}_n)=\lk(\{1,2\}, B^{d-1, 1}_{n-1})*n=\dots= \overline{\{n-d+2, n-d+3, \dots, n\}}.$$ Similarly, $\lk(\{1,2\}, -B^{d, 1}_n)=-\overline{\{n-d+2, n-d+3, \dots, n\}}$. Hence the link of $\{1,2\}$ in $B^{d, 1}_n$ and the link of $\{1,2\}$ in $-B^{d, 1}_n$ form antipodal complexes that are cs-$0$-neighborly and $0$-stacked. The other base case $d=3$ and $i=2$ follows from Lemma \ref{lm: cs 1 neighborly edge links}.
	
	For $2\leq i\leq  d/2$, by definition of $B^{d, i}_n$,  $\lk(\{1,2\}, B^{d, i}_n) =D_1\cup D_2$, where $$D_1=\lk(\{1,2\}, B^{d-1, i}_{n-1})*n \;\;\text{and}\;\; D_2=\lk(\{1,2\}, -B^{d-1, i-1}_{n-1})*(-n). $$
	By the inductive hypothesis, $D_1$ is cs-$(i-1)$-neighborly and $(i-1)$-stacked while $D_2$ is cs-$(i-2)$-neighborly and $(i-2)$-stacked. Furthermore, by Lemma \ref{lm: prop of B^{d, i}_n}(2), $-B^{d-1, i-1}_{n-1}\subseteq B^{d-1, i}_{n-1}$ and hence $D_1\cap D_2=\lk(\{1,2\}, -B^{d-1, i-1}_{n-1})$ is $(i-2)$-stacked. It follows from Lemma \ref{lm: stackedness} that $\lk(\{1,2\}, B^{d, i}_n)$ is indeed cs-$(i-1)$-neighborly and $(i-1)$-stacked. Also by the inductive hypothesis, $\lk(\{1,2\}, B^{d-1, j}_{n-1})$ and $\lk(\{1,2\}, -B^{d-1, j}_{n-1})$ are antipodal complexes for $j=i-1$ or $i$. Therefore $\lk(\{1,2\}, -B^{d,i}_n)=(-D_1)\cup (-D_2)=-\lk(\{1,2\}, B^{d,i}_n)$. Finally, for $i\leq  \frac{d}{2}$, the complexes $B^{d-1, i}_{n-1}$ and $B^{d-1, i-1}_{n-1}$ share no common facets. Hence $D_1$ and $-D_2$, as well as $-D_1$ and $D_2$, also share no common facets; thus, neither do $\lk(\{1,2\}, B^{d,i}_n)$ and $\lk(\{1,2\}, -B^{d,i}_n)$. We will treat the case $d=2k-1$ and $i=k$ a bit later.
	
	Next we prove that $\Lambda^{2k-1}_{n}$ is both cs and cs-$(k-1)$-neighborly by induction on $n$. First $\Lambda^{2k-1}_{2k}$ is the boundary of a $2k$-dimensional cross-polytope, and so it is both cs and cs-$k$-neighborly. In the inductive step, to obtain $\Delta^{2k+1}_{n+3}$ from $\Delta^{2k+1}_{n+2}$, we delete $\pm B^{2k+1, k}_{n+2}$ and insert $\pm \big(\partial B^{2k+1, k}_{n+2} *(n+3)\big)$. On the level of edge links, by the inductive hypothesis we start with the $(2k-1)$-sphere $\Lambda^{2k-1}_n$ that is both cs and cs-$k$-neighborly. We then delete the cs-$(k-1)$-neighborly and $(k-1)$-stacked balls $\lk\big(\{1,2\}, \pm B^{2k+1, k}_{n+2}\big)$ that are antipodal and share no common facets, and insert the cones over the boundary of these two balls. Thus, the resulting complex is also cs; furthermore, by Lemma \ref{lm: induction method}, it is cs-$k$-neighborly. In the case of $d=2k$, note that by Proposition \ref{prop: odd even sphere relation}, $\Delta^{2k+1}_{n+2}\subseteq \Delta^{2k+2}_{n+2}$. Hence $\Lambda^{2k}_n\supseteq \Lambda^{2k-1}_n$, and so $\Lambda^{2k}_n$ is also cs-$k$-neighborly. The proof that $\Lambda^{2k}_n$ is cs is identical to the proof in the odd-dimensional cases.
	
Finally, to complete the proof of the first part for the case of $d=2k-1$ and $i=k$, note that,
	$$\lk\big(\{1,2\}, B^{2k-1, k}_n\big)=\lk\big(\{1,2\}, \Delta^{2k-1}_n\big)\backslash \lk\big(\{1,2\}, B^{2k-1, k-1}_n\big).$$ 
We then conclude from the case of $d=2k-1, i=k-1$ and Lemma \ref{lm: complement} that $\lk\big(\{1,2\}, \pm B^{2k-1, k}_n\big)$ is indeed cs-$(k-1)$-neighborly and $(k-1)$-stacked.
\end{proof}

\begin{lemma}\label{lm: neighborly stacked edge links}
		Let $k\geq 3$ and let $n$ be sufficiently large. The only edges $e\subseteq V_{n}$ such that both $\lk(e, B^{2k-1, k-1}_n)$ and $\lk(e, -B^{2k-1, k-1}_n)$ are cs-$(k-2)$-neighborly and $(k-2)$-stacked are $e=\pm \{1,2\}$.
\end{lemma}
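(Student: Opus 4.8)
The plan is to proceed by induction on $k$, mimicking the inductive structure of Lemma \ref{lm: vertex link in B} and Lemma \ref{lm: link of 12}(1), using the recursive description of $B^{2k-1,k-1}_n$ provided by equation \eqref{B^d-in-terms of-B^{d-2}}. First I would record that by Lemma \ref{lm: link of 12}(1) the edge $e=\pm\{1,2\}$ does have both links cs-$(k-2)$-neighborly and $(k-2)$-stacked, so the content of the statement is that no other edge works. Fix an edge $e=\{a,b\}$ with $|a|<|b|$ and suppose both $\lk(e,B^{2k-1,k-1}_n)$ and $\lk(e,-B^{2k-1,k-1}_n)$ are cs-$(k-2)$-neighborly and $(k-2)$-stacked; I want to force $\{a,b\}=\pm\{1,2\}$.

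The main step is a case analysis on $b$ driven by \eqref{B^d-in-terms of-B^{d-2}}, which writes $B^{2k-1,k-1}_n$ as the union of three joins over the ``top'' vertices $\{n-1,n\}$, $\{n,-n+1,-n\}$, $\{n-1,-n\}$ with the complexes $B^{2k-3,k-1}_{n-2}$, $-B^{2k-3,k-2}_{n-2}$, $B^{2k-3,k-3}_{n-2}$ respectively. If $|b|<n-1$, then taking links in $e$ commutes with this decomposition and $\lk(e,B^{2k-1,k-1}_n)$ is a union of joins of $\lk(e,B^{2k-3,\ast}_{n-2})$-type complexes with the same top simplices; a link of a cs-$(k-2)$-neighborly $(k-2)$-stacked complex of this shape can only be cs-$(k-2)$-neighborly if already $\lk(e,B^{2k-3,k-1}_{n-2})$ is cs-$(k-2)$-neighborly and similarly on the antipodal side — but $B^{2k-3,k-1}_{n-2}$ is only $\lceil(2k-3)/2\rceil=(k-1)$-dimensional-ly meaningful and here $k-1>\lceil(2k-3)/2\rceil=k-1$ is an equality, so I actually need the edge-link statement for $B^{2k-3,k-1}_{n-2}$, i.e. the $d=2k'-1,i=k'$ case; this I can handle via Lemma \ref{lm: complement} exactly as in the last paragraph of the proof of Lemma \ref{lm: link of 12}, reducing it to the $B^{2k-3,k-2}_{n-2}$ statement, which is the inductive hypothesis for $k-1$. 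If instead $|b|\in\{n-1,n\}$, then $e$ lies in the ``top'' coordinates of some of the three joins; here I would compute $\lk(e,B^{2k-1,k-1}_n)$ directly as a join of a link of $a$ (a single vertex) in $B^{2k-3,\ast}_{n-2}$ with whatever survives of the top simplices, and invoke Lemma \ref{lm: vertex link in B}: a vertex link of $B^{2k-3,j}_{n-2}$ is cs-$j$-neighborly only if the vertex is $n-2$, and then one checks the antipodal link fails to be cs-$(k-2)$-neighborly because it involves $-B^{2k-3,j-1}_{\cdot}$ with the non-maximal vertex $n-2$. Either branch forces us to descend to the $k-1$ case with the edge (or vertex) now in the analogous ``special'' position, and chasing the recursion to the bottom pins the edge down to $\pm\{1,2\}$. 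The base cases $k=3$ would be verified directly from equation \eqref{eq:facets-of-B^{3,1}} describing $B^{3,1}_n$ together with Corollary \ref{cor: edge links} / Lemma \ref{lm: cs 1 neighborly edge links}, which is why the hypothesis $k\geq 3$ appears and $n$ must be large.

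The hard part, and the place I would spend the most care, is the bookkeeping when $|b|$ is large (in $\{n-1,n\}$ or even $n-2$), because there the edge link is not simply ``link of $e$ in each of the three pieces''—one or both endpoints of $e$ may be among the apex vertices $\{n-1,n,-n+1,-n\}$ of the joins in \eqref{B^d-in-terms of-B^{d-2}}, so the link becomes a union of joins where some factors are contracted to simplices and some pieces of the union drop out entirely; I need to track precisely which of the three terms $e$ is a face of and make sure the surviving union is not cs-$(k-2)$-neighborly. A secondary subtlety is that cs-$(k-2)$-neighborliness of a union $D_1\cup D_2$ of the join-type does not by itself force cs-$(k-2)$-neighborliness of each $D_j$; I would extract the needed implication from the structure of the decomposition (the missing faces of one piece must be covered by the other, which has strictly smaller stackedness/neighborliness parameters), the same mechanism already used implicitly in Lemma \ref{lm: vertex link in B}. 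Once that monotonicity-under-links principle is cleanly stated, the induction closes routinely, and symmetry in the antipodal map handles the sign ambiguity, so the only surviving edge is $e=\pm\{1,2\}$.
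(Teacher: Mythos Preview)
Your overall architecture matches the paper's: induction on $k$, case split on whether the edge lives in $V_{n-2}$ or touches one of the ``top'' vertices $\pm(n-1),\pm n$, recursive decomposition via \eqref{B^d-in-terms of-B^{d-2}}, and Lemma~\ref{lm: vertex link in B} to pin down the vertex when the edge is high. Two points deserve comment, one minor and one a genuine gap.

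\smallskip
\textbf{Minor.} In the $|b|<n-1$ case your route through $\lk(e,B^{2k-3,k-1}_{n-2})$ and then Lemma~\ref{lm: complement} is roundabout. The paper instead applies Lemmas~\ref{lm: stackedness} and~\ref{lm: exact stackedness} directly to the three-term join decomposition of $\lk(e,B^{2k-1,k-1}_n)$: for the union to be $(k-2)$-stacked, the middle piece $\lk(e,-B^{2k-3,k-2}_{n-2})$ must already be cs-$(k-3)$-neighborly and $(k-3)$-stacked, and by symmetry the same holds for $\lk(e,B^{2k-3,k-2}_{n-2})$. This \emph{is} the inductive hypothesis for $k-1$, with no detour through the $i=k-1$ case or the complement lemma.

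\smallskip
\textbf{Gap.} In the $|b|\in\{n-1,n\}$ cases your claimed obstruction --- that ``the antipodal link fails to be cs-$(k-2)$-neighborly because it involves $-B^{2k-3,j-1}_{\cdot}$ with the non-maximal vertex $n-2$'' --- is false for precisely the edges that matter. Take $e=\{n-1,n\}$: then $\lk(e,B^{2k-1,k-1}_n)=B^{2k-3,k-1}_{n-2}$ and $\lk(e,-B^{2k-1,k-1}_n)=B^{2k-3,k-2}_{n-2}$, \emph{both} of which are cs-$(k-2)$-neighborly. Similarly for $e=\{n-2,n-1\}$: the link in $-B^{2k-1,k-1}_n$ is the suspension of $B^{2k-4,k-2}_{n-3}$, which is again cs-$(k-2)$-neighborly. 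The actual obstruction is \emph{stackedness}: by Lemma~\ref{remark on B^{d, i}_n}, $B^{2k-3,k-1}_{n-2}$ is exactly $(k-1)$-stacked (so not $(k-2)$-stacked), and the suspension of the exactly $(k-2)$-stacked ball $B^{2k-4,k-2}_{n-3}$ is exactly $(k-1)$-stacked. Your sketch never invokes the ``exactly $i$-stacked'' statement of Lemma~\ref{remark on B^{d, i}_n}, and without it these two edges cannot be excluded; the neighborliness argument you outline simply does not rule them out.
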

\begin{proof}
	We prove the statement by considering the following three cases.
	
	\smallskip\noindent{\bf Case 1:} $e\subseteq V_{n-2}$. By equation (\ref{B^d-in-terms of-B^{d-2}}),
	\begin{equation*}
	\begin{split}
	\lk(e, B^{2k-1, k-1}_n)&=\left( \lk(e, B^{2k-3,k-1}_{n-2})*(n-1, n)\right) \cup \left( \lk(e, -B^{2k-3, k-2}_{n-2})*(n, -n+1, -n)\right) \\
	&\cup \left( \lk(e, B^{2k-3, k-3}_{n-2})*(-n, n-1)\right) .
	\end{split}
	\end{equation*}
    By Lemma \ref{lm: prop of B^{d, i}_n}(2), $\lk(e, B^{2k-3,k-1}_{n-2}) \supseteq \lk(e, -B^{2k-3,k-2}_{n-2})\supseteq \lk(e, B^{2k-3,k-3}_{n-2})$. Hence according to Lemmas \ref{lm: stackedness} and \ref{lm: exact stackedness}, for the link $\lk(e, B^{2k-1, k-1}_n)$ to be cs-$(k-2)$-neighborly and $(k-2)$-stacked, we must have 
	\begin{itemize}
		\item $\lk(e, B^{2k-3, k-1}_{n-2})$ is cs-$(k-2)$-neighborly and $(k-2)$-stacked;
		\item $\lk(e, -B^{2k-3, k-2}_{n-2})$ is cs-$(k-3)$-neighborly and $(k-3)$-stacked.
	\end{itemize}
	Note that the above two bullet points also apply to $\lk(e, -B^{2k-3,k-1}_{n-2})$ and $\lk(e, B^{2k-3,k-2}_{n-2})$, resp.  This leads to a) $\lk(e, \Delta^{2k-3}_{n-2})$ is cs-$(k-2)$-neighborly, and b) both links $\lk(e,  B^{2k-3, k-2}_{n-2})$ and $\lk(e,  -B^{2k-3, k-2}_{n-2})$ are cs-$(k-3)$-neighborly and $(k-3)$-stacked. By Lemma \ref{lm: cs 1 neighborly edge links} and its proof, the only edges $e$ of $B^{3,1}_{n-2}$ that satisfy both a) and b) are $\pm \{1,2\}$. This proves the base case. On the other hand, the inductive hypothesis implies that the only edges that satisfy condition b) are $\pm\{1,2\}$.  Finally by Lemma \ref{lm: link of 12}, the links of $e=\pm \{1,2\}$ do have the desired properties.
	
	\smallskip\noindent{\bf Case 2:} $e=\pm \{i, n-1\}$, where $|i|<n-1$. For $e=\{i, n-1\}$, 
	\begin{equation*}
	\begin{split}
	\lk(e, -B^{2k-1, k-1}_n)&=\left( \lk(e, -B^{2k-2, k-1}_{n-1})*(-n)\right)  \cup \left( \lk(e, B^{2k-2, k-2}_{n-1})*n\right) \\
	&=\left( \lk(i, B^{2k-3, k-2}_{n-2})*(-n)\right)  \cup \left( \lk(i, B^{2k-3, k-2}_{n-2})*n\right) .
	\end{split}
	\end{equation*}
	Thus, by Lemma \ref{lm: vertex link in B}, the link of $e$ is cs-$(k-2)$-neighborly only if $i=n-2$. But then the link equals the suspension of $B^{2k-4, k-2}_{n-3}$. Since by Lemma \ref{remark on B^{d, i}_n}, $B^{2k-4, k-2}_{n-3}$ is exactly $(k-2)$-stacked, we conclude that in this case the link is exactly $(k-1)$-stacked. 
	
\smallskip\noindent	{\bf Case 3:} $e=\pm \{i, n\}$, where $|i|<n$. For $e=\{i, n\}$, $\lk(e, -B^{2k-1, k-1}_n)=\lk(i, B^{2k-2, k-2}_{n-1})$ and by Lemma \ref{lm: vertex link in B}, it is cs-$(k-2)$-neighborly only if $i=n-1$. However, in this case $\lk(e, B^{2k-1, k-1}_n)=B^{2k-3, k-1}_{n-2}$, and so according to Lemma \ref{remark on B^{d, i}_n} it is not $(k-2)$-stacked. 
\end{proof}

We are now in {\color{blue}a} position to prove the promised result.
\begin{theorem}\label{lm: edge link main result}
	Let $k\geq 2$ and let $n$ be sufficiently large. The only edges $e\subseteq V_n$ whose links in $\Delta^{2k-1}_n$ are cs-$(k-1)$-neighborly are $e=\pm \{1,2\}$ and $\pm \{n-1,n\}$. Furthermore, the links of these four edges are also cs.
\end{theorem}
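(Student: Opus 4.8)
The plan is to prove the two halves of the statement separately, relying on the structural decomposition $\Delta^{2k-1}_n = \pm B^{2k-1,k-1}_n \cup \left(\Delta^{2k-1}_n \setminus \pm B^{2k-1,k-1}_n\right)$ together with the link calculations already established. For the identification of $\pm\{1,2\}$ and $\pm\{n-1,n\}$ as the only edges with cs-$(k-1)$-neighborly links, I would first dispose of the positive result: by Lemma~\ref{lm: prop of Delta^{2k-1}_n}, $\lk(\{n-1,n\},\Delta^{2k-1}_n)=\Delta^{2k-3}_{n-2}$, which is cs-$(k-1)$-neighborly by Theorem~\ref{thm: Delta^{d}_n}; and by Lemma~\ref{lm: link of 12}(2), $\lk(\{1,2\},\Delta^{2k-1}_n)=\Lambda^{2k-3}_{n}$ is cs-$(k-1)$-neighborly (with the appropriate shift in indices, using that $\Lambda^{2k-3}_{n-2}=\lk(\{1,2\},\Delta^{2k-1}_n)$). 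Central symmetry of these four links follows: $\lk(-e,\Delta)=-\lk(e,\Delta)$ always, and $\lk(\{n-1,n\})=\Delta^{2k-3}_{n-2}$ and $\Lambda^{2k-3}$ are both cs by construction.

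For the converse — that no other edge works — I would argue by the same three-case analysis used in Lemma~\ref{lm: neighborly stacked edge links}, but now bookkeeping cs-$(k-1)$-neighborliness of the full link in $\Delta^{2k-1}_n$ rather than cs-$(k-2)$-neighborliness plus stackedness of the link in $B^{2k-1,k-1}_n$. The key observation is that an edge link $\lk(e,\Delta^{2k-1}_n)$ decomposes as $\lk(e, B^{2k-1,k-1}_n)\cup\lk(e,-B^{2k-1,k-1}_n)\cup\lk(e, B^{2k-1,k}_n)\cup\lk(e,-B^{2k-1,k}_n)$, i.e., into pieces coming from $\pm B^{2k-1,k-1}_n$ and its complement $\pm B^{2k-1,k}_n=\pm(\Delta^{2k-1}_n\setminus B^{2k-1,k-1}_n)$. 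For the link to be cs-$(k-1)$-neighborly, every $(k-1)$-subset of $V_n\setminus\pm e$ no two of whose elements are antipodal must appear; since the complement balls $\pm B^{2k-1,k}_n$ are only cs-$(k-1)$-neighborly themselves (Lemma~\ref{remark on B^{d, i}_n}) and are $k$-stacked, one can use the stacked structure to pin down which edges $e$ can have $\lk(e,B^{2k-1,k}_n)$ contribute the missing $(k-1)$-faces. Concretely: if $e\subseteq V_{n-2}$, apply equation \eqref{B^d-in-terms of-B^{d-2}} to peel off the top two vertices and reduce to $\lk(e,\Delta^{2k-3}_{n-2})$ being cs-$(k-1)$-neighborly plus an auxiliary stacked-ball condition, then invoke induction on $k$ (base case $k=2$ handled by Lemma~\ref{lm: cs 1 neighborly edge links} and Corollary~\ref{cor: edge links}); if $e$ contains $n-1$ but not $n$, or contains $n$, use the explicit link formulas from the definition of $B^{d,i}_n$ (as in Cases 2 and 3 of Lemma~\ref{lm: neighborly stacked edge links}) together with Lemma~\ref{lm: vertex link in B} to force $e=\pm\{n-1,n\}$.

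The main obstacle I anticipate is the same subtlety that makes Lemma~\ref{lm: neighborly stacked edge links} delicate: when $e\subseteq V_{n-2}$, one must carefully track how cs-$(k-1)$-neighborliness of $\lk(e,\Delta^{2k-1}_n)$ splits into neighborliness demands on the three sub-links $\lk(e,B^{2k-3,k-1}_{n-2})$, $\lk(e,-B^{2k-3,k-2}_{n-2})$, $\lk(e,B^{2k-3,k-3}_{n-2})$ (and their antipodes), because the joins appended to these pieces — namely $(n-1,n)$, $(n,-n+1,-n)$, $(n-1,-n)$ — are paths/arcs of different lengths, so a given $(k-1)$-face of the link can be realized in several of the summands. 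The trap is that a face missing from one summand might still appear in another, so the argument must identify precisely which faces are \emph{only} realizable through the ``large'' summand $\lk(e,B^{2k-3,k-1}_{n-2})*(n-1,n)$ and conclude that $\lk(e,\Delta^{2k-3}_{n-2})$ itself must be cs-$(k-1)$-neighborly, triggering the inductive hypothesis. Once that reduction is in place, the remaining work is routine: assemble the three cases, note that central symmetry of the four surviving links is immediate, and conclude.
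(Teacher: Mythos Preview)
Your plan diverges from the paper at the crucial step, and the divergence is exactly where your acknowledged ``main obstacle'' sits. For edges $e$ with $\pm n\notin e$ the paper does \emph{not} re-run a Lemma~\ref{lm: neighborly stacked edge links}-style case analysis on the sphere. Instead it uses a single step of the inductive construction: since $\Delta^{2k-1}_n$ is obtained from $\Delta^{2k-1}_{n-1}$ by replacing $\pm B^{2k-1,k-1}_{n-1}$ with $\pm\bigl(\partial B^{2k-1,k-1}_{n-1}*n\bigr)$, the link $\lk(e,\Delta^{2k-1}_n)$ is cs-$(k-1)$-neighborly if and only if $\lk(e,\Delta^{2k-1}_{n-1})$ is cs-$(k-1)$-neighborly \emph{and} both balls $\lk(e,\pm B^{2k-1,k-1}_{n-1})$ are cs-$(k-2)$-neighborly and $(k-2)$-stacked. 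The second condition is literally the hypothesis of Lemma~\ref{lm: neighborly stacked edge links} (with $n$ replaced by $n-1$), which already pins down $e=\pm\{1,2\}$. No further splitting of faces across join-summands is needed. For $e\ni\pm n$, the paper uses $\lk(n,\Delta^{2k-1}_n)=\partial B^{2k-1,k-1}_{n-1}$ and $(k-1)$-stackedness to pass from the boundary to the ball itself, then invokes Lemma~\ref{lm: vertex link in B}; your sketch for that case is essentially right.

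Your proposed route for $e\subseteq V_{n-2}$ has a concrete gap: equation~\eqref{B^d-in-terms of-B^{d-2}} decomposes the \emph{ball} $B^{d,i}_n$ (for $i\leq\lceil d/2\rceil-1$), not the sphere $\Delta^{2k-1}_n$, so you cannot ``peel off the top two vertices'' of $\Delta^{2k-1}_n$ to land in $\Delta^{2k-3}_{n-2}$. The copy of $\Delta^{2k-3}_{n-2}$ inside $\Delta^{2k-1}_n$ is $\lk(\{n-1,n\},\Delta^{2k-1}_n)$, which is the link of a \emph{different} edge, so it does not help compute $\lk(e,\Delta^{2k-1}_n)$. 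Your four-piece decomposition is also redundant: already $\Delta^{2k-1}_n=B^{2k-1,k-1}_n\cup B^{2k-1,k}_n$ (complementary balls) and $-B^{2k-1,k-1}_n\subseteq B^{2k-1,k}_n$, so the four listed links double-cover $\lk(e,\Delta^{2k-1}_n)$ rather than partition it. The fix is precisely the paper's one-step-in-$n$ reduction, which converts the sphere question into the ball question that Lemma~\ref{lm: neighborly stacked edge links} was designed to answer.
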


\begin{proof}
The links of $\pm\{1,2\}$ are cs by Lemma \ref{lm: link of 12}; the links of $\pm\{n-1,n\}$ are cs because both of them are $\Delta^{2k-3}_{n-2}$. In the rest of the proof we concentrate on the first statement.

The case $k=2$ follows from Corollary \ref{cor: edge links}, so assume that $k\geq 3$. If $e=\pm \{i, n\}$, then $\lk(\{i, n\}, \Delta^{2k-1}_n)=\lk(i, \partial B^{2k-1, k-1}_{n-1}).$
Since $B^{2k-1, k-1}_{n-1}$ is $(k-1)$-stacked, it follows that in this case $\skel
_{k-2}\big(\lk(i, \partial B^{2k-1, k-1}_{n-1})\big)=\skel_{k-2}\big(\lk(i, B^{2k-1, k-1}_{n-1})\big)$. Hence by Lemma \ref{lm: vertex link in B}, the link is cs-$(k-1)$-neighborly if and only if $i=n-1$. Similarly, if $e=\{-i, -n\}$, then $\lk(e, \Delta^{2k-1}_n)$ is cs-$(k-1)$-neighborly if and only if $i=n-1$.
	
Now assume that $\pm n \notin e$. Since $\lk(e, \Delta^{2k-1}_n)$ is obtained from $\lk(e, \Delta^{2k-1}_{n-1})$ by replacing $\lk(e, B^{2k-1, k-1}_{n-1})$ with $\lk(e, \partial B^{2k-1, k-1}_{n-1}) *n$ and $\lk(e, -B^{2k-1, k-1}_{n-1})$ with $\lk(e, -\partial B^{2k-1, k-1}_{n-1}) *(-n)$, the link $\lk(e, \Delta^{2k-1}_n)$ is cs-$(k-1)$-neighborly if and only if $\lk(e, \Delta^{2k-1}_{n-1})$ is cs-$(k-1)$-neighborly and furthermore $\lk(e, \pm B^{2k-1, k-1}_{n-1})$ is cs-$(k-2)$-neighborly and $(k-2)$-stacked. By Lemma \ref{lm: neighborly stacked edge links}, the second condition is equivalent to $e=\pm \{1,2\}$. Finally, Lemma \ref{lm: link of 12} implies that $\lk(\{1,2\}, \Delta^{2k-1}_n)$ is indeed cs-$(k-1)$-neighborly.
\end{proof}

\begin{remark}
	While by Lemma \ref{lm: link of 12}, the link $\lk(\{1,2\}, \Delta^d_n)$ is always cs, the link $\lk(\{n-1,n\}, \Delta^{d}_n)$ is cs only when $d$ is odd. Indeed, $\lk(\{n-1,n\}, \Delta^{2k}_n)=\lk(n-1, \partial B^{2k, k-1}_{n-1})=\partial B^{2k-1,k-1}_{n-2}$ is not cs. 
\end{remark}

To complete the proof of Theorem \ref{thm:Lambda}, it is left to prove the following result.
\begin{theorem}\label{thm: distinct edge links}
	Let $k\geq 2$. For a sufficiently large $n$, the complexes $\Lambda^{2k-1}_{n-2}$ and $\Delta^{2k-1}_{n-2}$ are not isomorphic. 
\end{theorem}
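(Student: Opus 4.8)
\emph{Proof proposal.} The plan is to distinguish the two spheres by a relabelling-invariant statistic: the number of codimension-two faces whose link is a long cycle. In a combinatorial $(2k-1)$-sphere $\Gamma$, the link of any $(2k-3)$-face is a combinatorial $1$-sphere, i.e.\ a cycle, and any isomorphism $\Gamma\to\Gamma'$ carries $(2k-3)$-faces to $(2k-3)$-faces and their links to isomorphic cycles; hence it preserves, for each $L$, the number of $(2k-3)$-faces $\tau$ with $f_0(\lk(\tau))=L$. Call a $(2k-3)$-face \emph{long} if its link has at least $2(n-3k+3)$ vertices. It therefore suffices to show that, for all sufficiently large $n$, $\Lambda^{2k-1}_{n-2}$ has $\Omega(n^{k-1})$ long faces while $\Delta^{2k-1}_{n-2}$ has only $O(n^{k-2})$ of them. (For $k=2$, the bound for $\Delta^3_{n-2}$ is immediate from Corollary~\ref{cor: edge links}, which shows that only a bounded number of edges of $\Delta^3_m$ have links of size $\ge 2(m-3)$.)

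\textbf{Lower bound for $\Lambda^{2k-1}_{n-2}$.} Recall $\Lambda^{2k-1}_{n-2}=\lk(\{1,2\},\Delta^{2k+1}_n)$, so $\tau\subseteq W_{n-2}$ is a $(2k-3)$-face of it exactly when $\{1,2\}\cup\tau$ is a $(2k-1)$-face of $\Delta^{2k+1}_n$, and then $\lk(\tau,\Lambda^{2k-1}_{n-2})=\lk(\{1,2\}\cup\tau,\Delta^{2k+1}_n)$. Following the ``generic face'' recipe preceding Proposition~\ref{prop:squeezed-balls-as-subcomplexes} and using the description of $S(2k+2,n)_{k+1}$ in Lemma~\ref{lm: positive facets}, I would take
\[\tau=\{\varepsilon_1 a_1,\varepsilon_1(a_1+2)\}\cup\{\varepsilon_2 a_2,\varepsilon_2(a_2+2)\}\cup\cdots\cup\{\varepsilon_{k-1}a_{k-1},\varepsilon_{k-1}(a_{k-1}+2)\},\]
with $\varepsilon_i\in\{\pm1\}$ and $3\le a_1$, $a_{i+1}\ge a_i+3$, $a_{k-1}+2\le n$ — that is, $k-1$ widely spaced ``distance-two'' pairs carrying arbitrary signs. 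One checks that $\{1,2\}\cup\tau$ is a face of $\Delta^{2k+1}_n$ because inserting any further such pair into it (for instance a positive one above $a_{k-1}$) produces an element of $S(2k+2,n)_{k+1}$, hence a facet by Lemma~\ref{lm: positive facets}; and, again by Lemma~\ref{lm: positive facets}, $\lk(\{1,2\}\cup\tau,\Delta^{2k+1}_n)$ misses only the $2(k-1)$ vertices of $\tau$ and $O(k)$ of their ``neighbours'' (roughly the $\varepsilon_i(a_i+1)$), so it has at least $2(n-3k+3)$ vertices once $n$ is large — thus $\tau$ is long. Since the number of admissible data $(\varepsilon_i,a_i)_{i=1}^{k-1}$ is $2^{k-1}\binom{\Theta(n)}{k-1}=\Omega(n^{k-1})$ and distinct data give distinct faces, this proves the lower bound.

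\textbf{Upper bound for $\Delta^{2k-1}_{n-2}$; the main obstacle.} Write $m=n-2$ and suppose $\tau$ is a $(2k-3)$-face of $\Delta^{2k-1}_m$ with $f_0(\lk(\tau,\Delta^{2k-1}_m))\ge 2(m-3k+3)$; equivalently $\tau$ lies in at least $2(m-3k+3)$ facets. Each facet $F=\{p_1,\dots,p_{2k}\}\supseteq\tau$ (sorted by absolute value) obeys the Gale-type constraints of Lemma~\ref{lm: facet}: $|p_{2s}|-|p_{2s-1}|\le2$ for $2\le s\le k$ and $|p_2|-|p_1|=1$ unless $|p_1|=1$. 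The crucial structural claim, which I expect to be the hardest part, is that a $(2k-3)$-face lying in that many facets must be \emph{extremal in all but $k-2$ of its coordinates}: up to finitely many exceptions it decomposes as one distinguished block $\{c,c+1\}$ with $c$ ranging over a bounded set (the higher-dimensional avatar of the edges $\{1,2\}$ and $\{m-1,m\}$ from Corollary~\ref{cor: edge links}) together with $k-2$ further distance-two pairs. The heuristic: a codimension-two face whose $k-1$ blocks all have distance two can be enlarged to a facet only by supplying one more block as a \emph{diff-one} pair, and the evenness relations of Lemma~\ref{lm: facet}, sharpened by the fact (Lemma~\ref{lm: facet inclusion}) that every initial segment of a facet is itself a facet of some $\pm\partial B^{i,j'}_{n'}$, together with the layer decomposition~\eqref{eq:decomposition}, force that diff-one block into a bounded window near the bottom, whereupon its remaining blocks ``screen off'' the rest of the link and its length stays below $2(m-3k+3)$ unless those blocks are packed. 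Granting this claim, the long faces are counted by the $O(1)$ choices of the distinguished block times the $\binom{m}{k-2}$ choices of the remaining $k-2$ pairs, i.e.\ $O(m^{k-2})=O(n^{k-2})$.

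Combining the two bounds, for $n$ large $\Lambda^{2k-1}_{n-2}$ has strictly more long $(2k-3)$-faces than $\Delta^{2k-1}_{n-2}$, so they are not isomorphic. I would organize the structural claim by induction on $k$, using Lemma~\ref{lm: prop of Delta^{2k-1}_n} and Lemma~\ref{lm: link of 12} to furnish the extremal base cases (the links of $\{m-1,m\}$ and of $\{1,2\}$) and peeling off one pair at a time via~\eqref{eq:decomposition} and~\eqref{B^d-in-terms of-B^{d-2}}; the delicate bookkeeping is tracking how $f_0$ of a $(2k-3)$-face link changes under one step $\Delta^{2k-1}_{m-1}\to\Delta^{2k-1}_m$ of Definition~\ref{def}.
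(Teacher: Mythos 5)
Your overall strategy is exactly the paper's: the statement is reduced to counting codimension-two faces with long cycle links, proving a lower bound of $\Omega(n^{k-1})$ such faces in $\Lambda^{2k-1}_{n-2}$ and an upper bound of $O(n^{k-2})$ in $\Delta^{2k-1}_{n-2}$ (the paper's Lemmas~\ref{lemma:Omega(n^{k-1})} and~\ref{lemma:O(n^{k-2})}, with threshold $2(n-3k+1)$ where you use $2(n-3k+3)$; the constant is immaterial). Your lower-bound argument is essentially the paper's proof of Lemma~\ref{lemma:Omega(n^{k-1})}: choose $\tau$ made of $k-1$ widely spaced distance-two blocks with arbitrary signs and appeal to Lemma~\ref{lm: positive facets} to show $\{1,2\}\cup\tau$ lies in many facets of $\Delta^{2k+1}_n$. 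That part is sound.

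The gap is the upper bound, and it is not a small one. You explicitly write ``Granting this claim\dots'' around the entire structural assertion, and what you offer for it is a heuristic, not an argument. Moreover, the structural claim as stated --- that a long $(2k-3)$-face must, up to finitely many exceptions, decompose as a single bounded-location block $\{c,c+1\}$ plus $k-2$ distance-two pairs --- does not match the actual structure uncovered in the paper's proof of Lemma~\ref{lemma:O(n^{k-2})}. There the contributing faces are organized via ``runs of close elements,'' and the delicate subcases include $\{i_1,i_2\}=\pm\{1,-m\}$ (not of the form $\{c,c+1\}$) and the subcase $|i_3|\le|i_2|+2$ where the constraint propagates down the blocks rather than localizing in a single bounded window. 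Each of these still contributes $O(n^{k-2})$ faces, but establishing that requires the evenness constraints of Lemmas~\ref{lm: facet inclusion} and~\ref{lm: facet} to be pushed through a careful multi-case analysis, not a single decomposition statement. Your proposed alternative route --- induction on $k$ peeling off one pair at a time via~\eqref{eq:decomposition} and~\eqref{B^d-in-terms of-B^{d-2}} --- is plausible in spirit but is only a plan; you would need to verify that the ``link length'' statistic behaves controllably under one step of Definition~\ref{def}, and you have not shown that. As written, the proof is incomplete precisely where the work lies.
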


Before proceeding, it is worth remarking that although we do not have a proof, we suspect that for $k\geq 2$ and $n\gg 0$, $\Lambda^{2k}_{n-2}$ and $\Delta^{2k}_{n-2}$ are also not isomorphic. It is also worth noting that for all $n\leq 6$, the spheres $\Lambda^3_n$ and $\Delta^3_n$ {\em are isomorphic}, while $\Lambda^3_7$ and $\Delta^3_7$ are already {\em not isomorphic}. This is not hard to check using, for instance, the computer program Sage.

Theorem \ref{thm: distinct edge links} is an immediate consequence of the following two lemmas which are independently interesting. We spend the rest of this section establishing these lemmas.
\begin{lemma}  \label{lemma:Omega(n^{k-1})}
		Let $k\geq 2$. For a sufficiently large $n$, the complex $\Lambda^{2k-1}_{n-2}$ has at least $\Omega(n^{k-1})$ faces of dimension $(2k-3)$ whose links have $2(n-3k+1)$ or more vertices.
\end{lemma}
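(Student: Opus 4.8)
The plan is to exhibit an explicit $\Theta(n^{k-1})$-sized family of $(2k-3)$-faces of $\Lambda^{2k-1}_{n-2}=\lk(\{1,2\},\Delta^{2k+1}_n)$ and to lower-bound the number of vertices of each of their links, using the sufficiency half of Lemma~\ref{lm: positive facets}; recall here that $S(2k+2,n)$ also contains sets with negative entries, as long as the two members of each of its $k+1$ consecutive pairs share a sign, so that direction yields mixed-sign facets of $\Delta^{2k+1}_n$ as well. Fix a $(k-1)$-subset $P=\{a_1<a_2<\cdots<a_{k-1}\}$ of $\{10,11,\ldots,n-10\}$ with $a_{s+1}-a_s\ge 10$ for all $1\le s\le k-2$ (when $k=2$ this is just a single $a_1\in\{10,\ldots,n-10\}$), and set
\[
\tau_P:=\{a_1,\,a_1+2,\,a_2,\,a_2+2,\,\ldots,\,a_{k-1},\,a_{k-1}+2\}\subseteq W_{n-2}.
\]
First I would verify that $\tau_P$ is a $(2k-3)$-face of $\Lambda^{2k-1}_{n-2}$: it has $2k-2$ elements and is disjoint from $\{1,2\}$, and the $(2k+2)$-set $\{1,2\}\cup\tau_P\cup\{a_{k-1}+3,a_{k-1}+5\}$, listed in increasing order of absolute value, lies in $S(2k+2,n)_{k+1}$, hence is a facet of $\Delta^{2k+1}_n$ containing $\{1,2\}\cup\tau_P$ — so $\tau_P\in\Lambda^{2k-1}_{n-2}$. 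The substitution $b_s:=a_s-10(s-1)$ shows there are $\Theta(n^{k-1})$ admissible $P$, and distinct $P$ give distinct faces $\tau_P$; so the family has size $\Omega(n^{k-1})$.

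The heart of the matter is the estimate $f_0\bigl(\lk(\tau_P,\Lambda^{2k-1}_{n-2})\bigr)\ge 2(n-3k+1)$. Since $\lk(\tau_P,\Lambda^{2k-1}_{n-2})=\lk(\{1,2\}\cup\tau_P,\Delta^{2k+1}_n)$, a vertex $w$ lies in this link exactly when $\{1,2\}\cup\tau_P\cup\{w\}\in\Delta^{2k+1}_n$, which I would certify by exhibiting a facet from $S(2k+2,n)$ containing it. The key observation is that whenever an integer $c$ belongs to one of the pairwise disjoint intervals
\[
\{3,\ldots,a_1-1\},\qquad \{a_s+3,\ldots,a_{s+1}-1\}\ \ (1\le s\le k-2),\qquad \{a_{k-1}+3,\ldots,n\},
\]
one can adjoin to $\{1,2\}\cup\tau_P$ a suitable gap-$2$ pair — one of $\{c,c+2\}$, $\{c-2,c\}$, $\{n-3,n-1\}$, $\{n-2,n\}$ — so that the enlarged $(2k+2)$-set, listed in increasing order of absolute value, decomposes into the $k+1$ consecutive pairs $(1,2),(a_1,a_1+2),\ldots,(a_{k-1},a_{k-1}+2)$ with the extra gap-$2$ pair slotted in, i.e.\ it lies in $S(2k+2,n)_{k+1}$; moreover the same choice works verbatim after negating both entries of the adjoined pair, since $S(2k+2,n)_{k+1}$ only constrains signs within pairs. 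Hence both $c$ and $-c$ lie in the link for every such $c$. As these intervals are disjoint, avoid $\tau_P$, and have total length
\[
(a_1-3)+\sum_{s=1}^{k-2}(a_{s+1}-a_s-3)+(n-a_{k-1}-2)=n-3k+1
\]
independently of $P$, the link contains the $2(n-3k+1)$ distinct vertices $\pm c$. Together with the count $\Omega(n^{k-1})$ of admissible $P$, this proves the lemma.

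I expect the only delicate part to be the casework behind the claim that $\pm c$ lies in the link for every $c$ in the three families of intervals: for each type of $c$ one must check that the gap-$2$ pair can be chosen so that, after re-sorting by absolute value, the enlarged set genuinely meets the same-sign and consecutive-gap-$2$ requirements defining $S(2k+2,n)_{k+1}$. The awkward sub-cases are $c$ near an end ($c\in\{3,4\}$ or $c\in\{n-1,n\}$) and $c$ sitting just below a block $\{a_s,a_s+2\}$ of $\tau_P$, where $\{c-2,c\}$ must be used in place of $\{c,c+2\}$ to keep the adjoined pair from interleaving with that block; the separation hypotheses $a_1\ge 10$, $a_{s+1}-a_s\ge 10$, $a_{k-1}\le n-10$ are precisely what leaves room around every block for this bookkeeping to succeed. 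No topological input is needed beyond the link-of-a-link identity and the fact that $\Lambda^{2k-1}_{n-2}$ is a combinatorial $(2k-1)$-sphere (so the links in question are cycles, although only their vertex counts are used).
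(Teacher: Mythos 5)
Your proof is correct and follows essentially the same strategy as the paper's: build a $\Theta(n^{k-1})$-sized family of $(2k-3)$-faces of $\Lambda^{2k-1}_{n-2}$ consisting of $k-1$ well-separated gap-$2$ blocks, and then use the sufficiency half of Lemma~\ref{lm: positive facets} (inserting a single gap-$2$ pair into the remaining room) to certify that each such face has $2(n-3k+1)$ vertices in its link. The only differences are cosmetic: the paper also assigns arbitrary signs to its $k-1$ blocks (giving a factor $2^{k-1}$ in the count, which you don't need for the asymptotic bound) and uses the slightly tighter separation constant $5$ in place of your $10$.
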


\begin{lemma} \label{lemma:O(n^{k-2})}
	Let $k\geq 2$. For a sufficiently large $n$, the complex $\Delta^{2k-1}_{n-2}$ has at most $O(n^{k-2})$ faces of dimension $(2k-3)$ whose links have $2(n-3k+1)$ or more vertices.
\end{lemma}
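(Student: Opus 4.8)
The plan is to control the size of the edge-type links in $\Delta^{2k-1}_{n-2}$ by combining the necessary conditions from Lemmas~\ref{lm: facet} and \ref{lm: facet inclusion}. First I would set up a convenient renaming: replace $n-2$ by $N$ (so we work in $\Delta^{2k-1}_N$ on $V_N$) and the target bound becomes: all but $O(N^{k-2})$ of the $(2k-3)$-faces $\sigma$ have links with strictly fewer than $2(N-3k+5)$ vertices, i.e.\ only $O(N^{k-2})$ faces have ``large'' links. The key observation is that the link of a $(2k-3)$-face $\sigma$ in $\Delta^{2k-1}_N$ is a graph-theoretic cycle (a $1$-sphere), and $f_0(\lk(\sigma))$ counts pairs $\{x,-x\}$-free extensions of $\sigma$ to a facet, suitably; so a large link means that there are $\Theta(N)$ vertices $v$ with $\sigma\cup\{v\}\in\Delta^{2k-1}_N$ in a way that still extends to a facet. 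I would then argue that for $\sigma$ to have such a large link, $\sigma$ must itself be ``spread out'' in a very rigid way dictated by Lemma~\ref{lm: facet}, leaving only $O(N^{k-2})$ combinatorial choices.

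The heart of the argument is the following dichotomy. Take a $(2k-3)$-face $\sigma=\{q_1,\dots,q_{2k-2}\}$ with $|q_1|<\dots<|q_{2k-2}|$ and suppose $\lk(\sigma,\Delta^{2k-1}_N)$ has $\geq 2(N-3k+5)$ vertices. Every vertex $v$ in this link completes $\sigma$ to a $(2k-2)$-face, and since $\Delta^{2k-1}_N$ is pure of dimension $2k-1$ and every $(2k-2)$-face lies in a facet, each such $v$ together with $\sigma$ sits inside some facet $F$. By Lemma~\ref{lm: facet} applied to $\Delta^{2k-1}_N=\partial B^{2k,k}_N$, the facet $F$, written $\{p_1,\dots,p_{2k}\}$ in increasing order of absolute value, satisfies $|p_{2s}|-|p_{2s-1}|\le 2$ for $2\le s\le k$ and $|p_2|-|p_1|=1$ unless $|p_1|=1$. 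I would combine this with Lemma~\ref{lm: facet inclusion}, which says each initial segment $\{p_1,\dots,p_i\}$ of $F$ is itself a facet of $\pm\partial B^{i,j'}_{n'}$, to pin down how the two vertices of $F$ adjacent to $\sigma$ (in the cycle-link) relate to $\sigma$. The upshot is that the link vertices $v$ fall into $O(1)$ ``slots'' relative to the structure of $\sigma$, and each slot can contribute $\Theta(N)$ vertices only when $\sigma$ has a very specific shape: the even-indexed gaps of $\sigma$ must all be $\le 2$ except possibly at the positions where the ``free'' running parameter lives. Concretely, I expect the analysis to show that a large link forces $\sigma$ to look like (a signed version of) $\{a_1,a_1+1,a_2,a_2+2,a_3,a_3+2,\dots\}$ with only one ``long'' gap that houses the $\Theta(N)$ link vertices, together with at most two more free parameters among the $k-2$ pairs beyond the first — and choosing the signs ($2^{O(k)}=O(1)$ ways) and the $\le k-2$ free position-parameters from $[N]$ gives $O(N^{k-2})$ total.

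To execute this cleanly I would proceed by induction on $k$, mirroring the structure of the proofs of Lemmas~\ref{lm: facet} and \ref{lm: facet inclusion}, and peeling off the top pair. Using the decomposition \eqref{eq:decomposition} of $\Delta^{2k-1}_N$ together with Lemma~\ref{lm: prop of B^{d, i}_n}(3), I would split $\sigma$ according to which piece of the decomposition its facet-extensions come from; in the generic piece $\pm(\partial B^{2k-1,k-1}_{s-1}*s)\setminus\pm B^{2k-1,k-1}_s$ the top vertex $s$ is forced, the link of $\sigma$ there reduces to a link in $\pm\partial B^{2k-3,k-2}_{s-3}*\{s-2,s\}$ or similar (compare the Case 1 computation in the proof of Lemma~\ref{lm: positive facets}), and I can invoke the inductive statement for $k-1$ to see that large links come only from $O(N^{k-2})$ choices plus one free running index $s$. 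Summing a constant number of such $O(N^{k-2})$ contributions over the constant number of pieces and cases gives the claim. I would also handle the boundary cases $|q_1|=1$ (where the evenness constraint on the first pair is relaxed) and $k=2$ separately; for $k=2$ the statement is exactly Corollary~\ref{cor: edge links}, which says only $O(1)=O(N^0)$ edges have link size $\geq 2(N-1)$-ish, matching the $O(N^{k-2})=O(1)$ bound.

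The main obstacle I anticipate is bookkeeping in the inductive step: ensuring that ``large link'' is inherited correctly when passing from $\sigma$ in $\Delta^{2k-1}_N$ to a sub-face in $\Delta^{2k-3}_{N'}$, i.e.\ that a link of size $\ge 2(N-3k+5)$ forces a sub-link of size $\ge 2(N'-3(k-1)+5)$ for the relevant $N'=N-O(1)$, and not merely a sub-link that is ``somewhat large.'' This requires carefully tracking that at most $O(1)$ link vertices get ``used up'' in choosing the top pair of the facet, so that the $\Theta(N)$ growth is genuinely passed down. Relatedly, one must make sure Lemma~\ref{lm: facet inclusion} is applied to the right ball $\pm\partial B^{i,j'}_{n'}$ and that the sign/orientation data is compatible across the union in \eqref{eq:decomposition}; this is the kind of place where a naive count could accidentally produce $O(N^{k-1})$ instead of $O(N^{k-2})$ if one free parameter is double-counted. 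Once that is nailed down, the rest is a routine — if somewhat lengthy — case analysis driven entirely by the evenness-type constraints of Lemma~\ref{lm: facet}.
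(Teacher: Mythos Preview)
Your plan diverges substantially from the paper's proof, and I believe the inductive scheme you sketch has a genuine gap.

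The paper does \emph{not} argue by induction on $k$. Instead it introduces the notion of \emph{close} elements (two elements whose absolute values differ by at most $2$, plus $1$ close to every negative and $-1$ close to every positive) and decomposes a $(2k-3)$-face $F=\{i_1,\dots,i_{2k-2}\}$ into maximal \emph{runs} $R_1\cup\dots\cup R_q$ of consecutive close elements. Lemma~\ref{lm: facet} forces every facet of $\Delta^{2k-1}_{n-2}$ to be a union of even-length runs, so the two extra vertices in any facet $G\supset F$ must attach to the at most two odd-length runs of $F$. The proof is then a direct case analysis: if $F$ has two odd runs (and $\pm1\notin R_1$, etc.), the link is bounded by $8(2k-2)+2=O(1)$; otherwise all runs are even and one splits further according to whether $\{i_1,i_2\}$ is an edge of some $\Delta^1_m$, bounding either the link size or the number of such $F$ in each subcase. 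No induction, no use of the decomposition~\eqref{eq:decomposition} for this lemma.

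Your inductive plan breaks at the step where you say ``Summing a constant number of such $O(N^{k-2})$ contributions over the constant number of pieces and cases gives the claim.'' The decomposition~\eqref{eq:decomposition} has $\Theta(N)$ pieces, not $O(1)$: the index $s$ runs from $2k+1$ to $N$. If you apply the inductive hypothesis piece by piece you get $O(N^{k-1})$, exactly the overcount you flagged as a danger. More fundamentally, a face $\sigma$ with a large link has its $\Theta(N)$ extending facets spread across \emph{many} pieces of~\eqref{eq:decomposition} --- that spreading is precisely what makes the link large --- so you cannot localize $\sigma$ to a single piece and then pass to a sub-face $\sigma'$ in a single $\Delta^{2k-3}_{N'}$. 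The ``inheritance'' of large-link that you say you need is therefore not merely a bookkeeping issue; as stated it is false. (Minor point: after substituting $N=n-2$, the threshold is $2(N-3k+3)$, not $2(N-3k+5)$.) The paper's run-based argument sidesteps all of this by reasoning directly about where the two extra vertices of $G\setminus F$ can sit relative to $F$.
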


\smallskip\noindent {\it Proof of Lemma \ref{lemma:Omega(n^{k-1})}: \ }
	Let $\sigma=\{p_1, p_2, \dots, p_{2k-3}, p_{2k-2}\}\subseteq V_n$ be such that a) $|p_1|\geq 7$ and $|p_{2k-2}|\leq n-4$, b) for each $i$, the elements $p_{2i-1}, p_{2i}$ have the same signs and $|p_{2i}|-|p_{2i-1}|=2$, and c) $|p_{2j+1}|-|p_{2j}|\geq 5$ for $1\leq j \leq k-2$. By Lemma \ref{lm: positive facets}, both $\sigma\cup \{1,2\}\cup \{m, m+2\}$ and $\sigma\cup \{1,2\}\cup \{-m, -m-2\}$ ($m> 0$) are facets of $\Delta^{2k+1}_n$ as long as $$\{m, m+2\}\subseteq [3, |p_1|-1]\cup [|p_2|+1, |p_3|-1]\cup\dots \cup [|p_{2k-2}|+1, n].$$
	In other words, the link of $\sigma$ in $\Lambda^{2k-1}_{n-2}=\lk(\{1,2\}, \Delta^{2k+1}_n)$ has at least $2(n-2-3(k-1))=2(n-3k+1)$ vertices. On the other hand, the number of such $\sigma$ is $2^{k-1} \binom{n-6k}{k-1}=\Omega(n^{k-1})$: indeed $\binom{n-6k}{k-1}$ is the number of ways of choosing a subset $\{|p_1|<|p_3|< \dots< |p_{2k-3}|\}$ of $[7, n-6]$ so that $|p_{2j-1}| -|p_{2j-3}|\geq 7$, while $2^{k-1}$ comes from the fact that we can attach a sign to any of the $k-1$ pairs $(|p_{2j-1}|, |p_{2j}|)$.
	\hfill$\square$\medskip

	In light of Lemmas \ref{lm: facet inclusion} and \ref{lm: facet}, we introduce the following definitions. We say that $1$ is {\em close} to every negative element of $V_n$ while $-1$ is {\em close} to every positive element. In addition, we call two elements of $V_n$ {\em close} to each other if their absolute values differ by at most $2$. Consider a set $F=\{i_1,i_2,\dots,i_{2j-1}, i_{2j}\} \subseteq V_{n}$, where $|i_1|<|i_2|<\dots< |i_{2j}|$. We say that $\{i_s, i_{s+1},\dots, i_r\}$ is a {\em run} of close elements in $F$ if every two consecutive elements of this segment are close to each other, but $i_{s-1}$ is not close to $i_s$ (or $s=1$) and $i_r$ is not close to $i_{r+1}$ (or $r=2j$). 
	
\smallskip\noindent {\it Proof of Lemma \ref{lemma:O(n^{k-2})}: \ }	
	Let $F=\{i_1,i_2,\dots,i_{2k-3}, i_{2k-2}\}$ be a $(2k-3)$-face of $\Delta^{2k-1}_{n-2}$, where $|i_1|<|i_2|<\dots< |i_{2k-2}|$. Then $F$ can be expressed in a unique way as a disjoint union of runs: $ F=R_1\cup\dots\cup R_q$. By Lemmas \ref{lm: facet inclusion} and \ref{lm: facet}, a facet containing the codimension $2$ face $F$ is a disjoint union of runs of even lengths. Hence only zero or two of $R_1, \dots, R_q$ have an odd length.
	
	{\bf Case 1:} two of the runs, say $R_a$ and $R_b$ (where $a<b$), have odd lengths. We prove that for a sufficiently large $n$, the link of any such $F$ computed in $\Delta^{2k-1}_{n-2}$ has less than $2(n-3k+1)$ vertices.
	
	i) If $a\neq 1$ or $a=1$, but $\pm 1\notin F$, then the link of such a face cannot have more than $8(2k-2)+2$ vertices: this is because if $G$ is a facet of $\Delta^{2k-1}_{n-2}$ that contains $F$, then $G\backslash F$ must contain 2 vertices each of which is close to some element of $F$; these could only be vertices $\pm 1$ along with $$\{v: |v|-|i_p|\in \{\pm 1, \pm 2\}, \;\text{for some} \;i_p\in R_a\cup R_b\} \subseteq \{v: |v|-|i_p|\in \{\pm 1, \pm 2\}, \;\text{for some} \;i_p\in F\}.$$
	
	ii) If $a=1$, and $R_1$ starts with $\{\pm 1, \pm 2\}$ or $\{\pm1, \pm3\}$, then the same argument as above applies and shows that the link has at most $8(2k-2)$ vertices.
	
	iii) If $a=1$, and $R_1$ starts with $\{1, -m\}$ for $m>2$, then the vertices of the link of $F$ are contained in $\{-\ell: 1<\ell\leq m-1 \}\cup \{v: |v|-|i_p|\in\{\pm1,\pm2\}, \;\text{for some}\; i_p\in F\}$, so there are at most $m+8(2k-2) < n+8(2k-2)$ such vertices and $n+8(2k-2)$ is smaller than $2(n-3k+1)$ assuming $n$ is large enough. The same argument works if $R_1$ starts with $\{-1,m\}$.
	
	iv) If $a=1$ and $R_1=\{1\}$, then the first element of $R_2$ is some positive number $m>3$. In this case as in case (iii), the vertices of the link of $F$ are contained in $\{-\ell: 1<\ell\leq m-1 \}\cup \{v: |v|-|i_p|\in\{\pm1,\pm 2\} \;\text{for some}\; i_p\in F\}$, so there are again at most $m+8(2k-2) \leq n+8(2k-2)$ such vertices. The same argument works if $R_1=\{-1\}$.
	
	{\bf Case 2: } all $R_1, \dots, R_q$ have even lengths. Assume that $G$ is a facet of $\Delta^{2k-1}_{n-2}$ containing $F$. By Lemma \ref{lm: facet inclusion}, the two elements of $G$ with the smallest absolute values form an edge in some $\Delta^1_m$. Hence there are the following three possible subcases; we consider them below.
	
	i) $\{i_1, i_2\}$ is not an edge of any $\Delta^1_m$. First, assume $|i_1|\neq 1$. Then, by Lemmas \ref{lm: facet inclusion} and \ref{lm: facet}, $G$ can only be of the following two types: 
	\begin{itemize}
		\item $F\cup\{i_{-1},i_0\}$, where $|i_{-1}|< |i_0|<|i_1|$;
		\item $F\cup \{u,v\}$, where $|u|=|1|$, $|i_1|-1$ or $|i_1|+1$, and $|v|-|i_p|\in\{\pm1,\pm 2\}$ for some $i_p\in F$.
	\end{itemize} 
	Thus, $f_0(\lk(F, \Delta^{2k-1}_{n-2}))\leq 2(|i_1|-1)+8(2k-2)$. Therefore, for this link to have at least $2(n-3k+1)$ vertices, we must have $|i_1|\geq n+10-11k$. There are at most $2^{2k-2}\binom{11k}{2k-2}$ such faces $F$.
	
	On the other hand, if $|i_1|=1$, then $G$ is of the form $F\cup\{u,v\}$, where $1<|u|<|i_2|$ and $|v|-|i_p|\in\{\pm 1,\pm 2\}$ for some $i_p\in F$. The same computation as above implies that there are at most $2^{2k-2}\binom{11k}{2k-3}$ such faces.
	
	ii) $\{i_1,i_2\}=\pm \{1, -m\}$ for some $m>1$. Say, $\{i_1,i_2\}=\{1, -m\}$. Let $G=F\cup \{u, v\}$, where $|u|<|v|$. We claim that then either $m\leq 5$, or $\{u, v\}$ and $\{3,4,\dots,m-3\}$ are disjoint. Indeed if $|u|< m$, then by Lemmas \ref{lm: facet inclusion} and \ref{lm: facet}, either $u=\pm 2$ or $-|m|<u<-2$, while $|v|-|i_p|\in\{\pm 1,\pm 2\}$ for some $i_p\in F$, $p\geq 2$. Thus, the link of $F$ will have at least  $2(n-3k+1)$ vertices only if $m\leq 5$ or $$2(n-3k+1)\leq 2(n-2)-|[3,m-3]|-|F\cup (-F)|=(2n-4)-(m-5)-(4k-4).$$ In either case, $m\leq 2k+3$. There are at most $2(2k+2)8^{k-2}\binom{n}{k-2}=O(n^{k-2})$ such faces $F$: the factor of $2(2k+2)$ counts the number of possible ways to choose the value of $m$ and the sign of  $\{i_1,i_2\}=\pm\{1,-m\}$, while $8^{k-2}\binom{n}{k-2}$ counts the number of ways to choose $|i_3|, |i_5|, \dots, |i_{2k-3}|$, the signs of $i_{2j-1}$ and the values of $i_{2j}$ (which must belong to $\{\pm(|i_{2j-1}|+1),\pm(|i_{2j-1}|+2)\}$).
	
	iii) $|i_2|=|i_1|+1$ and $i_1, i_2$ have the same signs. Assume that $G=F\cup \{i_{-1}, i_0\}$ is a facet of $\Delta^{2k-1}_{n-2}$, where $|i_{-1}|<|i_0|<|i_1|$. By Lemma \ref{lm: facet inclusion}, there exist $j\leq 2$ and $m$ such that 
	$$\{i_{-1}, i_0,\dots, i_3\}\in \pm \partial B^{5,j}_m=\pm \left(\partial B^{4, j}_{m-1}*m\right)\cup \pm \left(\partial B^{4, j-1}_{m-1}*m\right)  \cup \pm \left( B^{4,j}_{m-1}\backslash -B^{4, j-1}_{m-1}\right). $$ 
	Lemma \ref{lm: facets} along with the fact that $i_1$ and $i_2$ differ by 1 and have the same sign implies that if $$\{i_{-1}, i_0, i_1, i_2\}\in \pm \partial B^{4,2}_{m-1}\cup \pm B^{3,2}_{m-2}\subseteq \Delta^3_{m-1} \cup \Delta^3_{m-2},$$ then $ i_2\in \pm \{m-2, m-1\}$. Furthermore, in such a case we must have $\{i_{-1}, \dots, i_3\}\in \pm \big(\partial B^{4, 2}_{m-1}*m\big)\cup \pm B^{4,2}_{m-1}$ and so $|i_3|\leq |i_2|+2$. Otherwise, $\{i_{-1}, i_0, i_1, i_2\}$ must be in $\pm \partial B^{4,1}_{m-1}$ or $\pm B^{3,1}_{m-2}$, in which case it also follows that $|i_3|\leq |i_2|+2$.
	
	The above discussion shows that either the vertex set of the link of $F$ has no elements in $\{\pm2,\pm3,\dots,\pm(|i_1|-2)\}$ or $|i_3|\leq |i_2|+2$. In the former case, $$f_0\big(\lk(F, \Delta^{2k-1}_{n-2})\big) \leq 2(n-2-(|i_1|-3)-(2k-2)),$$ which means that this link has at least $2(n-3k+1)$ vertices only if $|i_1|\leq k+2$. But then the number of such faces $F$ is at most $2(k+2)\cdot 8^{k-2}\binom{n}{k-2} =O(n^{k-2})$. In the latter case, choosing $i_1$ determines $i_2$ and gives four possible choices for $i_3$ (as $i_3$ must belong to $\{\pm(|i_2|+1),\pm(|i_2|+2)\}$), which, by Lemma \ref{lm: facet}, in turn gives four possible choices for $i_4$. Therefore, the total number of such faces $F$ is also at most $2n \cdot 4^2\cdot 8^{k-3}\binom{n}{k-3}=O(n^{k-2})$.
\hfill$\square$\medskip


\section{Many cs $(2k-1)$-spheres that are cs-$(k-1)$-neighborly} \label{sec:(k-1)-neighb}
The goal of this section is to construct many cs $(2k-1)$-spheres with vertex set $V_n$ that are cs-$(k-1)$-neighborly. Our strategy is outlined at the end of Section \ref{sec:two-constructions}: it consists of starting with $\Delta^{2k-1}_n$ and symmetrically applying  bistellar flips. Showing that the resulting complexes are not pairwise isomorphic relies on understanding the set of automorphisms of $\Delta^{2k-1}_n$.
  
\begin{theorem}\label{thm: automorphism}
	Let $k\geq 2$ and let $n$ be sufficiently large. 
Then $\Delta^{2k-1}_n$ admits only two automorphisms: the identity map and the map induced by the involution $\alpha: i \mapsto -i$ for all $i\in V_n$.
\end{theorem}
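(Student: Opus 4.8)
The plan is to identify the vertices of $\Delta^{2k-1}_n$ that can be distinguished combinatorially by local invariants, and to show that any automorphism $\varphi$ must fix the antipodal structure of the vertex set. The natural invariant to use is the function $e \mapsto f_0(\lk(e, \Delta^{2k-1}_n))$, the size of an edge link; more precisely, the relevant quantity is whether this link is "large" in the sense of Lemma \ref{lemma:Omega(n^{k-1})}--\ref{lemma:O(n^{k-2})}, i.e., whether it contains $2(n-3k+1)$ or more vertices. An automorphism preserves this property, so it permutes the set of edges with large links and the set of $(2k-3)$-faces with large links. By Theorem \ref{lm: edge link main result}, when $k\geq 2$ and $n$ is large, the only edges $e$ whose links in $\Delta^{2k-1}_n$ are cs-$(k-1)$-neighborly are $\pm\{1,2\}$ and $\pm\{n-1,n\}$; one should check (using Corollary \ref{cor: edge links} in the $k=2$ case and the link computations behind Lemmas \ref{lm: link of 12}, \ref{lm: vertex link in B} in general) that these four edges are also precisely the ones with strictly largest link size, so that $\varphi$ permutes $\{\pm\{1,2\}, \pm\{n-1,n\}\}$ as a set.

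Next I would bootstrap from this to control $\varphi$ on all vertices. Since $\varphi$ permutes the four distinguished edges, it permutes their four endpoints-as-pairs, so it restricts to a bijection of $\{\pm 1, \pm 2, \pm(n-1), \pm n\}$; moreover $\varphi$ must carry the link $\lk(\{n-1,n\}, \Delta^{2k-1}_n) = \Delta^{2k-3}_{n-2}$ to another such distinguished edge-link, and by induction on $k$ (or on $n$) one can assume the automorphism group of $\Delta^{2k-3}_{n-2}$ is already known to be $\{\mathrm{id}, \alpha\}$. This pins down $\varphi$ on $V_n \setminus \{\pm 1, \pm 2\}$ up to the global involution, after which I would peel off the remaining vertices: either use $\lk(\{1,2\}, \Delta^{2k-1}_n) = \Lambda^{2k-3}_{n-2}$ together with Theorem \ref{thm: distinct edge links} to see that $\varphi$ cannot swap $\{1,2\}$ with $\{n-1,n\}$ (their links are non-isomorphic for $n\gg 0$), forcing $\varphi(\{n-1,n\}) \in \{\pm\{n-1,n\}\}$ and $\varphi(\{1,2\}) \in \{\pm\{1,2\}\}$. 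Composing with $\alpha$ if necessary we may assume $\varphi$ fixes $\{n-1,n\}$ and $\{1,2\}$ setwise. A finer invariant — e.g. the sizes of vertex links $f_0(\lk(v,\Delta^{2k-1}_n))$, or the asymmetry between the two endpoints of $\{n-1,n\}$ visible from $f_0(\lk(\{n-2,n\}))$ versus $f_0(\lk(\{n-2,n-1\}))$ in Corollary \ref{cor: edge links}, extended inductively — should then force $\varphi(n)=n$, $\varphi(n-1)=n-1$, and hence $\varphi(i)=i$ for $n-1 \le |i| \le n$.

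From there I would run a downward induction: having shown $\varphi$ fixes all vertices with absolute value $\ge m$ for some $m$, use the decomposition \eqref{eq:decomposition} together with the facet description in Lemmas \ref{lm: facets}, \ref{lm: positive facets} to show that the combinatorial position of a vertex $j$ with $|j| = m-1$ is determined by which facets through fixed vertices it lies in — concretely, a "typical" facet containing $j$ has the form $\{\dots, j-2, j, \dots, n-1, n\}$ or its sign-variants by Lemma \ref{lm: positive facets}, and the pattern of runs forces $\varphi(j) = j$. Iterating down to $|j| = 3$ finishes the proof (the vertices $\pm 1, \pm 2$ being already controlled by the $\{1,2\}$-stabilization step).

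The main obstacle I anticipate is the base of the downward induction near $|j|$ small, i.e., disentangling the behavior of $\varphi$ on $\{\pm 1, \pm 2\}$: the vertices $1$ and $2$ are genuinely special (vertex $1$ is the unique "hub" appearing in the exceptional facets $\{1,-n+2,\dots\}$ of Lemma \ref{lm: facets}, and $\pm 1$ play the role of being "close" to everything in the sense defined before the proof of Lemma \ref{lemma:O(n^{k-2})}), so one needs a robust asymmetric invariant separating $1$ from $2$ and separating $\{1,2\}$ from $\{n-1,n\}$ that survives for all large $n$. Theorem \ref{thm: distinct edge links} handles the latter, but making the former precise — and making sure all the "sufficiently large $n$" thresholds from Theorems \ref{lm: edge link main result} and \ref{thm: distinct edge links} and the inductive hypothesis are compatible — will be the delicate bookkeeping part of the argument.
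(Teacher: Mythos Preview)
Your overall strategy matches the paper's: identify the four distinguished edges $\pm\{1,2\}$, $\pm\{n-1,n\}$ via Theorem~\ref{lm: edge link main result}, use Theorem~\ref{thm: distinct edge links} to separate $\pm\{1,2\}$ from $\pm\{n-1,n\}$, and then invoke induction on $k$ through the link $\lk(\{n-1,n\},\Delta^{2k-1}_n)=\Delta^{2k-3}_{n-2}$. However, there is a concrete bookkeeping error that sends the rest of your sketch in the wrong direction.

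The vertex set of $\lk(\{n-1,n\},\Delta^{2k-1}_n)=\Delta^{2k-3}_{n-2}$ is $V_{n-2}=\{\pm 1,\ldots,\pm(n-2)\}$, which \emph{includes} $\pm 1$ and $\pm 2$. So the inductive step does not pin down $\varphi$ on $V_n\setminus\{\pm 1,\pm 2\}$ as you write; it pins down $\varphi$ on $V_{n-2}$, and the only vertices left to determine are $\pm(n-1)$ and $\pm n$. Once you correct this, the entire downward induction on $|j|$ and the worry you flag about disentangling $\pm 1$ from $\pm 2$ simply evaporate: those vertices are already handled by the induction on $k$.

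What remains is to nail down $\pm(n-1)$ and $\pm n$ given that $\varphi$ is already known (up to $\alpha$) on $V_{n-2}$. The paper does this in one stroke using Lemma~\ref{lem:F_i-G_i-flip}: the face $F=\{n-4k+3,n-4k+6,\ldots,n-2\}\subset V_{n-2}$ has $\lk(F,\Delta^{2k-1}_n)=\partial\overline{G}$ with $n\in G$ but $-n,\pm(n-1)\notin G$, so once $F$ is fixed the vertex $n$ is singled out. Then $\lk(\{n-1,n\})$ is cs-$(k-1)$-neighborly while $\lk(\{n-1,-n\})$ is not, which determines $n-1$. Your suggestion of comparing $f_0(\lk(\{n-2,n\}))$ to $f_0(\lk(\{n-2,n-1\}))$ would work for $k=2$ (and indeed the paper's $k=2$ base case argues via Corollary~\ref{cor: edge links} in this spirit), but for general $k$ you have not established the needed edge-link asymmetries, whereas Lemma~\ref{lem:F_i-G_i-flip} is already available.
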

\begin{proof}
We first treat the case of $k=2$. Let $\phi:V_n\to V_n$ be a bijection that induces an automorhism $\Phi$ of $\Delta^{3}_n$. By Corollary \ref{cor: edge links}, the edges $\pm\{1,2\}$ and $\pm\{n-1,n\}$ are the only edges whose links have length $2n-4$, hence $\Phi$ must map the set $A=\{\pm\{1,2\}, \pm\{n-1,n\}\}$ to itself. Similarly, $\Phi$ must also map the set $B=\{\pm\{n-2,n\}, \pm\{2,3\}, \pm\{n-3,n-1\}\}$ to itself because $B$ is the set of all edges whose links have length $2n-5$. Finally, for $\Phi$ to be an automorphism of $\Delta^{3}_n$, it must satisfy $\phi(-v)=-\phi(v)$ for all vertices $v$.

Now, observe that  $\phi(2)\neq \pm 1$, for otherwise $\Phi(\{2,3\})\notin B$. Similarly, $\phi(2)\neq \pm n$, or else $\phi(1)$ would be $n-1$ or $-(n-1)$, in which case,  $\{n-3,n-1\}$ would not be in $\Phi(B)$. The same argument shows that $\phi(2)\neq \pm (n-1)$, for otherwise $\phi(1)$ would be $n$ or $-n$, and so $\{n-2,n\}$ would not be in $\Phi(B)$. Thus $\phi$ must map $1$ to $1$ or $-1$ and $2$ to $2$ or $-2$; furthermore, the signs of $\phi(1)$ and $\phi(2)$ must be the same. Assume first that $\phi(1)=1$, $\phi(2)=2$. Then $\Phi(\{2,3\})\in B$ implies that $\Phi(\{2,3\})=\{2,3\}$, so $\phi(3)=3$. In addition, $\Phi(\{n-1,n\}) \in A$ and hence $\Phi(\{n-1,n\})\in \{\pm\{n-1,n\} \}$. Since the link of both $\{n-1,n\}$ and $\{-n+1,-n\}$ is the cycle $(1,2,3,\dots, n-2,-1,\dots)$, and since $\phi$ is the identity on $\{1,2,3\}$, we conclude that $\phi$ must be the identity on $V_{n-2}$. Finally, since the link of $\{1,2\}$ is the cycle $(3,5,\dots,n-3,n-1,n, n-2,\dots)$ or the cycle $(3,5,\dots, n-2, n, n-1, n-3,\dots)$ depending on the parity of $n$, it follows that $\phi$ must also be the identity on $\{n-1, n\}$. Exactly the same argument applies if $\phi(1)=-1$, $\phi(2)=-2$ and shows that in this case $\phi$ is  the involution $\alpha$.
	
	For $k\geq 3$, we use induction on $k$. By Theorem \ref{lm: edge link main result}, the complex $\Delta^{2k-1}_n$ has exactly four edges whose links are both cs and cs-$(k-1)$-neighborly; they are $\pm \{1, 2\}$ and $\pm \{n-1,n\}$. Furthermore, by Theorem \ref{thm: distinct edge links} we could identify which two of these four edges are $\pm\{n-1, n\}$; their links are $\Delta^{2k-3}_{n-2}$. By induction, there are only two automorphisms of $\Delta^{2k-3}_{n-2}$: the one induced by the identity on $V_{n-2}$ and another one induced by the involution $\alpha$. In other words, there are only two ways to label the vertices of the link of $\{n-1,n\}$ by the elements of $V_{n-2}$. Since by Lemma \ref{lem:F_i-G_i-flip}, the link of $F=\{n-4k+3, n-4k+6, n-4k+10, \dots, n-2\}$ contains the vertex $n$ but does not contain any of the vertices $\pm(n-1), - n$, once such a labeling is chosen, the vertex $n$ is determined uniquely, and hence so is $-n$. Finally, since $\lk(\{n-1, n\}, \Delta^{2k-1}_n)$ is cs-$(k-1)$-neighborly while $\lk(\{n-1, -n\}, \Delta^{2k-1}_n)$ is not, this also uniquely determines the vertices $n-1$ and $-n+1$.
\end{proof}

Let $I=[3, n-4k+2]$. 
In Section \ref{sec:two-constructions}, for $i\in I$, we defined the sets $F_i:=\{i, i+3, i+7, i+11, \dots, i+4k-5\}$ and $G_i:=\{i-1, i+1, i+5, i+9, \dots, i+4k-3\}$. We then proved in Lemma~\ref{lem:F_i-G_i-flip} that 
replacing $\pm\st(F_i,\Delta^{2k-1}_n)= \pm\big(\overline{F_i} * \partial\overline{G_i}\big)$ with $\pm\big(\partial\overline{F_i}*\overline{G_i}\big)$ constitutes admissible bistellar flips and results in new cs combinatorial spheres. Further, since the stars $\pm\st(F_i,\Delta^{2k-1}_n)$ are pairwise disjoint, we can simultaneously perform any number of such flips. With Theorem \ref{thm: automorphism} in hand, we are ready to show that distinct combinations of such symmetric flips produce distinct cs combinatorial $(2k-1)$-spheres that are cs-$(k-1)$-neighborly.

\begin{theorem} \label{thm:cs-(k-1)-neighb-constr}
	Let $k\geq 3$ and let $n$ be sufficiently large. There are at least $\Omega(2^n)$ non-isomorphic cs combinatorial $(2k-1)$-spheres on vertex set $V_n$ that are cs-$(k-1)$-neighborly.
\end{theorem}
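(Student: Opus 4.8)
The plan is to carry out the construction sketched after Lemma~\ref{lem:F_i-G_i-flip} and then use Theorem~\ref{thm: automorphism} to separate the resulting spheres up to isomorphism. Fix $I=[3,n-4k+2]$, so $|I|=n-4k$. For $J\subseteq I$, let $\Delta_J$ be the complex obtained from $\Delta^{2k-1}_n$ by replacing $\pm\st(F_i,\Delta^{2k-1}_n)=\pm(\overline{F_i}*\partial\overline{G_i})$ with $\pm(\partial\overline{F_i}*\overline{G_i})$ for every $i\in J$. By Lemma~\ref{lem:F_i-G_i-flip} each such replacement is an admissible bistellar flip; the stars $\pm\st(F_i,\Delta^{2k-1}_n)$ are pairwise facet-disjoint, so the flips may be performed simultaneously; by symmetry $\Delta_J$ is cs; and since bistellar flips preserve the PL homeomorphism type, $\Delta_J$ is again a combinatorial $(2k-1)$-sphere. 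First I would check cs-$(k-1)$-neighborliness: comparing $\overline{F_i}*\partial\overline{G_i}$ with $\partial\overline{F_i}*\overline{G_i}$, every removed face contains $F_i$ (hence has dimension $\geq k-1$) and every added face contains $G_i$ (hence has dimension $\geq k$), so the flips leave $\skel_{k-2}$ unchanged; as $\Delta^{2k-1}_n$ is cs-$k$-neighborly this gives $\skel_{k-2}(\Delta_J)=\skel_{k-2}(\partial\C^*_n)$.

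Next I would record two bookkeeping facts. Since $G_i\notin\Delta^{2k-1}_n$ by Lemma~\ref{lem:F_i-G_i-flip}, since the only new $(k+1)$-subsets contributed by a flip at index $j$ are the sets $A\cup G_j$ with $A\subsetneq F_j$, and since $i\mapsto G_i$ is injective, one gets $G_i\in\Delta_J\iff i\in J$ (and likewise for $-G_i$); in particular distinct $J$'s give distinct labeled complexes. The key point, however, is that the set $\{\pm G_i:i\in J\}$ is \emph{intrinsically recoverable} from $\Delta_J$. Call a $k$-face $G$ of a simplicial complex \emph{admissibly flippable} if $\lk(G)=\partial\overline{B}$ for some $k$-subset $B$ that contains no antipodal pair and is not a face of the complex. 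Because $\Delta^{2k-1}_n$ is cs-$k$-neighborly, every $k$-subset with no antipodal pair is already a face, so $\Delta^{2k-1}_n$ has \emph{no} admissibly flippable $k$-faces; on the other hand each $G_i$ (with $B=F_i$) and each $-G_i$ is admissibly flippable in $\Delta_J$. The main technical step is to show that no \emph{other} admissibly flippable $k$-faces are created: a flip at $F_i$ alters only the links of $(k+1)$-subsets of $F_i\cup G_i$, and using the explicit facet descriptions of Section~\ref{sec:Facets} (Lemmas~\ref{lm: facet}, \ref{lm: facet inclusion} and \ref{lm: positive facets}) one computes the relevant links in $\Delta^{2k-1}_n$ and checks that none of the other $(k+1)$-subsets of $F_i\cup G_i$ ends up with a boundary-of-a-simplex link whose missing simplex is free of antipodal pairs. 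I expect this verification to be the main obstacle.

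Granting this, the rest is short. The sets $\pm(F_i\cup G_i)$, $i\in J$, are pairwise disjoint, so reverse-flipping $\Delta_J$ at \emph{all} of its admissibly flippable $k$-faces — that is, replacing $\pm\st(G_i,\Delta_J)=\pm(\partial\overline{F_i}*\overline{G_i})$ by $\pm(\overline{F_i}*\partial\overline{G_i})$ for every $i\in J$ — is a well-defined canonical operation on $\Delta_J$ whose output is exactly $\Delta^{2k-1}_n$. Hence any isomorphism $\phi\colon\Delta_J\to\Delta_{J'}$, viewed as a bijection of $V_n$, commutes with this canonical operation and therefore induces an automorphism of $\Delta^{2k-1}_n$ given by the same vertex map $\phi$. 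By Theorem~\ref{thm: automorphism}, for $n$ sufficiently large $\phi$ is either the identity or $\alpha$. If $\phi=\mathrm{id}$ then $\Delta_J=\Delta_{J'}$ as labeled complexes, so $J=J'$; if $\phi=\alpha$ then $\Delta_{J'}=\alpha(\Delta_J)=\Delta_J$ since $\Delta_J$ is cs, so again $J=J'$. Thus the $2^{|I|}=2^{\,n-4k}=\Omega(2^n)$ complexes $\{\Delta_J:J\subseteq I\}$ are pairwise non-isomorphic cs combinatorial $(2k-1)$-spheres on $V_n$, and each is cs-$(k-1)$-neighborly, which proves the theorem.
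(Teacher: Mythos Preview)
Your construction of $\Delta_J$ and the cs-$(k-1)$-neighborliness argument match the paper. Where you diverge is in distinguishing the $\Delta_J$ up to isomorphism, and there your proposal has a real gap. The claim that the admissibly flippable $k$-faces of $\Delta_J$ are exactly $\{\pm G_i : i\in J\}$ is left unproved, and the sketch you give for it is not quite right: a flip at index $i$ changes the links of faces contained in $F_i\cup G_i$, but these sets are \emph{not} pairwise disjoint (for instance $i,\,i+1\in(F_i\cup G_i)\cap(F_{i+1}\cup G_{i+1})$), so the link of a given $k$-face can be affected by several flips at once and cannot simply be read off from $\Delta^{2k-1}_n$; your later assertion that ``the sets $\pm(F_i\cup G_i)$, $i\in J$, are pairwise disjoint'' is false for the same reason. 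One useful reduction is available: since the only antipode-free $k$-subsets missing from $\Delta_J$ are $\pm F_i$ for $i\in J$, the hypothetical $B$ in your definition must be one of these, so what actually remains is to show that for each $i\in J$ the unique $k$-face of $\Delta_J$ with link $\partial\overline{F_i}$ is $G_i$. This is plausible but requires genuine control over the facets of $\Delta_J$ containing a given $(k-1)$-subset of $F_i$, and the facet descriptions of Section~\ref{sec:Facets} are only partial for $k\ge 3$.

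The paper sidesteps all of this by exploiting a feature of the construction you did not use: for every $i\in I$ one has $1\notin F_i\cup G_i$ and $n\notin F_i\cup G_i$ (because $i\ge 3$ and $i+4k-3\le n-1$), so the edge links $\lk(\pm\{1,2\},\Delta_J)$ and $\lk(\pm\{n-1,n\},\Delta_J)$ coincide with those in $\Delta^{2k-1}_n$. The paper then argues, via Theorem~\ref{lm: edge link main result} and Theorem~\ref{thm: distinct edge links}, that these four are still the only edges of $\Delta_J$ whose links are cs and cs-$(k-1)$-neighborly, and that $\pm\{n-1,n\}$ can be singled out among them; from there, exactly as in the proof of Theorem~\ref{thm: automorphism}, the vertex labels are recovered up to $\alpha$. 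No analysis of which $k$-faces are flippable is needed.
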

\begin{proof}
Let $J$ be any subset of $I=[3, n-4k+2]$. By Lemma \ref{lem:F_i-G_i-flip}, 
we can simultaneously apply bistellar flips replacing $\pm\st(F_i,\Delta^{2k-1}_n)=\pm(\overline{F_i}*\partial\overline{G_i})$ with $\pm(\partial\overline{F_i}*\overline{G_i})$ for for all $i\in J$. The resulting complex $\Gamma(J)$ is a combinatorial sphere with missing $(k-1)$-faces $F_i$ for $i\in J$, but with the same $(k-2)$-skeleton as $\Delta^{2k-1}_n$. Hence $\Gamma(J)$ is cs-$(k-1)$-neighborly. Observe that since $1$ and $n$ are not in any of $F_i, G_i$, the link $\lk(\{n-1, n\}, \Gamma(J))=\lk(\{n-1, n\},\Delta^{2k-1}_n)$ and $\lk(\{1,2\}, \Gamma(J))=\lk(\{1,2\}, \Delta^{2k-1}_n)$. By Theorem \ref{thm: distinct edge links} and the fact that $\skel_{k-2}(\Gamma(J))=\skel_{k-2}(\Delta^{2k-1}_n)$, the only edges in $\Gamma(J)$ whose links are both cs and cs-$(k-1)$-neighborly  are $\pm \{1,2\}$ and $\pm \{n-1,n\}$. As in the proof of Theorem~\ref{thm: distinct edge links}, we thus can identify which two edges are $\pm \{n-1, n\}$. We then can use Theorem~\ref{thm: automorphism} to determine the labels of all the vertices in $\lk(\pm \{n-1, n\}, \Delta^{2k-1}_n)$ up to the involution $\alpha$.  As in the proof of Theorem \ref{thm: automorphism}, this in turn determines the vertices $\pm (n-1), \pm n$. (Note that the face $F$ of $\Delta^{2k-1}_n$ used at the end of the proof of Theorem 6.1 is $F_{n-4k+3}$. Since $n-4k+3\notin I$, the face $F$ and its link are unaffected by bistellar flips.) Therefore, the  spheres $\Gamma(J)$ (for $J\subseteq I$) are pairwise non-isomorphic. The result  follows since  there are $2^{|I|}=\Omega(2^n)$ of them.
\end{proof}

\section{Many cs $3$-spheres that are cs-$2$-neighborly} \label{sec:many-3-spheres}

In this section we turn our attention to the $3$-dimensional case and show that there exist many non-isomorphic cs $3$-spheres on $V_{n+1}$ that are cs-$2$-neighborly, see Theorems  \ref{thm:Delta(I)} and \ref{thm: 2^n combinatorial types}. As in \cite{Jockusch95}, our construction is based on Lemma \ref{lm: induction method}: (i) start with $\Delta^3_{n}$, a cs $3$-sphere on $V_{n}$ that is cs-$2$-neighborly; (ii) in $\Delta^3_{n}$, find a $3$-ball $B$ such that $B$ is cs-$1$-neighborly (w.r.t.~$V_n$) and $1$-stacked, and shares no common facets with $-B$; then (iii) replace $B$ and $-B$ with the cones over their boundaries. Our first goal is to construct many non-isomorphic balls with these properties. 

For brevity, write a facet $\{a,b,c,d\}\in \Delta^3_n$ as $abcd$. Consider Figure \ref{figure: tree}. By Corollary \ref{cor: edge links} and the fact that $B^{3,1}_n\subseteq \Delta^{3}_n$, each node in Figure \ref{figure: tree} corresponds to a facet and each row represents a subset of facets in $\lk(\{i, i+2\}, \Delta^3_n)$ for $3\leq i\leq n-2$ or $\lk(\{n-1,n\}, \Delta^3_n)$. We refer to the middle row --- the row corresponding to $\lk(\{n-1,n\}, \Delta^3_n)$ --- as row $0$, and to the row corresponding to $\lk(\{n-i, n-i+2\}, \Delta^3_n)$ as row $i-1$ (here, $2\leq i\leq n-3$). Thus, the two rows adjacent to the middle row are rows $1$ and $2$. More generally, the distance from row $i$ to the middle row is $\lfloor (i+1)/2\rfloor$.

We start by defining a family of trees as subgraphs of the facet-ridge graph of $\Delta^3_n$. We will then prove that these trees are facet-ridge graphs of pairwise non-isomorphic cs-$1$-neighborly and $1$-stacked balls in $\Delta^3_n$.
\begin{figure}
\includegraphics[scale=0.9]{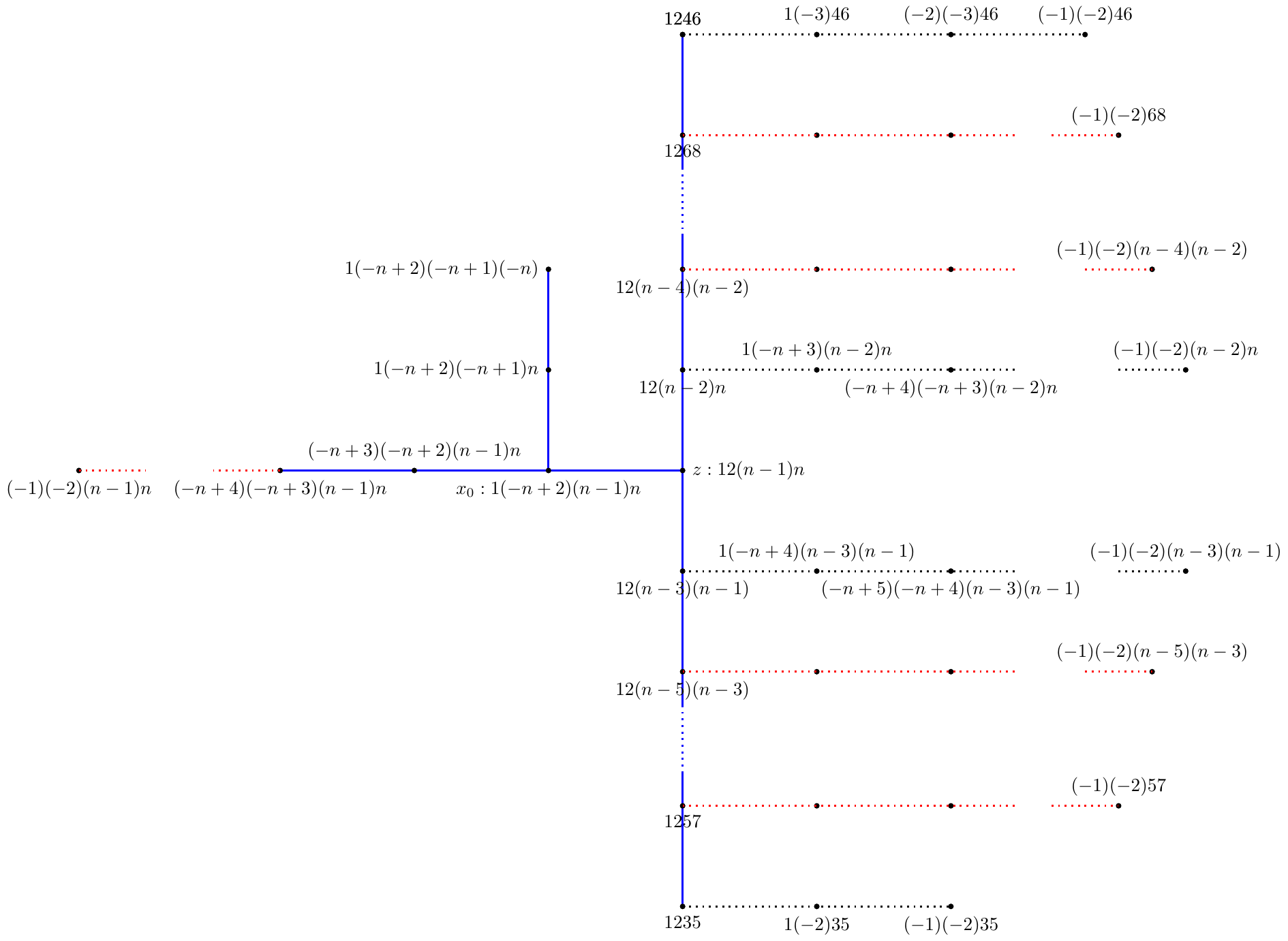}
\caption{A subgraph of the facet-ridge graph of $\Delta^{3}_n$ when $n$ is even}\label{figure: tree}
\end{figure}

\begin{definition}
	Let $n\geq 10$. Consider the following collection of subsets
	\begin{equation}\label{eq: I}
	\mathcal{I}_n:=\left\{I=\{i_1<i_2<\dots <i_p\}\subseteq [3, n-6]: i_2-i_1>1\;\;\mathrm{if}\;\; p\geq 2\right\}.
	\end{equation}
For each $I\in \mathcal{I}_n$, define the tree $T(I)$ as the subgraph of the graph in Figure \ref{figure: tree} that consists of the two blue vertical paths in the picture, i.e., the path from node $1246$ to node $1235$ and the path corresponding to the facet-ridge graph of $(1, -n+2)*(n-1, n, -n+1, -n)$, together with several horizontal paths attached to the middle vertical path and specified as follows: the horizontal paths are contained in the rows indexed by the elements of $I\cup \{0\}=\{i_0=0<i_1<\cdots <i_p\}$, and have lengths $i_1-i_0, i_2-i_1, \ldots, i_{p+1}-i_p$, respectively, where $i_{p+1}:=n-2$. In more detail,
\begin{itemize}
	\item The path in row $i_0$ starts from the node $12(n-1)n$ and goes through nodes $1(-n+2)(n-1)n$, $(-n+3)(-n+2)(n-1)n$, etc.~until it reaches the node $(-n+i_1+1)(-n+i_1)(n-1)n$.
	\item For $1\leq j\leq p$, the path in row $i_j$ starts from the node $12(n-i_j-1)(n-i_j+1)$, moves to the node $1(-n+i_j+2)(n-i_j-1)(n-i_j+1)$ and continues all the way until it reaches the node $(-n+i_{j+1}+1)(-n+i_{j+1}) (n-i_j-1)(n-i_j+1)$.
\end{itemize}

We also define $T(\mathcal{I}_n):=\{T(I) : I \in \mathcal{I}_n\}$. 
\end{definition}

\noindent For instance, if $I=\emptyset$, then $T(I)$ consists of the blue parts of Figure \ref{figure: tree} plus the entire middle row. Note also that the condition $I\subseteq [3,n-6]$ guarantees that all four black horizontal paths in Figure~\ref{figure: tree} are excluded from $T(I)$. On the other hand, the horizontal path in the middle row always has at least three edges. In short, the blue part of Figure~\ref{figure: tree} is contained in {\em all} trees of $T(\mathcal{I}_n)$. Finally, it is worth remarking that $T(I)$ has exactly $2n-3$ nodes: $n-3$ nodes come from the middle column, $3$ more from the short blue vertical line and $(i_1-i_0)+\cdots+(i_{p+1}-i_p)-1=n-3$ additional nodes come from the horizontal paths.

\begin{lemma}\label{lm: I and T(I)}
	Let $n\geq 10$. The trees in $T(\I_n)$ are pairwise non-isomorphic. 
\end{lemma}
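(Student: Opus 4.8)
The claim is that the trees $T(I)$, as $I$ ranges over $\mathcal{I}_n$, are pairwise non-isomorphic as abstract graphs. My strategy is to extract from each tree $T(I)$ a combinatorial invariant that recovers the set $I$ uniquely. The natural candidate is the multiset of path-lengths of the maximal ``horizontal'' segments hanging off the unique long path, read together with their positions along that path. Concretely, I would first identify, purely graph-theoretically, the ``spine'' of $T(I)$ — the unique path of length $n-3$ running through the middle column together with the three-edge blue vertical segment attached at its top — and show it is canonically determined (e.g.\ as a diametral path, or as the unique longest path, after checking the horizontal paths attached to it are all strictly shorter, which follows from $i_{j+1}-i_j \le n-2 - 3$ and the bound $n\ge 10$; one must be slightly careful and may instead characterize the spine via vertex degrees, since the spine nodes are exactly the degree-$\le 3$ nodes lying on every longest path). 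Once the spine is canonically pinned down, each horizontal path is attached at a well-defined vertex of the spine, at a well-defined distance (namely $\lfloor(i_j+1)/2\rfloor$, since row $i_j$ sits at that distance from the middle row) from the spine's branch point with the short blue vertical path, and has a well-defined length $i_{j+1}-i_j$.

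**Key steps, in order.** (1) Show $T(I)$ has a unique vertex $r$ of degree $3$ on the short blue vertical segment — i.e.\ the node $12(n-1)n$ — characterized as the endpoint of the short blue $3$-edge pendant path; this is the canonical ``root.'' (2) Show the spine (the long path of length $n-3$ through the middle column) is canonically recoverable: it is the unique longest path starting at the far end of the short blue segment, or alternatively the unique maximal path through $r$ all of whose internal vertices have degree $\le 3$ and which is not contained in any longer such path; the point is that the $p$ horizontal paths each have length at most $n-5 < n-3$, so no horizontal path can be confused with the spine, and the two blue vertical paths are both short. (3) Having fixed the spine, for each $j$ read off the distance $d_j := \lfloor (i_j+1)/2\rfloor$ from $r$ along the spine to the branch vertex of the $j$-th horizontal path, and the length $\ell_j := i_{j+1}-i_j$ of that horizontal path. (4) From the data $\{(d_j,\ell_j)\}$ reconstruct $I$: the $d_j$'s determine the $i_j$'s up to the parity ambiguity in $\lfloor(i_j+1)/2\rfloor$, but the constraint that the horizontal path in row $i_j$ has the specific length $i_{j+1}-i_j$ and attaches at the correct side (the figure shows which parity of row attaches left vs.\ right) resolves the ambiguity; alternatively, since the lengths $\ell_0 = i_1-0,\ \ell_1=i_2-i_1,\dots$ of consecutive horizontal paths sum in a way that telescopes, and $\ell_0$ already determines $i_1$ directly (the middle-row path has length exactly $i_1$), one recovers $i_1$, then $i_2 = i_1+\ell_1$, and so on. Hence $I$ is an isomorphism invariant of $T(I)$, so distinct $I$ give non-isomorphic trees.

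**The main obstacle.** The delicate point is step (2) together with the parity issue in step (4): the map $i_j \mapsto \lfloor(i_j+1)/2\rfloor$ from rows to distances is two-to-one, so the distance along the spine alone does not recover $i_j$. What saves us is that the length of the horizontal path emanating from row $i_j$ is $i_{j+1}-i_j$, and these lengths are \emph{ordered} along the spine; in particular the bottom horizontal path (row $0$, the middle row, always present) has length exactly $i_1$ and sits at distance $0$, which pins $i_1$ exactly with no parity ambiguity, and then each subsequent $i_{j+1}$ is $i_j$ plus the next horizontal length. So the reconstruction is forced once we correctly identify which endpoint of the spine is the ``middle-row end'' — and that is canonical because the middle-row horizontal path is the one attached at the spine vertex farthest from $r$ along the spine (equivalently, at distance $0$ in the row-indexing, i.e.\ at the spine's lower extremity), whereas $r$ is at the top. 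I would therefore spend most of the write-up carefully justifying that the spine, the root $r$, and the bottom horizontal path are all canonically identifiable from the abstract graph, using only the degree sequence and path-length bounds guaranteed by $n\ge 10$ and $I\subseteq[3,n-6]$ with the gap condition $i_2-i_1>1$; after that, the reconstruction of $I$ is a short bookkeeping argument. I would also note that the hypothesis $I \subseteq [3, n-6]$ (which excludes the four black horizontal paths) is exactly what guarantees every $T(I)$ has the same ``ambient'' structure, so the only differences between them are the horizontal paths indexed by $I$, making the invariant clean.
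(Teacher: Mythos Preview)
Your overall strategy---reconstruct $I$ from the abstract tree by first locating a canonical anchor and then reading off the horizontal-path lengths---is the same as the paper's, and your step~(4) (recover $i_1$ from the length of the middle-row path, then telescope) is essentially correct. But steps~(1) and~(2) rest on a misreading of the tree, and the argument breaks there.

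First, the short blue vertical segment does \emph{not} attach to the middle column at $z=12(n-1)n$. It attaches at $x_0=1(-n+2)(n-1)n$, which is the \emph{first node of the middle-row horizontal path after $z$}. Thus $x_0$ is a degree-$3$ node with neighbors $z$, the second node of the short blue vertical, and the next node along the middle row; whereas $z$ is a degree-$3$ node lying on the middle column. The short blue segment has three nodes (two edges), not three edges, and your proposed characterization of $r$ as ``the degree-$3$ node on the short blue segment'' picks out $x_0$, not $z$.

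Second, and more seriously, the middle column is \emph{not} a longest path of $T(I)$, so your spine-finding procedure fails. The middle column has $n-3$ nodes, hence length $n-4$. But take $I=\{3\}$: the horizontal path in row $i_1=3$ has length $i_2-i_1=(n-2)-3=n-5$, and its branch node $12(n-4)(n-2)$ sits at distance $2$ from $z$ along the middle column. The path from the leaf $1235$ along the middle column to this branch node and then out along the row-$3$ path has length $(n-4-1)+(n-5)=2n-10$, which for $n\ge 10$ strictly exceeds $n-4$. Your bound ``each horizontal path has length at most $n-5<n-3$'' is correct but irrelevant: the competing candidates for a longest path are not horizontal paths alone, but concatenations of a piece of the middle column with a horizontal path, and these can be much longer. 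In the example above there are in fact several distinct paths of maximal length, so neither ``the unique longest path'' nor ``the path through $r$ not contained in any longer such path'' singles out the middle column.

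The paper sidesteps this entirely: it never tries to identify the middle column. Instead it works with the set $N$ of degree-$3$ nodes and their ``exterior'' elements (those not lying on the path between two others in $N$). The crucial observation is that $x_0$ is always exterior and is the \emph{unique} exterior element adjacent to the ``central'' node $z$; the hypotheses $i_1\ge 3$ and $i_2-i_1>1$ force all other exterior elements of $N$ to be at distance $\ge 2$ from $z$. Once $x_0$ and $z$ are pinned down, $i_1$ is read off from the distance from $x_0$ to the middle-row leaf, the parity (which side of $z$ is ``even'') is determined by matching $i_1$ to the distance from $z$ to the nearest remaining degree-$3$ node, and then each $i_j$ is recovered from the distance of the corresponding branch node to $z$. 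Your telescoping idea in step~(4) would also work once $z$ and $x_0$ are correctly identified, but as written your proof does not get that far.
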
 
\begin{proof}
	It suffices to show that given an unlabeled tree $T$ in $T(\I_n)$, we can reconstruct the set $I$ such that $T=T(I)$. 
	Let $N$ be the set of nodes of $T$ of degree $3$. Call an element $u$ of $N$ {\em interior} if there exist $v,w\in N\backslash \{u\}$ such that the unique path from $v$ to $w$ in $T$ contains $u$; call $u$ {\em exterior} otherwise. Note that $N$ has at most three exterior elements and that one of them is the node that corresponds to the facet $1(-n+2)(n-1)n$; we denote this node by $x_0$. Note also that $|N|=|I|+2$. In particular, $p=|I|$ is determined from $T$, so we can assume in the rest of the proof that $p>0$.
	
	Suppose first that  $N$ has exactly three exterior elements: $x_0$ and two more; these two nodes lie on the middle vertical column of Figure \ref{figure: tree} (the column that corresponds to the link of $\{1,2\}$); we denote them by $x_1$ and $x_2$. Also, denote by $z$ the node corresponding to the facet $12(n-1)n$. We start by showing how to identify $x_0$ and $z$ in the unlabeled tree $T$. To identify $z$, for each two of the three exterior elements of $N$, consider the unique path in $T$ that connects them. Then $z$ is precisely the intersection of these three paths. Further, the distance in $T$ from $z$ to  $x_0$ is $1$, while the conditions $i_1\geq 3, i_2\geq 5$ guarantee that the distance from $x_1$ to $z$ is at least $2$ and so is the distance from $x_2$ to $z$. This determines $x_0$.
	
	We now discuss how to reconstruct the set $I$ from $T$.  
	Observe that there are exactly two leaves in $T$ with the property that the path from $x_0$ to each of them does not contain any nodes in $N\backslash x_0$: they are the leaf of the short vertical blue line, i.e., the node corresponding to $1(-n+2)(-n+1)(-n)$, and the leaf of $T$ that lies in the middle row, i.e., the node corresponding to $(-n+i_1+1)(-n+i_1)(n-1)n$. These two leaves are at distance $2$ and $i_1-1$ from $x_0$. 
	This determines $i_1$. Let $y_1$ be the closest to $z$ (after $x_0$) node of degree 3. Since $i_2-i_1>1$, such $y_1$ is unique. By construction,  $y_1$ must be in row $i_1$, and so $i_1$ equals either $2\dist(y_1, z)-1$ or $2\dist(y_1, z)$, depending on the parity of $i_1$. Since we already know $i_1$, this allows us to determine which side of $z$ is ``even" and which side is ``odd". 
	Then, for any node $y\in N\backslash\{x_0,y_1,z\}$, the distance from $y$ to $z$ combined with whether $y$ is on the same side of $z$ as $y_1$ allows us to determine $y$'s row number. The set of these row numbers together with $i_1$ is the set $I$.

	It is left to consider the case where $N$ has only two exterior elements $x_0$ and $x_1$. In this case, the path connecting $x_0$ to $x_1$ in $T$ contains all elements of $N$. List the elements of $N$ in the order we encounter them on this path: $x_0, z, y_1,\ldots,y_{p-1}, y_p=x_1$. 
	Our first task is to show that we can distinguish between $x_0$ and $x_1$. If $p=1$, this is easy as $x_0$ and $z$ are neighbors in $T$ while $x_1$ and $z$ are not. So assume $p>1$. Observe that there is exactly one leaf in $T$, denote it by $\tilde{z}$, such that the path from $z$ to  $\tilde{z}$ does not contain any nodes in $N\backslash z$: it is one of the two leaves of the middle column. In particular, $\dist(z,\tilde{z})$ is a fixed number $\lfloor n/2 \rfloor-2$. Similarly, there is exactly one leaf $\tilde{y}_{p-1}$ such that the path from $y_{p-1}$ to  $\tilde{y}_{p-1}$ does not contain any nodes in $N\backslash y_{p-1}$: it is the leaf of $T$ in row $i_{p-1}$.  Since $x_0$ and $z$ are neighbors in $T$, to distinguish between $x_0$ and $x_1$, it suffices to show that if $y_{p-1}$ is a neighbor of $x_1$ in $T$, then $\dist(y_{p-1},\tilde{y}_{p-1})\neq \lfloor n/2 \rfloor -2$. Indeed, 
if $y_{p-1}$ is a neighbor of $x_1=y_{p}$, then the rows $i_{p-1}$ and $i_p$ are adjacent rows, and so $\dist(y_{p-1},\tilde{y}_{p-1})=i_p-i_{p-1}=2\neq \lfloor n/2 \rfloor -2$, for $n\geq 10$, as desired. 

Once we can distinguish between $x_0$ and $x_1$, we can also determine $z$: it is the only neighbor of $x_0$ that has degree $3$. We can now reconstruct $I$ exactly in the same way as in the case of three exterior elements.
\end{proof}  

We now come to the two main definitions of this section (see, Definitions \ref{def:B(I)} and \ref{def:Delta(I)}):
\begin{definition} \label{def:B(I)}
	Let $n\geq 10$. For $I\in\mathcal{I}_n$, define $B(I)$ as the pure simplicial complex whose set of facets is given by the set of nodes of $T(I)$.
\end{definition} 

The following lemma describes some important properties of $B(I)$. Recall that a pure $d$-dimensional simplicial complex $\Delta$ is {\em shellable} if the facets of $\Delta$ can be ordered $F_1, \dots, F_m$ in such a way that for every $2\leq k\leq m$, $\overline{F_k}\cap(\cup_{i<k} \overline{F_i})$ is pure $(d-1)$-dimensional. The order $(F_1, \dots, F_m)$ is called a {\em shelling order} of $\Delta$. It is known that an ordering $(F_1,\dots,F_m)$ of facets is a shelling if and only if for every $k\geq 2$, the collection of faces of $\overline{F_k}\backslash (\cup_{i<k} \overline{F_i})$ (i.e., the collection of new faces of $\overline{F_k}$) has a unique minimal face w.r.t~inclusion; this face is called the {\em restriction face} of $F_k$. (See \cite[Chapter 8]{Ziegler} for more background on shellings.)

\begin{lemma}\label{lm: property of B(I)}
	Let $n\geq 10$. The complex $B(I)$ is a cs-$1$-neighborly (w.r.t.~$V_n$) and  $1$-stacked combinatorial $3$-ball in $\Delta^{3}_n$; furthermore, $B(I)$ and $-B(I)$ share no common facets.
\end{lemma}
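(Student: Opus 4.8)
The plan is to verify the four claimed properties of $B(I)$ essentially by inspecting the structure of the tree $T(I)$ together with the explicit list of facets of $\Delta^3_n$ from Lemma~\ref{lm: facets} and the edge-link data of Corollary~\ref{cor: edge links}. First I would record that $B(I)$ is a subcomplex of $\Delta^3_n$: each node of $T(I)$ has been defined to be a specific $4$-element set, and one checks directly from Lemma~\ref{lm: facets} that each of these sets is indeed a facet of $\Delta^3_n$ (the nodes on the middle column and the short blue line are facets of $\pm B^{3,1}_n$ or lie in $\lk(\{n-1,n\})$, while the nodes on a horizontal path in row $i_j$ are among the facets listed in part~(2) of Lemma~\ref{lm: facets}, using Corollary~\ref{cor: edge links}). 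Hence $B(I)$ is a pure $3$-dimensional subcomplex of $\Delta^3_n$ whose facet-ridge graph contains $T(I)$; I would then argue that the facet-ridge graph of $B(I)$ \emph{equals} $T(I)$, i.e., no two facets of $B(I)$ that are non-adjacent in $T(I)$ share a ridge. This is the first place where care is needed: since the horizontal rows sit inside edge links $\lk(\{n-i,n-i+2\})$ and these links overlap in the column $\lk(\{1,2\})$, one must check that the ridges shared between a horizontal path and the middle column are exactly the ones recorded as edges of $T(I)$, and that distinct horizontal paths (being in different rows) share no ridge. The ``furthermore'' part of Corollary~\ref{cor: edge links} (describing which pairs $\{i,i+2\}$ appear in $\lk(\{1,2\})$ and the path $(2,1,-\ell+1,\ldots,-1)$ inside $\lk(\{\ell,\ell+2\})$) is exactly what pins this down.

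Once the facet-ridge graph of $B(I)$ is identified with the tree $T(I)$, the statement that $B(I)$ is a $1$-stacked combinatorial $3$-ball should follow from the general principle that a pure complex whose facet-ridge graph is a tree, built by repeatedly attaching a new simplex along a single ridge (a ``stacking''), is a stacked ball; concretely I would exhibit a shelling of $B(I)$ by ordering the facets so that each new facet meets the union of the previous ones in exactly one ridge (its restriction face being the opposite vertex), using $T(I)$ as a spanning tree to guide the order. Shellability with all restriction faces of dimension $0$ except the first gives both that $B(I)$ is a combinatorial $3$-ball and that it is $1$-stacked (interior faces only in dimension $\geq 2$). For cs-$1$-neighborliness, I would check that the vertex set of $B(I)$ is all of $V_n$: the middle column $\lk(\{1,2\})$ already supplies the vertices $\{3,4,\dots,n\}$ (together with $1,2$ from the facets $12\cdots$), and the horizontal paths and the short blue line supply the negative vertices $-1,-2,\dots,-n$; reading off the node labels in the definition of $T(I)$ shows every element of $V_n$ occurs.

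The main obstacle — and the last property — is showing $B(I)$ and $-B(I)$ share no common facet. Here I would use that $B(I)$ is assembled from $\lk(\{1,2\},\Delta^3_n)$ and from rows $\lk(\{n-i,n-i+2\},\Delta^3_n)$, i.e., all of its facets \emph{contain} one of the positive edges $\{1,2\}$ or $\{n-i,n-i+2\}$ (with $3\leq n-i+2\leq n$, so these edges are ``close to positive''), while a facet of $-B(I)$ correspondingly contains one of the antipodal edges $\{-1,-2\}$ or $\{-(n-i),-(n-i+2)\}$. A common facet $F$ would then have to contain two disjoint such edges of opposite sign, forcing $F$ to be one of the special ``transition'' facets near $1,-n+2$; but the short blue vertical line — the only part of $T(I)$ using facets with both a large positive and a large negative vertex — consists of the fixed facets $1(-n+2)(n-1)n$, $1(-n+2)(-n+1)n$, $1(-n+2)(-n+1)(-n)$, and one checks these are not antipodes of any node of $T(I)$ (their antipodes would need $-1,n-2$ together with $n-1,n$ or $\pm(n-1),\pm n$, which do not appear). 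I expect this antipodality check to be the most delicate bookkeeping in the proof, and I would organize it by cases according to which of the pieces (middle column, a horizontal row, the short blue line) each of $F$ and $-F$ lies in, ruling out each combination using Lemma~\ref{lm: facets} and the sign pattern of the node labels.
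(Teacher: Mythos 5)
Your overall plan matches the paper's — check that $B(I)$ is a subcomplex of $\Delta^3_n$, exhibit a shelling with singleton restriction faces to get the combinatorial ball and $1$-stackedness properties, read cs-$1$-neighborliness off the node labels, and do sign bookkeeping for the antipodality claim — but the heart of the shellability step is underspecified, and you miss the counting argument that makes the paper's proof work in one line.

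Concretely, you propose to first show that the facet-ridge graph of $B(I)$ is \emph{exactly} $T(I)$, and then shell $B(I)$ along a spanning-tree traversal of $T(I)$. This has two problems. First, proving that the facet-ridge graph equals $T(I)$ is more than you need, and, as you acknowledge, it requires delicate checks across the different edge links. Second, and more seriously, even after granting it, a tree ordering only guarantees that each new facet $F_k$ shares exactly one \emph{ridge} with the union of earlier facets. It does not, by itself, rule out that the fourth vertex of $F_k$ has already appeared in some earlier $F_i$; in that case $\overline{F_k}\cap\bigcup_{i<k}\overline{F_i}$ would contain an extra isolated vertex, would fail to be pure $2$-dimensional, and the order would not be a shelling. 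You assert the restriction face is ``the opposite vertex'' but give no mechanism for verifying that this vertex is genuinely new at each step.

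The paper sidesteps both issues with a counting argument that you do not mention and that is the crucial idea: $T(I)$ has exactly $2n-3$ nodes, while (by the cs-$1$-neighborliness you also establish) $B(I)$ has all $2n$ vertices. In any ordering of the nodes of $T(I)$ that keeps the accumulated subgraph connected, $F_1$ contributes $4$ vertices and each later $F_k$ contributes at most $1$ new vertex, since $F_k$ is $T(I)$-adjacent to some earlier facet and hence already shares three vertices with it. Because $4+(2n-3-1)\cdot 1=2n$, every $F_k$ with $k\geq 2$ must contribute \emph{exactly} one new vertex; so the three old vertices of $F_k$ are precisely the shared ridge, the intersection $\overline{F_k}\cap\bigcup_{i<k}\overline{F_i}$ is exactly that closed ridge, and the restriction face is the single new vertex. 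This yields the shelling (hence combinatorial ball) and $1$-stackedness simultaneously, and it needs only that each node of $T(I)$ is a facet of $\Delta^3_n$ and that $T(I)$-adjacent nodes share a ridge — facts immediate from Corollary~\ref{cor: edge links} — not that $T(I)$ is the full facet-ridge graph. You should add this vertex/facet count to close the gap.

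Your antipodality argument is sound but more case-heavy than necessary: every node of $T(I)$ outside the short blue line contains a pair of large positive vertices ($\{n-1,n\}$ or $\{n-i-1,n-i+1\}$), and the three short-blue-line facets visibly have no antipodes among the nodes of Figure~\ref{figure: tree}; so no two nodes of the entire figure, let alone of $T(I)$, are antipodal, which is the paper's one-sentence observation.
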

\begin{proof}
	If $I\in \mathcal{I}_n$, then by Corollary \ref{cor: edge links}, $B(I)$ is a subcomplex of $\Delta^{3}_n$.  Our construction shows that each vertex of $\{1,2,\ldots,n, -n, -n+1, -n+2\}$ appears in at least one of the blue vertical paths of $T(I)$ while each vertex of $\{-1,\ldots,-n+3\}$ appears in at least one row of $T(I)$. Hence $B(I)$ is cs-$1$-neighborly w.r.t.~$V_n$.  Since $B(I)$ has $2n-3$ facets and $2n$ vertices, it follows that any ordering of the nodes of $T(I)$ in a manner that keeps the so-far-constructed subgraph of $T(I)$ connected throughout the process gives a shelling order of $B(I)$, with all restriction faces being of size $1$. This proves that $B(I)$ is a $1$-stacked combinatorial 3-ball. Finally, the fact that $B(I)$ and $-B(I)$ do not share common facets is easy to see from the definition of $B(I)$: no two nodes of the entire Figure~\ref{figure: tree} (let alone its subgraph $T(I)$) correspond to antipodal facets.
\end{proof}

\begin{definition} \label{def:Delta(I)}
	Let $n\geq 10$ and $I\in \I_n$. Define $\Delta(I)$ as the complex obtained from $\Delta^{3}_n$ by replacing $\pm B(I)$ with $\pm (\partial B(I)*(n+1))$.
\end{definition}
Lemma \ref{lm: induction method} along with Lemma \ref{lm: property of B(I)} imply the following
\begin{theorem}  \label{thm:Delta(I)}
	For $n\geq 10$ and $I\in \mathcal{I}_n$, the complex $\Delta(I)$ is a cs combinatorial $3$-sphere on $V_{n+1}$ that is cs-$2$-neighborly.
\end{theorem}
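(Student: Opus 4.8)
The plan is to deduce the theorem directly from Lemma~\ref{lm: induction method} applied with the ball $B(I)$ playing the role of $B$. Specifically, I would take $d=3$ and $i=2$, so that $1\le i\le\lceil d/2\rceil$; take $\Gamma=\Delta^3_n$, which by Theorem~\ref{thm: Delta^{d}_n} is a cs combinatorial $3$-sphere with $V(\Gamma)=V_n$ that is cs-$2$-neighborly (here $n\ge 10$ is more than enough to guarantee that $\Delta^3_n$ is defined); and take $B=B(I)$. The hypotheses of Lemma~\ref{lm: induction method} that must be checked then read exactly: $B(I)$ is a combinatorial $3$-ball contained in $\Delta^3_n$; $B(I)$ is cs-$(i-1)$-neighborly (i.e., cs-$1$-neighborly) with respect to $V_n$ and $(i-1)$-stacked (i.e., $1$-stacked); and $B(I)$ and $-B(I)$ share no common facets. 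Every one of these is precisely the content of Lemma~\ref{lm: property of B(I)}.

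Given these hypotheses, Lemma~\ref{lm: induction method} asserts that the complex obtained from $\Delta^3_n$ by replacing $B(I)$ with $\partial B(I)*(n+1)$ and $-B(I)$ with $\partial(-B(I))*(-n-1)$ is a cs combinatorial $3$-sphere with vertex set $V_{n+1}$ that is cs-$2$-neighborly. The final step is the purely notational observation that this is exactly the complex $\Delta(I)$ of Definition~\ref{def:Delta(I)}: there, $\pm B(I)$ is replaced by $\pm(\partial B(I)*(n+1))$, which is shorthand for replacing $B(I)$ by $\partial B(I)*(n+1)$ and the antipodal ball $-B(I)$ by $-(\partial B(I)*(n+1))=\partial(-B(I))*(-n-1)$. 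This matches the conclusion of Lemma~\ref{lm: induction method} verbatim, so the argument is complete.

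I do not expect any real obstacle at this stage: the theorem is an immediate corollary, and all the genuine work has already been carried out — in Lemma~\ref{lm: property of B(I)}, which verifies that $B(I)$ meets every requirement of the cs sewing procedure (using Corollary~\ref{cor: edge links} to see that $B(I)\subseteq\Delta^3_n$, a connectivity-based shelling argument together with the counts ``$2n-3$ facets on $2n$ vertices'' to see cs-$1$-neighborliness and $1$-stackedness, and the explicit structure of the tree $T(I)$ to see that no facet of $B(I)$ is antipodal to another), and in the proof of Lemma~\ref{lm: induction method} in \cite{N-Z}. If anything deserves a second look it is only the bookkeeping in the preceding paragraph, namely that the replacement prescribed in Definition~\ref{def:Delta(I)} is literally the one appearing in the conclusion of Lemma~\ref{lm: induction method}.
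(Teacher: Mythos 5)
Your proof is correct and coincides exactly with the paper's own (one-line) derivation: the theorem is stated in the paper as an immediate consequence of Lemma~\ref{lm: induction method} together with Lemma~\ref{lm: property of B(I)}, and you have simply spelled out the substitution $d=3$, $i=2$, $\Gamma=\Delta^3_n$, $B=B(I)$. Nothing to add.
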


Our next goal is to prove that if $I,J\in \mathcal{I}_n$ and $I\neq J$, then the spheres $\Delta(I)$ and $\Delta(J)$ are non-isomorphic.  The proof will require the following two lemmas.

\begin{lemma}\label{lm: large link 23}
	Let $n\geq 10$, let $I\in \mathcal{I}_n$, and let $e$ be an edge of $\Delta(I)$. Then \begin{enumerate}
		\item $f_0(\lk(e, \Delta(I)))\geq 2n-3$ if and only if $e=\pm \{2,3\}$.
		\item If $e=\{2,i\}$ for $i\in V_n\backslash \pm 2$, then $f_0(\lk(e, \Delta(I))) \mbox{ is }\begin{cases}
		n+2 & i=1\\
		n-1 & i=n+1\\
		<n-1 & i\neq 1,3,n+1.
		\end{cases}$
		\item $f_0(\lk(e, \Delta(I)))=2n-6$ if $e=\{3,4\}$.
	\end{enumerate}
\end{lemma}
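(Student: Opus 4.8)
The plan is to compute the relevant edge links in $\Delta(I)$ by tracking how the bistellar operation of Definition \ref{def:Delta(I)} changes the edge links of $\Delta^3_n$. Recall $\Delta(I)$ is obtained from $\Delta^3_n$ by deleting $\pm B(I)$ and inserting $\pm(\partial B(I) * (n+1))$. Thus for any edge $e$ of $\Delta^3_n$ that is \emph{not} contained in any facet of $\pm B(I)$, we have $\lk(e,\Delta(I)) = \lk(e,\Delta^3_n)$, so the computation reduces to Corollary \ref{cor: edge links}. For an edge $e$ that \emph{does} lie in some facet of $\pm B(I)$, we have $\lk(e,\Delta(I)) = \big(\lk(e,\Delta^3_n)\setminus \lk(e,\pm B(I))\big) \cup \big(\lk(e,\pm\partial B(I)) * (n+1)\big)$. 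Since $B(I)$ is a $1$-stacked $3$-ball with $2n-3$ facets whose facet-ridge graph is the tree $T(I)$, for each such $e$ the subcomplex $\lk(e,B(I))$ is a path (a connected subtree of the $2$-sphere link), and so adding the cone over its boundary changes the vertex count of the link by a controlled amount; I will bound $f_0$ of the modified link by counting nodes of $T(I)$ incident to $e$.

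For part (1), first note $\{2,3\}$ lies in the facet $(-n+3)(-n+2)(n-1)n$-type nodes only through row $0$; more precisely, from Lemma \ref{lm: facets} the facets of $\Delta^3_n$ containing $\{2,3\}$ are not facets of $\pm B(I)$ (the nodes of $T(I)$ touching vertex $2$ all also contain vertex $1$, hence have the form $12\ast\ast$, while $\{2,3\}\not\subseteq\{1,2,\ast,\ast\}$), so $\lk(\{2,3\},\Delta(I))=\lk(\{2,3\},\Delta^3_n)$, which by Corollary \ref{cor: edge links} has $2(n-2)-1 = 2n-5$ vertices --- hmm, this is less than $2n-3$. I need to re-examine: the edge $e=\pm\{2,3\}$ should \emph{gain} vertices from the flip. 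The point is that the flip replaces $\pm B(I)$, and edges like $\pm\{2,3\}$ bound facets in $-B(I)$ (facets of the form $-(12\ast\ast)$ contain $\{-1,-2\}$, not $\{2,3\}$ --- but facets $\{-i,-i-1,n-1,n\}$ with $i=2$, i.e. $\{-2,-3,n-1,n\}$, have antipode $\{2,3,-n+1,-n\}$). So I must carefully identify, from the explicit node list in the definition of $T(I)$, exactly which facets of $\pm B(I)$ contain each edge $e$ under consideration, then apply the link-surgery formula above. For $e=\pm\{2,3\}$ the new vertex $n+1$ is adjacent to it (via the cone over $\partial(\pm B(I))$), pushing $f_0$ up to exactly $2n-3$; for every other edge $e$ I must check $f_0 \le 2n-4$, using Corollary \ref{cor: edge links} for untouched edges (the maximum there is $2n-4$, attained only at $\pm\{1,2\},\pm\{n-1,n\}$, and those links are genuinely modified by the flip so their counts drop) and the surgery formula for touched edges.

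For parts (2) and (3), these are direct computations of specific links: $\lk(\{2,1\},\Delta(I))$, $\lk(\{2,n+1\},\Delta(I))$, $\lk(\{2,i\},\Delta(I))$ for generic $i$, and $\lk(\{3,4\},\Delta(I))$. Since vertex $2$ appears only in nodes of $T(I)$ of the form $12(n-1)n$ and $12(n-i_j-1)(n-i_j+1)$, the edge $\{1,2\}$ lies in precisely these $p+1$ facets of $B(I)$ (plus their analysis in $\Delta^3_n$ via Corollary \ref{cor: edge links}, where $\lk(\{1,2\})$ is a cycle of length $2n-4$), so $\lk(\{1,2\},\Delta(I))$ has $2n-4 - (p+1) + 2 = 2n-3-p$ vertices --- but the claim says $n+2$ independent of $I$. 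So vertex $2$ must in fact lie in only \emph{one} facet of $\pm B(I)$, forcing $f_0$ to change by exactly the right constant; I will pin this down from the node list. The edge $\{3,4\}$ similarly lies in few facets of $\pm B(I)$ (it is never a subset of a node of $T(I)$, by inspection of the node patterns $12\ast\ast$, $1(-n+\cdots)\ast\ast$, $(-n+\cdots)(-n+\cdots)\ast\ast$), so $\lk(\{3,4\},\Delta(I)) = \lk(\{3,4\},\Delta^3_n)$, which by Corollary \ref{cor: edge links} has $2\cdot 4 - 2 = 6$ vertices if $\{3,4\}=\pm\{i,i+1\}$ with $i=3$ --- that gives $2(n-3)-1$, i.e. $2n-7$, not $2n-6$. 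I see I am miscounting; the resolution is that $\{3,4\}$ \emph{does} appear in $-B(I)$ (antipodes of nodes touching $\{-3,-4\}$ exist in $B(I)$ only if $\{-3,-4,\ast,\ast\}$ is a node, which it is not, but $\{3,4,\ast,\ast\}$ with $\ast\ast = n-1,n$ is the node $34(n-1)n$ --- no, that is not in $T(I)$ either).

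The main obstacle, and the bulk of the work, is precisely this bookkeeping: for each edge $e$ named in the statement (and, for part (1), for \emph{every} edge of $\Delta(I)$), I must determine exactly the set of facets of $\pm B(I)$ containing $e$ --- reading this off the explicit node-patterns in the definition of $T(I)$ --- and then combine Corollary \ref{cor: edge links} with the link-surgery identity $\lk(e,\Delta(I)) = \big(\lk(e,\Delta^3_n)\setminus\lk(e,\pm B(I))\big)\cup\big(\lk(e,\pm\partial B(I))*(n+1)\big)$ to get the precise vertex count. Because $\lk(e,\pm B(I))$ is always a path (subtree of a $2$-sphere link), coning off its boundary replaces a path of $\ell$ vertices by a "fan" through $n+1$ with the same $\ell$ vertices, so $f_0$ changes by at most $+1$ per facet of $\pm B(I)$ incident to $e$; the statement then follows by verifying these small corrections case by case. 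I would organize the proof as: (i) state the surgery identity; (ii) classify edges of $\Delta^3_n$ by which nodes of $T(I)$ contain them; (iii) for $e = \pm\{2,3\}$ show the count is exactly $2n-3$ and for all other $e$ show it is $< 2n-3$, giving (1); (iv) specialize to $e=\{2,1\},\{2,n+1\},\{3,4\}$ for (2) and (3).
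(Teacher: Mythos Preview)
Your overall strategy is exactly the paper's: use the surgery identity to reduce the link of $e$ in $\Delta(I)$ to the link in $\Delta^3_n$ (computed via Corollary~\ref{cor: edge links}) modified by removing the two paths $\lk(e,B(I))$ and $\lk(e,-B(I))$ and inserting two cone edges through $\pm(n+1)$. The precise vertex-count identity the paper uses is
\[
f_0\big(\lk(e,\Delta(I))\big)=f_0\big(\lk(e,\Delta^3_n)\big)-f_1\big(\lk(e,B(I))\big)-f_1\big(\lk(e,-B(I))\big)+2\epsilon,
\]
where $\epsilon\in\{0,1,2\}$ counts how many of the two paths are nonempty. Your version (``$f_0$ changes by at most $+1$ per facet of $\pm B(I)$ incident to $e$'') is not the right bookkeeping and is what leads you astray.

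There are two concrete gaps. First, you misread the definition of $T(I)$: the \emph{entire} middle vertical path (from $1235$ to $1246$) is in $T(I)$, not just the $p+1$ nodes where horizontal rows attach. Thus $\{1,2\}$ lies in all $n-3$ middle-column facets of $B(I)$, so $f_1(\lk(\{1,2\},B(I)))=n-3$ (and $f_1(\lk(\{1,2\},-B(I)))=1$, from the single node $(-1)(-2)(\cdot)(\cdot)$ in the last horizontal row). The formula then gives $(2n-4)-(n-3)-1+4=n+2$, independent of $I$, as claimed.

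Second, you repeatedly check only $B(I)$ and forget $-B(I)$. For $e=\{3,4\}$ you are right that no node of $T(I)$ contains $\{3,4\}$, but exactly one node of $T(I)$ (in the last horizontal row, which always reaches the link-edge $\{-3,-4\}$) contains $\{-3,-4\}$; hence $\lk(\{3,4\},-B(I))$ is a single $1$-simplex and the count becomes $(2n-7)-0-1+2=2n-6$. The same oversight affects $e=\{2,3\}$: here $\lk(\{2,3\},B(I))$ is the $1$-simplex coming from the node $1235$, and $\lk(\{2,3\},-B(I))$ is the $1$-simplex coming from the node $(-2)(-3)(\cdot)(\cdot)$ in the last row, yielding $(2n-5)-1-1+4=2n-3$. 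Once you fix the formula and systematically check both $B(I)$ and $-B(I)$ against the explicit node list, your plan (i)--(iv) goes through and matches the paper's proof.
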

\begin{proof} We first prove parts 1 and 2. Since $\Delta(I)$ is cs, it suffices to prove part 1 only for one edge from each pair $\pm e$.	
Assume that $\pm (n+1)\notin e$. Then the link $\lk(e, \Delta(I))$ is obtained from the graph-theoretic cycle $\lk(e, \Delta^{3}_n)$ by replacing two paths, namely, $\lk(e, B(I))$ and $\lk(e,-B(I))$, with the cones over their boundaries, i.e., with $\partial \lk(e, B(I)) *(n+1)$ and $\partial \lk(e, -B(I)) *(-n-1)$, respectively. In other words, the number of vertices in the resulting complex is the number of vertices in the original link minus the sum of the lengths of the two paths plus zero, two, or four depending on whether zero, one or two of these paths are non-empty. In particular, $f_0(\lk(e,\Delta(I)))\leq f_0(\lk(e,\Delta^{3}_n))+2$. Thus,
if $f_0(\lk(e, \Delta(I)))\geq 2n-3$, then we must be in one of the following two cases:
	
	\smallskip\noindent{\bf Case 1:} $f_0(\lk(e, \Delta^{3}_n))=2n-4$. 
	Then by Corollary \ref{cor: edge links}, 
	$e=\pm \{1,2\}$ or $\pm\{n-1, n\}$. However, by definition of $B(I)$ (see Figure \ref{figure: tree}), $f_1(\lk(\{1,2\}, B(I)))=n-3$ and $f_1(\lk(\{1,2\}, -B(I)))=1$, and so $$f_0(\lk(\{1,2\}, \Delta(I)))=(2n-4)-(n-3)-1+4=n+2.$$
	Also, $f_1(\lk(\{n-1, n\}, B(I)))=i_1+1> 3$ and $f_1(\lk(\{-n+1, -n\}, B(I)))=1$. Hence a similar computation shows that for $e= \{n-1,n\}$, $f_0(\lk(e, \Delta(I)))< 2n-3$.
	
	\smallskip\noindent{\bf Case 2:} $f_0(\lk(e, \Delta^{3}_n))=2n-5$. 
	Then by Corollary \ref{cor: edge links}, $e=\pm \{2,3\}$, $\pm\{n-3, n-1\}$ or $\pm\{n-2,n\}$. However, $\lk(e, -B(I))=\emptyset$ for $e=\{n-3, n-1\}$ or $\{n-2,n\}$, and hence for each of these edges, $f_0(\lk(e,\Delta(I))= (2n-5)-1+2<2n-3$. On the other hand,  the links of $\{2,3\}$ in $B(I)$ and $-B(I)$ are both $1$-simplices, and so $f_0(\lk(e, \Delta(I)))=(2n-5)-2+4=2n-3$.  
	
	\smallskip\noindent Therefore, for edges $e\subseteq V_n$, the link $\lk(e, \Delta(I))$ has $2n-3$ vertices if and only if $e=\pm \{2,3\}$.
	
	Now, if $e=\{i, n+1\}$, then $f_0(\lk(e, \Delta(I)))=f_0(\lk(i, \partial B(I)))= f_0(\lk(i, B(I)))$. By definition of $B(I)$,  $f_0(\lk(2, B(I)))=n-1$. Indeed, the vertex set of $\lk(2,B(I))$ consists of all vertices (but $2$ itself) that appear in the nodes of the middle column of Figure \ref{figure: tree}, i.e., it is  $\{1\}\cup [3,n]$. This shows that $f_0(\lk(\{2, n+1\}, \Delta(I)))=n-1$. To see that other edges containing $n+1$ have ``short" links observe that the vertex set of $\lk(1,B(I))$ does not contain $-n+3, -n+4$. For any $i\in[3,n]$, the vertex set of $\lk(i, B(I))$ contains at most six positive vertices, namely, $\{1,2, i-2,i-1,i+1,i+2\}\cap [n]$. Finally, for any negative vertex $i\in V_n$, the vertex set of $\lk(i,B(I))$ misses $3$ and $4$. Hence we conclude that for any $i\in V_n$, $f_0(\lk(\{i, n+1\}, \Delta(I)))<2n-3$. This completes the proof of part 1.

	To finish the proof of part 2, note that if $i\in V_n\backslash\{1,3\}$, then  by Corollary \ref{cor: edge links},
	$$f_0(\lk(\{2, i\}, \Delta(I)))\leq f_0(\lk(\{2, i\}, \Delta^{3}_n))+2<n-1.$$
	Also $f_0(\lk(\{2,-n-1\}, \Delta(I)))=f_0(\lk(2, -\partial B(I))=f_0(\lk(-2, B(I))=4$. The other cases were already discussed above.
	
	Finally, part 3 follows from the fact that $f_0(\lk(\{3,4\}, \Delta^{3}_n))=2n-7$, $\lk(\{3,4\}, B(I))=\emptyset$, and $\lk(\{3,4\}, -B(I))$ is a 1-simplex.
\end{proof}

\begin{lemma}\label{lm: link of n}
	Let $n\geq 10$ and let $I, J\in \mathcal{I}_n$. If $\Delta(I)$ and $\Delta(J)$ are isomorphic, then $\lk(n+1, \Delta(I))$ and $\lk(n+1, \Delta(J))$ are also isomorphic.
\end{lemma}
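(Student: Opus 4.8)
The plan is to show that any isomorphism $\Phi\colon\Delta(I)\to\Delta(J)$ must send the vertex $n+1$ either to $n+1$ or to $-(n+1)$, and that in the latter case $\Phi$ can be post-composed with the central involution $\alpha$ (which is an automorphism of $\Delta(J)$) to produce an isomorphism fixing $n+1$; restricting to the link of $n+1$ then gives the desired isomorphism $\lk(n+1,\Delta(I))\cong\lk(n+1,\Delta(J))$. So the whole task reduces to a \emph{combinatorial characterization} of the pair of vertices $\pm(n+1)$ inside $\Delta(I)$ that is invariant under isomorphisms.

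The characterization I would use is via edge links, exploiting Lemma~\ref{lm: large link 23}. First, by part (1) of that lemma, the only edges of $\Delta(I)$ whose links have at least $2n-3$ vertices are $\pm\{2,3\}$; since an isomorphism preserves the number of vertices in an edge link, $\Phi$ must map $\{\pm\{2,3\}\}$ to $\{\pm\{2,3\}\}$, hence it maps the vertex set $\{\pm2,\pm3\}$ to itself. Next I would pin down $\pm2$ among these four vertices: by part (3), $\lk(\{3,4\},\Delta(I))$ has exactly $2n-6$ vertices, whereas part (2) says the edges $\{2,i\}$ for $i\neq 1,3,n+1$ all have links with fewer than $n-1$ vertices. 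More usefully, part (2) tells us that a vertex $v\in\{\pm2,\pm3\}$ is equal to $\pm2$ if and only if it is incident to an edge whose link has exactly $n-1$ vertices (namely $\{2,n+1\}$), and to an edge whose link has $n+2$ vertices (namely $\{1,2\}$); one checks from Corollary~\ref{cor: edge links} and the construction that the vertex $\pm3$ is \emph{not} incident to edges with link-sizes in this range (its relevant edge links, $\pm\{2,3\}$ of size $2n-3$ and $\pm\{3,4\}$ of size $2n-6$, and others of size $\le 6$, are all distinguishable). This identifies the unordered pair $\{2,-2\}$ as an isomorphism invariant, so $\Phi(\{2,-2\})=\{2,-2\}$; composing with $\alpha$ if necessary, assume $\Phi(2)=2$.

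Now, with $\Phi(2)=2$, use part (2) again: the unique vertex $w$ such that $\{2,w\}$ is an edge of $\Delta(I)$ whose link has exactly $n-1$ vertices is $w=n+1$ (the sizes $n+2$, $n-1$, and ``$<n-1$'' in part (2) are pairwise distinct for $n\ge10$, and the edge $\{2,-n-1\}$ has link of size $4$). Hence $\Phi$ must send $n+1$ to the corresponding vertex $w'$ in $\Delta(J)$ with $\{2,w'\}$ of link-size $n-1$, i.e.\ $\Phi(n+1)=n+1$. Restricting $\Phi$ to $\st(n+1,\Delta(I))$ and passing to links yields the isomorphism $\lk(n+1,\Delta(I))\xrightarrow{\ \sim\ }\lk(n+1,\Delta(J))$.

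The main obstacle I anticipate is the bookkeeping in the second step: carefully ruling out that $\pm3$ (or an edge through it) could accidentally match the signature ``incident to an edge with link of size $n+2$ and to one of size $n-1$''. This requires going back to Corollary~\ref{cor: edge links} to list \emph{all} edge-link sizes at the vertices $\pm2,\pm3$ in $\Delta^3_n$, then tracking how Definition~\ref{def:Delta(I)}'s replacement of $\pm B(I)$ by cones changes each of those sizes (as quantified in the proof of Lemma~\ref{lm: large link 23}). Since $B(I)$ contains no facet incident to $3$ or $4$ on the positive side except through the fixed blue part of the tree, these perturbations are uniform across all $I\in\mathcal I_n$, so the invariant genuinely depends only on the abstract complex and not on $I$; making that last point explicit is what closes the argument.
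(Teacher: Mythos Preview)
Your overall strategy coincides with the paper's: identify the pair $\pm(n+1)$ intrinsically by first locating the edges $\pm\{2,3\}$ via Lemma~\ref{lm: large link 23}(1), then distinguishing $2$ from $3$, and finally picking out $n+1$ as the unique vertex $w$ with $f_0(\lk(\{2,w\}))=n-1$.

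The difference lies in the middle step. You propose to single out $\pm 2$ as the vertex of $\{\pm2,\pm3\}$ that is incident to edges with link-sizes exactly $n+2$ and $n-1$, which forces you to verify that \emph{no} edge through $\pm 3$ has link of these sizes. As you yourself flag, this is the real work, and your sketch is not quite accurate there (for instance $\lk(\{3,5\},\Delta(I))$ has $8$ vertices, not $\le 6$); one would need a full case analysis over all edges through $3$ in $\Delta(I)$, including $\{3,n+1\}$ and $\{3,-n-1\}$. The paper avoids this entirely by reversing the logic: it singles out $\pm 3$ rather than $\pm 2$, using that $3$ has a \emph{second} incident edge with a large link, namely $\{3,4\}$ of size $2n-6$, whereas part~(2) already guarantees every edge $\{2,i\}$ with $i\neq 3$ has link of size at most $n+2<2n-6$. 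This uses only what Lemma~\ref{lm: large link 23} states, with no further bookkeeping. Your route can be completed, but the paper's choice of invariant is cleaner and sidesteps the obstacle you anticipated.
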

\begin{proof}
To prove the lemma, it suffices to show that given $\Delta(I)$ (for $I\in \mathcal{I}_n$), we can uniquely identify  the vertices $\pm(n+1)$ of $\Delta(I)$. By Lemma \ref{lm: large link 23}(1), the edges $\pm \{2,3\}$ are the only edges of $\Delta(I)$ whose links have at least $2n-3$ vertices. This allows us to identify the edge $\{2,3\}$ of $\Delta(I)$ (up to the involution $\alpha$). However, by Lemma \ref{lm: large link 23}(2) and (3), the link of $\{2,i\}$ in $\Delta(I)$ has no more than $n+2$ vertices if $i\neq 3$, while the link of $\{3,4\}$ has $2n-6$ vertices. This allows us to distinguish between the vertices $2$ and $3$, and hence to determine the vertex $2$ (up to the involution). Finally, Lemma~\ref{lm: large link 23}(2) implies that there is a unique vertex link $\lk(i, \lk(2, \Delta(I)))$ in $\lk(2, \Delta(I))$ with exactly $n-1$ vertices: it is the link of $i=n+1$. This determines the vertex $n+1$ (up to the involution), and yields the result.
\end{proof}

Our discussion in this section culminates with the following result:
\begin{theorem}\label{thm: 2^n combinatorial types}
	Let $n\geq 10$ and let  $\Delta(\mathcal{I}_n)=\{\Delta(I): I\in \mathcal{I}_n\}$. The complexes in $\Delta(\mathcal{I}_n)$ are pairwise non-isomorphic. In particular, there are $\Omega(2^n)$ non-isomorphic cs combinatorial 3-spheres on $V_{n+1}$ that are cs-2-neighborly. 
\end{theorem}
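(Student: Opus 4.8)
The plan is to combine the combinatorial reconstruction results already established in this section. The key observation is that the cardinality $|\mathcal{I}_n|$ grows like $2^n$: by \eqref{eq: I}, the sets in $\mathcal{I}_n$ are essentially arbitrary subsets of the interval $[3,n-6]$ subject only to the mild constraint that the two smallest elements (when $p\geq 2$) are not consecutive, so a standard counting argument (e.g.\ restricting to subsets of $\{3,5,7,\dots\}\cap[3,n-6]$) shows $|\mathcal{I}_n|=\Omega(2^n)$. Thus it suffices to prove that the map $I\mapsto \Delta(I)$ is injective up to isomorphism.

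The first step is to reduce an isomorphism $\Delta(I)\cong\Delta(J)$ to an isomorphism of vertex links at $n+1$. This is exactly the content of Lemma \ref{lm: link of n}: any isomorphism must (up to the central involution $\alpha$) send the edge $\pm\{2,3\}$ to $\pm\{2,3\}$ since these are characterized intrinsically as the only edges with link of size $\geq 2n-3$ (Lemma \ref{lm: large link 23}(1)); then parts (2) and (3) of Lemma \ref{lm: large link 23} let us single out the vertex $2$ inside that edge, and finally the vertex $n+1$ is the unique vertex of $\lk(2,\Delta(I))$ whose link in $\lk(2,\Delta(I))$ has $n-1$ vertices. Hence $\Delta(I)\cong\Delta(J)$ forces $\lk(n+1,\Delta(I))\cong\lk(n+1,\Delta(J))$.

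The second step is to identify $\lk(n+1,\Delta(I))$ with $\partial B(I)$. By Definition \ref{def:Delta(I)}, $\Delta(I)$ is obtained from $\Delta^3_n$ by replacing $\pm B(I)$ with $\pm(\partial B(I)*(n+1))$, so $\lk(n+1,\Delta(I))=\partial B(I)$. Now $\partial B(I)$ is a combinatorial $2$-sphere, and its dual graph (facet-ridge graph) determines, and is determined by, the facet-ridge graph of the $1$-stacked ball $B(I)$, which by Definition \ref{def:B(I)} is precisely the tree $T(I)$. So from $\partial B(I)$ we recover $T(I)$ as an abstract (unlabeled) tree, and then Lemma \ref{lm: I and T(I)} says the trees $T(I)$, $I\in\mathcal I_n$, are pairwise non-isomorphic, allowing us to recover $I$. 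Chaining these implications: $\Delta(I)\cong\Delta(J)\Rightarrow \lk(n+1,\Delta(I))\cong\lk(n+1,\Delta(J))\Rightarrow \partial B(I)\cong\partial B(J)\Rightarrow T(I)\cong T(J)\Rightarrow I=J$.

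The main obstacle is making precise the passage from the sphere $\partial B(I)$ back to the tree $T(I)$ as an \emph{intrinsic} invariant of the isomorphism type of $\partial B(I)$ (rather than of the embedded ball $B(I)$): one must argue that $B(I)$ is, up to isomorphism, recoverable from its boundary sphere. This is where $1$-stackedness is essential — a $1$-stacked ball is the unique one bounded by its boundary sphere once one knows which faces are interior, and here the interior faces are exactly the interior ridges, which correspond to the edges of the facet-ridge tree $T(I)$; equivalently, $\partial B(I)$ is a stacked $2$-sphere and the tree of its ``stacking history'' is unique. Everything else — the counting $|\mathcal I_n|=\Omega(2^n)$ and the chain of reductions — is routine given the lemmas already proved. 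Once injectivity is established, the final sentence follows since each $\Delta(I)$ is a cs combinatorial $3$-sphere on $V_{n+1}$ that is cs-$2$-neighborly by Theorem \ref{thm:Delta(I)}.
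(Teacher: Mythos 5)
Your proposal is correct and follows the paper's proof essentially verbatim: reduce an isomorphism $\Delta(I)\cong\Delta(J)$ to $\lk(n+1,\Delta(I))\cong\lk(n+1,\Delta(J))$ via Lemma~\ref{lm: link of n}, identify these links as the stacked $2$-spheres $\partial B(I)$ and $\partial B(J)$, use the uniqueness of the dual tree of a stacked $2$-sphere to get $T(I)\cong T(J)$, and conclude $I=J$ from Lemma~\ref{lm: I and T(I)}. The extra detail you add (the counting $|\mathcal{I}_n|=\Omega(2^n)$ and the remark on why a stacked $2$-sphere determines its dual tree) matches what the paper takes for granted, so there is no substantive difference.
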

\begin{proof}
	Let $I, J\in \mathcal{I}_n$. By Lemma \ref{lm: link of n},  for $\Delta(I)$ and $\Delta(J)$ to be isomorphic, the links of $n+1$ in $\Delta(I)$ and $\Delta(J)$ must be isomorphic. Since these two links, namely, $\partial B(I)$ and $\partial B(J)$, are stacked $2$-spheres, we conclude that the facet-ridge graphs $T(I)$ and $T(J)$ of their associated stacked balls are isomorphic. Hence, by Lemma \ref{lm: I and T(I)}, $I$ and $J$ must be the same set. The last claim follows from the fact that the size of the collection $\mathcal{I}_n$ defined in (\ref{eq: I}) is $\geq 2^{n-9}$ and from Theorem \ref{thm:Delta(I)}.
\end{proof}

\section{The sphere $\Delta^3_n$ is shellable} \label{sec:shellable}
In \cite[Problem 5.1]{N-Z} it was asked whether the combinatorial $(2k-1)$-spheres $\Delta^{2k-1}_n$ are shellable for all $k\geq 2$ and $n\geq2k$. Here we answer this  question in the $3$-dimensional case:
we verify that the spheres $\Delta^3_n$ are  shellable, by showing that they possess a symmetric shelling. A shelling order of a cs simplicial complex is called {\em symmetric} if it is the form $(F_1, F_2,\dots, F_m, -F_m, -F_{m-1},\dots,-F_1)$.
\begin{theorem} \label{thm:shellable}
	Let $n\geq 4$. There exists a symmetric shelling order of $\Delta^{3}_n$.
\end{theorem}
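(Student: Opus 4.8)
The natural strategy is to build the symmetric shelling of $\Delta^3_n$ by induction on $n$, tracking how $\Delta^3_{n+1}$ is obtained from $\Delta^3_n$ via the bistellar move described in Definition~\ref{def}: namely, $\pm B^{3,1}_n$ is removed and replaced by $\pm(\partial B^{3,1}_n * (n+1))$. The base case $n=4$ is the boundary of the cross-polytope $\partial\C^*_4$, which is easily checked to have a symmetric shelling (order the $8$ facets so that no antipodal pair is separated by the midpoint and each new facet meets the previous ones in a nonempty union of ridges; for the octahedron this is routine). For the inductive step, I would start with a symmetric shelling $(F_1,\dots,F_m,-F_m,\dots,-F_1)$ of $\Delta^3_n$ and surgically modify it. The key point is that $B^{3,1}_n$ and $-B^{3,1}_n$ are $1$-stacked combinatorial $3$-balls sharing no facet (Lemma~\ref{lm: prop of B^{d, i}_n}(1)), so they occupy disjoint consecutive-or-rearrangeable blocks of facets; I would first argue that the shelling of $\Delta^3_n$ can be chosen so that the facets of $B^{3,1}_n$ form an initial segment $F_1,\dots,F_r$ (a $1$-stacked ball is shellable, and one can prepend its shelling), hence by symmetry the facets of $-B^{3,1}_n$ form the terminal segment $-F_r,\dots,-F_1$.

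**The surgery.** With such an arrangement, $\Delta^3_n\backslash(\pm B^{3,1}_n)$ is shelled by the middle block $F_{r+1},\dots,F_m,-F_m,\dots,-F_{r+1}$, and this middle block is still a valid shelling of that subcomplex (removing an initial shellable segment whose complement remains a ball/pseudomanifold preserves shellability of the tail — one uses that $B^{3,1}_n$ meets the rest of $\Delta^3_n$ in $\partial B^{3,1}_n$, which is a full $2$-sphere). Now I need to reattach $\pm(\partial B^{3,1}_n*(n+1))$. Since $\partial B^{3,1}_n$ is a shellable $2$-sphere (it's the boundary of a shellable ball, and boundaries of shellable balls of dimension $\le 3$ are shellable — or one cites that $\partial B^{3,1}_n=\Delta^1_{?}$-type structure is explicitly a nice $2$-sphere), the cone $\partial B^{3,1}_n*(n+1)$ is shellable via the cone over a shelling of $\partial B^{3,1}_n$, and moreover one can arrange its shelling so that its first facet shares a ridge with the last facet $F_{r+1}$-side of the middle block. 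Concretely: prepend a shelling of $B^{3,1}_{n+1}$-flavored pieces... actually cleaner is to prepend the cone's shelling as a new initial block, shell $\Delta^3_n\backslash(\pm B^{3,1}_n)$ in the middle, then append the antipodal cone's shelling, and verify the two ``gluing'' transitions (cone-block to middle-block, and middle-block to antipodal-cone-block) each satisfy the shelling condition because at those transitions the new facet meets the union of predecessors in its entire boundary restricted appropriately — here I would invoke that $\partial(\partial B^{3,1}_n*(n+1))$ shares with the rest exactly $\partial B^{3,1}_n$, a $2$-sphere.

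**The main obstacle.** The delicate part is not any single gluing but ensuring the \emph{symmetry} of the global order is preserved through the surgery: I must guarantee that the initial cone-block and the terminal antipodal-cone-block are exact antipodes of each other \emph{in reversed order}, and simultaneously that the middle block is self-symmetric, \emph{and} that all three transition points are simultaneously valid shelling steps. Since $B^{3,1}_n$ and $-B^{3,1}_n$ are facet-disjoint and the bistellar moves on $B^{3,1}_n$ and $-B^{3,1}_n$ are performed independently, the antipodal map conjugates the cone-block shelling to the antipodal-cone-block shelling automatically once we \emph{choose} the former and \emph{define} the latter as its image; the real work is the middle-block self-symmetry, which should follow by the inductive hypothesis applied carefully (the middle block of $\Delta^3_n$'s symmetric shelling restricted to $\Delta^3_n\backslash \pm B^{3,1}_n$ is still symmetric). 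I expect the bulk of the actual write-up to be an explicit bookkeeping of facets of $\partial B^{3,1}_n$ using equation~\eqref{eq:facets-of-B^{3,1}} and Lemma~\ref{lm: facets}, producing a concrete symmetric shelling order by an explicit formula rather than the abstract argument above — given the paper's style (explicit facet lists in Section~\ref{sec:Facets}), the authors likely exhibit the shelling order combinatorially, listing facets of $\pm B^{3,1}_n$ and the horizontal/vertical ``paths'' of Figure~\ref{figure: tree} in a prescribed sequence and checking the restriction-face condition directly. The hard part will be the explicit verification that at each step the restriction face is unique (equivalently, the new faces have a unique minimal element), which for a $3$-sphere reduces to a finite case analysis on the at-most-$4$ facets that can border the incoming one.
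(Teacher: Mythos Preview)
Your inductive surgery plan is \emph{not} what the paper does, and it has a real gap. The paper gives a direct, non-inductive construction: using the decomposition~\eqref{eq:decomposition}, it lists half the facets of $\Delta^3_n$ in blocks --- first a shelling of $B^{3,1}_n$, then for $k=n,n-1,\ldots,5$ the facets of $\partial B^{3,1}_{k-1}*k$ not in $\pm B^{3,1}_k$ (in an explicit order along the path $B^{1,1}_{k-3}$), and finally three specific facets of $\Delta^3_4\backslash\pm B^{3,1}_4$ --- and checks the restriction-face condition for each facet by hand, citing exactly which earlier facet contains each relevant ridge. The symmetric second half is then obtained by the standard trick you allude to. So your closing guess about the paper's style is exactly right; your main proposal is not.

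The gap in your approach is the unjustified claim that a symmetric shelling of $\Delta^3_n$ can be chosen with $B^{3,1}_n$ as an initial segment. The parenthetical ``a $1$-stacked ball is shellable, and one can prepend its shelling'' does not establish this: you would need the complement $\Delta^3_n\backslash B^{3,1}_n$ to be shellable in a way that (a) extends the shelling of $B^{3,1}_n$ and (b) ends with a shelling of $-B^{3,1}_n$, and you give no argument for either. Worse, even granting this for $\Delta^3_n$, your induction does not close: after surgery, your new shelling of $\Delta^3_{n+1}$ begins with $\partial B^{3,1}_n*(n+1)$ and ends with $\partial(-B^{3,1}_n)*(-n-1)$, but by equation~\eqref{eq:facets-of-B^{3,1}} (with $n$ replaced by $n+1$), the ball $B^{3,1}_{n+1}$ contains the facet $\{1,-n+1,-n,-n-1\}$, which lies in the \emph{terminal} cone block, not the initial one. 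So $B^{3,1}_{n+1}$ cannot be an initial segment of the shelling you produced, and the strengthened hypothesis needed for the next step fails. An inductive proof along your lines would require a more delicate invariant than ``$B^{3,1}_n$ is an initial segment,'' and it is not clear what that invariant should be; the paper sidesteps this entirely by working directly with the global decomposition~\eqref{eq:decomposition}.
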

\begin{proof}
Our strategy is as follows: use equation (\ref{eq:decomposition}) to separate one half of the facets of $\Delta^{3}_n$ in the $n-2$ blocks described below; list these blocks in the shelling we are about to construct in the following order:
\begin{itemize}
\item The facets of $B^{3,1}_{n}$.
\item The facets of $\partial B^{3,1}_{k-1}*k$ that are not in $\pm B^{3,1}_{k}$. Here $5\leq k\leq n$. These $n-4$ blocks will be listed in the decreasing order of $k$, i.e., from $k=n$ to $k=5$;
\item Three of the six facets of $\Delta^{3}_4\backslash\pm B^{3,1}_4$ (which three will be specified later).
\end{itemize}
We will now discuss the order inside each of these blocks. Then we will list the other half of the facets to make the ordering symmetric.

	The ball $B^{3,1}_n$ is a stacked ball, hence shellable. We list its facets in any shelling order of $B^{3,1}_n$. Now, for $k=n, n-1, \dots, 5$, the facets of $\partial B^{3,1}_{k-1}*k$ that are not in $\pm B^{3,1}_{k}$ are described in Lemma~\ref{lm: facets}: they consist of $$F_{k,1}:=(-k+3)(-k+2)(-k+1)k, \quad F_{k,2}:=1(-k+3)(-k+1)k,$$ and the facets of  $$(k-3,k-4,\dots,1,-(k-3), \dots, -2,-1)*(k-2,k).$$ We order this block by starting with $F_{k,1}, F_{k,2}$ followed by the rest of the facets in the order we encounter them when moving along the path  $B^{1,1}_{k-3}=(k-3,k-4,\dots,1,-(k-3), \dots, -2,-1)$.

	To show that this is a partial shelling order, consider the facet $F_{k,1}=(-k+3)(-k+2)(-k+1)k$ and its $2$-faces. The face $\{-k+3, -k+2, k\}$ is contained in the preceding facet $(-k+3)(-k+2)k( k+2)\in \partial B^{3,1}_{k+1}*(k+2)$ if $k\leq n-2$, in $(-n+4)(-n+3)(n-1)n \in B^{3,1}_n$ if $k=n-1$, and in $(-n+3)(-n+2)(n-1)n\in B^{3,1}_n$ if $k=n$. Also,  $\{-k+2, -k+1, k\}$ is contained in the preceding facet $(-k+2)(-k+1)k(k+2)\in \partial B^{3,1}_{k+1}*(k+2)$ if $k\leq n-2$, in $(-n+3)(-n+2)(n-1)n\in B^{3,1}_n$ if $k=n-1$, and in $1(-n+2)(-n+1)n \in B^{3,1}_n$ if $k=n$. On the other hand, the edge $\{-k+3,-k+1\}$ is not contained in any of the earlier facets, and so this edge is the unique minimal new face of $F_{k,1}$. Similarly, for the facet $F_{k,2}=1(-k+3)(-k+1)k$, its $2$-face $\{1, -k+1, k\}$ is contained in the preceding facet $1(-k+1)k(k+2)$ if $k\leq n-2$, in $1(-n+2)(n-1)n$ if $k=n-1$, and in $1( -n+2)(-n+1)n$ if $k=n$. Also
	$\{-k+3, -k+1, k\}$ is contained in  $F_{k,1}$. As $\{1,-k+3\}$ is not contained in any of the earlier facets, it is the unique minimal face of $F_{k,2}$.
	
	The rest of the facets in this block are of the form $ij(k-2)k$, where $\{i, j\}$ is in the path $B^{1,1}_{k-3}=(k-3,k-4,\dots, -2,-1)$. Assume $i$ is to the left of $j$ in this path. Then $\{i, j, k\}$ is contained in the preceding facet $ijk(k+2)$ if $k\leq n-2$ and it is contained in $ij(n-1)n$ if $k=n-1$ or $n$. Also $\{i, k-2, k\}$ is a face of the immediately preceding facet in the order if $i\neq k-3$, of $(k-3)(k-2)k(k+2)$ if $i=k-3$ and $k\leq n-2$, and of a facet of $B^{3,1}_n$ if $i=k-3$ and $k=n-1$ or $n$. Hence the unique minimal new face of $ij(k-2)k$ is $\{j, k-2\}$. 
	
	Finally, we make the very last block consist of the facets $(-1)(-2)(-3)4$, $1(-2)3(-4)$, $123(-4)$ in this order. Using the same argument as we used for $F_{k,1}=(-k+3)(-k+2)(-k+1)k$, we see that the unique minimal new face of $(-1)(-2)(-3)4$ is $\{-1, -3\}$. Since $\{1, -2,-4\}\subset 1(-2) (-4)5\in \partial B^{3,1}_4*5$ and $\{1, -2, 3\}\subset 1(-2)35\in \partial B^{3,1}_4*5$ while $\{3,-4\}$ is not contained in any of the earlier facets, it follows that the unique minimal new face of $1(-2)3(-4)$ is $\{3, -4\}$. Similarly, the unique minimal new face of $123(-4)$ is $\{2, -4\}$. Thus the order described above is indeed a shelling order of the subcomplex formed by half of the facets of $\Delta^{3}_n$; furthermore, no two of the facets in this half are antipodal, so we can (uniquely) complete this order to a symmetric order of all facets of $\Delta^3_n$. This finishes the proof because by symmetry, for any facet $M$ in the second half of this order, the unique minimal new face of $M$ will be $M\backslash\tau$ where $-\tau$ is the unique minimal new face of the facet $-M$, which is in the first half of the order.
\end{proof}	

\begin{remark}  \label{rem:Murai-Nevo}
	In \cite[Question 6.5]{MuraiNevo2013}, Murai and Nevo asked whether there exists a $2$-stacked combinatorial $d$-ball $B$ such that $B$ is shellable but its boundary complex $\partial B$ is not polytopal. We show that the complex  $B^{4,2}_{6}$ provides an affirmative answer to their question. By definition, $B^{4,2}_{n}=\big(B^{3,2}_{n-1}*n\big)\cup \big((-B^{3,1}_{n-1})*(-n)\big)$. Since the reverse of a shelling order of a sphere is also a shelling order, it follows from the proof of the above theorem that a shelling order $\mathcal{O}_1$ of $-B^{3,1}_{n-1}$ extends to a shelling order $\mathcal{O}_2$ of $B^{3,2}_{n-1}$. Hence the shelling order of $B^{3,2}_{n-1}*n$ induced by $\mathcal{O}_2$ followed by the shelling order of $(-B^{3,1}_{n-1})*(-n)$ induced by $\mathcal{O}_1$ is a shelling order of $B^{4,2}_{n}$. Thus, the ball $B^{4,2}_{n}$ is shellable (for all $n\geq 5$); it is also $2$-stacked as all balls $B^{d,2}_n$ are. On the other hand, it was recently shown in \cite[Example 4]{Macchia-Wiebe} that $\partial B^{4,2}_{6}=\Delta^3_{6}$ is not polytopal.
\end{remark}

\section{Open problems}  \label{sec:open problems}
We close the paper with a few open problems. The result that $\Delta^3_{6}$ is not polytopal makes it very likely that the complexes $\Delta^3_{n}$ are not polytopal for all $n\geq 6$. It also begs us to ask the following question.

\begin{problem}\label{p1} According to a result of McMullen and Shephard \cite{McMShep}, for $d\geq 3$, a cs combinatorial $d$-sphere that is cs-$\lceil d/2\rceil$-neighborly and has more than $2(d+2)$ vertices cannot be realized as the boundary complex of a centrally symmetric polytope. Which of the cs spheres $\Delta^{d}_n$, where $d\geq 3$ and $n\geq d+2$, are realizable as the boundary complexes of non-cs polytopes? What about $\Lambda^{d}_n$?
\end{problem}

While this paper was under review, the first question in Problem \ref{p1} has been completely resolved by Pfeifle \cite{Pfeifle-20} (see also later work \cite{GouveiaMacchiaWiebe}) who proved that for all $d\geq 3$ and $n\geq d+2$ (including $n=d+2$), the complex $\Delta_n^{d}$ is {\em not} realizable as the boundary complex of a polytope. The second question remains open.

The rest of the problems concern the number of distinct combinatorial types of highly neighborly cs spheres.

\begin{problem} \label{Problem 1}
Let $k\geq 3$, and let $n$ be sufficiently large. Find $\Omega(2^n)$ pairwise non-isomorphic cs (combinatorial) $(2k-1)$-spheres with $2n$ vertices that are cs-$k$-neighborly. More optimistically, are there $2^{\Omega(n^k)}$ such spheres (for all $k\geq 2$)?
\end{problem}

\begin{problem} \label{Problem 2}
Let $\cs(d,n)$ denote the number of labeled cs $d$-spheres on $V_n$ and let $\ncs(d,n)$ denote the number of labeled cs $d$-spheres on $V_n$ that are cs-$\lceil d/2 \rceil$-neighborly. Is it true that  $$\lim_{n\to \infty} \frac{\cs(d,n)}{\ncs(d,n)} =1$$ for all odd $d\geq 3$? 
\end{problem}

Problem \ref{Problem 2} is analogous to Kalai's conjecture \cite[Section 6.3]{Kal} in the non-cs case. To start working on this problem, one may want to first establish some non-trivial upper and lower bound on $\cs(d, n)$. Part 1 of Problem~\ref{Problem 1} is motivated by our Theorem~\ref{thm: 2^n combinatorial types} while the more optimistic bound is motivated by a (non-cs) result of Nevo, Santos, and Wilson \cite{NeSanWil} along with Problem~\ref{Problem 2}. This result by Nevo, Santos, and Wilson asserts that for $k\geq 2$, there exist $2^{\Omega(n^k)}$ labeled triangulations of a $(2k-1)$-sphere with $n$ vertices. Since $n!=2^{O(n\log n)}$,  there are also $2^{\Omega(n^k)}$ pairwise non-isomorphic triangulations of a $(2k-1)$-sphere with $n$ vertices. 

As for establishing non-trivial lower bounds on $\cs(d,m)$, the results in this paper along with those in \cite{Kal} imply that  $\cs(2k-1,2n)\geq 2^{\Omega (n^{k-1})}$. Indeed, consider $\Lambda^{2k-1}_{2n-1}$, and let $\B$ be any of Kalai's squeezed spheres with at most $n$ vertices. Let $\rho: i \mapsto 2i+1$ be the map from the proof of Proposition~\ref{prop:squeezed-balls-as-subcomplexes}. Then $\rho(\B)$ is a combinatorial $(2k-1)$-ball that is a subcomplex of $(\Lambda^{2k-1}_{2n-1})_+$; in particular, $\rho(\B)$ and $-\rho(\B)$ share no common facets. Therefore, by replacing the subcomplexes  $\pm \rho(\B)$ of $\Lambda^{2k-1}_{2n-1}$ with $\pm \big(\partial\rho(\B) * (2n+2)\big)$, we obtain a new cs combinatorial $(2k-1)$-sphere, $\Lambda^{2k-1}_{2n-1}(\B)$. Furthermore, the resulting cs spheres are all distinct (as labeled spheres on $W_{2n}$). To see this, note that by \cite[Proposition 3.3]{Kal}, if the squeezed balls $\B_1$ and $\B_2$ are not equal, then $\partial\B_1\neq \partial \B_2$, and so $\lk\big(2n+2, \Lambda^{2k-1}_{2n-1}(\B_1)\big)\neq \lk\big(2n+2, \Lambda^{2k-1}_{2n-1}(\B_2)\big)$. We conclude that the number of cs combinatorial $(2k-1)$-spheres on the vertex set $W_{2n}$  is at least as large as the number of Kalai's squeezed $(2k-1)$-balls on $\leq n$ vertices. The promised lower bound on $\cs(2k-1,2n)$ then follows from \cite[Theorem 4.2]{Kal}.

{\small
	\bibliography{refs}
	\bibliographystyle{plain}
}
\end{document}